\pdfoutput=1 

\RequirePackage{fix-cm}
\documentclass[smallextended]{svjour3}
\smartqed

\usepackage[margin=2.cm]{geometry}
\setlength{\parindent}{12pt}
\setlength{\parskip}{4pt}
\usepackage{amsmath,amssymb,epsfig,float,color,xcolor,enumitem,comment}
\definecolor{lightblue}{rgb}{0.22,0.45,0.70}
\definecolor{lightgreen}{rgb}{0.09, 0.45, 0.27}
\usepackage[colorlinks=true,breaklinks=true,linkcolor=lightblue,citecolor=lightgreen]{hyperref}

\renewenvironment{proof}{\noindent{\it Proof.}}{\hfill$\square$}

\numberwithin{equation}{section}
\numberwithin{theorem}{section}
\numberwithin{lemma}{section}
\numberwithin{corollary}{section}
\numberwithin{proposition}{section}
\numberwithin{remark}{section}
\numberwithin{figure}{section}
\numberwithin{table}{section}

\usepackage{diagbox}

\usepackage{multirow,multicol,array}

\def\div{\mathop{\mathrm{div}}\nolimits}

\def\CT{\mathcal{T}}

\def\rt{\mathrm{t}}

\newcommand\mG{\mathcal{G}}

\newcommand\mK{\mathcal{K}}

\newcommand\err{\texttt{err}}
\newcommand\eff{\texttt{eff}}

\newcommand\bE{\mathbf{E}}

\newcommand\bff{\boldsymbol{f}}
\newcommand\bL{\mathbf{L}}

\newcommand\bV{\mathbf{V}}
\newcommand\bU{\mathbf{U}}
\newcommand\bM{\mathbf{M}}
\newcommand\bW{\mathbf{W}}
\newcommand\bY{\mathbf{Y}}
\newcommand\bPi{\mathbf{\Pi}}
\newcommand\bS{\mathbf{S}}
\newcommand\bT{\mathbf{T}}

\newcommand\beps{\boldsymbol{\varepsilon}}
\newcommand\bnabla{\boldsymbol{\nabla}}

\newcommand\bg{\boldsymbol{g}}

\newcommand\bsig{\boldsymbol{\sigma}}

\newcommand\btau{\boldsymbol{\tau}}

\newcommand\bu{\boldsymbol{u}}
\newcommand\bv{\boldsymbol{v}}
\newcommand\bx{\boldsymbol{x}}

\newcommand\bz{\boldsymbol{z}}

\newcommand\bcQ{\boldsymbol{\mathcal{Q}}}

\newcommand\bcW{\boldsymbol{\mathcal{W}}}
\newcommand\bcX{\boldsymbol{\mathcal{X}}}

\newcommand\N{\mathbb{N}}
\newcommand\R{\mathbb{R}}
\newcommand{\wbPi}{\widehat{\bPi}}
\newcommand{\norm}[1]{\ensuremath{\left\|#1\right\|}}

\newcommand\dz{\,\mathrm{d}z}

\renewcommand\O{\Omega}

\renewcommand\H{\mathrm{H}}
\renewcommand\L{\mathrm{L}}
\newcommand\Q{\mathrm{Q}}

\newcommand\bdiv{\mathop{\mathbf{div}}\nolimits}
\newcommand\vdiv{\mathop{\mathrm{div}}\nolimits}

\newcommand\tr{\mathop{\mathrm{tr}}\nolimits}

\renewcommand\sp{\mathop{\mathrm{sp}}\nolimits}

\newcommand\bH{\mathbf{H}}

\newcommand\bn{\boldsymbol{n}}

\newcommand\bw{\boldsymbol{w}}


\newcommand\curl{\mathop{\mathbf{curl}}\nolimits}
\newcommand\rot{\mathop{\mathrm{rot}}\nolimits}

\def\bI{\mathbf{I}}
\def\CE{{\mathcal E}}
\def\CT{{\mathcal T}}

\newcommand{\cblue}[1]{\textcolor{blue}{#1}}

\newcommand{\vertiii}[1]{{\left\vert\kern-0.25ex\left\vert\kern-0.25ex\left\vert #1 
		\right\vert\kern-0.25ex\right\vert\kern-0.25ex\right\vert}}
\allowdisplaybreaks
\begin{document}
\titlerunning{FEM for a Herrmann-FSI spectral problem}
\authorrunning{Khan, Lepe, Mora, Ruiz-Baier \& Vellojin}
\title{Finite element analysis  for a Herrmann pressure formulation of the elastoacoustic problem with variable coefficients}
\author{A. Khan \and F. Lepe \and D. Mora \and R. Ruiz-Baier \and J. Vellojin}
\institute{Felipe Lepe \and David Mora \at
		GIMNAP, Departamento de Matem\'atica,
		Facultad de Ciencias,  
		Universidad del B\'io-B\'io, Casilla 5-C, Concepci\'on, Chile.\\
		\email{\{flepe,dmora\}@ubiobio.cl} \and
		Arbaz Khan \at
		Department of Mathematics, Indian Institute of Technology Roorkee, Roorkee 247667, India.\\
		\email{arbaz@ma.iitr.ac.in}
		\and
		David Mora \at
		CI$^2$MA, Universidad de Concepci\'on, Casilla 160-C, Concepci\'on, Chile.
		\and
		Ricardo Ruiz-Baier \at 
		School of Mathematics, Monash University, 9 Rainforest Walk, Melbourne 3800 VIC, Australia; and Universidad Adventista de Chile, Casilla 7-D, Chill\'an, Chile.\\
                \email{ricardo.ruizbaier@monash.edu}
         \and
        Jesus Vellojin
         \at 
        Departamento de Ciencias, Universidad Técnica Federico Santa María, Av. Federico Sta. María 6090, Viña del Mar, Chile.\\
        \email{jesus.vellojinm@usm.cl}
	}
	
	\date{Received: date / Revised version: date \hfill Updated: \today}

	\maketitle
	
	\begin{abstract}
			In two and three dimensions, this study is focused on the numerical analysis of an eigenproblem associated with a fluid-structure model for sloshing and elasto-acoustic vibration. We use a displacement-Herrmann pressure formulation for the solid, while  for the fluid, a pure displacement formulation is considered. Under this approach we propose a non conforming locking-free method based on classic finite elements to approximate the natural frequencies (of the eigenmodes) of the coupled system. Employing the theory for non-compact operators we prove convergence and error estimates. Also we propose an a posteriori error estimator for this coupled problem which is shown to be efficient and reliable. All the presented theory is contrasted with a set of numerical tests in 2D and 3D. 
		\end{abstract}

	\keywords{Fluid structure problems \and Herrmann pressure \and eigenvalue problems \and
		finite elements \and a priori error estimates \and a posteriori error bounds}
	\subclass{65N30 \and 65N12 \and 76D07 \and 65N15}

\section{Introduction}
The dynamics of fluid-structure interaction systems are of primary interest in a number of scientific and industrial applications. For example, vibrations occurring in pipes, parts of aerospace vessels, tanks, and many others. These applications are related to the design and development of different components, structures, and devices that are needed in different contexts. Several specific frameworks of interest are described in, e.g., \cite{MR1180076}. Although this problem has been studied for many years, new applications, formulations, methods, and challenges are still emerging in the literature, proving that research on this problem is in ongoing progress.

In the present paper, our contribution is related to the development of numerical methods for an elasto-acoustic problem where we model the motion of a fluid in a solid container. Regarding the mathematical formulation of this classical problem (see, e.g., \cite{boujot1987mathematical}), it is possible to use a displacement formulation for both subdomains and employ, at the discrete level, simple Lagrangian finite elements (FE) for the solid and Raviart--Thomas elements for the fluid. This type of formulation has been used for elasto-acoustic vibration models as well as hydroelastic and sloshing models. Some important references, such as \cite{bermudez1999finite,bermudez1997finite,MR1342293}, are focused on the study of formulations where the main unknowns are the displacements of the solid and the fluid. Hence, this choice of formulation leads to a numerical analysis that involves classical finite elements, such as piecewise linear functions to approximate the displacement of the solid and Raviart--Thomas elements for the displacement of the fluid. This choice of finite elements is shown to be spurious free and capable of approximating the spectrum of the elasto-acoustic problem accurately. However, the method depending on these families of finite elements leads to a non-conforming approximation scheme, which has its inherent difficulties. To avoid this non-conformity, \cite{MR3283363} proposes an alternative mixed formulation based on the pressure of the fluid and the stress of the solid, which, when numerically analyzed by approximating the stress with Brezzi--Douglas--Marini elements and the pressure with piecewise linear functions, is shown to be locking free, spurious free, and conforming. This formulation is consistent with the classical results in the literature. An alternative method, such as the one introduced in \cite{brenner2018nonconforming}, where only piecewise linear polynomials are considered for both media, also turns out to be theoretically and computationally accurate. Time-dependent elasto-acoustic problems can also be studied, such as in \cite{MR3606460}, where a semi-implicit time discretization with distributed Lagrange multipliers is analyzed, or \cite{MR4092281}, where a spatial discretization based on the finite element method is first considered and error estimates are also provided, followed by a fully discrete approximation based on a family of implicit finite difference schemes in time. Let us remark that an important physical property is the absence of viscosity in the model, particularly in the fluid. This assumption leads to a linear eigenvalue problem, which is not the case when internal dissipation is considered, where naturally the eigenvalue problem is quadratic, as in \cite{MR1770352}. Hence, to begin with a simple formulation, the presence of viscosity is not considered in this paper.

All the aforementioned references (and the references therein) are focused on a priori error estimates. However, the a posteriori analysis can also be performed. Indeed, we can mention \cite{MR4867703,MR2740815,MR4770432,MR3718007} as particularly interesting and well-developed contributions on this subject, which certainly provide inspiration for our work. However, we are not considering a priori and a posteriori error analysis separately; our plan is to unify the analysis under a new approach.

In the present paper, our contribution is focused on the mathematical and numerical analysis of an extension of the elasto-acoustic model studied in, for instance, \cite{MR1342293}, where sloshing effects are also considered, as in \cite{MR3283363}. More precisely, the classical elasto-acoustic model of \cite{MR1342293}, which considers the typical formulation for the elastic domain, is here extended with the incorporation of the so-called Herrmann pressure \cite{herrmann1965elasticity}. This new variable creates a connection between the classical Stokes problem and the elasticity problem. More precisely, for eigenvalue problems, it is possible to prove a relation between the spectra of the Stokes and elasticity eigenproblems via the Herrmann pressure. This fact has been proved and numerically analyzed in \cite{khan2023finite} with a finite element method involving inf-sup stable families of finite elements for Stokes. This approach has the advantage of providing a framework in which one can analyze both a fully incompressible solid (Stokes' limit) and the typical elastic structure by means of a locking-free numerical method. More precisely, with this formulation we are capable of considering a Poisson ratio equal to $1/2$, which classical formulations of the elasto-acoustic problem are not able to handle. Indeed, as in \cite{khan2023finite} for the elasticity eigenproblem, for the elasto-acoustic eigenvalue problem the locking-free property remains valid, as we present in our numerical findings. On the other hand, the introduction of variable coefficients, reflecting the physical attributes of the structure, adds another layer of complexity to our analysis. Specifically, these variable coefficients demand the use of a weighted norm with spatially dependent parameters in order to ensure robustness of the different estimates that we derive. For the fluid part, we consider a similar model to that of \cite{brenner2018nonconforming}. However, there are two main differences with respect to the aforementioned study. First, the numerical scheme in the present paper considers an inf-sup stable family for the solid domain, such as Taylor--Hood or MINI elements, while the fluid domain is discretized with Brezzi--Douglas--Marini elements. These approximations provide conformity along the interface at the expense of additional degrees of freedom. Second, we propose an a posteriori error analysis that relies on the weighted norm and a Helmholtz decomposition to provide a reliable and efficient estimator. 

Despite these relevant features of the Herrmann formulation and the framework of our analysis, the approximation of the solutions must be studied not only on the domains in which the fluid or the solid are located, but also on the interface. This drawback was rigorously studied in \cite{MR1342293}, where a corrected interpolant was constructed in order to obtain a proper approximation on the contact interface while accounting for the non-conformity of the method in that paper. For the mixed formulation in \cite{MR3283363}, this corrected interpolant operator was also needed for the same reason, but its analysis is based on the conformity of the method on the interface and hence, its construction was analyzed with a suitable extension operator. For the analysis of our model and its numerical method, we also need this corrected operator, and we construct it inspired by the aforementioned references.

Continuing with the contributions of the paper, we introduce a novel residual-based a posteriori error estimator tailored to the proposed model. The formulation of the estimator for the solid domain draws inspiration from the work of \cite{khan2023finite}. Leveraging the Helmholtz decomposition, we devise an innovative estimator for the fluid domain. Furthermore, we establish the reliability and efficiency of the proposed estimator, which are proved under the use of weighted norms and using the standard techniques for a posteriori error analysis such as \cite{MR1885308,MR3059294}. 

The paper is structured in the following manner: In Section \ref{sec:model} we describe the governing equations and derive a weak formulation of the eigenvalue problem, noting that the solution operator associated with the corresponding source problem is non-compact. Next, Section~\ref{sec:fem} describes the numerical discretization, introducing the main assumptions for the discrete spaces used herein (Taylor--Hood, MINI element, and Brezzi--Douglas--Marini), and stating approximation properties of the needed modification of classical interpolants. We discuss both the source and the discrete eigenvalue problem. The spectral approximation is postponed to Section~\ref{sec:spec_app} according to \cite{MR483400}, where we show the convergence (with optimal order) of the nonconforming method in a suitably defined mesh-dependent norm. In Section~\ref{sec:apost} we propose a residual-based a posteriori error estimator and show its robustness. We close in Section~\ref{sec:numerical-section} with a set of numerical experiments that report on the accuracy of the proposed schemes and that also illustrate the properties of the a posteriori error indicators. 

\section{Model problem}\label{sec:model}
We briefly describe the model problem according to \cite{MR3283363}, which represents the small-amplitude motion of a fluid in a container with a free surface for the fluid. Let $\O_f$ and $\O_s$ two polygonal/polyhedral bounded domains of $\mathbb{R}^d$, where $d\in\{2,3\}$, with Lipschitz boundary. We assume that $\O_f$ is the domain where the fluid is contained whereas $\O_s$ is the domain occupied by the structure. We split in two parts the boundary $\partial\O_f$: the first part corresponds to the interface contact with the structure, which we denote by $\Sigma$, and the second part is an open boundary that we dente by $\Gamma_0$. On the other hand, the boundary $\partial\O_s$ is such that $\partial\O_s:=\Sigma\cup\Gamma_D\cup\Gamma_N$, where $\Gamma_D$ is the part of the solid that we consider as clamped. We assume that $\Sigma$ is oriented by the normal vector $\bn$ outward to the boundary	 of $\O_f$. The unit normal outward to $\partial\O_s$ is also denoted by $\bn$ (see Figure~\ref{fig:reference-2D-domain}).

\begin{figure}[!hbt]
	\centering
	\includegraphics[scale=1.65]{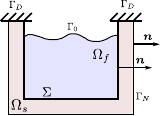}
	\caption{Sketch of a fluid-structure interaction domain with sub-boundaries. Here,  $\Omega_S$ and $\Omega_F$ denote the solid and fluid subdomains, respectively.}
	\label{fig:reference-2D-domain}
\end{figure}

On the solid subdomain, let us denote by $\bsig$ the Cauchy stress tensor, defined as
$$
\bsig:=2\mu\beps(\bu) + \lambda\tr(\beps(\bu))\mathbb{I},
$$
where $\bu$ denotes the displacement of the solid, $\boldsymbol{\varepsilon}(\bu):=\frac{1}{2}[\nabla\bu+(\nabla\bu)^{\texttt{t}}]$ is the infinitesimal strain tensor, and the Lam\'e parameters are given by
\begin{equation*}
	\lambda:=\frac{E(\bx)\nu}{(1+\nu)(1-2\nu)}\quad\text{and}\quad \mu:=\frac{E(\bx)}{2(1+\nu)},
\end{equation*}
with $E$ and $\nu$ representing the Young modulus and Poisson ratio, respectively, both assumed heterogeneous but uniformly bounded away from zero.
Let us define the solid pressure variable $p:=-\lambda\,\vdiv\bu$. Under the assumption of small oscillations  and inspired in \cite{MR3283363},  if $\omega>0$ represents the natural frequencies of the eigenmodes, the system of interest consists in finding the values of $\omega$ for which there is a tuple of solid and fluid displacements, together with Herrmann and fluid pressures $((\bu,\bw),(p,\textrm{p}_F))$ that satisfy the following system
\begin{align}
	-\bdiv(2\mu\beps(\bu)) + \nabla p&=\omega^2\rho_s\bu &\text{in }\O_s,\nonumber\\
	\vdiv \bu  + \frac{1}{\lambda} p &=0 &\text{in }\Omega_s,\nonumber\\
	\nabla\textrm{p}_F&=\omega^2\rho_f\bw&\text{in }\Omega_f,\nonumber\\
	\textrm{p}_F+\rho_fc^2\div\bw&=0&\text{in }\O_f,\nonumber\\
	\left(2\mu\beps(\bu) - p\mathbb{I} \right)\bn-(\rho_f c^2\vdiv\bw)\bn&=\boldsymbol{0} &\text{on}\,\,\Sigma, \label{eq:coupled-problem2}\\
	\bw\cdot\bn -\bu\cdot\bn&=0 &\text{on}\,\,\Sigma,\nonumber\\
	\rho_fg(\bw\cdot\bn) + \rho_fc^2 \vdiv\bw&=0 &\text{on}\,\,\Gamma_0,\nonumber\\
	\bu &= \boldsymbol{0} &\text{on }\Gamma_D,\\
	(2\mu\beps(\bu) - p\mathbb{I})\bn &= \boldsymbol{0} &\text{on } \Gamma_N,\nonumber
\end{align}
where $g$ is the gravity magnitude, $c$ is the sound speed, $\rho_s$ and $\rho_f$ denote the density of the solid and the fluid, respectively, and $\bw$ is the fluid displacement.  We observe that the pressure $\textrm{p}_F$ of the fluid can be eliminated replacing the fourth equation on \eqref{eq:coupled-problem2} on the third, leading to an expression for the fluid depending only on the displacement $\bw$.

Let us consider the following   spaces
$$
\bV=\left\{\bv\in\H^1(\O_s)^d \,:\, \bv=\boldsymbol{0}\;\text{ on }\Gamma_D\right\}, \qquad \Q=\L^2(\O_s),\quad  \bW:= \left\{\btau\in\H(\vdiv,\O_f) \;:\; \btau\cdot\bn\in \L^2(\Gamma_0) \right\},
$$
and proceed to test system \eqref{eq:coupled-problem2} by suitable test functions over $\bV$, $\Q$ and $\bW$, and imposing the boundary and interface conditions, we obtain with the following  variational problem: find $\omega\in\mathbb{R}^+$ and  $(\boldsymbol{0},\boldsymbol{0},0)\neq(\bu,\bw,p)\in\bV\times\bW\times \Q$ such that 
\begin{align}
\int_{\O_s}2\mu(\bx)\beps(\bu):\beps(\bv) - \int_{\O_s} p\vdiv \bv  + \int_{\Sigma} (\rho_f c^2\vdiv\bw)(\bv\cdot\bn) &= \omega^ 2\int_{\O_s}\rho_s\bu\cdot\bv, &\forall \bv\in \bV,\nonumber\\
-\int_{\O_s}\vdiv\bu q - \int_{\O_s}\frac{1}{\lambda(\bx)}p q &=0, &\forall q \in Q,\label{eq:coupled-problem-variational2}\\
\int_{\O_f}c^2\rho_f\vdiv\bw \vdiv \btau - \int_{\Sigma}(\rho_f c^2\vdiv\bw)(\btau\cdot\bn) + \int_{\Gamma_0}(g\rho_f\bw\cdot\bn)(\btau\cdot\bn) &=\omega^2\int_{\O_f}\rho_f\bw\cdot\btau, &\forall \btau\in \bW.\nonumber
\end{align}

Note that the formulation is not well-defined for a Darcy flux merely in $\H(\vdiv,\O_f)$, since in that case we only have $\bw\cdot\bn,\btau\cdot\bn \in \H^{-1/2}(\partial \Omega_f)$ and therefore the last term  on the  left-hand side of the third equation in \eqref{eq:coupled-problem-variational2} might not be bounded. This explains the need for the space $\bW$ defined above, which in turn makes sense as long as $\Gamma_0$ is sufficiently regular (see also, e.g., \cite{gjerde2021mixed}).			
{As commonly done in other formulations for fluid-structure interaction problems, we observe from the sixth equation in system \eqref{eq:coupled-problem2} that, in order to enforce the continuity of the normal displacement across the interface, we can define a functional space of kinematically admissible displacements as follows:
$$
\bY:=\left\{ (\bv,\btau)\in\bV\times\bW \;:\; \bv\cdot\bn-\btau\cdot\bn=0\text{ on }\Sigma\right\},
$$
where the transmission condition is understood (at least) in the $\H^{-1/2}(\Sigma)$ sense.
This allows us to define a strongly coupled system. 
Hence, using $(\bv,\btau)\in \bY$ in \eqref{eq:coupled-problem-variational2}, we arrive at the problem: find $\omega\in\mathbb{R}^+$ and $((\boldsymbol{0},\boldsymbol{0}),0)\neq((\bu,\bw),p)\in \bY\times \Q$ such that 
\begin{align}
	\int_{\O_s}2&\mu(\bx)\beps(\bu):\beps(\bv) - \int_{\O_s} p\vdiv \bv + \int_{\O_f}c^2\rho_f\vdiv\bw \vdiv \btau + {\int_{\Gamma_0} g\rho_f(\bw\cdot\bn)\,(\btau\cdot\bn)} 
	\nonumber     \\
	&= \omega^ 2\left(\int_{\O_s}\rho_s\bu\cdot\bv + \int_{\O_f}\rho_F\bw\cdot\btau\right) &\forall(\bv,\bw)\in\bY,\label{eq:coupled-problem-variational3}\\
	&-\int_{\O_s}q\vdiv\bu - \int_{\O_s}\frac{1}{\lambda(\bx)}p q =0  &\forall q\in \Q,\nonumber
\end{align}

Let us define  $\kappa:=\omega^2$ and $\bH:=\bY\times Q$ in order to rewrite \eqref{eq:coupled-problem-variational3}   as follows:
find $\kappa\in\mathbb{R}^+$ and  $((\boldsymbol{0},\boldsymbol{0}),0)\neq((\bu,\bw),p)\in \bH$ such that
\begin{equation}\label{def:limit_system_eigen_complete}
	A(((\bu,\bw),p),((\bv,\btau),q))   =(\kappa+1) B(((\bu,\bw),p),((\bv,\btau),q)),\qquad \forall((\bv,\btau),q)\in\bH,
\end{equation}
where the continuous bilinear forms
$A:\bH\times\bH\to\R$ and
$B:\bH\times\bH\to\R$
are defined by
\begin{align*}
	A(((\bu,\bw),p),((\bv,\btau),q)) &:=\int_{\O_s}2\mu(\bx)\beps(\bu):\beps(\bv) - \int_{\O_s} p\vdiv \bv + \int_{\O_f}c^2\rho_f\vdiv\bw \vdiv \btau \\
	& +{\int_{\Gamma_0} g\rho_f(\bw\cdot\bn)\,(\btau\cdot\bn)}
	-\int_{\O_s}q\,\vdiv\bu - \int_{\O_s}\frac{1}{\lambda(\bx)}p q  + B(((\bu,\bw),p),((\bv,\btau),q)),
	\\
	B(((\bu,\bw),p),((\bv,\btau),q))&:=\int_{\O_s}\rho_s\bu\cdot\bv + \int_{\O_f}\rho_f\bw\cdot\btau,
\end{align*}
for all $((\bu,\bw), p),((\bv,\btau), q)\in\bH$.  We observe that the formulation associated with the fluid corresponds to the acoustic equations written in terms of the displacement of the fluid. Following the ideas of \cite{MR1342293}, it is necessary to decompose $\H(\text{div},\O_f)$ in a suitable way. Also, 
we observe that the eigenspace associated with $\kappa=0$ is 
$
\mathcal{K}:=\big\{((\boldsymbol{0},\curl\xi),0):\ \xi\in\H_0^1(\O_f)\big\}\subset\bH,
$
and its orthogonal complement (in $\L^2$) denoted as $\mathcal{G}$ and defined by (see, e.g., \cite{MR1342293})
\begin{equation*}
	\mathcal{G}:=\big\{((\bu,\nabla\varphi),p): \bu\in\bV, \varphi\in \H^{1}(\O_f)^{d}, p\in\Q\big\}.
\end{equation*}
We also define the subspace $\mathcal{G}_{\bH}:=\mathcal{G}\cap\bH$. Moreover, it holds  that $\bH=\mathcal{K}\oplus\mathcal{G}_{\bH}$ (\cite[Lemma 2.3]{MR1342293}).

To perform the analysis,
we define the following parameter-weighted norm  for  all $((\bv,\btau),q)\in\bH$ 
\begin{align*}
	\vertiii{((\bv,\btau),q)}_{\bH}^2 & :=\Vert\mu(\bx)^{1/2}\nabla\bv\Vert_{0,\O_s}^2 + \Vert\mu(\bx)^{-1/2}q\Vert_{0,\O_s}^2 +\Vert\lambda(\bx)^{-1/2}q\Vert_{0,\O_s}^2 + \Vert\rho_S^{1/2} \bv\Vert_{0,\O_s}\\
	&\quad + \Vert (c^2\rho_f)^{1/2} \vdiv \btau\Vert_{0,\O_f} + \Vert\rho_f^{1/2}\btau \Vert_{0,\O_f} +\Vert(g\rho_f)^{1/2}\btau\cdot\bn\Vert_{0,\Gamma_0}^2.
\end{align*}

Let us now define the solution operator
$$ 			\bT:\bH\to\bH,\qquad 
((\bff,\bg),g)\mapsto \bT((\bff,\bg),g):=((\overline{\bu},\overline{\bw}),\overline{p}),
$$
where the triplet $((\overline{\bu},\overline{\bw}),\overline{p}) \in\bH$ is the solution  of  the following source problem:
\begin{equation}
	\label{eq:source_cont}
	A(((\overline{\bu},\overline{\bw}),\overline{p}),((\bv,\btau),q))   = B(((\bff,\bg),g),((\bv,\btau),q)),\qquad \forall((\bv,\bw),q)\in\bH.
\end{equation}

Let us recall the following result proved
in \cite[Theorem~2.5]{MR1342293}.
\begin{lemma}\label{regG}
	If $((\bff,\bg),g)\in\mathcal{G}$, then
	$\bT((\bff,\bg),g):=((\overline{\bu},\overline{\bw}),\overline{p})\in\mathcal{G}_{\bH}$ is the unique solution of \eqref{eq:source_cont}. Moreover, 
	there exist $\alpha\in(1/2,1]$, $\beta\in (0,1]$ and $C>0$ independent of $\lambda$,
	such that the following estimate holds true
	\begin{equation*}
		\|\overline{\bu}\|_{1+\beta,\O_s}+\|\overline{\bw}\|_{\alpha,\O_f}
		+\|\div\overline{\bw}\|_{1,\O_f}+\|\overline{p}\|_{\beta,\O_s}\leq
		C\|((\bff,\bg),g)\|_{\L^2(\Omega_s)^{d}\times \L^2(\Omega_f)^{d}\times \L^2(\Omega_s)}.
	\end{equation*}
\end{lemma}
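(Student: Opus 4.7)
The plan is to adapt the strategy of \cite[Theorem~2.5]{MR1342293} to the Herrmann pressure setting, splitting the argument into three parts: invariance of $\mathcal{G}_{\bH}$ under $\bT$, well-posedness of the source problem on $\mathcal{G}_{\bH}$, and the regularity shift.

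First I would establish that $\bT$ sends $\mathcal{G}$ into $\mathcal{G}_{\bH}$. Testing \eqref{eq:source_cont} against an arbitrary element $((\boldsymbol{0},\curl\xi),0)\in\mathcal{K}$ with $\xi\in\H_0^1(\O_f)$, the solid volume integrals vanish since the test displacement is zero, the divergence coupling $\int_{\O_f}c^2\rho_f\vdiv\overline{\bw}\,\vdiv\curl\xi$ vanishes by $\vdiv\curl\xi=0$, and the boundary integral on $\Gamma_0$ vanishes because $\xi|_{\partial\O_f}=0$ forces $\curl\xi\cdot\bn=0$. What remains is $\int_{\O_f}\rho_f\overline{\bw}\cdot\curl\xi=\int_{\O_f}\rho_f\bg\cdot\curl\xi$, and the right-hand side is zero since $\bg$ is a gradient. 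Hence $\overline{\bw}$ is $\L^2$-orthogonal to $\curl\H_0^1(\O_f)$, so $((\overline{\bu},\overline{\bw}),\overline{p})\in\mathcal{G}_{\bH}$, and moreover $\overline{\bw}=\nabla\varphi$ for some $\varphi\in\H^1(\O_f)$.

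For well-posedness on $\mathcal{G}_{\bH}$ I would apply standard saddle-point theory to the outer variables $(\bu,\bw)\in\bY$ and the inner variable $p\in\Q$. Coercivity of the $(\bu,\bw)$-block on the constrained kernel follows from Korn's inequality on $\O_s$ together with the non-negative $\L^2$ contributions from $\vdiv\bw$ on $\O_f$, from $\btau\cdot\bn$ on $\Gamma_0$, and from the mass terms inherited from $B$; the inf-sup condition for the coupling $-\int_{\O_s}q\,\vdiv\bu$ is the classical one for the Herrmann formulation and yields control of $p$ in $\L^2(\O_s)$ with a constant independent of $\lambda$. These two ingredients deliver existence and uniqueness on $\mathcal{G}_{\bH}$. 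For the regularity shift I would then decouple the continuous problem using the transmission condition $\overline{\bu}\cdot\bn=\overline{\bw}\cdot\bn$ on $\Sigma$. On $\O_f$ the scalar potential $\varphi$ solves a Helmholtz-type problem with Neumann data $\partial_{\bn}\varphi=\overline{\bu}\cdot\bn$ on $\Sigma$ and Robin data on $\Gamma_0$; classical elliptic regularity on a polygonal/polyhedral domain gives $\varphi\in\H^{1+\alpha}(\O_f)$ for some $\alpha\in(1/2,1]$, hence $\overline{\bw}\in\H^\alpha(\O_f)^d$. The stronger $\H^1$-bound for $\vdiv\overline{\bw}$ comes from the underlying relation $\rho_fc^2\vdiv\overline{\bw}+\mathrm{p}_F=0$ combined with $\nabla\mathrm{p}_F=\rho_f\bg$, which together with the gradient structure of $\bg$ places $\mathrm{p}_F$ in $\H^1(\O_f)$. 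Back on $\O_s$, $(\overline{\bu},\overline{p})$ solves a Herrmann--Stokes system loaded by $\rho_s\bff$ and by the interface traction $(\rho_fc^2\vdiv\overline{\bw})\bn$, and the known regularity theory for mixed-boundary Stokes-type systems on polygonal/polyhedral domains delivers $\overline{\bu}\in\H^{1+\beta}(\O_s)^d$ and $\overline{p}\in\H^\beta(\O_s)$ for some $\beta\in(0,1]$.

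The main obstacle is keeping every constant genuinely independent of $\lambda$: any energy estimate in which $\lambda$ appears without compensation would break the locking-free character announced in the statement. This is precisely what the Herrmann pressure variable handles, by absorbing the incompressibility constraint uniformly; the inf-sup step must therefore be carried out on the Herrmann--Stokes coupling rather than on a displacement-only formulation. A secondary difficulty is that the mixed Dirichlet/Neumann/interface conditions on $\O_s$ and the Neumann/Robin conditions on $\O_f$ could in principle drive the exponents $\alpha,\beta$ below the stated ranges; this is avoided precisely because the domains are polygonal/polyhedral (with $\Gamma_0$ sufficiently regular), where explicit corner-singularity analyses keep $\alpha\in(1/2,1]$ and $\beta\in(0,1]$.
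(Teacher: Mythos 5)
The paper does not actually prove this lemma: it is imported verbatim by citation to \cite[Theorem~2.5]{MR1342293}, so there is no in-paper argument to compare against line by line. Your reconstruction follows the same strategy as that reference, transplanted to the Herrmann setting, and it is consistent with the machinery the paper builds elsewhere: the invariance step (testing against $((\boldsymbol{0},\curl\xi),0)$ and using $\vdiv\curl\xi=0$, the vanishing of $\curl\xi\cdot\bn$ on $\partial\O_f$ for $\xi\in\H_0^1(\O_f)$, and the orthogonality of a gradient datum $\bg$ to $\curl\H_0^1(\O_f)$) is exactly the mechanism by which $\mathcal{G}$ is invariant in \cite{MR1342293}; the decoupled regularity shift (a Neumann/Robin Helmholtz problem for the fluid potential, $\vdiv\overline{\bw}\in\H^1$ read off from the strong momentum equation, and mixed-boundary Stokes-type regularity for $(\overline{\bu},\overline{p})$ on $\O_s$) matches both that reference and the elliptic-regularity discussion the paper itself invokes around \eqref{neumannE}.

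Two points in your well-posedness paragraph deserve more care than you give them. First, the inf-sup condition must be verified on the \emph{constrained} space $\bY$, not on $\bV$ alone: the supremizing field $\tilde{\bv}$ must be completed to a pair $(\tilde{\bv},\tilde{\btau})\in\bY$, which means either extending $\tilde{\bv}\cdot\bn$ into $\O_f$ (this is what the operator $\bE$ of Section~3 does at the continuous level) or choosing $\tilde{\bv}$ with vanishing normal trace on $\Sigma$ and pairing it with $\btau=\boldsymbol{0}$, which still controls all of $\L^2(\O_s)$ only because $\Gamma_N$ has positive measure; the paper's own Lemma~\ref{rel11} takes the latter route via \cite[Eq.~2.6]{khan2023finite}. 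You should say which option you use. Second, coercivity of the displacement block actually holds on all of $\bV\times\bW$ (Korn's second inequality plus the mass terms inherited from $B$, since $\|\vdiv\btau\|_{0,\O_f}^2+\|\btau\|_{0,\O_f}^2+\|\btau\cdot\bn\|_{0,\Gamma_0}^2$ is the full squared $\bW$-norm), so no restriction to a kernel is needed; invoking "coercivity on the constrained kernel" understates what is available and slightly obscures why the penalty term $-\int_{\O_s}\lambda^{-1}pq$ causes no loss of $\lambda$-uniformity. Neither point is a fatal gap, but both are where a careless execution would break.
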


We observe that Lemma \ref{regG} establishes that source problem \eqref{eq:source_cont} is well posed and, as a consequence of this, we have that $\bT$ is well defined. We also  notice that  operator $\bT$ is  non-compact, since $\H(\text{div},\O_f)$ is not compactly embedded in $\L^2(\O_f)^d$. Moreover, we observe that $(\kappa,(\bu,\bw),p)\in\mathbb{R}\times\bH$
solves \eqref{def:limit_system_eigen_complete} if and only if $((\kappa+1)^{-1},(\bu,\bw),p)$ is an eigenpair of $\bT$, i.e., if 	$((\bu,\bw),p)\neq ((\boldsymbol{0},\boldsymbol{0}),0)$ and $\bT((\bu,\bw),p)=(\kappa+1)^{-1}((\bu,\bw),p)$.

Moreover, as a consequence of Lemma \ref{regG}, we have the following additional regularity for the eigenfunctions, which holds when $((\bff,\bg),g)=(\kappa+1)((\bu,\bw),p)$. 
\begin{corollary}\label{reg_eigenfunctions}
	There exist $\alpha_1\in(1/2,1]$, $\beta_1\in (0,1]$ and $C>0$ independent of $\lambda$ but depending on the eigenvalue $\kappa$,
	such that the following estimate holds true
	\begin{equation*}
		\|\bu\|_{1+\beta_1,\O_s}+\|\bw\|_{\alpha_1,\O_f}
		+\|\div\bw\|_{1,\O_f}+\|p\|_{\beta_1,\O_s}\leq
		C\|((\bu,\bw),p)\|_{\L^2(\Omega_s)^{d}\times \L^2(\Omega_f)^{d}\times \L^2(\Omega_s)}.
	\end{equation*}
\end{corollary}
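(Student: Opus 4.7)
The plan is to derive the corollary as a direct consequence of Lemma \ref{regG} by recognising that every eigenpair $(\kappa,((\bu,\bw),p))$ of \eqref{def:limit_system_eigen_complete} is simultaneously a solution of the source problem \eqref{eq:source_cont} with datum $((\bff,\bg),g)=(\kappa+1)((\bu,\bw),p)$. Since Lemma \ref{regG} only delivers the regularity estimate for data lying in $\mathcal{G}$, the single substantive step is to verify that any eigenfunction corresponding to $\kappa>0$ lies in $\mathcal{G}_{\bH}$.

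To this end, I would exploit the decomposition $\bH=\mathcal{K}\oplus\mathcal{G}_{\bH}$ together with the symmetry of the bilinear forms $A$ and $B$. For any $v_{\mathcal{K}}=((\boldsymbol{0},\curl\xi),0)\in\mathcal{K}$ with $\xi\in\H_0^1(\O_f)$, a short computation from the definition of $A$ shows that $A(v_{\mathcal{K}},((\bv,\btau),q))=B(v_{\mathcal{K}},((\bv,\btau),q))$ for every test function: the strain, the $p\,\vdiv\bv$, the $q\,\vdiv\bu$ and the $\lambda^{-1}pq$ terms drop because the solid displacement and the pressure component of $v_{\mathcal{K}}$ vanish; the bulk fluid term vanishes because $\vdiv(\curl\xi)=0$; and the boundary contribution on $\Gamma_0$ vanishes because $\xi\in\H_0^1(\O_f)$ forces the tangential derivative, and hence $(\curl\xi)\cdot\bn$, to be zero on $\partial\O_f$. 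Testing \eqref{def:limit_system_eigen_complete} with $v_{\mathcal{K}}$ and using successively the symmetries of $A$ and $B$ together with this identity yields $\kappa\,B(((\bu,\bw),p),v_{\mathcal{K}})=0$, so that for $\kappa>0$ the eigenfunction is $B$-orthogonal to $\mathcal{K}$. Combined with the decomposition $\bH=\mathcal{K}\oplus\mathcal{G}_{\bH}$, this places $((\bu,\bw),p)$ in $\mathcal{G}_{\bH}$.

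Once this membership is established, the datum $(\kappa+1)((\bu,\bw),p)$ belongs to $\mathcal{G}$, and applying Lemma \ref{regG} to the corresponding instance of \eqref{eq:source_cont} yields the desired estimate with $\alpha_1=\alpha$, $\beta_1=\beta$, and a constant $(\kappa+1)C$ that depends on $\kappa$ through the scaling of the right-hand side but remains independent of $\lambda$, as required.

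The main obstacle is the orthogonality step itself, which is classical but hinges on the careful identification of $\mathcal{K}$ as the eigenspace of $\bT$ at the value $1$ and on the symmetry of both forms; once this is in place, everything else reduces to the routine rescaling of the right-hand side and the direct invocation of Lemma \ref{regG}.
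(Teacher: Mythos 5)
Your proposal is correct and follows essentially the same route as the paper, which obtains the corollary by feeding the datum $((\bff,\bg),g)=(\kappa+1)((\bu,\bw),p)$ back into Lemma \ref{regG}. The only addition is your explicit verification that eigenfunctions with $\kappa>0$ are $B$-orthogonal to $\mathcal{K}$ and hence lie in $\mathcal{G}_{\bH}$ — a detail the paper leaves implicit by relying on the spectral characterization of Theorem \ref{CHAR_SP} — and that verification is sound.
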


Next, the following result provides a spectral characterization of $\bT$. See \cite[Theorem~2.7]{MR1342293} for instance.
\begin{theorem}
	\label{CHAR_SP}
	The spectrum of $\bT$ decomposes as follows:
	$\sp(\bT)=\left\{0,1\right\}\cup\left\{\mu_k\right\}_{k\in\N}$, where:
	\begin{enumerate}
		\item[\textit{i)}] $\kappa=1$ is an infinite$-$multiplicity eigenvalue of
		$\bT$ and its associated eigenspace is $\mathcal{K}.$
		\item[\textit{ii)}] $\left\{\kappa_k\right\}_{k\in\N}\subset(0,1)$ is a
		sequence of finite-multiplicity eigenvalues of $\bT$ which converge to
		$0$
		and the corresponding eigenspaces lie in
		$\{(\bu,\bw),p)\in\H^{1+\beta_1}(\Omega_s)^d\times\H^{\alpha_1}(\Omega_f)^d \times\H^{\beta_1}(\Omega_s): \vdiv\bw\in\H^1(\Omega_f)\}$.
	\end{enumerate}
	
\end{theorem}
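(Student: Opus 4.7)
The plan is to exploit the orthogonal decomposition $\bH=\mathcal{K}\oplus\mathcal{G}_{\bH}$ and to analyse $\bT$ separately on each summand. Since $\bT$ is non-compact because of the infinite-dimensional kernel $\mathcal{K}$, the strategy is to show that $\mathcal{K}$ is the eigenspace of $\bT$ corresponding to the eigenvalue $1$, while on the complement $\mathcal{G}_{\bH}$ the operator becomes compact and self-adjoint so that the classical Hilbert–Schmidt theory applies.

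For part \textit{i)}, I would plug $((\boldsymbol{0},\curl\xi),0)$ with $\xi\in\H^1_0(\Omega_f)$ into the source problem \eqref{eq:source_cont} with itself as right-hand side, and verify that every non-$B$ term in $A$ vanishes. Indeed $\vdiv(\curl\xi)=0$ kills the $c^2\rho_f$-term; the elasticity, pressure and incompressibility terms vanish because the solid component is $\boldsymbol{0}$ and $p=0$; and the boundary term $\int_{\Gamma_0}g\rho_f(\curl\xi\cdot\bn)(\btau\cdot\bn)$ is zero because $\xi\equiv0$ on $\partial\Omega_f$ forces the tangential derivative, and hence $\curl\xi\cdot\bn$, to vanish on $\Gamma_0$. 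Thus $A$ reduces to $B$ on $\mathcal{K}$, which shows that $\bT$ is the identity on $\mathcal{K}$, i.e.\ the eigenvalue $1$ has eigenspace containing $\mathcal{K}$. The reverse inclusion follows from the fact that any eigenvector associated with the eigenvalue $1$ must make the stiffness part of $A$ vanish, forcing $\bu=\boldsymbol{0}$, $p=0$ and $\vdiv\bw=0$ with $\bw\cdot\bn=0$ on $\Gamma_0$, which characterises $\mathcal{K}$.

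For part \textit{ii)}, I would argue as follows. By Lemma~\ref{regG}, $\bT$ sends $\mathcal{G}_{\bH}$ into itself, and the solution inherits the additional regularity stated there. Combining $\H^{1+\beta}(\Omega_s)\hookrightarrow\hookrightarrow\H^1(\Omega_s)$, $\H^\alpha(\Omega_f)\hookrightarrow\hookrightarrow\L^2(\Omega_f)$, $\H^1(\Omega_f)\hookrightarrow\hookrightarrow\L^2(\Omega_f)$ (Rellich–Kondrachov), and $\H^\beta(\Omega_s)\hookrightarrow\hookrightarrow\L^2(\Omega_s)$, the restriction $\bT|_{\mathcal{G}_{\bH}}$ is compact in the $\vertiii{\cdot}_{\bH}$-topology. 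Self-adjointness is immediate from the symmetry of both $A$ and $B$: $A(\bT\phi,\psi)=B(\phi,\psi)=B(\psi,\phi)=A(\bT\psi,\phi)=A(\phi,\bT\psi)$, so $\bT$ is self-adjoint with respect to the inner product induced by $A$ (which is coercive on $\mathcal{G}_{\bH}$ after eliminating $p$ via $p=-\lambda\vdiv\bu$). The spectral theorem for compact self-adjoint operators then yields a sequence of real eigenvalues $\mu_k\to 0$. Positivity and the strict upper bound $\mu_k<1$ follow by testing \eqref{eq:source_cont} with the eigenfunction itself: $B$ is strictly positive on non-trivial elements of $\mathcal{G}_{\bH}$, so $\mu_k>0$; and $\mu_k=1$ would force the eigenfunction into $\mathcal{K}\cap\mathcal{G}_{\bH}=\{0\}$, a contradiction. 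The regularity of the eigenfunctions is then Corollary~\ref{reg_eigenfunctions}.

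The main obstacle I anticipate is the rigorous justification that the boundary terms behave well on $\mathcal{K}$ and the clean separation of the spectrum at $1$: one must check that $\curl\xi\in\bW$ actually satisfies $\curl\xi\cdot\bn\in\L^2(\Gamma_0)$ with the appropriate vanishing trace (so that $((\boldsymbol{0},\curl\xi),0)$ lies in $\bY$ and the coupling term in $A$ is meaningful), and one must verify that $\mathcal{K}$ and $\mathcal{G}_{\bH}$ are actually $A$-orthogonal so that the spectral decomposition does not mix them. Both are handled by exploiting $\xi\in\H^1_0(\Omega_f)$ and Lemma~2.3 of \cite{MR1342293}. Apart from this, the argument is a textbook application of spectral theory once the correct invariant splitting is identified.
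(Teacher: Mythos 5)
The paper does not actually prove this theorem: it is quoted verbatim from \cite[Theorem~2.7]{MR1342293} ("See \cite[Theorem~2.7]{MR1342293} for instance"), so there is no in-paper argument to compare against. Your outline reproduces the standard proof of that cited result --- split $\bH=\mathcal{K}\oplus\mathcal{G}_{\bH}$, verify that the non-$B$ part of $A(\cdot,\cdot)$ annihilates $\mathcal{K}$ so that $\bT|_{\mathcal{K}}$ is the identity (and conversely, testing with $((\bu,\bw),-p)$ shows any fixed point has vanishing stiffness energy, hence lies in $\mathcal{K}$), then use the extra regularity of Lemma~\ref{regG} together with Rellich--Kondrachov to get compactness of $\bT|_{\mathcal{G}_{\bH}}$ --- and it is essentially correct. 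The one step that needs tightening is the self-adjointness claim: $A$ is indefinite on $\mathcal{G}_{\bH}$ as defined in the paper (take $\bu=\boldsymbol{0}$, $\bw=\boldsymbol{0}$, $p\neq 0$, which gives $A(x,x)=-\|\lambda^{-1/2}p\|_{0,\O_s}^2<0$), so there is no "inner product induced by $A$" on that subspace; one must first restrict to the invariant manifold where the constraint $p=-\lambda\vdiv\bu$ holds (equivalently, eliminate the Herrmann pressure and argue on the reduced displacement formulation, which is what the pressure-free cited reference does), after which $A$ is positive and the compact self-adjoint spectral theorem yields the sequence $\kappa_k\in(0,1)$ accumulating only at $0$. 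You should also note that the compactness argument must cover the boundary component $\|(g\rho_f)^{1/2}\btau\cdot\bn\|_{0,\Gamma_0}$ of the norm (which follows from $\bw\in\H^{\alpha}(\O_f)^d$ with $\alpha>1/2$), a point you flag but do not resolve.
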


\section{Numerical discretization}
\label{sec:fem}
In this section our aim is to describe a FE discretization of problem \eqref{def:limit_system_eigen_complete}.   Let $\mathcal{T}_h(\O_s)$ and $\mathcal{T}_h(\O_f)$ be a conforming partition  of the polyhedral domains $\overline{\O}_S$ and $\Omega_F$, respectively, into triangles (tetrahedrons) $T$ with size $h_T=\text{diam}(T)$. Define $h:=\max\{h_T\,:\, T\in\mathcal{T}_h(\O_s)\cup\O_f\}$. Both the interface and the meshes are assumed conformal, that is, meshes are constructed such that the vertices of $\mathcal{T}_h(\O_s)$ and $\mathcal{T}_h(\O_f)$ coincide on $\Sigma_h=\Sigma$. 
On the other hand, the numerical method that we propose is nonconforming.  To make matters precise, on the solid part of the problem, the discretization will be considered under the approach of inf-sup stable families of FEs for Stokes, whereas for the fluid, Brezzi--Douglas--Marini elements will be considered. This is the key point on the conforming or non conforming nature of the methods. With this in mind, the analysis presented in \cite{MR3335232} will become essential for our purposes.  

Let $k\geq 1$ and $S\subseteq\mathbb{R}^{d}$. We denote by $\mathbb{P}_k(S)$ the space of polynomial functions defined on $S$ of total degree $\leq k$. In particular, given two families of  inf-sup stable FEs $\bV_h$ and $\Q_h$ to approximate the solid displacement $\bu_S$ and the pressure $p$ we can take, for example,  
\begin{itemize}
	\item[(a)] the MINI element \cite[Section 4.2.4]{MR2050138}: 
	\begin{align*}
		&\bV_h=\{\bv_{h}\in\boldsymbol{C}(\overline{\O})\ :\ \bv_{h}|_T\in[\mathbb{P}_1(T)\oplus\mathbb{B}(T)]^{d} \ \forall \ T\in\mathcal{T}_h(\O_s)\}\cap\bV,\\
		&\Q_h=\{ q_{h}\in C(\overline{\O})\ :\ q_{h}|_T\in\mathbb{P}_1(T) \ \forall \ T\in\mathcal{T}_h(\O_s) \},
	\end{align*}
	where $\mathbb{B}(T)$ denotes the space spanned by local bubble functions; or 
	\item[(b)] the Taylor--Hood element \cite[Section 4.2.5]{MR2050138}: 
	\begin{align*}
		&\bV_h=\{\bv_{h}\in\boldsymbol{C}(\overline{\O})\ :\ \bv_h|_T\in\mathbb{P}_{k+1}(T)^{d} \ \forall \ T\in\mathcal{T}_h(\O_s)\}\cap\bV,\\
		\label{eq:P_TH}
		&\Q_h=\{ q_{h}\in C(\overline{\O})\ :\ q_{h}|_T\in\mathbb{P}_k(T) \ \forall\ T\in\mathcal{T}_h(\O_s) \}.
	\end{align*}
\end{itemize}

Let us introduce the space to approximate the fluid displacement. We use the well-known  Brezzi--Douglas--Marini finite element space  $\mathbf{BDM}_k:=\mathbb{P}_k(\CT_h)^{d}\text{ with } k\geq 1$ (see \cite{MR799685}).
We set $\bW_h:=\mathbf{BDM}_k\cap\bW$ to be the corresponding global space. Then, as in \cite{MR1342293}, we need to impose a weaker condition than $(\bw -\bu)\cdot\bn=0$ on $\Sigma$,  for the discrete space. Then, we introduce the following space
$$
\bY_h:=\left\{ (\bv_h,\btau_h)\in\bV_h\times\bW_h \;:\; \bv_h\cdot\bn-\btau_h\cdot\bn=0\text{ on }\Sigma\right\}.
$$
Let us remark that the choice of BDM elements to approximate what concerns to the fluid leads to a conforming discretization since  $\bY_h\subset\bY$.

Now we  recall some well-known approximation properties for this FE family. 
First, for the fluid, given $s\in(0,1]$, let $\bPi_h: \H^s(\O_h)^{d}\cap\bW\rightarrow\bW_h$ be the classic global lowest order BDM interpolant operator which satisfies
\begin{equation*}
	\displaystyle\int_{E}\left(\bPi_h\btau\cdot\bn_{E}\right)\zeta=\int_{E}(\btau\cdot\bn_{E})\zeta,\hspace{0.3cm}\forall\zeta\in\mathcal{P}_1(E)^d,
\end{equation*}
where $E$ is an edge of any  $T\in\mathcal{T}_h(\O_f)$, and the following commutative diagram property holds true
\begin{equation*}
	\label{diagrama}
	\text{div}\left(\bPi_h\btau\right)=\mathcal{P}_h\left(\text{div}\btau\right)\hspace{0.3cm}\forall\btau\in \H^s(\O_f)^{d}\cap\H(\text{div};\O_f),
\end{equation*}
where $\mathcal{P}_h:\L^2(\O_f)^{d}\rightarrow\mathcal{U}_h$ is the $\L^2(\O_f)^{d}$-orthogonal projection onto 
\[
\mathcal{U}_h:=\{q_h\in L^2(\O_f): q_h|_T\in\mathbb{P}_{k-1}(T)\ \forall\,T\in\mathcal{T}_h(\O_f)\}.
\]
Moreover, for $s\in (0,1]$, the operators  $\bPi_h$ and $\mathcal{P}_h$  satisfy the following approximation properties
\begin{subequations}
	\begin{align}\label{errorL2}
		\|\boldsymbol{\tau}-\bPi_h\btau\|_{0,\O_f} &\leq Ch^s\left(\|\boldsymbol{\tau}\|_{s,\O_f}+\|\text{div}\boldsymbol{\tau}\|_{0,\O_f}\right)\quad\forall\boldsymbol{\tau}\in\H(\text{div};\O_f)\cap\H^s(\O_f)^{d},\\
		\label{errorDiv}
		\|\boldsymbol{\tau}-\bPi_h\boldsymbol{\tau}\|_{\text{div},\O_f}&\leq Ch^s \|\boldsymbol{\tau}\|_{\H^s(\text{div};\O_f)}\qquad\forall\boldsymbol{\tau}\in\H^s(\text{div};\O_f)\cap\bW_h,
		\\
		\label{errorPh}
		\|\bv-\mathcal{P}_h\bv\|_{0,\O_f}&\leq C h^s\|\bv\|_{s,\O_f}\qquad\forall\bv\in\H^{s}(\O_f)^{d}.
	\end{align}
\end{subequations}
In  the solid domain, we also introduce the orthogonal projection $\Lambda_h: \H^{1}(\O_s)^d\rightarrow \mathcal{M}_h$, where
\[
\mathcal{M}_h:=\{\bv_h\in\bV_h : \bv_h|_T\in\mathbb{P}_{k+1}(T)^{d}\ \forall\,T\in\mathcal{T}_h(\O_s)\},
\]
with $k=0$ for MINI element, and $k\geq 1$ for Taylor-Hood element. The projection satisfies 
\begin{equation}\label{errorH1}
	\|\bv-\Lambda_h\bv\|_{1,\O_s}\leq C h^s\|\bv\|_{1+s,\O_s}\quad\forall \bv\in\H^{1+s}(\O_s)^d.
\end{equation}
\subsection{The interface approximation}
Our goal is not only to approximate the solution $(\bu,\bw)\in\bY$ in $\O_f$ and $\O_s$, but also to prove the convergence of the proposed method on the interface $\Sigma$. This implies the construction of a corrected BDM interpolation operator to approximate the solution on $\Sigma$. With this aim, and inspired in the analysis of \cite{MR3283363}, the following sequel of results will provide the required corrected operator. Given $\bv\in\H^1(\O_s)^d$, let $\varphi\in\H^1(\O_f)$ be the solution of the following Neumann problem: 
\begin{equation}\label{neumannE}
	\displaystyle-\Delta\varphi=\frac{1}{|\O_f|}\int_{\Sigma}\bv\cdot\bn\quad\mbox{in}\,\,\O_f,\quad
	\displaystyle\frac{\partial\varphi}{\partial \bn}=0\quad\mbox{in}\,\,\Gamma_0,\quad\displaystyle\frac{\partial\varphi}{\partial \bn}=\bv\cdot\bn\quad\mbox{on}\,\,\Sigma.
\end{equation}


Applying classic regularity results for elliptic problems (see, e.g., \cite{MR0961439,jerison1989functional} for the case of Lipschitz domains not necessarily convex), we know that there exists $\hat{\upsilon}>0$ such that $\varphi \in \mathrm{W}^{2,\hat{\upsilon}}(\O_f)$ with $\frac32 - \varepsilon < \hat{\upsilon} < 3 + \varepsilon$ for a $\varepsilon >0$ depending on the domain. Therefore $\nabla \varphi \in \H^\upsilon(\Omega_f)^d$ for every $\upsilon < 2 - \frac{2}{\hat{\upsilon}}$, giving the worst bound $\upsilon < \frac{2}{3}$.
As a consequence of the trace inequality,  the following estimate holds true
\begin{equation}\label{acotagrad}
	\|\nabla\varphi\|_{\upsilon,\O_f}\leq C\|\bv\|_{1,\O_s}.
\end{equation}
On the other hand,  we define $\boldsymbol{\widehat{\psi}}:=-\nabla\varphi$ such that $\div\boldsymbol{\widehat{\psi}}\in\mathbb{R}$, according to the right hand side of the first equation of \eqref{neumannE}.
Let us define the following linear operator
\[
\bE: \H^1(\O_s)^d\rightarrow{\bW},\qquad 
\bv\mapsto\bE\bv:=-
\boldsymbol{\widehat{\psi}}.
\]
Observe  that  $\bE$ is bounded and provides an extension from $\O_s$ to $\O_f$ which, according to \eqref{acotagrad}, satisfies $\bE\bv\in\H^{\upsilon}(\O_f)^d$ with $\|\bE\bv\|_{\upsilon,\O_f}\leq C\|\bv\|_{1,\O_s}$ for all $\bv\in\H^1_{\Gamma_D}(\O_s)^d$. The following properties of $\bE$ hold true.
\begin{lemma}\label{estimaciones E}
	There exists a constant $C>0$ independent of $h$ such that
	\begin{align*}
		\|\bPi_h\bE\bv\|_{0,\O_f}&\leq  C\|\bv\|_{1,\O_s}\hspace{0.3cm}\forall \bv\in\H^1_{\Gamma_D}(\O_s)^d,\\
		\|\bE\bv-\bPi_h \bE\bv\|_{0,\O_f}&\leq  Ch^{\upsilon}\|\bv\|_{1,\O_s}\hspace{0.3cm}\forall\bv\in\H^1_{\Gamma_D}(\O_s)^d.
	\end{align*}
\end{lemma}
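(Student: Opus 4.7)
The plan is to establish the second estimate first, since it controls the approximation error directly, and then derive the first estimate by the triangle inequality.

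For the second estimate, I would invoke the approximation property \eqref{errorL2} applied to $\btau=\bE\bv$, which gives
\begin{equation*}
\|\bE\bv-\bPi_h\bE\bv\|_{0,\O_f} \leq C h^{\upsilon}\bigl(\|\bE\bv\|_{\upsilon,\O_f}+\|\vdiv \bE\bv\|_{0,\O_f}\bigr).
\end{equation*}
The first term on the right is controlled by \eqref{acotagrad}, since by construction $\bE\bv=-\nabla\varphi$ and $\|\nabla\varphi\|_{\upsilon,\O_f}\leq C\|\bv\|_{1,\O_s}$. For the second term, I note that by the definition of $\bE$ and \eqref{neumannE},
\begin{equation*}
\vdiv(\bE\bv)=-\Delta\varphi=\frac{1}{|\O_f|}\int_{\Sigma}\bv\cdot\bn,
\end{equation*}
which is a spatial constant. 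Using the continuity of the normal trace from $\H^1(\O_s)^d$ onto $\L^2(\Sigma)$ together with Cauchy--Schwarz on $\Sigma$, this constant is bounded by $C\|\bv\|_{1,\O_s}/|\O_f|^{1/2}$, so $\|\vdiv\bE\bv\|_{0,\O_f}\leq C\|\bv\|_{1,\O_s}$. Combining, $\|\bE\bv-\bPi_h\bE\bv\|_{0,\O_f}\leq Ch^{\upsilon}\|\bv\|_{1,\O_s}$.

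For the first estimate, I would simply add and subtract $\bE\bv$ and apply the triangle inequality:
\begin{equation*}
\|\bPi_h\bE\bv\|_{0,\O_f} \leq \|\bE\bv-\bPi_h\bE\bv\|_{0,\O_f}+\|\bE\bv\|_{0,\O_f}.
\end{equation*}
The first summand is bounded by the estimate just proved (absorbing $h^{\upsilon}\leq C$ for $h$ bounded), and the second is bounded by $C\|\bv\|_{1,\O_s}$ using the continuity of the extension operator $\bE$ established through \eqref{acotagrad} and the embedding $\H^{\upsilon}(\O_f)^d\hookrightarrow \L^2(\O_f)^d$.

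I expect no serious obstacle here; the only subtle point is ensuring that $\bE\bv$ has enough regularity for $\bPi_h$ to be well defined, which is precisely why the Neumann problem \eqref{neumannE} and its $\mathrm{W}^{2,\hat{\upsilon}}$-regularity on Lipschitz domains were invoked before this lemma. The control of $\vdiv\bE\bv$ by a \emph{constant} is the cleanest way to avoid any higher-order trace estimate.
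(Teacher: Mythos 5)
Your argument is correct and is essentially the same one the paper intends: the paper's proof simply defers verbatim to \cite[Lemma~5.1]{MR3283363}, whose reasoning is exactly your combination of the BDM approximation property \eqref{errorL2} with the regularity bound \eqref{acotagrad} and the fact that $\vdiv(\bE\bv)$ is the constant $\tfrac{1}{|\O_f|}\int_\Sigma \bv\cdot\bn$, controlled via the trace theorem, followed by a triangle inequality for the first bound. The only (immaterial) slip is the sign: with the paper's definitions $\bE\bv=-\boldsymbol{\widehat{\psi}}=\nabla\varphi$, not $-\nabla\varphi$, which of course changes nothing in the norms.
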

\begin{proof}
	The proof of these estimates follows verbatim the arguments of   \cite[Lemma~5.1]{MR3283363}.
\end{proof}

Next, let  $\bE_h$ be the discrete counterpart of $\bE$, defined by
$ \bE_h\bv:=\bPi_h\bE(\Lambda_h\bv)\in\bW_h$, for any $\bv\in\H^1_{\Gamma_D}(\O_s)^d$.
The following result provides an approximation property between operators $\bE$
and $\bE_h$ and its proof is available in \cite[Lemma 5.2]{MR3283363}.
\begin{lemma}\label{ERROR-E_E_h}
	There exists a constant $C>0$, independent of $h$, such that 
	\begin{equation*}
		\|\bE\bv-\bE_h\bv\|_{\div,\O_f}\leq C\left(h^{\upsilon}\|\bv\|_{1,\O_s}
		+\|\bv-\Lambda_h\bv\|_{1,\O_s}\right)\hspace{0.3cm}\forall \bv\in \H^1_{\Gamma_D}(\O_s)^d.
	\end{equation*} 
\end{lemma}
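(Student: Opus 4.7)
The plan is to use the triangle inequality with the intermediate element $\bPi_h \bE \bv$, writing
\[
\bE\bv-\bE_h\bv = (\bE\bv-\bPi_h\bE\bv) + \bPi_h\bE(\bv-\Lambda_h\bv),
\]
where the second identity uses the linearity of both $\bE$ and $\bPi_h$. The bound on $\|\cdot\|_{\div,\O_f}$ then splits into an $\L^2$-norm estimate and a divergence estimate on each summand.

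For the first summand, the $\L^2$ part is Lemma \ref{estimaciones E}, which gives $\|\bE\bv-\bPi_h\bE\bv\|_{0,\O_f}\leq C h^{\upsilon}\|\bv\|_{1,\O_s}$. For the divergence part, I would exploit the crucial feature of the extension: by construction of $\bE$ via \eqref{neumannE}, $\div(\bE\bv)=-\Delta\varphi$ is a constant, hence belongs to $\mathcal{U}_h$ (since $k\geq 1$), so the commutative diagram property $\div(\bPi_h\bE\bv)=\mathcal{P}_h(\div\bE\bv)$ yields $\div(\bPi_h\bE\bv)=\div(\bE\bv)$, and that divergence contribution vanishes.

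For the second summand, observe that $\Lambda_h\bv\in\mathcal{M}_h\subset \H^1_{\Gamma_D}(\O_s)^d$, so $\bv-\Lambda_h\bv\in \H^1_{\Gamma_D}(\O_s)^d$ and Lemma \ref{estimaciones E} applied to this difference gives
\[
\|\bPi_h\bE(\bv-\Lambda_h\bv)\|_{0,\O_f}\leq C\|\bv-\Lambda_h\bv\|_{1,\O_s}.
\]
For the divergence of this term, I again use the commutative diagram: $\div\bigl(\bPi_h\bE(\bv-\Lambda_h\bv)\bigr)=\mathcal{P}_h\bigl(\div\bE(\bv-\Lambda_h\bv)\bigr)$, which is a constant equal to $\frac{1}{|\O_f|}\int_\Sigma(\bv-\Lambda_h\bv)\cdot\bn$. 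Its $\L^2(\O_f)$-norm is bounded by $C\|\bv-\Lambda_h\bv\|_{1/2,\Sigma}\leq C\|\bv-\Lambda_h\bv\|_{1,\O_s}$ via the trace inequality. Summing the four contributions delivers the stated bound.

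The step I expect to be mildly delicate is keeping track of the fact that $\div(\bE \cdot)$ produces constants rather than generic $\L^2$ functions, which is what makes both divergence contributions controllable: the first vanishes outright and the second reduces to a boundary integral handled by trace. Everything else is triangle inequality plus the two estimates already recorded in Lemma \ref{estimaciones E}.
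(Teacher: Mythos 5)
Your proposal is correct and follows essentially the same route as the paper, which simply defers to \cite[Lemma~5.2]{MR3283363}: there the argument is exactly your splitting $\bE\bv-\bE_h\bv=(\bE\bv-\bPi_h\bE\bv)+\bPi_h\bE(\bv-\Lambda_h\bv)$, combined with Lemma~\ref{estimaciones E} and the fact that $\div(\bE\bw)$ is a constant so the commutative diagram property kills (or reduces to a trace bound) the divergence contributions. The only nit is an immaterial sign: since $\bE\bv=\nabla\varphi$, one has $\div(\bE\bv)=\Delta\varphi$ rather than $-\Delta\varphi$, but it is a constant either way and the estimates are unaffected.
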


Our next task  is to prove that any smooth enough $(\bu,\bw)\in \bY$ is well approximated by functions of $\bY_h$. With this aim, we need to correct the BDM interpolant on the fluid to impose safely the continuity of the degrees of freedom of the BDM elements and the piecewise linear functions on the interface $\Sigma$.  To do this task, we define the corrected interpolant operator $\wbPi_h:\bY\rightarrow\bY_h$ as follows
\begin{equation*}\label{eq:wbPi}
	\wbPi_h(\bu,\bw):=\big(\Lambda_h\bu,\bPi_h\bw+(\bE_h\bu-\bPi_h\bE\bu)\big)\quad\forall (\bu,\bw)\in\bY.
\end{equation*}
The following lemma proves that $\wbPi_h(\bu,\bw)$ indeed lies in $\bY_h$. See \cite[Lemma~5.3]{MR3283363} for the details.
\begin{lemma}\label{ERROR-APROX-V-VH}
	Let $(\bu,\bw)\in \bY$ with $\bw\in\H^{\upsilon}(\O_f)^d$ and let  $(\bu_h,\bw_h):=\wbPi_h(\bu,\bw)$. Then, $(\bu_h,\bw_h)\in\bY_h$ and 
	\begin{equation*}\label{int_hat}
		\|(\bu,\bw)-(\bu_h,\bw_h)\|_{\bW\times\bV}\leq C\left(\|\bw-\bPi_h\bw\|_{\div,\O_f}+\|\bu-\Lambda_h\bu\|_{1,\O_s}\right).
	\end{equation*}
\end{lemma}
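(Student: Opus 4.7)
The plan is to verify, in this order, (i) that the pair $(\bu_h,\bw_h)$ satisfies the discrete kinematic constraint defining $\bY_h$, and (ii) the quantitative error bound on the right of \eqref{int_hat}.

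For (i), I would exploit the Neumann boundary data built into problem \eqref{neumannE}, which forces $\bE\bv\cdot\bn=\bv\cdot\bn$ on $\Sigma$ and $\bE\bv\cdot\bn=0$ on $\Gamma_0$ for every admissible $\bv$. Using the kinematic identity $\bw\cdot\bn=\bu\cdot\bn$ on $\Sigma$ inherited from $(\bu,\bw)\in\bY$, the auxiliary field $\bw-\bE\bu+\bE(\Lambda_h\bu)$ has normal trace equal to $\Lambda_h\bu\cdot\bn$ on $\Sigma$. Applying $\bPi_h$ and invoking its moment-preserving property along $\Sigma$ (so that $\bPi_h$ reproduces exactly $\Lambda_h\bu\cdot\bn|_E$ on each interface face $E$, provided the polynomial degrees of $\bV_h$ and of the BDM normal trace are compatible, as is the case for the pairings listed in Section~\ref{sec:fem}), we conclude that $\bw_h\cdot\bn=\Lambda_h\bu\cdot\bn=\bu_h\cdot\bn$ on $\Sigma$, so $(\bu_h,\bw_h)\in\bY_h$.

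For (ii), a triangle inequality on the fluid side yields
\begin{equation*}
\|\bw-\bw_h\|_{\div,\O_f}\le \|\bw-\bPi_h\bw\|_{\div,\O_f}+\|\bPi_h\bE(\bu-\Lambda_h\bu)\|_{\div,\O_f},
\end{equation*}
after recognising $\bE_h\bu-\bPi_h\bE\bu=-\bPi_h\bE(\bu-\Lambda_h\bu)$. Setting $\bz:=\bu-\Lambda_h\bu$, the $\L^2$ part of the second summand is directly controlled by $C\|\bz\|_{1,\O_s}$ thanks to the first estimate of Lemma~\ref{estimaciones E}. For its divergence I would use the commuting diagram $\div(\bPi_h\bE\bz)=\mathcal{P}_h(\div\bE\bz)$, observe that by \eqref{neumannE} the function $\div\bE\bz=\Delta\varphi_{\bz}$ reduces to the single constant $|\O_f|^{-1}\int_\Sigma \bz\cdot\bn$ whose modulus is bounded by $C\|\bz\|_{1,\O_s}$ via the trace inequality, and use that constants are left invariant by $\mathcal{P}_h$ (since $\mathbb{P}_0\subset\mathcal{U}_h$). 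Combining these and noting that the solid contribution $\|\bu-\bu_h\|_{1,\O_s}=\|\bu-\Lambda_h\bu\|_{1,\O_s}$ is trivial, one arrives at the asserted estimate.

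The principal obstacle lies in step (i): the pointwise identification of the normal traces of $\bu_h$ and $\bw_h$ on $\Sigma$ hinges on the compatibility of the polynomial spaces attached to $\bV_h$ and to the BDM normal traces on interface faces. This is the only place where the particular inf-sup stable pair (MINI or Taylor--Hood) interacts essentially with the choice of the BDM family; once that compatibility is recorded, all remaining arguments are bounded-operator manipulations combined with the regularising role of the extension $\bE$.
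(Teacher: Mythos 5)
Your proposal is correct and follows essentially the route that the paper itself delegates to \cite[Lemma~5.3]{MR3283363}: the membership $(\bu_h,\bw_h)\in\bY_h$ comes from combining the trace identities $\bE\bv\cdot\bn=\bv\cdot\bn$ on $\Sigma$ (built into \eqref{neumannE}) with the face-moment preservation of $\bPi_h$, and the error bound comes from the triangle inequality together with Lemma~\ref{estimaciones E} and the commuting-diagram property, using that $\div\bE\bz$ is the constant $-|\O_f|^{-1}\int_\Sigma\bz\cdot\bn$, which $\mathcal{P}_h$ reproduces and the trace inequality controls. The one point worth recording explicitly is the compatibility caveat you already flag: your computation yields $\bw_h\cdot\bn=P_E(\Lambda_h\bu\cdot\bn)$ on each interface face, where $P_E$ is the $\L^2$-projection onto the BDM normal-trace space $\mathbb{P}_k(E)$; this equals $\Lambda_h\bu\cdot\bn$ pointwise for the MINI element paired with $\mathbf{BDM}_1$, but for Taylor--Hood of degree $k+1$ paired with $\mathbf{BDM}_k$ one only obtains equality of the face moments, so the constraint in $\bY_h$ must be read in the weak (moment-wise) sense that the paper alludes to when it speaks of imposing ``a weaker condition'' on the discrete space. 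With that reading, both parts of your argument are sound and the quantitative estimate is exactly the one asserted.
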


The first step is to construct an adequate interpolant for the correct approximation in the domain occupied by the fluid, the domain of the solid and the interface. With this aim, we introduce the \emph{corrected} operator $\bM_h:\bY\times \Q\rightarrow\bY_h\times \Q_h$, defined as follows:
$$
\bM_h((\bv,\btau),q):=
\begin{cases}
	((\boldsymbol{0},\boldsymbol{0}), \mathcal{P}_hq)\hspace{0.3cm}\mbox{if}\, T\subset\O_s,\\
	(\wbPi_h(\bv,\btau),0), \hspace{0.3cm} \mbox{otherwise.}
\end{cases}$$ 
Moreover,  the operator $\bM_h$ satisfies the following approximation property.
\begin{lemma}\label{APROX_M}
	There exists a positive constant $C$ such that,
	for all $((\bv,\btau),q)\in\H^{1+\beta}(\O_s)^d\times\H^{\alpha}(\O_f)^d\times \H^\beta(\O_s)$
	with $\div\btau\in\H^1(\O_f)$ there holds
	\begin{equation*}
		\vertiii{((\bv,\btau),q)-\bM_h((\bv,\btau),q)}_{\bH}\leq Ch^r(\|\btau\|_{\alpha,\O_f}+\|\div\btau\|_{1,\O_f}
		+\|\bv\|_{1+\beta,\O_s}+\|q\|_{\beta,\O_s}),
	\end{equation*}
	where $r=\min\{\alpha-1/2,\beta,k\}$, and $\alpha,\beta$ are as in Lemma \ref{regG}.
\end{lemma}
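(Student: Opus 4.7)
The plan is to decompose the error in the weighted triple norm term by term, and for each component apply the most appropriate of the ingredients already assembled in this section: the approximation property \eqref{errorH1} for $\Lambda_h$, the projection estimate \eqref{errorPh} (adapted to $\Omega_s$) for $\mathcal{P}_h$, the BDM estimates \eqref{errorL2}--\eqref{errorDiv} for $\bPi_h$, and the auxiliary bounds for the extension operator provided by Lemmas \ref{estimaciones E} and \ref{ERROR-E_E_h}. Since the coefficients $\mu$, $\lambda^{-1}$, $\rho_s$, $\rho_f$, $c^2\rho_f$, $g\rho_f$ are uniformly bounded away from zero and infinity, the weights are harmless and I would absorb them into the constant $C$.

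First, I would write the error in the fluid component explicitly as
\[
\btau - \bPi_h\btau - (\bE_h\bv - \bPi_h\bE\bv) \;=\; (\btau - \bPi_h\btau) \;+\; \bPi_h\bE(\bv-\Lambda_h\bv),
\]
using $\bE_h\bv = \bPi_h\bE(\Lambda_h\bv)$. The first piece is controlled in $\L^2(\Omega_f)$ by \eqref{errorL2} as $Ch^{\alpha}(\|\btau\|_{\alpha,\Omega_f}+\|\div\btau\|_{0,\Omega_f})$; its divergence, by the commuting diagram and \eqref{errorPh} applied to $\div\btau\in\H^1(\Omega_f)$, yields $Ch\,\|\div\btau\|_{1,\Omega_f}$. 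For the second piece, the $\L^2$ boundedness of $\bPi_h\bE$ from Lemma \ref{estimaciones E} gives $\|\bPi_h\bE(\bv-\Lambda_h\bv)\|_{0,\Omega_f}\le C\|\bv-\Lambda_h\bv\|_{1,\Omega_s}\le Ch^{\beta}\|\bv\|_{1+\beta,\Omega_s}$ via \eqref{errorH1}; for its divergence I would exploit that $\div\bE\bw$ is the constant $\tfrac1{|\Omega_f|}\int_{\Sigma}\bw\cdot\bn$ coming from \eqref{neumannE}, so $\div\bPi_h\bE(\bv-\Lambda_h\bv)=\mathcal{P}_h\div\bE(\bv-\Lambda_h\bv)$ is bounded by a normal trace of $\bv-\Lambda_h\bv$ on $\Sigma$, hence by $Ch^{\beta}\|\bv\|_{1+\beta,\Omega_s}$ after a trace inequality.

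Next, the solid contributions are straightforward: \eqref{errorH1} bounds both $\|\mu^{1/2}\nabla(\bv-\Lambda_h\bv)\|_{0,\Omega_s}$ and (with an even better rate) $\|\rho_s^{1/2}(\bv-\Lambda_h\bv)\|_{0,\Omega_s}$, while the $\L^2$-projection estimate analogous to \eqref{errorPh} applied to $q\in\H^{\beta}(\Omega_s)$ controls the two weighted pressure terms, all by $Ch^{\beta}$ times the corresponding Sobolev norms.

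The decisive step is the boundary term $\|(g\rho_f)^{1/2}(\btau_{\mathrm{err}})\cdot\bn\|_{0,\Gamma_0}$, where $\btau_{\mathrm{err}}$ denotes the full fluid error. Here I would observe a key cancellation: by definition of $\bE$ via \eqref{neumannE}, $\bE\bw\cdot\bn = -\partial_{\bn}\varphi = 0$ on $\Gamma_0$ for every $\bw$, and since the BDM interpolant reproduces the normal trace in an $\L^2$-projection sense on each boundary face, $\bPi_h\bE\bw\cdot\bn \equiv 0$ on $\Gamma_0$. This makes the correction term invisible on $\Gamma_0$ and reduces the interface bound to $\|(\btau - \bPi_h\btau)\cdot\bn\|_{0,\Gamma_0}$. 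I would then estimate this face by face using that on each boundary face $E$ the quantity $\bPi_h\btau\cdot\bn|_E$ coincides with an $\L^2(E)$-projection of $\btau\cdot\bn$ onto polynomials, so a standard scaling / trace argument based on $\btau\cdot\bn\in\H^{\alpha-1/2}(\partial\Omega_f)$ yields the rate $h^{\alpha-1/2}(\|\btau\|_{\alpha,\Omega_f}+\|\div\btau\|_{0,\Omega_f})$.

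Collecting all contributions, the dominant terms give the exponents $\beta$ (from the solid and from the correction in $\Omega_f$), $\alpha$ and $1$ (from the fluid in $\Omega_f$), and $\alpha-1/2$ (from the trace on $\Gamma_0$); the $k$ in $r=\min\{\alpha-1/2,\beta,k\}$ simply reflects the saturation of the polynomial approximation order when the solution is smoother than the discrete space allows. The main obstacle I expect is this last boundary estimate on $\Gamma_0$: one must use the precise structure of the BDM interpolant on boundary faces, verify the cancellation $\bPi_h\bE\bw\cdot\bn=0$ on $\Gamma_0$, and accept the unavoidable loss of a half-derivative, which is exactly what forces $\alpha-1/2$ into the convergence exponent and is the reason this weighted mesh-dependent norm was tailored in the first place.
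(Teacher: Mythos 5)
Your proposal is correct and follows essentially the same route as the paper's proof: the same splitting of the fluid error into $(\btau-\bPi_h\btau)+\bPi_h\bE(\bv-\Lambda_h\bv)$, the same use of \eqref{errorL2}--\eqref{errorH1} together with Lemmas \ref{estimaciones E} and \ref{ERROR-E_E_h}, and the same $h^{\alpha-1/2}$ loss coming from the normal-trace estimate on $\Gamma_0$. The one point where you sharpen the argument is the observation that $\bE\bw\cdot\bn=\partial\varphi/\partial\bn=0$ on $\Gamma_0$ by \eqref{neumannE}, so that $\bPi_h\bE(\bv-\Lambda_h\bv)\cdot\bn$ vanishes identically there; the paper instead controls this term via a triangle inequality and a trace bound, which gives the same order but without exploiting the cancellation.
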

\begin{proof}
	Let $((\bv,\btau),q)\in\H^{1+\beta}(\O_s)^d\times\H^s(\O_f)^d\times \H^\beta(\O_s)$ with $\div\btau\in\H^1(\O_f)$.
	From the definition of $\bM_h$, direct computations reveal that
	\begin{align*}
		\vertiii{((\bv,\btau),q)-\bM_h((\bv,\btau),q)}_{\bH}^2 & \leq \Vert \mu(\bx)^{1/2}\nabla(\bv-\Lambda_h\bv)\Vert_{0,\O_S}^2 + \Vert\mu(\bx)^{-1/2}(q-\mathcal{P}_hq)\Vert_{0,\O_S}\\
		& \quad + \Vert\lambda(\bx)^{-1/2}(q-\mathcal{P}_hq)\Vert_{0,\O_S}^2 
		+ \Vert \rho_S^{1/2}(\bv-\Lambda_h\bv)\Vert_{0,\O_S}^2 \\
		& \quad + \Vert (c^2\rho_F)^{1/2}\div(\btau - (\bPi_{h}\btau+\bE_h\bv -\bPi_{h}\bE\bv))\Vert_{0,\O_f}^2\\
		&\quad +\Vert \rho_F^{1/2}(\btau - (\bPi_{h}\btau+\bE_h\bv -\bPi_{h}\bE\bv))\Vert_{0,\O_f}^2 \\
		&\quad + \Vert (g\rho_F)^{1/2}(\btau - (\bPi_{h}\btau+\bE_h\bv -\bPi_{h}\bE\bv))\cdot\bn\Vert_{0,\Gamma_0}^2 \\
		&\leq \bI_1 + \bI_2,
	\end{align*}
	where 
	\begin{align*}
		&\bI_1:=C_1\left(\Vert \nabla(\bv-\Lambda_h\bv)\Vert_{0,\O_S}^2 + \Vert q - \mathcal{P}_{h}q\Vert_{0,\O_S}^2 + \Vert \bv - \Lambda_h\bv\Vert_{0,\O_S}^2 + \Vert \div (\btau -\bPi_{h}\btau)\Vert_{0,\O_f}^2 \right. \\
		&\hspace{6cm}\left. + \Vert \btau -\bPi_{h}\btau\Vert_{0,\O_f}^2 +\Vert(g\rho_F)^{1/2} (\btau-\bPi_{h}\btau)\cdot\bn\Vert_{0,\Gamma_0}^2\right),\\
		&\bI_2:=C_2\left( \Vert \div(\bPi_{h}\bE(\bv-\Lambda_h\bv))\Vert_{0,\O_f}^2+ \Vert\bPi_{h}\bE(\bv-\Lambda_h\bv)\Vert_{0,\O_f}^2 +\Vert (\bPi_{h}\bE(\bv-\Lambda_h\bv)\cdot\bn)\Vert_{0,\Gamma_0}^2\right),
	\end{align*}
	with constants $C_1,C_2>0$   depending on $\mu_{\max},\mu_{\min},\lambda_{\min},\rho_S,\rho_F,c$ and $g$. 
	
	From  inverse inequality and the following estimate  (valid for the BDM interpolation applied to any $\btau \in \H^\alpha(\Omega_f)^d)$,
		$\Vert\bPi_{h}\btau\cdot\bn-\btau\cdot\bn\Vert_{0,\Gamma_0}\leq C h^{\zeta}\vert \btau\vert_{\alpha,\Omega_f}$, with $\zeta=\min\{\alpha-1/2,k\}$, we have,
		\begin{align*}
			\Vert \bPi_{h}\bE(\bv-\Lambda_h\bv)\cdot\bn\Vert_{0,\Gamma_0}&\leq \Vert \bPi_{h}\bE(\bv-\Lambda_h\bv)\cdot\bn-\bE(\bv-\Lambda_h\bv)\cdot\bn\Vert_{0,\Gamma_0}+\Vert \bE(\bv-\Lambda_h\bv)\cdot\bn\Vert_{0,\Gamma_0}\\
			&\leq C \left(h^{\zeta}\vert \bE( \bv - \Lambda_h\bv)\vert_{1/2+\zeta,\O_f} + \Vert \bE(\bv - \Lambda_h\bv)\Vert_{1/2+\zeta,\O_f}\right)\\
			&\leq C \left(h^{\zeta}\Vert \bv - \Lambda_h\bv\Vert_{1,\O_s} + \Vert \bv - \Lambda_h\bv\Vert_{1,\O_s}\right).
		\end{align*}
		Hence, the desired estimate follows using \eqref{errorL2}-\eqref{errorH1}, Lemma \ref{estimaciones E} and Lemma \ref{ERROR-E_E_h}.	
	\end{proof}
	
	To continue with the analysis,  we require a suitable Helmholtz decomposition. This result has been stated in \cite[Lemma 4.1]{MR1770352}, and with minor manipulations can be adapted to our purposes. We skip details for  brevity.
	\begin{lemma}\label{HELMOLTZ-DISC}
		For $((\bu_h,\bw_h),p_h)\in\mG_h$ we have the following decomposition 
		\[((\bu_h,\bw_h),p_h)=((\bu_h,\nabla\xi),p_h)+((\boldsymbol{0},\boldsymbol{\chi}),0) \quad \text{with} \quad \xi\in\H^{1+\widetilde{\upsilon}}(\O_f) \quad \text{and} \quad ((\boldsymbol{0},\boldsymbol{\chi}),0)\in\mK,\]  
		which satisfies
		\begin{align*}
			\|\nabla\xi\|_{\widetilde{\upsilon},\O_f}&\leq C(\|\div\bu_h\|_{0,\O_f}+\|\bw_h\|_{1,\O_s}+\|p_h\|_{0,\O_s}), \qquad \text{and} 
			\\\
			\|\boldsymbol{\chi}\|_{0,\O_f}&\leq Ch^{\widetilde{\upsilon}}(\|\div\bu_h\|_{0,\O_f}+\|\bw_h\|_{1,\O_s}+\|p_h\|_{0,\O_s}).
		\end{align*}
		where $\widetilde{\upsilon}:=\min\{\frac{1}{2},\upsilon  \}$.
	\end{lemma}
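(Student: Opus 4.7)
The plan is to build the scalar potential $\xi$ by solving an auxiliary Neumann problem on the fluid subdomain and then identifying $\boldsymbol{\chi}$ as the divergence-free residual. Precisely, given $((\bu_h,\bw_h),p_h)\in\mG_h$, I would let $\xi\in\H^1(\O_f)/\R$ be the weak solution of
\begin{equation*}
\Delta\xi=\vdiv\bw_h\quad\text{in }\O_f,\qquad \nabla\xi\cdot\bn=\bw_h\cdot\bn\quad\text{on }\partial\O_f,
\end{equation*}
whose compatibility condition is the divergence theorem applied to $\bw_h$. Then $\boldsymbol{\chi}:=\bw_h-\nabla\xi$ is divergence-free with vanishing normal trace on $\partial\O_f$, and a stream-function/Stokes argument in the Lipschitz domain $\O_f$ produces a potential $\zeta\in\H^1_0(\O_f)$ with $\boldsymbol{\chi}=\curl\zeta$; this places $((\boldsymbol{0},\boldsymbol{\chi}),0)$ inside $\mK$ and yields the structural form of the decomposition.

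For the first estimate, I would invoke the elliptic regularity results of Jerison--Kenig \cite{jerison1989functional} for the Neumann Laplacian on Lipschitz (possibly non-convex) polygonal/polyhedral domains. Combined with trace theory and the interface transmission condition defining $\bY_h$ (which exchanges $\bw_h\cdot\bn|_\Sigma$ for $\bu_h\cdot\bn|_\Sigma$), one obtains $\xi\in\H^{1+\widetilde\upsilon}(\O_f)$ together with the stated bound on $\|\nabla\xi\|_{\widetilde\upsilon,\O_f}$ in terms of $\|\vdiv\bw_h\|_{0,\O_f}$ and of the boundary trace of $\bw_h\cdot\bn$, which in turn is dominated by $\|\bu_h\|_{1,\O_s}$ and $\|p_h\|_{0,\O_s}$ after invoking the constraint encoded in $\mG_h$. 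The cutoff $\widetilde\upsilon=\min\{1/2,\upsilon\}$ reflects both the limited Sobolev regularity of the Neumann problem on non-convex Lipschitz domains and the regularity available for the extension $\bE$ from Lemma \ref{estimaciones E}, which supplies the fluid-side normal trace inherited from the interface.

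For the second inequality I would use a duality argument based on the orthogonality built into $\mG_h$. Since $((\bu_h,\bw_h),p_h)\in\mG_h$ and $\mG_h$ is $\L^2$-orthogonal to the discrete kernel, the pairing of $\bw_h$ against any discrete curl $\curl\zeta_h$ vanishes, while $\nabla\xi$ is automatically $\L^2$-orthogonal to every curl field with zero normal trace; hence
\begin{equation*}
\|\boldsymbol{\chi}\|_{0,\O_f}^2=\int_{\O_f}\boldsymbol{\chi}\cdot\curl\zeta=\int_{\O_f}\bw_h\cdot\curl(\zeta-\zeta_h)\le\|\bw_h\|_{0,\O_f}\,\|\curl(\zeta-\zeta_h)\|_{0,\O_f},
\end{equation*}
for a Scott--Zhang-type interpolant $\zeta_h$ of $\zeta$. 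The standard interpolation bound $\|\curl(\zeta-\zeta_h)\|_{0,\O_f}\le Ch^{\widetilde\upsilon}\|\zeta\|_{1+\widetilde\upsilon,\O_f}$, together with a Jerison--Kenig regularity bound for the stream-function problem of $\boldsymbol{\chi}$ (controlling $\|\zeta\|_{1+\widetilde\upsilon,\O_f}$ by the same right-hand side already appearing in the first estimate), produces the prefactor $h^{\widetilde\upsilon}$ after absorbing $\|\boldsymbol{\chi}\|_{0,\O_f}$.

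The hardest part is the careful bookkeeping of the exponent $\widetilde\upsilon$: the Neumann data for $\xi$ live on the mixed boundary $\Sigma\cup\Gamma_0$, so one must ensure compatibility with the trace of the BDM function $\bw_h$ and with the regularity of the extension $\bE$, which together force the cutoff at $1/2$. Once this is tracked, the argument essentially mimics \cite[Lemma 4.1]{MR1770352}, adapted here to the mixed-boundary, variable-coefficient and gravity-enriched setting that distinguishes our formulation.
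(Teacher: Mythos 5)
The paper never actually proves this lemma: it defers entirely to \cite[Lemma 4.1]{MR1770352} ``with minor manipulations'', so your proposal is being measured against that cited argument rather than against anything written here. Your construction of the decomposition and your treatment of the first estimate do follow that standard route: solve the auxiliary Neumann problem $\Delta\xi=\vdiv\bw_h$, $\nabla\xi\cdot\bn=\bw_h\cdot\bn$, identify $\boldsymbol{\chi}:=\bw_h-\nabla\xi$ as a divergence-free field with vanishing normal trace (hence an element of $\mK$), and combine elliptic regularity on the Lipschitz domain with the interface identity $\bw_h\cdot\bn=\bu_h\cdot\bn$ on $\Sigma$ to bound $\|\nabla\xi\|_{\widetilde{\upsilon},\O_f}$. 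That half is sound, modulo the usual caveats (simple connectedness in 2D, vector potentials in 3D, and the fact that on $\Gamma_0$ the Neumann datum is only an $\L^2$ function, which is precisely what caps the exponent at $1/2$).

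The second estimate is where your argument has a genuine gap. Your duality chain ends with $\|\boldsymbol{\chi}\|_{0,\O_f}^2\le\|\bw_h\|_{0,\O_f}\|\curl(\zeta-\zeta_h)\|_{0,\O_f}\lesssim h^{\widetilde{\upsilon}}\|\bw_h\|_{0,\O_f}\|\zeta\|_{1+\widetilde{\upsilon},\O_f}$, and you propose to control $\|\zeta\|_{1+\widetilde{\upsilon},\O_f}$ by elliptic regularity for the stream-function problem of $\boldsymbol{\chi}$. But $\|\zeta\|_{1+\widetilde{\upsilon},\O_f}\simeq\|\boldsymbol{\chi}\|_{\widetilde{\upsilon},\O_f}$, and $\boldsymbol{\chi}=\bw_h-\nabla\xi$ contains the piecewise-polynomial field $\bw_h$, whose $\H^{\widetilde{\upsilon}}$-norm is \emph{not} uniformly controlled by the right-hand side of the lemma: by an inverse inequality it may grow like $h^{-\widetilde{\upsilon}}\|\bw_h\|_{0,\O_f}$, which exactly cancels the gain $h^{\widetilde{\upsilon}}$ you are after. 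There is also no factor $\|\boldsymbol{\chi}\|_{0,\O_f}$ left on the right-hand side to ``absorb''. The argument that actually closes (and is the one in \cite{MR1342293,MR1770352}) needs no regularity of $\boldsymbol{\chi}$ or $\zeta$ at all: note that $\bw_h-\bPi_h\nabla\xi$ is a \emph{discrete} divergence-free field with vanishing normal trace, i.e.\ an element of $\mK_h$, thanks to the commuting-diagram property ($\div\bPi_h\nabla\xi=\mathcal{P}_h\div\bw_h=\div\bw_h$) and the edge degrees of freedom of the BDM interpolant on $\partial\O_f$. The discrete orthogonality $\bw_h\perp\mK_h$ together with the continuous orthogonality $\nabla\xi\perp\mK$ then yields $\|\bw_h-\bPi_h\nabla\xi\|_{0,\O_f}\le\|\nabla\xi-\bPi_h\nabla\xi\|_{0,\O_f}$, whence $\|\boldsymbol{\chi}\|_{0,\O_f}\le 2\|\nabla\xi-\bPi_h\nabla\xi\|_{0,\O_f}\le Ch^{\widetilde{\upsilon}}\bigl(\|\nabla\xi\|_{\widetilde{\upsilon},\O_f}+\|\div\bw_h\|_{0,\O_f}\bigr)$ by \eqref{errorL2}, and one concludes with the first estimate. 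You should replace your duality step by this interpolation/orthogonality argument; only the regularity of the continuous Neumann solution $\xi$ is ever used.
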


	\subsection{The discrete eigenvalue problem}
	Now we are in position to  introduce the discrete counterpart of the eigenvalue problem \eqref{def:limit_system_eigen_complete}. It consists in finding $((\bu_h,\bw_h),p_h))\in \bH_h:=\bY_h\times \Q_h$ such that 
	\begin{equation}\label{def:limit_system_eigen_complete_discrete}
		A(((\bu_h,\bw_h),p_h),((\bv_h,\btau_h),q_h))   =(\kappa_h+1) B(((\bu_h,\bw_h),p_h),((\bv_h,\btau_h),q_h)),\qquad \forall((\bv_h,\bw_h),q_h)\in\bH_h.
	\end{equation} 		
	Let us define the corresponding discrete solution operator
	$$
	\bT_h:\bH\to \bH_h,\qquad 
	((\bff,\bg),g)\mapsto \bT_h((\bff,\bg),g):=((\overline{\bu}_h,\overline{\bw}_h),\overline{p}_h),
	$$
	where $((\overline{\bu}_h,\overline{\bw}_h),\overline{p}_h)$ is the solution of the following discrete source problem:
	\begin{equation}
		\label{eq:source_disc}
		A(((\overline{\bu}_h,\overline{\bw}_h),\overline{p}_h),((\bv_h,\btau_h),q_h))   = B(((\bff,\bg),g),((\bv_h,\btau_h),q_h)),\qquad \forall((\bv_h,\bw_h),q_h)\in\bH_h.
	\end{equation}
	It is straightforward to prove that problem \eqref{eq:source_disc} is well posed and hence, the operator $\bT_h$ is well defined. On the other hand, 
	we observe that $(\kappa_h,(\bu_h,\bw_h),p_h)\in\mathbb{R}\times\bH_h\in\Q_h$
	solves \eqref{def:limit_system_eigen_complete_discrete} if and only if $((\kappa_h+1)^{-1},(\bu_h,\bw_h),p_h)$ is an eigenpair of $\bT_h$, i.e., if 	$((\bu_h,\bw_h),p_h)\neq ((\boldsymbol{0},\boldsymbol{0}),0)$ and $\bT_h((\bu_h,\bw_h),p_h)=(\kappa_h+1)^{-1}((\bu_h,\bw_h),p_h)$.

	Since $\mG_h\nsubseteq\mG$ we need to apply  the second Strang lemma  in order to derive an  approximation error. For problems \eqref{eq:source_cont} and \eqref{eq:source_disc} the Strang estimate reads as follows:
	\begin{align*}
		\nonumber\displaystyle\vertiii{((\overline{\bu},\overline{\bw}),\overline{p})-((\overline{\bu}_{h},\overline{\bw}_{h}),\overline{p}_h)}\leq & \left\{C\inf_{((\bv_h,\btau_h),q_h)\in\mG_h}\vertiii{((\overline{\bu},\overline{\bw}),\overline{p})-((\bv_h,\btau_h),q_h)}\right.\\
		&\left.+\sup_{((\boldsymbol{0},\boldsymbol{0}),0)\neq((\bv_h,\btau_h),q_h)\in\mG_h}\frac{A(((\overline{\bu},\overline{\bw}),\overline{p})-((\overline{\bu}_{h},\overline{\bw}_{h}),\overline{p}_h),((\bv_h,\btau_h),q_h))}{\vertiii{((\bv_h,\btau_h),q_h)}}\right\}.
	\end{align*} 
	Since $\bM_h((\overline{\bu},\overline{\bw}),\overline{p})\in\bH_h$ and $\bH_h=\mG_h\oplus\mK_h$, there exist $((\overline{\bu}_h,\overline{\bw}_h),\overline{p}_h)\in\mG_h$ and $((\overline{\bu}_{\mK_h}, \overline{\bw}_{\mK_h}),\overline{p}_{\mK_h})\in\mK_h$ such that $\bM_h((\overline{\bu},\overline{\bw}),\overline{p})=((\overline{\bu}_h,\overline{\bw}_h),\overline{p}_h)+((\overline{\bu}_{\mK_h}, \overline{\bw}_{\mK_h}),\overline{p}_{\mK_h})$. Then, since $((\overline{\bu},\overline{\bw}),\overline{p})\in\mG$ is orthogonal to $((\overline{\bu}_{\mK_h}, \boldsymbol{0}),0)\in\mK_h$, we observe that 
	\begin{align*}
		\vertiii{((\overline{\bu},\overline{\bw}),\overline{p})-((\overline{\bu}_h,\overline{\bw}_h),\overline{p}_h)}^2&\leq \vertiii{((\overline{\bu},\overline{\bw}),\overline{p})-((\overline{\bu}_h,\overline{\bw}_h),\overline{p}_h)}^2+\vertiii{((\bu_{\mK_h}, \boldsymbol{0}),0)}^2\\
		&=\vertiii{((\overline{\bu},\overline{\bw}),\overline{p})-((\overline{\bu}_h,\overline{\bw}_h),\overline{p}_h)+((\bu_{\mK_h}, \boldsymbol{0}),0)}^2\\
		&=\vertiii{((\overline{\bu},\overline{\bw}),\overline{p})-\bM_h((\overline{\bu},\overline{\bw}),\overline{p})}^2\\
		& \leq Ch^r(\|\overline{\bw}\|_{s,\O_f}+\|\div\overline{\bw}\|_{1,\O_f}+\|\overline{\bu}|_{1+\beta,\O_s}+\|\overline{p}\|_{1,\O_s}),
	\end{align*}
	where we have used the approximation property of $\bM_h$ given by Lemma \ref{APROX_M}. 
	
	Now for the consistency term, we procede as follows: Let $((\bv_h,\btau_h),q_h)$. Invoking Lemma \ref{HELMOLTZ-DISC}, let us consider the following decomposition
	$((\bv_h,\btau_h),q_h)=((\bv_h,\nabla\xi),q_h)+((\boldsymbol{0},\boldsymbol{\chi}),0)$ where $((\bv_h,\nabla\xi),q_h)\in\mG_h$ and 
	$((\boldsymbol{0},\boldsymbol{\chi}),0)\in\mK$, which holds according to Lemma \ref{HELMOLTZ-DISC}. Hence
	\begin{align}
		\label{eq:err_cont}
		A(((\overline{\bu},\overline{\bw}),&\overline{p}),((\bv_h,\btau_h),q_h)) =2\int_{\O_s}\mu(\bx)\beps(\overline{\bu}):\beps(\bv_h) - \int_{\O_s} \overline{p}\vdiv \bv_h + \int_{\O_f}c^2\rho_f\vdiv\overline{\bw} \vdiv \btau_h \nonumber \\
		& \quad + 
		\langle g\rho_f\overline{\bw}\cdot\bn,\btau_h\cdot\bn\rangle_{\Gamma_0}
		-\int_{\O_s}\vdiv\overline{\bu} q_h - \int_{\O_s}\frac{1}{\lambda}\overline{p} q_h +\int_{\O_s}\rho_S\overline{\bu}\cdot\bv_h + \int_{\O_f}\rho_f\overline{\bw}\cdot\btau_h \nonumber \\
		&=2\int_{\O_s}\mu(\bx)\beps(\overline{\bu}):\beps(\bv_h) - \int_{\O_s} \overline{p}\vdiv \bv_h + \int_{\O_f}c^2\rho_f\vdiv\overline{\bw} \vdiv (\nabla\xi+\boldsymbol{\chi}) \nonumber \\
		& \quad + 
		\langle g\rho_f\overline{\bw}\cdot\bn,(\nabla\xi+\boldsymbol{\chi})\cdot\bn\rangle_{\Gamma_0}
		-\int_{\O_s}\vdiv\overline{\bu} q_h - \int_{\O_s}\frac{1}{\lambda}\overline{p} q_h +\int_{\O_s}\rho_S\overline{\bu}\cdot\bv_h \nonumber \\
		& \quad + \int_{\O_f}\rho_f\overline{\bw}\cdot(\nabla\xi+\boldsymbol{\chi}) \nonumber \\
		&			=2\int_{\O_s}\mu(\bx)\beps(\overline{\bu}):\beps(\bv_h) - \int_{\O_s} \overline{p}\vdiv \bv_h - \int_{\O_s}\frac{1}{\lambda}\overline{p} q_h + \int_{\O_f}c^2\rho_f\vdiv\overline{\bw} \vdiv \nabla\xi \nonumber \\
		&\quad +
		\langle g\rho_f\overline{\bw}\cdot\bn,\nabla\xi\cdot\bn\rangle_{\Gamma_0} 
		-\int_{\O_s}\vdiv\overline{\bu} q_h  +\int_{\O_s}\rho_S\overline{\bu}\cdot\bv_h + \int_{\O_f}\rho_f\overline{\bw}\cdot\nabla\xi \nonumber \\
		&			=\int_{\O_s}\rho_S\boldsymbol{f}_h\cdot\bv_h + \int_{\O_f}\rho_f\boldsymbol{g}_h\cdot\nabla\xi,
	\end{align}
	where we have used that $\mathcal{G}\oplus\mathcal{K}$, where $\vdiv\boldsymbol{\chi}=0$ in $\O_f$ and $\boldsymbol{\chi}\cdot\boldsymbol{n}=0$ on $\Gamma_0$ and that $((\overline{\bu},\overline{\bw}),\overline{p})$ is the solution of the source problem \eqref{eq:source_cont}.  
	
	On the other hand, there holds
	\begin{multline}
		\label{eq:err_disc}
		A(((\overline{\bu}_h,\overline{\bw}_h),\overline{p}_h),((\bv_h,\btau_h),q_h))=2\int_{\O_s}\mu(\bx)\beps(\overline{\bu}_h):\beps(\bv_h) - \int_{\O_s} \overline{p}_h\vdiv \bv_h + \int_{\O_f}c^2\rho_f\vdiv\overline{\bw}_h \vdiv \btau_h \\
		+ \int_{\Gamma_0}(g\rho_f\overline{\bw}_h\cdot\bn)(\btau_h\cdot\bn) 
		-\int_{\O_s}\vdiv\overline{\bu}_h q_h - \int_{\O_s}\frac{1}{\lambda}\overline{p}_h q_h +\int_{\O_s}\rho_S\overline{\bu}_h\cdot\bv_h + \int_{\O_f}\rho_f\overline{\bw}_h\cdot\btau_h\\
		=\int_{\O_s}\rho_S\boldsymbol{f}_h\cdot\bv_h + \int_{\O_f}\rho_f\boldsymbol{g}_h\cdot(\nabla\xi+\boldsymbol{\chi}).
	\end{multline}
	Now, subtracting \eqref{eq:err_disc} from \eqref{eq:err_cont}, applying the Cauchy--Schwarz inequality and invoking Lemma \ref{HELMOLTZ-DISC} we have
	\begin{multline*}
		A(((\overline{\bu},\overline{\bw}),\overline{p})-((\overline{\bu}_{h},\overline{\bw}_{h}),\overline{p}_h),((\bv_h,\btau_h),q_h))=\int_{\O_f}\rho_f\boldsymbol{g}_h\cdot\boldsymbol{\chi}\\
		\leq C_{\rho_f}\|\boldsymbol{g}_h\|_{0,\O_f}\|\boldsymbol{\chi}\|_{0,\O_f}\leq  Ch^s\|\boldsymbol{g}_h\|_{0,\O_f}(\|\div\btau_h\|_{0,\O_f}+\|\bv_h\|_{1,\O_s}+\|q_h\|_{0,\O_s}).
	\end{multline*}
	Finally, taking supremum, we readily obtain the bound
	\begin{equation*}
		\sup_{((\boldsymbol{0},\boldsymbol{0}),0)\neq((\bv_h,\btau_h),q_h)\in\mG_h}\frac{A(((\overline{\bu},\overline{\bw}),\overline{p})-((\overline{\bu}_{h},\overline{\bw}_{h}),\overline{p}_h),((\bv_h,\btau_h),q_h))}{\vertiii{((\bv_h,\btau_h),q_h)}}\leq Ch^r\|\boldsymbol{g}_h\|_{0,\O_f}.
	\end{equation*}
	
	\section{Spectral approximation}
	\label{sec:spec_app}
	We begin this section recalling some definitions from classical spectral theory. Let $\mathcal{X}$ be a generic Hilbert space and let $\bS$ be a linear bounded operator defined by $\bS:\mathcal{X}\rightarrow\mathcal{X}$. If $\boldsymbol{I}$ represents the identity operator, the spectrum of $\bS$ is defined by $\sp(\bS):=\{z\in\mathbb{C}:\,\,(z\boldsymbol{I}-\bS)\,\,\text{is not invertible} \}$ and the resolvent is its complement $\rho(\bS):=\mathbb{C}\setminus\sp(\bS)$. For any $z\in\rho(\bS)$, we define the resolvent operator of $\bS$ corresponding to $z$ by $R_z(\bS):=(z\boldsymbol{I}-\bS)^{-1}:\mathcal{X}\rightarrow\mathcal{X}$. 
	
	Also, if $\mathcal{X}$ and $\mathcal{Y}$ are vectorial fields, we denote by $\mathcal{L}(\mathcal{X},\mathcal{Y})$ the space of all the linear and bounded operators acting from $\mathcal{X}$ to $\mathcal{Y}$. We define $\bcX:=\bcW\times\bcQ$ and $\bcX_h:=\bcW_h\times\bcQ_h$. 
	
	Let $\mathcal{S}$ be a linear operator defined by $\mathcal{S}:\mathcal{X}\rightarrow\mathcal{X}$. We define the norm $\norm{\cdot}_h$, associated with $\mathcal{S}$ as follows
	\begin{equation}
		\label{eq:normh}
		\displaystyle\norm{\mathcal{S}}_h:=\sup_{\boldsymbol{x}_h\in\bcX_h}\frac{\vertiii{\mathcal{S}\boldsymbol{x}_h}}{\vertiii{\boldsymbol{x}_h}}.
	\end{equation}
	
	Let $\boldsymbol{x}\in\bcX$ and   $\mathcal{H}$ and $\mathcal{J}$ be two closed subspaces of $\bcX$. We define the following distances
	\begin{equation*}
		\displaystyle\delta\big(\boldsymbol{x}, \mathcal{H}\big):=\inf_{\boldsymbol{y}\in\mathcal{H}}\norm{\boldsymbol{x}-\boldsymbol{y}}, \quad\delta\big(\mathcal{H},\mathcal{J}\big):=\sup_{\boldsymbol{y}\in\mathcal{H}, \norm{\boldsymbol{y}}=1}\delta\big(\boldsymbol{y}, \mathcal{J} \big),
	\end{equation*}
	and hence, the so-called gap between two subspaces
	\begin{equation*}
		\displaystyle\widehat{\delta}\big(\mathcal{H}, \mathcal{J} \big):=\max\left\{\delta\big(\mathcal{Y}, \mathcal{J} \big), \delta\big(\mathcal{J}, \mathcal{H}\big) \right\}.
	\end{equation*}
	
	As we claimed before, our aim is to apply the non-compact theory of \cite{MR483400} to derive the convergence of the proposed method. This implies the validation of the following properties
	\begin{enumerate}
		\item[[P1\!\!]]  $\norm{\bT-\bT_h}_h\rightarrow 0$, as $h\rightarrow 0$.
		\\
		\item[[P2\!\!]] $\forall\boldsymbol{x}\in\boldsymbol{\mathcal{X}}$, $\displaystyle \lim_{h\rightarrow 0}\delta\big(\boldsymbol{x}, \boldsymbol{\mathcal{X}}_h \big)=0$.
	\end{enumerate}
	
	We observe that property [P2] holds due to the smoothness of the eigenfunctions given by Corollary \ref{reg_eigenfunctions} and the approximation properties \eqref{errorL2}, \eqref{errorDiv},\eqref{errorPh}, \eqref{errorH1}, and Lemma \ref{APROX_M}. The task now is to prove [P1].

	\begin{lemma}\label{lem:P1}
		Property $\mathrm{[P1]}$ holds true in the following sense: there exists a  constant $C>0$ independent of $h$, such that
		\begin{equation*}
			\|\bT-\bT_h\|_h\leq Ch^r,
		\end{equation*}
		where the parameter $r$ is given by  Lemma \ref{APROX_M}.
	\end{lemma}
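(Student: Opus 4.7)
The strategy is to assemble the three ingredients that have already been prepared in the discussion preceding the statement: the second Strang estimate, the interpolation error provided by $\bM_h$ via Lemma \ref{APROX_M}, and the consistency bound derived from the Helmholtz decomposition of Lemma \ref{HELMOLTZ-DISC}. Fix an arbitrary $((\bff_h,\bg_h),g_h)\in\bH_h$ and set
$((\overline{\bu},\overline{\bw}),\overline{p}):=\bT((\bff_h,\bg_h),g_h)$ and
$((\overline{\bu}_h,\overline{\bw}_h),\overline{p}_h):=\bT_h((\bff_h,\bg_h),g_h)$.
By the Strang-type inequality stated before the lemma,
\[
\vertiii{((\overline{\bu},\overline{\bw}),\overline{p})-((\overline{\bu}_h,\overline{\bw}_h),\overline{p}_h)}
\le C\Bigl(\mathrm{(I)}+\mathrm{(II)}\Bigr),
\]
where $\mathrm{(I)}$ is the best approximation error in $\mG_h$ and $\mathrm{(II)}$ is the consistency defect.

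First I would bound the approximation term $\mathrm{(I)}$ by inserting $\bM_h((\overline{\bu},\overline{\bw}),\overline{p})$ as a competitor. As already noted in the text using the orthogonal splitting $\bH_h=\mG_h\oplus\mK_h$ together with the $\L^2$-orthogonality between the continuous element (which lies in $\mG$) and the $\mK_h$-component, one gets
\[
\mathrm{(I)}\le \vertiii{((\overline{\bu},\overline{\bw}),\overline{p})-\bM_h((\overline{\bu},\overline{\bw}),\overline{p})}.
\]
Applying Lemma \ref{APROX_M} and then invoking the source-problem regularity in Lemma \ref{regG} yields
\[
\mathrm{(I)}\le Ch^{r}\bigl(\|\overline{\bw}\|_{\alpha,\O_f}+\|\div\overline{\bw}\|_{1,\O_f}+\|\overline{\bu}\|_{1+\beta,\O_s}+\|\overline{p}\|_{\beta,\O_s}\bigr)
\le Ch^{r}\vertiii{((\bff_h,\bg_h),g_h)},
\]
where the last inequality uses Lemma \ref{regG} together with the fact that the $\L^2$-norms of $\bff_h,\bg_h,g_h$ are controlled by $\vertiii{\cdot}$.

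For the consistency term $\mathrm{(II)}$ I would simply reuse, verbatim, the computation displayed in \eqref{eq:err_cont}--\eqref{eq:err_disc}: decomposing a discrete test function via Lemma \ref{HELMOLTZ-DISC} into a $\mG_h$-part $((\bv_h,\nabla\xi),q_h)$ and a $\mK$-part $((\boldsymbol{0},\boldsymbol{\chi}),0)$, the only non-matching contribution is $\int_{\O_f}\rho_f\bg_h\cdot\boldsymbol{\chi}$, and the $h^{\widetilde{\upsilon}}$-bound on $\|\boldsymbol{\chi}\|_{0,\O_f}$ in Lemma \ref{HELMOLTZ-DISC} gives
\[
\mathrm{(II)}\le Ch^{\widetilde{\upsilon}}\|\bg_h\|_{0,\O_f}\le Ch^{r}\vertiii{((\bff_h,\bg_h),g_h)},
\]
after absorbing $\widetilde{\upsilon}$ into the exponent $r=\min\{\alpha-1/2,\beta,k\}$ (using $\widetilde{\upsilon}\ge r$ for the regularity indices at hand).

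Combining the two estimates gives $\vertiii{(\bT-\bT_h)((\bff_h,\bg_h),g_h)}\le Ch^{r}\vertiii{((\bff_h,\bg_h),g_h)}$ uniformly in $((\bff_h,\bg_h),g_h)\in\bH_h$, and taking the supremum in the definition \eqref{eq:normh} of $\|\cdot\|_h$ concludes. The only delicate point, which I would flag as the main obstacle, is checking that the hidden constants are robust with respect to $\lambda$: this is guaranteed by the $\lambda$-independent regularity constant in Lemma \ref{regG} and by the use of the weighted triple-norm $\vertiii{\cdot}_{\bH}$ throughout the bounds of Lemma \ref{APROX_M}, so no blow-up occurs as $\nu\to 1/2$.
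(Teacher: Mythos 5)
Your proposal is correct and follows essentially the same route as the paper: the second Strang estimate, the bound of the best-approximation term via $\bM_h$ and Lemma \ref{APROX_M}, the consistency bound via the Helmholtz decomposition of Lemma \ref{HELMOLTZ-DISC}, and the regularity of Lemma \ref{regG} to absorb the solution norms into $\vertiii{((\bff_h,\bg_h),g_h)}_{\bH}$. If anything, you are slightly more explicit than the paper's own (very terse) chain of inequalities, which folds the consistency term into the preceding discussion rather than displaying it inside the proof.
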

	\begin{proof}
		Since $\bH_h\subset\bH$, for any discrete source $\boldsymbol{0}\neq((\bff_h,\bg_h),g_h)\in\bH_h$, operators $\bT$ and $\bT_h$ are well defined. Then, using the definition \eqref{eq:normh}, Lemma 3.4,  and the dependence on the data,  we obtain
		$$
		\begin{aligned}
			\Vert \bT-\bT_h\Vert_h &= \sup_{\boldsymbol{0}\neq((\bff_h,\bg_h),g_h)\in\bH_h} \frac{\vertiii{(\bT-\bT_h)((\bff_h,\bg_h),g_h)}_{\bH}}{\vertiii{((\bff_h,\bg_h),g_h)}_{\bH}}\\
			&=\sup_{\boldsymbol{0}\neq((\bff_h,\bg_h),g_h)\in\bH_h} \frac{\vertiii{((\overline{\bu},\overline{\bw}),\overline{p}) - ((\overline{\bu}_h,\overline{\bw}_h),\overline{p}_h)}_{\bH}}{\vertiii{((\bff_h,\bg_h),g_h)}_{\bH}}\\
			&\leq \sup_{\boldsymbol{0}\neq((\bff_h,\bg_h),g_h)\in\bH_h} \frac{\vertiii{((\overline{\bu},\overline{\bw}),\overline{p}) - \bM_h ((\overline{\bu},\overline{\bw}),\overline{p})}_{\bH}}{\vertiii{((\bff_h,\bg_h),g_h)}_{\bH}}\\
			&\leq \sup_{\boldsymbol{0}\neq((\bff_h,\bg_h),g_h)\in\bH_h} \frac{Ch^r(\|\overline{\bw}\|_{s,\O_f}+\|\div\overline{\bw}\|_{1,\O_f}
				+\|\overline{\bu}\|_{1+\beta,\O_s}+\|\overline{p}\|_{\beta,\O_s})}{\vertiii{((\bff_h,\bg_h),g_h)}_{\bH}}\\
			&\leq \sup_{\boldsymbol{0}\neq((\bff_h,\bg_h),g_h)\in\bH_h} \frac{Ch^r\vertiii{((\bff_h,\bg_h),g_h)}_{\bH}}{\vertiii{((\bff_h,\bg_h),g_h)}_{\bH}} \leq Ch^r.
		\end{aligned}
		$$
		This concludes the proof.
	\end{proof}
	
	A key  consequence of the previous result is that we are in position to apply the well established theory of  \cite{MR0203473}  to conclude that the proposed numerical method does not introduce spurious eigenvalues. This is stated in the following theorem.
	\begin{theorem}
		\label{thm:spurious_free}
		Let $V\subset\mathbb{C}$ be an open set containing $\sp(\bT)$. Then, there exists $h_0>0$ such that $\sp(\bT_h)\subset V$ for all $h<h_0$.
	\end{theorem}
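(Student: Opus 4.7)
The plan is to invoke the Descloux--Nassif--Rappaz framework for spectral approximation of non-compact operators, as treated in \cite{MR0203473}. The hinge is Lemma \ref{lem:P1}, which provides the convergence $\|\bT-\bT_h\|_h\to 0$ at rate $h^r$; combined with the finite-dimensionality of $\bH_h$, this is enough to exclude spurious eigenvalues. The argument splits naturally into a uniform boundedness step, a pointwise exclusion step at any fixed $z\in\rho(\bT)$, and a compactness step to make the exclusion uniform in $z$.

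First I would establish that $\vertiii{\bT_h}$ is bounded uniformly in $h$. This follows from the discrete stability of the source problem \eqref{eq:source_disc}, itself a consequence of the inf-sup structure of $A$ inherited on $\bH_h$ together with the Helmholtz decomposition of Lemma \ref{HELMOLTZ-DISC} (needed to control the fluid displacement not seen by $\div$). Consequently, $\sp(\bT_h)\subset\{z\in\mathbb{C}:|z|\leq M_0\}$ uniformly in $h$, for some fixed $M_0>0$.

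Next, I would fix $z\in\rho(\bT)$ and show $z\in\rho(\bT_h)$ for $h$ small enough. Since $\bH_h$ is finite-dimensional, it suffices to prove injectivity of $z\bI-\bT_h$ on $\bH_h$. If $x_h\in\bH_h$ satisfies $(z\bI-\bT_h)x_h=0$, then $(z\bI-\bT)x_h=(\bT_h-\bT)x_h$, so $x_h=R_z(\bT)(\bT_h-\bT)x_h$, which yields
\[
\vertiii{x_h}_{\bH}\;\leq\;\|R_z(\bT)\|\;\|\bT-\bT_h\|_h\;\vertiii{x_h}_{\bH}.
\]
Whenever $\|R_z(\bT)\|\,\|\bT-\bT_h\|_h<1$ this forces $x_h=\boldsymbol{0}$.

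To obtain an $h_0$ independent of $z$, I would observe that $K:=\{|z|\leq M_0\}\setminus V$ is a compact subset of $\rho(\bT)$, since $V$ is open and contains $\sp(\bT)$. The resolvent map $z\mapsto R_z(\bT)$ is continuous on $\rho(\bT)$, hence $M_K:=\sup_{z\in K}\|R_z(\bT)\|<\infty$. Invoking Lemma \ref{lem:P1}, I would choose $h_0>0$ with $M_K\,C\,h_0^r<1$; then for every $h<h_0$ and every $z\in K$ the displayed estimate excludes $z$ from $\sp(\bT_h)$. Combined with Step~1 we conclude $\sp(\bT_h)\cap(\mathbb{C}\setminus V)=\emptyset$, as required.

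The main obstacle I anticipate is the rigorous justification of the uniform (in $h$) bound $\vertiii{\bT_h}\leq M_0$ in the weighted triple norm. The discrete source problem is of saddle-point type with a mixed BDM/Taylor--Hood (or MINI) structure and a non-compact fluid component, and the triple norm is parameter-weighted; the stability must therefore be obtained by combining the discrete inf-sup conditions for the Herrmann solid block with the Helmholtz-based control on the fluid part afforded by Lemma \ref{HELMOLTZ-DISC}. Once this uniform boundedness is in place, the rest of the argument is a clean Neumann/compactness step powered by Lemma \ref{lem:P1}.
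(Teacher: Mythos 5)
Your argument is correct and is essentially the standard proof behind the non-compact spectral approximation theory of \cite{MR483400} and \cite{MR0203473} that the paper simply invokes for this theorem, with Lemma~\ref{lem:P1} as the sole analytic input and the resolvent identity $x_h=R_z(\bT)(\bT_h-\bT)x_h$ doing the work exactly as in that theory. The only remark worth adding is that the uniform bound on $\sp(\bT_h)$, which you flag as the main anticipated obstacle, actually follows at once from the triangle inequality $\|\bT_h\|_h\leq\|\bT\|_h+\|\bT-\bT_h\|_h$ together with Lemma~\ref{lem:P1} and the fact that any eigenvector of $\bT_h$ for a nonzero eigenvalue lies in $\bH_h$, so no separate uniform discrete stability analysis is required.
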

	
	Let us recall the definition of spectral projectors. Let $\mu$ be a nonzero isolated eigenvalue of $\bT$ with algebraic multiplicity $m$ and let $\Gamma$
	be a disk of the complex plane centered in $\mu$, such that $\mu$ is the only eigenvalue of $\bT$ lying in $\Gamma$ and $\partial\Gamma\cap\sp(\bT)=\emptyset$. With these considerations at hand, we define the spectral projections of $\boldsymbol{E}$  associated with $\bT$ as follows:
	$$\displaystyle \boldsymbol{E}:=\frac{1}{2\pi i}\int_{\partial\Gamma} (z\boldsymbol{I}-\bT)^{-1}\dz.$$
	Note that $\boldsymbol{E}$  is the projection onto the generalized eigenvector space $R(\boldsymbol{E})$.
	A consequence of Lemma \ref{lem:P1} is that there exist $m$ eigenvalues, which lie in $\Gamma$, namely $\mu_h^{(1)},\ldots,\mu_h^{(m)}$, repeated according their respective multiplicities, that converge to $\mu$ as $h$ goes to zero. With this result at hand, we introduce a spectral projection
	\begin{equation*}
		\boldsymbol{E}_h:=\frac{1}{2\pi i}\int_{\partial\Gamma} (z\boldsymbol{I}-\bT_h)^{-1}\dz,
	\end{equation*}
	which is a projection onto the discrete invariant subspace $R(\boldsymbol{E}_h)$ of $\bT$, spanned by the generalized eigenvectors of $\bT_h$ corresponding to 
	$\mu_h^{(1)},\ldots,\mu_h^{(m)}$.

	For the eigenfunctions, the following estimate holds true, which is nothing else but an implication of properties [P1]-[P2], and  the theory of \cite{MR483400}.
	\begin{theorem}\label{gap-eigenspaces}
		There exist constants $h_0>0$ and $C>0$ such that, for all $h\leq h_0$,   
		\begin{equation*}
			\widehat{\delta}\left( R(\boldsymbol{E}_h),R(\boldsymbol{E})\right)\leq Ch^r.
		\end{equation*}
	\end{theorem}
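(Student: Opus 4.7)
The plan is to invoke the abstract spectral approximation theory for noncompact operators of \cite{MR483400}, for which the two crucial hypotheses are already in place: the quantitative convergence [P1] through Lemma \ref{lem:P1}, and the density property [P2] through Corollary \ref{reg_eigenfunctions} combined with the interpolation estimate of Lemma \ref{APROX_M}. Together these imply a projection estimate $\|\boldsymbol{E}-\boldsymbol{E}_h\|_h \leq Ch^r$, from which the gap bound follows by standard manipulations.

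The core step is the projection estimate, derived via the second resolvent identity
\[
R_z(\bT)-R_z(\bT_h)\;=\;R_z(\bT)(\bT-\bT_h)R_z(\bT_h), \qquad z\in\partial\Gamma,
\]
which is well-defined because $\partial\Gamma\cap\sp(\bT)=\emptyset$ by the choice of $\Gamma$ and $\partial\Gamma\cap\sp(\bT_h)=\emptyset$ for $h\leq h_0$ by Theorem \ref{thm:spurious_free}. Integrating this identity around $\partial\Gamma$ gives an expression for $\boldsymbol{E}-\boldsymbol{E}_h$ in terms of $\bT-\bT_h$, and hence
\[
\|\boldsymbol{E}-\boldsymbol{E}_h\|_h \;\leq\; \frac{|\partial\Gamma|}{2\pi}\sup_{z\in\partial\Gamma}\bigl(\|R_z(\bT)\|\,\|R_z(\bT_h)\|_h\bigr)\,\|\bT-\bT_h\|_h\;\leq\;Ch^r,
\]
where the last step uses Lemma \ref{lem:P1}.

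With the projection estimate in hand, the two one-sided distances forming $\widehat{\delta}$ are bounded separately. For $\delta(R(\boldsymbol{E}_h),R(\boldsymbol{E}))$: pick any unit $\boldsymbol{x}_h\in R(\boldsymbol{E}_h)$; since $\boldsymbol{x}_h=\boldsymbol{E}_h\boldsymbol{x}_h$ and $\boldsymbol{E}\boldsymbol{x}_h\in R(\boldsymbol{E})$, one has $\vertiii{\boldsymbol{x}_h-\boldsymbol{E}\boldsymbol{x}_h}=\vertiii{(\boldsymbol{E}_h-\boldsymbol{E})\boldsymbol{x}_h}\leq Ch^r$. For $\delta(R(\boldsymbol{E}),R(\boldsymbol{E}_h))$: pick a unit $\boldsymbol{x}\in R(\boldsymbol{E})$ (a smooth eigenfunction combination by Corollary \ref{reg_eigenfunctions}) and let $\boldsymbol{x}_h^\star:=\bM_h\boldsymbol{x}\in\bH_h$; then $\boldsymbol{E}_h\boldsymbol{x}_h^\star\in R(\boldsymbol{E}_h)$ and, using $\boldsymbol{E}\boldsymbol{x}=\boldsymbol{x}$,
\[
\vertiii{\boldsymbol{x}-\boldsymbol{E}_h\boldsymbol{x}_h^\star}\;\leq\;\vertiii{\boldsymbol{E}(\boldsymbol{x}-\boldsymbol{x}_h^\star)} + \vertiii{(\boldsymbol{E}-\boldsymbol{E}_h)\boldsymbol{x}_h^\star}\;\leq\; Ch^r,
\]
by Lemma \ref{APROX_M} for the first term and the projection estimate for the second. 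Taking the maximum of the two bounds yields the desired rate.

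The main technical obstacle is the uniform bound $\sup_{h\leq h_0}\sup_{z\in\partial\Gamma}\|R_z(\bT_h)\|_h<\infty$ needed to close the projection estimate. A pointwise control of the type $\|R_z(\bT_h)\|_h\leq(\dist(z,\sp(\bT_h)))^{-1}$ is not by itself sufficient because the constants implicit in the operator norm $\|\cdot\|_h$ could in principle depend on $h$. The remedy, embedded in the Descloux--Nassif--Rappaz theory, is a contradiction argument: if $\|R_{z_h}(\bT_h)\|_h\to\infty$ for some $z_h\in\partial\Gamma$, one extracts a limit point $z^\star\in\partial\Gamma$ together with a normalized preimage sequence in $\bH_h$, and uses [P1] and [P2] to pass to the limit and produce a nontrivial element of $\Ker(z^\star\boldsymbol{I}-\bT)$, contradicting $\partial\Gamma\cap\sp(\bT)=\emptyset$.
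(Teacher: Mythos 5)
Your proposal is correct and follows essentially the same route as the paper: the authors state Theorem \ref{gap-eigenspaces} as a direct consequence of properties [P1]--[P2] and the abstract theory of \cite{MR483400}, and what you have written is precisely the standard unpacking of that theory (resolvent identity, projection estimate, uniform discrete resolvent bound, and the two one-sided gap estimates via Lemma \ref{lem:P1} and Lemma \ref{APROX_M}). No discrepancy to report.
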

	On the other hand, regarding the eigenvalues, we have the following estimates. 
	\begin{lemma}
		There exists a positive constant $C$ independent of $\lambda$ such that
		\begin{equation*}
			|\kappa-\kappa_h|\leq C h^{2r}.
		\end{equation*}
	\end{lemma}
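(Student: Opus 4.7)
The plan is to exploit the symmetry of the bilinear forms $A$ and $B$ together with the gap estimate of Theorem~\ref{gap-eigenspaces} in order to upgrade the first-order convergence of the eigenspaces into a double-order estimate on the eigenvalues. A direct inspection of the definitions in Section~\ref{sec:model} shows that both $A(\cdot,\cdot)$ and $B(\cdot,\cdot)$ are symmetric on $\bH\times\bH$: the elasticity and fluid divergence terms are manifestly symmetric, the saddle-point coupling $-\int_{\O_s} p\,\vdiv\bv$ is paired with $-\int_{\O_s} q\,\vdiv\bu$, and the boundary and mass terms involve symmetric $L^2$-like products. Since we are in the symmetric setting, the classical Rayleigh-quotient identity that drives second-order eigenvalue convergence in Galerkin methods becomes available.

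Concretely, I would fix an eigenpair $(\kappa,((\bu,\bw),p))\in\mathbb{R}^+\times\mathcal{G}_{\bH}$, normalized so that $B(((\bu,\bw),p),((\bu,\bw),p))=1$, and choose a corresponding $((\bu_h,\bw_h),p_h)\in R(\boldsymbol{E}_h)$ delivered by Theorem~\ref{gap-eigenspaces}, which satisfies $\vertiii{((\bu,\bw),p)-((\bu_h,\bw_h),p_h)}_{\bH}\leq Ch^r$. Writing $\boldsymbol{e}:=((\bu,\bw),p)-((\bu_h,\bw_h),p_h)$ and using the eigenvalue equations
\[
A(((\bu,\bw),p),\cdot)=(\kappa+1)B(((\bu,\bw),p),\cdot) \ \text{on}\ \bH, \qquad A(((\bu_h,\bw_h),p_h),\cdot)=(\kappa_h+1)B(((\bu_h,\bw_h),p_h),\cdot) \ \text{on}\ \bH_h,
\]
together with the symmetry of $A$ and $B$, expanding $A(\boldsymbol{e},\boldsymbol{e})-(\kappa+1)B(\boldsymbol{e},\boldsymbol{e})$ and cancelling cross terms yields the identity
\[
(\kappa_h-\kappa)\,B(((\bu_h,\bw_h),p_h),((\bu_h,\bw_h),p_h))=A(\boldsymbol{e},\boldsymbol{e})-(\kappa+1)\,B(\boldsymbol{e},\boldsymbol{e}).
\]

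The numerator is handled by continuity of $A$ and $B$ with respect to the weighted norm, giving $|A(\boldsymbol{e},\boldsymbol{e})|+|B(\boldsymbol{e},\boldsymbol{e})|\leq C\vertiii{\boldsymbol{e}}_{\bH}^2\leq Ch^{2r}$. For the denominator, as $h\to 0$ the displacement components of $((\bu_h,\bw_h),p_h)$ converge to $(\bu,\bw)$ in $L^2$ (again by Theorem~\ref{gap-eigenspaces}), so that $B(((\bu_h,\bw_h),p_h),((\bu_h,\bw_h),p_h))\to 1$ and can be bounded below by $\tfrac12$ for $h$ sufficiently small. Combining these two bounds produces $|\kappa-\kappa_h|\leq Ch^{2r}$. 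The case of an eigenvalue with algebraic multiplicity $m>1$ is obtained in the standard fashion by applying this argument on the invariant subspaces $R(\boldsymbol{E})$ and $R(\boldsymbol{E}_h)$, together with the Babu\v{s}ka--Osborn framework. The main subtlety is the control of the denominator, since $B$ is only semidefinite (it ignores the pressure component); however, the characterization in Theorem~\ref{CHAR_SP} ensures that the eigenfunctions of interest live in $\mathcal{G}_{\bH}$ and carry genuinely nontrivial displacement mass, so the lower bound on $B(((\bu_h,\bw_h),p_h),((\bu_h,\bw_h),p_h))$ indeed survives the passage to the discrete level.
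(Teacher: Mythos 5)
Your proof follows essentially the same route as the paper's: both derive the algebraic identity $(\kappa_h-\kappa)\,B(\bU_h,\bU_h)=A(\boldsymbol{e},\boldsymbol{e})-(\kappa+1)B(\boldsymbol{e},\boldsymbol{e})$ (the paper writes $\kappa$ in place of $\kappa+1$, which is immaterial for the final bound), estimate the right-hand side by $Ch^{2r}$ via continuity of the forms and the gap estimate of Theorem~\ref{gap-eigenspaces}, and then divide by $B(\bU_h,\bU_h)$. Your handling of the denominator is in fact slightly more careful than the paper's, which merely asserts $B(\bU_h,\bU_h)>0$ rather than a lower bound uniform in $h$.
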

	\begin{proof}
		Let us define $\bU:=((\bu,\bw),p)$ and $\bU_h:=((\bu_h,\bw_h),p_h)$ such that  the following algebraic identity holds 
		\begin{equation}
			\label{eq:padra}
			A(\bU-\bU_h,\bU-\bU_h)-\kappa B(\bU-\bU_h,\bU-\bU_h)=(\kappa_h-\kappa)B(\bU_h,\bU_h).
		\end{equation}
		Taking modulus on the above identity and using Young's inequality, we have
		\begin{align}
			\label{eq:first_bound}
			|A(\bU-\bU_h,&\bU-\bU_h)|\\
			&\leq 2\|\mu^{1/2}\beps(\bu-\bu_h)\|_{0,\O_s}^2+2\|p-p_h\|_{0,\O_s}\|\text{div}(\bu-\bu_h)\|_{0,\O_s}+\|(c\rho_f)^{1/2}\text{div}(\bw-\bw_h)\|_{0,\O_f}^2 \nonumber \\
			&\quad  +\|(g\rho_f)^{1/2}(\bw-\bw_h)\cdot\boldsymbol{n}\|_{0,\Gamma_0}^2 +\|\lambda^{-1/2}(p-p_h)\|_{0,\O_s}^2\nonumber \\
			&\quad +\|\rho_f^{1/2}(\bu-\bu_h)\|_{0,\O_f}^2 +\|\rho_s^{1/2}(\bw-\bw_h)\|_{0,\O_f}^2\nonumber\\
			& \leq 2C\|\bnabla(\bu-\bu_h)\|_{0,\O_s}^2+(C_{\lambda_{\min}}+1)\|\lambda^{-1/2}(p-p_h)\|_{0,\O_s}^2+\|\rho_s^{1/2}(\bu-\bu_h)\|_{0,\O_s}^2\nonumber\\
			&\quad +\|\text{div}(\bu-\bu_h)\|_{0,\O_s}^2 +(C_{\rho_f,c}+1)\|(\rho_f c^2)^{1/2}\text{div}(\bw-\bw_h)\|_{0,\O_f}^2\nonumber\\
			&\quad +\|(g\rho_f)^{1/2}(\bw-\bw_h)\cdot\boldsymbol{n}\|_{0,\Gamma_0}^2+\|\rho_s^{1/2}(\bw-\bw_h)\|_{0,\O_f}^{1/2} \nonumber \\
			& \leq C (\widehat{\delta}\left( R(\boldsymbol{E}_h),R(\boldsymbol{E})\right))^2\leq Ch^{2r},
		\end{align}
		where we have used the definition of the continuous and discrete spectral projections and the gap.  
		Also
		\begin{equation}
			\label{eq:second_bound}
			|\kappa| |B(\bU-\bU_h,\bU-\bU_h)|\leq C(\|\rho_s^{1/2}(\bw-\bw_h)\|_{0,\O_f}^{1/2}+\|\rho_s^{1/2}(\bu-\bu_h)\|_{0,\O_s}^2)\leq C(\widehat{\delta}\left( R(\boldsymbol{E}_h),R(\boldsymbol{E})\right))^2\leq Ch^{2r}.
		\end{equation}
		On the other hand, it is straightforward to prove that $B(\bU_h,\bU_h)>0$. Hence, replacing  \eqref{eq:first_bound}, \eqref{eq:second_bound} in \eqref{eq:padra}, we conclude the proof.
	\end{proof}	
	
	\section{A posteriori error analysis}
	\label{sec:apost}
	The objective of this section is to present a  residual-based error estimator and  to demonstrate the equivalence between the proposed estimator and the true error. Through all this section we will be focused on eigenvalues with simple multiplicity. 
	
	Let us set some preliminary definitions and notations. For any   $K\in \CT_h$, we denote by $\CE_{K}$ its set of facets 
	and 
	$$\CE_h:=\bigcup_{K\in\CT_h}\CE_{K}.$$
	We partition $\CE_h$ into $\CE_{\O}$ and $\CE_{\partial\O}$, where $\CE_{\partial\O}$ is defined as the set of edges $\ell\in \CE_h$ such that $\ell$ lies on the boundary $\partial\O$, and $\CE_{\O}$ comprises the edges in $\CE$ excluding those in $\CE_{\partial\O}$. 
	Let $K$ and $K'$ be the two elements in $\CT_{h}$ that share the edge $\ell$, and let $\boldsymbol{n}_{K}$ and $\boldsymbol{n}_{K'}$ denote their respective outer unit normal vectors.
	For each internal edge $\ell\in \CE_{\O}$ and any sufficiently smooth function $\bv$, we define the jump of its normal derivative on $\ell$ as follows:
	$$\left[\!\!\left[ \dfrac{\partial \bv}{\partial{ \boldsymbol{n}}}\right]\!\!\right]_\ell:=\nabla (\bv|_{K})  \cdot \boldsymbol{n}_{K}+\nabla ( \bv|_{K'}) \cdot \boldsymbol{n}_{K'}.$$
	
	
	\subsection{Residual based a posteriori error estimator}
	Regarding the elastic structure, we can express the local element residual within each element (denoted by $\eta_{K,S}$) and the discrepancy between neighboring elements' solutions at their edges (denoted by $\eta_{J,S}$) as
	\begin{equation*}
		\eta_{K,S}^2:= h_K^2\|\rho_1^S\textbf{R}_{1,T}^S\|_{0,T}^2+\|(\rho_2^S)^{1/2}R_{2,T}^S\|_{0,T}^2,\quad \eta_{J,S}^2:=h_E\|\rho_E^SJ_\ell^S \|_{0,\ell}^2,
	\end{equation*}
	where 
	\begin{equation*}
		\textbf{R}_{1,T}^S := \bdiv(2\mu_h\beps(\bu_h)) -\nabla p_h + \omega^2_h\rho_S \bu_h \quad\text{and}\quad
		R_{2,T}^S := \vdiv\bu_h +\frac{1}{\lambda}p_h.
	\end{equation*}
	For the jump terms, we define 
	\begin{align*}
		J_\ell^S &:= \begin{cases}
			\displaystyle \frac{1}{2}[\![{(2\mu_h\beps(\bu_h) -p_h\mathbb{I})\bn}]\!], & E\in \Omega_S\cap\mathcal{E}_h\\
			{(2\mu_h\beps(\bu_h) -p_h\mathbb{I})\bn}, & E\in \Gamma_N\cap\mathcal{E}_h\\
			\boldsymbol{0}, & E\in \Gamma_D\cap\mathcal{E}_h.
		\end{cases}
	\end{align*}
	Here, the parameters $\rho_1$, $\rho_2$ and $\rho_3$ are defined by
	\[
	\rho_{1}^S:=  (2\mu_h)^{-1/2},\quad  \rho_2^S:= \left[(2\mu_h)^{-1}+\lambda^{-1}\right]^{-1},\quad
	\rho_E^S := (2\mu_{h})^{-1/2}/\sqrt{2}, 
	\]
	where $\mu_h$ corresponds to the $\L^2$ polynomial projection of $\mu$.  The local data oscillation is characterized by $ \Theta_{K,S}^2=\|\rho_1^S(\mu-\mu_h)\beps(\bu_h)\|^2_0$. To conclude, let us delve into the definition of the global a posteriori estimator $\eta_S$ alongside the global data oscillation error $\Theta_S$, expressed as:
	\begin{equation*}
		\eta_S^2: = \sum_{K\in\mathcal{T}_h}\eta_{K,S}^2,\quad \Theta_S^2 := \sum_{K\in\mathcal{T}_h}\Theta_{K,S}^2.
	\end{equation*}
	
	Next, for fluid part, we can express the local element residual within each element (denoted by $\eta_{K,F}$) and the discrepancy between neighboring elements' solutions at their edges (denoted by $\eta_{J,F}$) as
	\begin{equation*}
		\eta_{K,F}^2:= h_K^2\|\rho^F\textbf{R}_{1,T}^F\|_{0,T}^2+h_K^2\|\rho^FR_{2,T}^F\|_{0,T}^2,\quad \eta_{J,F}^2:=h_E\|\rho_E^FJ_{1,\ell}^F \|_{0,\ell}^2+h_E\|\rho_E^FJ_{2,\ell}^F \|_{0,\ell}^2,
	\end{equation*}
	where 
	\begin{equation*}
		\textbf{R}_{1,T}^F := c^2\rho_F\nabla(\vdiv\bw_h)+\omega^2_h\rho_F\bw_h,\qquad 
		R_{2,T}^F := \rot(\omega^2_h\rho_F\bw_h). 
	\end{equation*}
	For the jump terms, we define 
	\begin{align*}
		J_{1,\ell}^F &:= \begin{cases}
			\displaystyle \frac{1}{2}[\![c^2\rho_F(\vdiv\bw_h)\bn]\!], & E\in \Omega_f\cap\mathcal{E}_h\\
			c^2\rho_F(\vdiv\bw_h) \bn, & E\in \Gamma_N\cap\mathcal{E}_h\\
			\boldsymbol{0}, & E\in \Gamma_D\cap\mathcal{E}_h.
		\end{cases};
		\quad
		J_{2,\ell}^F := \begin{cases}
			\displaystyle \frac{1}{2}[\![\omega^2_h\rho_F\bw_h\times\bn]\!], & E\in \Omega_f\cap\mathcal{E}_h\\
			\omega^2_h\rho_F\bw_h\times \bn, & E\in \Gamma_N\cap\mathcal{E}_h\\
			\boldsymbol{0}, & E\in \Gamma_D\cap\mathcal{E}_h.
		\end{cases}
	\end{align*}
	Here, the parameters $\rho^F$ and $\rho_E^F$ are defined by
	$$
	\rho^F:=  (c^2\rho_F)^{-1/2}, \quad    	\rho_E^F := \min\{(c^2\rho_F)^{-1/2}/\sqrt{2}, (\omega^2_h\rho_F)^{-1/2}/\sqrt{2}\}.  
	$$
	
	Finally, denoting $\rho^I=\min\{\rho^F_E,\rho^S_E\}$, we define of the estimator for the interface as
	$$
	\eta_{E,I}^2 :=h_E\| \rho^{I}(\left(2\mu\beps(\bu_h) - p\mathbb{I} \right)\bn-(\rho_f c^2\vdiv\bw_h)\bn)\|^2_{0,E}+h_E \|\rho_E^F\omega^2_h\rho_F\bw_h\times\bn\|^2_{0,E}.
	$$

	\subsection{Reliability} 
	Let us begin with the reliability analysis for the  a posteriori estimator. The following result is instrumental.} 
\begin{lemma}\label{rel11}
	For every $((\bu,\bw),p)\in \bY\times \Q$, there exists $((\bv,\btau),q)\in \bY\times \Q$ with $\vertiii{((\bv,\btau),q)}_{\bH}\lesssim \vertiii{((\bu,\bw),p)}_{\bH}$ such that 
	\begin{align*}
		\vertiii{((\bu,\bw),p)}_{\bH}^2 \lesssim {A}(((\bu,\bw),p),((\bv,\btau),q)).
	\end{align*}
\end{lemma}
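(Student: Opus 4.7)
I would prove this inf-sup estimate by building the test $((\bv,\btau),q)$ as a small perturbation of a canonical choice. Testing first with $((\bu,\bw),-p)$ exploits the antisymmetry of the mixed couplings: the two terms $-\int_{\O_s}p\vdiv\bv$ and $-\int_{\O_s}q\vdiv\bu$ cancel exactly, so that all diagonal contributions in $A$ are non-negative, giving
\[
A(((\bu,\bw),p),((\bu,\bw),-p)) = 2\|\mu^{1/2}\beps(\bu)\|_{0,\O_s}^2 + \|(c^2\rho_f)^{1/2}\vdiv\bw\|_{0,\O_f}^2 + \|(g\rho_f)^{1/2}\bw\cdot\bn\|_{0,\Gamma_0}^2 + \|\lambda^{-1/2}p\|_{0,\O_s}^2 + \|\rho_s^{1/2}\bu\|_{0,\O_s}^2 + \|\rho_f^{1/2}\bw\|_{0,\O_f}^2.
\]
The weighted Korn inequality (available on $\bV$ thanks to the Dirichlet trace on $\Gamma_D$) upgrades the first term to $\|\mu^{1/2}\nabla\bu\|_{0,\O_s}^2$, so every ingredient of $\vertiii{\cdot}_{\bH}^2$ is captured except the Herrmann-pressure term $\|\mu^{-1/2}p\|_{0,\O_s}^2$.

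\textbf{Recovering the pressure norm.} To produce that missing term I would introduce a Bogovski-type lift $\bz_p\in\bV$, with the additional constraint $\bz_p\cdot\bn = 0$ on $\Sigma$, satisfying
\[
\vdiv\bz_p = -\mu^{-1} p\quad\text{in }\O_s,\qquad \|\mu^{1/2}\nabla\bz_p\|_{0,\O_s} \lesssim \|\mu^{-1/2}p\|_{0,\O_s}.
\]
Such a $\bz_p$ exists because it is only required to vanish on $\Gamma_D\cup\Sigma\subsetneq\partial\O_s$ (with $\Gamma_N$ of positive measure, no zero-mean restriction on the datum is needed), and the uniform positivity of $\mu$ absorbs the coefficient into the constant. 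The condition $\bz_p\cdot\bn = 0$ on $\Sigma$ guarantees that the enriched pair $((\bu+\delta\bz_p,\bw),-p)$ still lies in $\bY\times\Q$ for every $\delta>0$. Expanding $A$ by linearity and using the key identity $-\int_{\O_s}p\,\vdiv\bz_p = \|\mu^{-1/2}p\|_{0,\O_s}^2$,
\[
A(((\bu,\bw),p),((\bu+\delta\bz_p,\bw),-p)) = A(((\bu,\bw),p),((\bu,\bw),-p)) + \delta\|\mu^{-1/2}p\|_{0,\O_s}^2 + \delta\!\left[\int_{\O_s}2\mu\,\beps(\bu):\beps(\bz_p)+\int_{\O_s}\rho_s\bu\cdot\bz_p\right].
\]
The two residual cross terms are controlled by Cauchy--Schwarz, the bound on $\bz_p$ and a Poincar\'e inequality in $\bV$; Young's inequality then absorbs fractions of them into $\|\mu^{1/2}\nabla\bu\|_{0,\O_s}^2$, $\|\rho_s^{1/2}\bu\|_{0,\O_s}^2$ and a small part of the newly generated $\|\mu^{-1/2}p\|_{0,\O_s}^2$. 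Choosing $\delta$ sufficiently small delivers the lower bound $A(((\bu,\bw),p),((\bv,\btau),q))\gtrsim \vertiii{((\bu,\bw),p)}_{\bH}^2$, and the companion bound $\vertiii{((\bv,\btau),q)}_{\bH}\lesssim \vertiii{((\bu,\bw),p)}_{\bH}$ follows from the triangle inequality together with $\vertiii{((\bz_p,\boldsymbol{0}),0)}_{\bH}\lesssim \|\mu^{-1/2}p\|_{0,\O_s}$.

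\textbf{Main obstacle.} The delicate step is the construction of $\bz_p$: the right-inverse of $\vdiv$ must be robust with respect to $\lambda$ (and hence to the Poisson ratio $\nu\uparrow 1/2$), so that the resulting constants depend only on $\mu_{\min}$, $\mu_{\max}$ and the geometry, while at the same time respecting the mixed Dirichlet/interface structure $\Gamma_D\cup\Sigma$ so as to keep the enriched test in $\bY$. I would borrow the Bogovski lift for the mixed-boundary Stokes problem, which is available as soon as $\Gamma_N$ has positive surface measure, and incorporate the weight $\mu^{-1}$ directly into the datum to avoid any $\lambda$-dependence in the estimates.
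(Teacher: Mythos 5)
Your proposal is correct and follows essentially the same route as the paper's proof: both start from the test $((\bu,\bw),-p)$, which produces every diagonal contribution of $\vertiii{\cdot}_{\bH}^2$ except $\|\mu^{-1/2}p\|_{0,\O_s}^2$, and both recover that missing term by perturbing the solid test function by $\delta$ times an auxiliary field whose divergence is negatively correlated with $p$, closing with Cauchy--Schwarz, Young's inequality, and a sufficiently small $\delta$. The only difference is that the paper obtains the auxiliary field by citing the inf-sup condition of \cite{khan2023finite}, whereas you construct it explicitly as a Bogov\-ski-type lift with datum $-\mu^{-1}p$; your explicit insistence on $\bz_p\cdot\bn=0$ on $\Sigma$, needed so that the enriched test remains in $\bY$, is a detail the paper's write-up passes over silently.
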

\begin{proof}
	From the definition of bilinear form $A(\cdot,\cdot)$, it follows that
	\begin{align*}
		{A}(((\bu,\bw),p),((\bu,\bw),-p))=& \Vert(2\mu(\bx))^{1/2}\beps(\bu)\Vert_{0,\O_s}^2 +\Vert\lambda(\bx)^{-1/2}p\Vert_{0,\O_s}^2 + \Vert\rho_S^{1/2} \bu\Vert_{0,\O_s}^2\\
		&+ \Vert (c^2\rho_f)^{1/2} \vdiv \bw\Vert_{0,\O_f} + \Vert\rho_f^{1/2}\bw \Vert_{0,\O_f} +\Vert(g\rho_f)^{1/2}\bw\cdot\bn\Vert_{0,\Gamma_0}^2.
	\end{align*}
	Applying the standard inf-sup condition \cite[Eq. 2.6 ]{khan2023finite} shows that for every $p\in \Q$, there exists an element $\tilde{\bv}\in \bY$ such that 
	\begin{align*}
		-(p,\rm{div} \tilde{\bv})\ge C_2 \|p\|_{0,\O_s}^2,\qquad \Vert\mu(\bx)^{1/2}\beps(\tilde{\bv})\Vert_{0,\O_s}\le C_1 \|\mu(\bx)^{-1/2}p\|_{0,\O_s}.
	\end{align*}  
	This finding implies that
	\begin{align*}
		{A}(((\bu,\bw),p)&,((\tilde{\bv},\boldsymbol{0}),0))=2\int_{\O_s}\mu(\bx)\beps(\bu):\beps(\bv) - \int_{\O_s} p\vdiv \tilde{\bv} + \int_{\O_s}\rho_S\bu\cdot\tilde{\bv}\\
		&\ge  C_2 \|p\|_{0,\O_s}^2- C_1  \Vert\mu(\bx)^{1/2}\beps(\bu)\Vert_{0,\O_s} \Vert\mu(\bx)^{-1/2}p\Vert_{0,\O_s}  - \Vert\rho_S^{1/2} \bu\Vert_{0,\O_s} \Vert\rho_S^{1/2} \tilde{\bv}\Vert_{0,\O_s}\\
		&\ge (C_2-1/2\epsilon_1)  \|p\|_{0,\O_s}^2- (C_1^2\epsilon_1/2 ) \Vert\mu(\bx)^{1/2}\beps(\bu)\Vert_{0,\O_s}^2- \Vert\rho_S^{1/2} \bu\Vert_{0,\O_s} C_3 \|\mu(\bx)^{-1/2}p\|_{0,\O_s}\\
		&\ge (C_2-1/2\epsilon_1-1/2\epsilon_2)  \|p\|_{0,\O_s}^2- (C_1^2\epsilon_1/2 ) \Vert\mu(\bx)^{1/2}\beps(\bu)\Vert_{0,\O_s}^2- (C_3^2\epsilon_2/2 )\Vert\rho_S^{1/2} \bu\Vert_{0,\O_s}^2.
	\end{align*}
	By taking the specific choices $\bv:= \bu+\delta \tilde{\bv}$, $\btau=\bw$ and $q=-p$, it follows that
	\begin{align*}
		{A}(((\bu,&\bw),p),((\bv,\btau),q))={A}(((\bu,\bw),p),((\bu,\bw),-p))+\delta{A}(((\bu,\bw),p),((\tilde{\bv},\boldsymbol{0}),0))\\
		&\ge \Vert\mu(\bx)^{1/2}\beps(\bu)\Vert_{0,\O_s}^2 +\Vert\lambda(\bx)^{-1/2}p\Vert_{0,\O_s}^2 + \Vert\rho_S^{1/2} \bu\Vert_{0,\O_s}^2\\
		&\quad+ \Vert (c^2\rho_f)^{1/2} \vdiv \bw\Vert_{0,\O_f}^2 + \Vert\rho_f^{1/2}\bw \Vert_{0,\O_f}^2 +\Vert(g\rho_f)^{1/2}\bw\cdot\bn\Vert_{0,\Gamma_0}^2\\
		&\quad +\delta((C_2-1/2\epsilon_1-1/2\epsilon_2)  \|p\|_{0,\O_s}^2- (C_1^2\epsilon_1/2 ) \Vert\mu(\bx)^{1/2}\beps(\bu)\Vert_{0,\O_s}^2- (C_3^2\epsilon_2/2 )\Vert\rho_S^{1/2} \bu\Vert_{0,\O_s}^2).
	\end{align*}
	Choosing $\epsilon_1 =\epsilon_2= 2/C_2$ and $\delta=1/2\min\{C_2/C_1^2,C_2/C_3^2\}$ implies that
	\begin{align*}
		{A}(((\bu,\bw),p),((\bv,\btau),q))\gtrsim \vertiii{((\bu,\bw),p)}_{\bH}^2.
	\end{align*}
	Moreover, it also holds that
	$$
	\vertiii{((\bv,\btau),q)}_{\bH}^2=\vertiii{((\bu+\tilde{\bv},\bw),p)}_{\bH}^2\lesssim  \vertiii{((\bu,\bw),p)}_{\bH}^2.
	$$
\end{proof}

To derive the upper bounds of the the fluid part,  
we employ the Helmholtz-decomposition based  approach as discussed in \cite{caucao2022posteriori}.
For sufficiently smooth scalar $\psi$, and vector $\bv := (v_1,v_2)^\rt$,   we let
\[
\curl(\psi):=\Big(\frac{\partial\psi}{\partial x_2}\,,\,-\frac{\partial\psi}{\partial x_1}\Big)^{\tt t}
\,,\quad
\rot(\bv):=\frac{\partial v_2}{\partial x_1}-\frac{\partial v_1}{\partial x_2}\,.
\]
By applying  \cite[Lemma 4.4]{caucao2022posteriori}, for each $\btau \in  \bW$, we can find $\bz\in \H^2(\O_f)$ and $\phi\in \H^1_{\Sigma}(\O_f)$, such that
\begin{align*}
	\btau= \nabla \bz+\curl \phi\;\mbox{in} \;\O_f,\quad \mbox{and}\quad \|\bz\|_{2,\O_f} +\|\phi\|_{1,\O_f}\lesssim\|\btau\|_{  \H(\vdiv,\O_f)}.
\end{align*}
Finally, we can define a discrete function $\btau_h\in\bW_h$ such that
\[
\btau_h=I_h(\nabla \bz)+\curl \phi_h. 
\] 
Then
\begin{align*}
	\bdiv(\btau-\btau_h)=\bdiv(\nabla\bz-I_h(\nabla \bz))=(\mathbb{I}-\mathcal{P}_h)(\bdiv\btau),
\end{align*}
is $\bL^2(\Omega)$-orthogonal. 

\begin{theorem}
	Consider $(\kappa,(\bu,\bw),p)\in\mathbb{R}\times\bH\in\Q$ as a solution of the spectral problem (\ref{eq:coupled-problem-variational2}) and let $(\kappa_h,(\bu_h,\bw_h),p_h)\in\mathbb{R}\times\bH_h\in\Q_h$
	be the discrete solution of the spectral problem \cblue{(\ref{def:limit_system_eigen_complete_discrete})}.  Then, for every $h_0\ge h$ there holds:
	\begin{align*}
		\vertiii{((\bu-\bu_h,\bw-\bw_h),p-p_h)}_{\bH}\lesssim &\eta +\Theta+ \|(\rho_S(\kappa\bu-\kappa_h\bu_h))\|_{0,\Omega_S}+\|(\rho_S(\bu-\bu_h))\|_{0,\Omega_S}\\
		&\quad+\|(\rho_F(\kappa\bw-\kappa_h\bw_h))\|_{0,\Omega_f}+\|(\rho_F(\bw-\bw_h))\|_{0,\Omega_f},
	\end{align*}
	where the hidden constant is independent of the mesh size, $\nu$ and the discrete solutions. Moreover,
	the eigenvalue also satisfies the following reliability bound:
	\begin{align*}
		|\kappa-\kappa_h|& \lesssim \eta^2+\Theta^2+\|(\rho_S(\kappa\bu-\kappa_h\bu_h))\|_{0,\Omega_S}^2+\|(\rho_S(\bu-\bu_h))\|_{0,\Omega_S}^2\\
		&\quad+\|(\rho_F(\kappa\bw-\kappa_h\bw_h))\|_{0,\Omega_f}^2+\|(\rho_F(\bw-\bw_h))\|_{0,\Omega_f}^2.
	\end{align*}
\end{theorem}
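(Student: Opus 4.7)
The plan is to combine the inf--sup-like control of the error by the bilinear form $A(\cdot,\cdot)$ provided by Lemma \ref{rel11} with a Galerkin-orthogonality argument, standard residual manipulations on the solid block, and a Helmholtz-decomposition argument on the fluid block. Set $\bU:=((\bu,\bw),p)$, $\bU_h:=((\bu_h,\bw_h),p_h)$ and $\bE_h:=\bU-\bU_h$. By Lemma \ref{rel11} there exists $((\bv,\btau),q)\in \bY\times\Q$ with $\vertiii{((\bv,\btau),q)}_{\bH}\lesssim \vertiii{\bE_h}_{\bH}$ such that
\begin{equation*}
\vertiii{\bE_h}_{\bH}^2 \lesssim A(\bE_h,((\bv,\btau),q)).
\end{equation*}
Using that $\bU$ solves the continuous eigenproblem \eqref{def:limit_system_eigen_complete} (so $A(\bU,\cdot)=(\kappa+1)B(\bU,\cdot)$) and that $\bU_h$ solves the discrete one \eqref{def:limit_system_eigen_complete_discrete}, for any test $((\bv_h,\btau_h),q_h)\in \bH_h$ I would write
\begin{equation*}
A(\bE_h,((\bv,\btau),q)) = (\kappa+1)B(\bU,((\bv,\btau),q)) - (\kappa_h+1)B(\bU_h,((\bv_h,\btau_h),q_h)) - A(\bU_h,((\bv-\bv_h,\btau-\btau_h),q-q_h)).
\end{equation*}
The first two terms are collected into the ``$\rho_S(\kappa\bu-\kappa_h\bu_h)$'' and ``$\rho_F(\kappa\bw-\kappa_h\bw_h)$'' contributions (plus $\bU-\bU_h$ terms picked up when matching the test functions), which explains the appearance of these quantities on the right-hand side.

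The hard part is to estimate the residual functional
$A(\bU_h,((\bv-\bv_h,\btau-\btau_h),q-q_h))$ minus the consistent right-hand side, uniformly in the weighted norm. For the solid block one integrates by parts on each $K\in\CT_h$: the volumetric residual $\mathbf{R}_{1,T}^S=\bdiv(2\mu_h\beps(\bu_h))-\nabla p_h+\omega_h^2\rho_S\bu_h$, the constitutive residual $R_{2,T}^S=\vdiv\bu_h+\lambda^{-1}p_h$, and the face jumps $J_\ell^S$ naturally appear. Picking $\bv_h$ as a Scott--Zhang (or Clément) quasi-interpolant of $\bv$, we bound these contributions by $\eta_S$ with the usual local trace/approximation estimates, picking up $\Theta_S$ when we replace $\mu$ by $\mu_h$ in the test-function evaluation. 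For the fluid block $\H(\vdiv;\O_f)$ is not $\H^1$-conforming, so I would invoke the Helmholtz decomposition $\btau=\nabla\bz+\curl\phi$ given right before the theorem, with $\bz\in\H^2(\O_f)$ and $\phi\in\H^1_\Sigma(\O_f)$, and choose $\btau_h = I_h(\nabla\bz)+\curl\phi_h$ with $\phi_h$ a Clément interpolant. Since $\bdiv(\btau-\btau_h)=(\mathbb{I}-\mathcal{P}_h)\bdiv\btau$ is $\L^2$-orthogonal to discrete functions, the term $\int_{\O_f}c^2\rho_F\vdiv\bw_h\,\vdiv(\btau-\btau_h)$ can be rewritten using integration by parts against $\curl\phi$, producing the rotational residual $R_{2,T}^F=\rot(\omega_h^2\rho_F\bw_h)$ together with the tangential jump $J_{2,\ell}^F$; the gradient component $\nabla\bz$, integrated by parts, yields $\mathbf{R}_{1,T}^F$ and the normal jump $J_{1,\ell}^F$. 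The contributions on $\Sigma$ produced by both solid and fluid integration by parts combine into the interface estimator $\eta_{E,I}$, which captures precisely the traction and kinematic consistency across $\Sigma$.

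Once every term in $A(\bE_h,((\bv,\btau),q))$ has been bounded by the local indicators times the interpolation errors of $\bv-\bv_h$, $\btau-\btau_h$, $q-q_h$, and the residual of $\bz,\phi$, we apply the weighted Cauchy--Schwarz inequality, the interpolation estimates in the weighted $\vertiii{\cdot}_{\bH}$ norm, and the stability $\vertiii{((\bv,\btau),q)}_{\bH}\lesssim \vertiii{\bE_h}_{\bH}$. Dividing by $\vertiii{\bE_h}_{\bH}$ then yields the first asserted bound. For the eigenvalue estimate I would reuse the identity \eqref{eq:padra}, bound $|A(\bU-\bU_h,\bU-\bU_h)|$ and $|\kappa||B(\bU-\bU_h,\bU-\bU_h)|$ by $\vertiii{\bE_h}_{\bH}^2$, then substitute the displacement/pressure bound just proved and use that $B(\bU_h,\bU_h)$ is bounded below (as already noted in the paper) to obtain
\begin{equation*}
|\kappa-\kappa_h|\lesssim \eta^2+\Theta^2+\|\rho_S(\kappa\bu-\kappa_h\bu_h)\|_{0,\O_s}^2+\|\rho_S(\bu-\bu_h)\|_{0,\O_s}^2+\|\rho_F(\kappa\bw-\kappa_h\bw_h)\|_{0,\O_f}^2+\|\rho_F(\bw-\bw_h)\|_{0,\O_f}^2,
\end{equation*}
as desired. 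The main technical obstacle I anticipate is the correct tracking of the interface contributions $\eta_{E,I}$ in the Helmholtz step, since the fluid test function $\btau$ lives in $\H(\vdiv;\O_f)$ while $\bv$ lives in $\H^1(\O_s)^d$ and the kinematic coupling is only $\H^{-1/2}(\Sigma)$; this is what forces the specific weighted choice $\rho^I=\min\{\rho_E^F,\rho_E^S\}$ and requires care to obtain bounds that are uniform in $\lambda$ up to the Stokes limit.
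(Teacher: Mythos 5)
Your proposal follows essentially the same route as the paper's proof: it starts from the stability result of Lemma \ref{rel11}, splits the error representation into the eigenpair-difference terms (bounded via Cauchy--Schwarz in the $B$-form) and a Galerkin-orthogonality residual, treats the solid block by elementwise integration by parts against a Cl\'ement-type interpolant, handles the fluid block via the Helmholtz decomposition $\btau=\nabla\bz+\curl\phi$ with the discrete choice $I_h(\nabla\bz)+\curl\phi_h$, collects the $\Sigma$-contributions into the interface indicator, and derives the eigenvalue bound from the algebraic identity \eqref{eq:padra}. The argument is correct and matches the paper's decomposition into $I_S+I_F+I_I$ in all essential respects.
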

\begin{proof}
	By applying an application of the stability result of Lemma \ref{rel11}, for  $((\bu-\bu_h,\bw-\bw_h),p-p_h)\in \bY\times \Q$, there exists $((\bv,\btau),q)\in \bY\times \Q$ with $\vertiii{((\bv,\btau),q)}_{\bH}\lesssim \vertiii{((\bu,\bw),p)}_{\bH}$ such that 
	\begin{align*}
		\vertiii{((\bu-\bu_h,\bw-\bw_h),p-p_h)}_{\bH}^2 \lesssim {A}(((\bu-\bu_h,\bw-\bw_h),p-p_h),((\bv,\btau),q)).
	\end{align*}
	Using the definition of the weak formulation (\ref{def:limit_system_eigen_complete}), it follows that
	\begin{align*}
		\vertiii{((\bu-\bu_h,\bw-\bw_h),p-p_h)}_{\bH}^2 &\lesssim(\kappa+1) B(((\bu,\bw),p),((\bv,\btau),q))-{A}(((\bu_h,\bw_h),p_h),((\bv,\btau),q)),\\
		&\lesssim (\kappa+1) B(((\bu,\bw),p),((\bv,\btau),q))-(\kappa_h+1) B(((\bu_h,\bw_h),p),((\bv,\btau),q))\\
		&\qquad+(\kappa_h+1) B(((\bu_h,\bw_h),p_h),((\bv,\btau),q))
		-{A}(((\bu_h,\bw_h),p_h),((\bv,\btau),q)).
	\end{align*}
	By the definition of the bilinear form $B(\cdot,\cdot)$ and using Cauchy--Schwarz's inequality, we obtain
	\begin{align*}
		|(\kappa+1) B(((\bu,\bw),p),((\bv,\btau),q))&-(\kappa_h+1) B(((\bu_h,\bw_h),p),((\bv,\btau),q))|\\
		&=|((\kappa+1) \rho_S\bu-(\kappa_h+1) \rho_S\bu_h,\bv)+((\kappa+1) \rho_F\bw-(\kappa_h+1) \rho_F\bw_h,\btau)|\\
		&\lesssim (\|(\rho_S(\kappa\bu-\kappa_h\bu_h))\|_{0,\Omega_S}+\|(\rho_S(\bu-\bu_h))\|_{0,\Omega_S})\|\bv\|_{0,\Omega_S}\\
		&\quad+(\|(\rho_F(\kappa\bw-\kappa_h\bw_h))\|_{0,\Omega_f}+\|(\rho_F(\bw-\bw_h))\|_{0,\Omega_f})\|\btau\|_{0,\Omega_f}.
	\end{align*}
	In order to estimate
	\[(\kappa_h+1) B(((\bu_h,\bw_h),p_h),((\bv,\btau),q))-{A}(((\bu_h,\bw_h),p_h),((\bv,\btau),q)),\]
	we first add 
	\begin{align*}
		{A}(((\bu_h,\bw_h),p_h),((\bv_h,\btau_h),q_h))-(\kappa_h+1) B(((\bu_h,\bw_h),p_h),((\bv_h,\btau_h),q_h))=0,
	\end{align*}
	and then apply integration by parts element-wise with the Helmholtz decomposition for $\btau-\btau_h$.  Then we obtain
	\begin{align*}
		(\kappa_h+1) B(((\bu_h,\bw_h),p),((\bv-\bv_h,\btau-\btau_h),q-q_h))-{A}(((\bu_h,\bw_h),p_h),&((\bv-\bv_h,\btau-\btau_h),q-q_h))\\
		&=I_S+I_F+I_I,
	\end{align*}
	where 
	\begin{align*}
		I_S &:= \sum_{K\in\mathcal{T}_h\cap \O_s}\int_{K}(\div(2\mu({\bx})\beps(\bu_h))-\nabla p_h+\kappa\bu_h)\cdot(\bv-\bv_h)  +\int_{\O_s}\left(\vdiv\bu_h +\frac{1}{\lambda}p_h\right) (q-q_h)	\\	
		&\quad -\sum_{E\in\mathcal{E}(\mathcal{T}_h)\cap\O_s}\int_E  ({(2\mu_h\beps(\bu_h) -p_h\mathbb{I})\bn})\cdot (\bv-\bv_h),\\
		I_F &:=   
		\int_{\O_f}(\nabla(c^2\rho_f\vdiv\bw_h)+\kappa_h \rho_F\bw_h)\cdot (\nabla\bz-I_h(\nabla \bz)+\int_{\O_f}(\kappa_h \rho_F\bw_h)\cdot (\curl \phi-\curl \phi_h)\\  
		&\quad -\sum_{E\in\mathcal{E}(\mathcal{T}_h)\cap\O_f}\int_E  ((c^2\rho_f\vdiv\bw_h)\bn)\cdot (\nabla\bz-I_h(\nabla \bz)- \int_{\Gamma_0}(g\rho_f\bw_h\cdot\bn)((\nabla\bz-I_h(\nabla \bz)\cdot\bn),
		\\
		I_I &:= \sum_{E\in\mathcal{E}(\mathcal{T}_h)\cap\Sigma}\int_E  ({(2\mu_h\beps(\bu_h) -p_h\mathbb{I})\bn})\cdot (\bv-\bv_h)-\sum_{E\in\mathcal{E}(\mathcal{T}_h)\cap\Sigma}\int_E  ((c^2\rho_f\vdiv\bw_h)\bn)\cdot (\nabla\bz-I_h(\nabla \bz)).		
	\end{align*}
	Applying integration by parts, along with the Cauchy--Schwarz inequality and the approximation results of the Cl\'ement interpolant, yields:
	\begin{align*}
		I_S\lesssim(\eta_S +\Theta_S)\vertiii{((\bv,\btau),q)}_{\bH}. 
	\end{align*}
	Using integration by parts and   the Cauchy--Schwarz inequality implies that
	\begin{equation*}
		|I_F|  \le (\eta_f+\Theta_f) \|\btau\|_{  \H(\vdiv,\O_f)},\qquad
		|I_I|\le \eta_I \vertiii{((\bv,\btau),q)}_{\bH},
	\end{equation*}
	which completes the proof. 
\end{proof}

\subsection{Efficiency}
We now focus on proving the efficiency bound for the a posteriori error estimator. The strategy for this is the standard based in the technique of using localization with bubble functions (see \cite{MR1885308,MR3059294}).

\begin{lemma}[Interior bubble functions]
	For any $K\in \CT_h$, let $\psi_{K}$ be the corresponding interior bubble function.
	Then, there holds
	\begin{align*}
		\|q\|_{0,K}^2&\lesssim \int_{K}\psi_{K} q^2\leq \|q\|_{0,K}^2\qquad \forall q\in \mathbb{P}_k(K),\\
		\| q\|_{0,K}&\lesssim \|\psi_{K} q\|_{0,K}+h_K\|\nabla(\psi_{K} q)\|_{0,K}\lesssim \|q\|_{0,K}\qquad \forall q\in \mathbb{P}_k(K),
	\end{align*}
	where the hidden constants are 
	independent of  $h_K$.
\end{lemma}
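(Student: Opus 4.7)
The plan is to establish both equivalences by the standard scaling argument to a reference element combined with equivalence of norms on finite-dimensional polynomial spaces. Let $\hat{K}$ denote the reference simplex and let $F_K:\hat{K}\to K$ be the affine map with Jacobian $B_K$ satisfying $\|B_K\|\lesssim h_K$ and $|\det B_K|\simeq |K|\simeq h_K^d$. Write $\hat{\psi}$ for the standard reference bubble (the product of the $d+1$ barycentric coordinates on $\hat{K}$), which is nonnegative, vanishes on $\partial\hat{K}$, is strictly positive on the interior, and satisfies $0\le\hat{\psi}\le 1$; then $\psi_K=\hat{\psi}\circ F_K^{-1}$.

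For the first inequality, I would note that for $\hat{q}\in\mathbb{P}_k(\hat{K})$ the quantities $\hat{q}\mapsto\left(\int_{\hat{K}}\hat{\psi}\,\hat{q}^2\right)^{1/2}$ and $\hat{q}\mapsto\|\hat{q}\|_{0,\hat{K}}$ are both norms on the finite-dimensional space $\mathbb{P}_k(\hat{K})$, so they are equivalent with constants depending only on $k$ and $d$. The upper bound is immediate from $\hat{\psi}\le 1$. Pulling back through $F_K$, the Jacobian factors $|\det B_K|$ cancel on both sides of the equivalence, yielding
\begin{equation*}
\|q\|_{0,K}^2 \lesssim \int_K \psi_K q^2 \le \|q\|_{0,K}^2 \qquad \forall q\in\mathbb{P}_k(K),
\end{equation*}
with hidden constants independent of $h_K$.

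For the second inequality I would apply the same strategy to the quantity $\hat{q}\mapsto \|\hat{\psi}\hat{q}\|_{0,\hat{K}}+\|\nabla(\hat{\psi}\hat{q})\|_{0,\hat{K}}$. Since $\hat{\psi}\hat{q}$ is a polynomial of fixed bounded degree and the map $\hat{q}\mapsto\hat{\psi}\hat{q}$ is injective on $\mathbb{P}_k(\hat{K})$, this is again a norm on a finite-dimensional space, hence equivalent to $\|\hat{q}\|_{0,\hat{K}}$. When transporting back to $K$, the $L^2$ term scales as $|K|^{1/2}$ while $\nabla$ introduces a factor of $\|B_K^{-1}\|\simeq h_K^{-1}$; multiplying the gradient term by $h_K$ restores the correct scaling so that both sides scale identically, producing
\begin{equation*}
\|q\|_{0,K} \lesssim \|\psi_K q\|_{0,K}+h_K\|\nabla(\psi_K q)\|_{0,K}\lesssim \|q\|_{0,K}.
\end{equation*}

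The only subtle point, which is otherwise routine, is to verify that the relevant quantities are indeed norms (not merely seminorms) on $\mathbb{P}_k(\hat{K})$; for the first this uses strict positivity of $\hat{\psi}$ in the open interior together with the fact that a polynomial vanishing on a set of positive measure vanishes identically, and for the second this uses injectivity of multiplication by $\hat{\psi}$. Once these observations are in place, the equivalence of norms on finite-dimensional spaces and the scaling bookkeeping deliver the estimates with constants depending only on $k$ and $d$ (and hence on the shape regularity of the mesh, but not on $h_K$).
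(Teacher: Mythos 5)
Your proof is correct: the paper does not prove this lemma but simply states it as a standard auxiliary result from the a posteriori literature (Verf\"urth-type arguments, cf.\ \cite{MR1885308,MR3059294}), and the argument it implicitly relies on is precisely the one you give — pull back to the reference simplex, use equivalence of norms on the finite-dimensional space $\mathbb{P}_k(\hat K)$ (checking that $\hat q\mapsto(\int_{\hat K}\hat\psi\hat q^2)^{1/2}$ and $\hat q\mapsto\|\hat\psi\hat q\|_{0,\hat K}$ are genuine norms via strict interior positivity of $\hat\psi$ and injectivity of multiplication by $\hat\psi$), and rescale, with the factor $h_K$ compensating the $\|B_K^{-1}\|$ scaling of the gradient. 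Your bookkeeping of the Jacobian factors and the dependence of the constants only on $k$, $d$ and shape regularity is accurate, so nothing further is needed.
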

\begin{lemma}[Facet bubble functions]
	\label{burbuja}
	For any $K\in \CT_h$ and $\ell\in\CE_{K}$, let $\psi_{\ell}$
	be the corresponding facet bubble function. Then, there holds
	\begin{equation*}
		\|q\|_{0,\ell}^2\lesssim \int_{\ell}\psi_{\ell} q^2 \leq \|q\|_{0,\ell}^2\qquad
		\forall q\in \mathbb{P}_k(\ell).
	\end{equation*}
	Moreover, for all $q\in\mathbb{P}_k(\ell)$, there exists an extension of  $q\in\mathbb{P}_k(K)$ (again denoted by $q$) such that
	\begin{align*}
		h_K^{-1/2}\|\psi_{\ell} q\|_{0,K}+h_K^{1/2}\|\nabla(\psi_{\ell} q)\|_{0,K}&\lesssim \|q\|_{0,\ell},
	\end{align*}
	where the hidden constants are independent of  $h_K$.
\end{lemma}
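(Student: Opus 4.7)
The plan is to reduce all the claimed inequalities to norm equivalences on finite-dimensional polynomial spaces on a reference element, and then to recover the $h_K$-scaling through an affine change of variables.

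First, fix a reference simplex $\widehat K$ with a distinguished facet $\widehat\ell$ and let $\psi_{\widehat\ell}$ denote the standard facet bubble function, namely the product of the barycentric coordinates associated with the vertices of $\widehat\ell$, normalized so that $0\le \psi_{\widehat\ell}\le 1$, strictly positive on the interior of $\widehat\ell$ and vanishing on $\partial\widehat K\setminus \widehat\ell$. The upper bound $\int_{\widehat\ell}\psi_{\widehat\ell}\,\widehat q^{\,2}\le \|\widehat q\|_{0,\widehat\ell}^2$ is immediate from $\psi_{\widehat\ell}\le 1$. For the lower bound, the map $\widehat q\mapsto \bigl(\int_{\widehat\ell}\psi_{\widehat\ell}\,\widehat q^{\,2}\bigr)^{1/2}$ defines a genuine norm on the finite-dimensional space $\mathbb{P}_k(\widehat\ell)$ — it is clearly a seminorm, and vanishes only when $\widehat q\equiv 0$ since $\psi_{\widehat\ell}>0$ on the interior of $\widehat\ell$ — hence, by equivalence of norms in finite dimension, it is comparable to $\|\widehat q\|_{0,\widehat\ell}$.

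For the second assertion, the next step is to invoke a fixed bounded linear polynomial extension operator $\mathcal{E}:\mathbb{P}_k(\widehat\ell)\to\mathbb{P}_k(\widehat K)$, for instance the constant extension along the direction normal to $\widehat\ell$ (this is standard, see e.g.\ \cite{MR1885308}). Denoting the extension again by $\widehat q$, both $\widehat q\mapsto \|\psi_{\widehat\ell}\widehat q\|_{0,\widehat K}$ and $\widehat q\mapsto \|\nabla(\psi_{\widehat\ell}\widehat q)\|_{0,\widehat K}$ are continuous seminorms on the finite-dimensional image of $\mathcal E$, and are therefore bounded by a constant multiple of $\|\widehat q\|_{0,\widehat\ell}$, again by norm equivalence.

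Finally, transferring back to a generic element $K$ via the affine map $F_K:\widehat K\to K$, shape-regularity yields $|\det DF_K|\simeq h_K^d$, $|\det(DF_K|_{\widehat\ell})|\simeq h_K^{d-1}$, and $\|(DF_K)^{-1}\|\simeq h_K^{-1}$. A routine change of variables then produces the prefactors $h_K^{-1/2}$ in front of $\|\psi_\ell q\|_{0,K}$ and $h_K^{1/2}$ in front of $\|\nabla(\psi_\ell q)\|_{0,K}$, which exactly absorb the mismatch between the $L^2$-scaling on a $(d-1)$-dimensional facet and on a $d$-dimensional element. There is no real obstacle here; the only care required is in the bookkeeping of the scaling exponents and in ensuring that the extension operator respects affine equivalence, which under shape-regularity is automatic.
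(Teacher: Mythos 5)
Your proposal is correct. The paper does not actually prove this lemma --- it states it as a standard tool and points to the usual a posteriori references (\cite{MR1885308,MR3059294}); the argument you give (positivity of $\psi_{\widehat\ell}$ on the interior of $\widehat\ell$ plus equivalence of norms on the finite-dimensional spaces $\mathbb{P}_k(\widehat\ell)$ and $\mathbb{P}_k(\widehat K)$ on a reference configuration, a fixed linear polynomial extension, and then affine scaling with $|\det DF_K|\simeq h_K^d$, facet Jacobian $\simeq h_K^{d-1}$ and $\|(DF_K)^{-1}\|\simeq h_K^{-1}$) is exactly the canonical proof from those references, and the exponents $h_K^{-1/2}$ and $h_K^{1/2}$ come out as you computed.
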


The efficiency of the elastic estimator $\eta_S$ directly follows from \cite[Lemma 4.3]{khan2023finite}. 
On the other hand, the efficiency of the fluid estimator $\eta_F$ follows by collecting the bounds in the following result.
\begin{lemma}\label{eff11}
	There holds:
	\begin{align*}
		h_K^2\|\rho^F\textbf{R}_{1,T}^F\|_{0,T}^2&\le  \|c\rho_F^{1/2}\vdiv(\bw-\bw_h)\|_{0,T}^2+\|(\rho^F)^{-1/2}(\omega^2_h\rho_F\bw_h -\omega^2\rho_F\bw)\|_{0,T}^2,\\
		h_K^2\|\rho^FR_{2,T}^S\|_{0,T}^2&\le\|(\rho^F)^{-1/2}(\omega^2_h\rho_F\bw_h -\omega^2\rho_F\bw)\|_{0,T}^2,\\
		h_E\|\rho_E^FJ_{1,\ell}^F \|_{0,\ell}^2&\le   \|c\rho_F^{1/2}\vdiv(\bw-\bw_h)\|_{0,\omega_T}^2+\|(\rho^F)^{-1/2}(\omega^2_h\rho_F\bw_h -\omega^2\rho_F\bw)\|_{0,\omega_T}^2,\\
		h_E\|\rho_E^FJ_{2,\ell}^F \|_{0,\ell}^2&\le \|(\rho^F)^{-1/2}(\omega^2_h\rho_F\bw_h -\omega^2\rho_F\bw)\|_{0,\omega_T}^2.
	\end{align*}
\end{lemma}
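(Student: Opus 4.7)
My plan is to exploit that each of the four residuals vanishes identically on the exact solution $(\omega,\bw)$, rewrite each local residual as the difference with its continuous counterpart, and then invoke the standard bubble-function localization of \cite{MR1885308,MR3059294}. The starting identities come from combining the third and fourth equations of \eqref{eq:coupled-problem2} to eliminate $\textrm{p}_F$, which yields
\[
c^2\rho_f\nabla(\vdiv\bw)+\omega^2\rho_f\bw=\boldsymbol{0}\qquad\text{in }\O_f.
\]
This provides, element by element, $\rot(\omega^2\rho_f\bw)=0$; moreover, since $c^2\rho_f\vdiv\bw\in\H^1(\O_f)$, its normal jump across interior facets vanishes, and the tangential jump of $\omega^2\rho_f\bw=\nabla\textrm{p}_F$ vanishes as well.

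For the first interior residual I would take $\bv_K:=\psi_K\textbf{R}_{1,T}^F$ extended by zero and use the rewrite $\textbf{R}_{1,T}^F=c^2\rho_f\nabla\vdiv(\bw_h-\bw)+(\omega_h^2\rho_f\bw_h-\omega^2\rho_f\bw)$. Integration by parts (no boundary contribution, since $\bv_K|_{\partial K}=\boldsymbol{0}$) combined with the inverse inequality $\|\vdiv\bv_K\|_{0,K}\lesssim h_K^{-1}\|\bv_K\|_{0,K}\lesssim h_K^{-1}\|\textbf{R}_{1,T}^F\|_{0,K}$ turns the $\vdiv(\bw_h-\bw)$ factor into the desired $L^2$ norm; dividing by $\|\textbf{R}_{1,T}^F\|_{0,K}$ and multiplying by $h_K^2$ produces the first bound once the weight $\rho^F=(c^2\rho_f)^{-1/2}$ is inserted. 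The second interior bound is then obtained with the scalar bubble $q_K:=\psi_K R_{2,T}^F$, the rewrite $R_{2,T}^F=\rot(\omega_h^2\rho_f\bw_h-\omega^2\rho_f\bw)$, and $\curl$-integration by parts, which reduces the estimate to $\int_K(\omega_h^2\rho_f\bw_h-\omega^2\rho_f\bw)\cdot\curl q_K$ and the inverse bound on $\curl q_K$.

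For the two facet residuals I would work on the patch $\omega_T=K\cup K'$, set $\bv_\ell:=\psi_\ell\widetilde J_{1,\ell}^F$ and $q_\ell:=\psi_\ell\widetilde J_{2,\ell}^F$ using the polynomial extension of Lemma~\ref{burbuja}, and integrate by parts element-wise. In the first case, the two-element integration by parts generates $\int_\ell J_{1,\ell}^F\cdot\bv_\ell$ together with volume terms involving $\textbf{R}_{1,T}^F$ (already bounded) and $c^2\rho_f\vdiv(\bw_h-\bw)\vdiv\bv_\ell$; the scalings $\|\bv_\ell\|_{0,T}\lesssim h_\ell^{1/2}\|J_{1,\ell}^F\|_{0,\ell}$ and $\|\nabla\bv_\ell\|_{0,T}\lesssim h_\ell^{-1/2}\|J_{1,\ell}^F\|_{0,\ell}$ then close the argument. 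The $J_{2,\ell}^F$ bound is analogous, with $\curl$-integration by parts, using that $\rot(\omega^2\rho_f\bw)=0$ in each element and that the tangential jump of $\omega^2\rho_f\bw$ across $\ell$ vanishes, so the continuous contribution disappears.

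The step I expect to be most delicate is matching the coefficient weights on the right-hand side: the bubble-function argument naturally yields $\|c^2\rho_f\vdiv(\bw-\bw_h)\|_{0,T}$ and $\|\omega_h^2\rho_f\bw_h-\omega^2\rho_f\bw\|_{0,T}$, whereas the stated bounds use the weighted norms with $\rho^F=(c^2\rho_f)^{-1/2}$ and $\rho_E^F$. Following the strategy used for the elastic estimator in \cite{khan2023finite}, I would absorb the mismatch by replacing $c^2\rho_f$ and $\omega_h^2\rho_f$ by their piecewise-constant projections on each element and controlling the remainder via oscillation terms of the claimed higher order. Everything else is a direct transcription of the classical Verf\"urth localization.
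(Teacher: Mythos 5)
Your proposal is correct and follows essentially the same route as the paper: rewrite each residual as its difference with the vanishing continuous counterpart $c^2\rho_f\nabla(\vdiv\bw)+\omega^2\rho_f\bw=\boldsymbol{0}$, localize with interior and facet bubble functions, integrate by parts, and close with the inverse estimates of the bubble lemmas (the paper handles the second and fourth bounds more tersely, by invoking an inverse inequality and citing Dur\'an--Padra, where you spell out the $\curl$-integration by parts). The only superfluous element is your final concern about coefficient oscillation in the fluid: $c$, $\rho_f$ and hence $\rho^F$, $\rho^F_E$ are constants here, so the weights can simply be built into the bubble test functions as the paper does, and no fluid oscillation term is needed.
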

\begin{proof}
	Firstly, we define $\bv_T= \chi_Th_T^2(\rho^F)^2\textbf{R}_{1,T}^F$, where  $\chi_T$ is the element bubble function defined on $T$. Then
	\begin{align*}
		h_T^2\|\rho^F\textbf{R}_{1,T}^F\|_{0,T}^2\le (\textbf{R}_{1,T}^F, \bv_T)&=\int_T  (c^2\rho_F\nabla(\vdiv\bw_h)+\omega^2_h\rho_F\bw_h) \bv_T\\
		&=\int_T  (c^2\rho_F\nabla(\vdiv\bw_h)+\omega^2_h\rho_F\bw_h -c^2\rho_F\nabla(\vdiv\bw)+\omega^2\rho_F\bw) \bv_T\\
		&=\int_T  (-c^2\rho_F\nabla(\vdiv(\bw-\bw_h))\bv_T +\int_T(\omega^2_h\rho_F\bw_h -\omega^2\rho_F\bw)) \bv_T=T_1+T_2.
	\end{align*} 
	An application of integration of parts on $T_1$ gives that
	\begin{align*}
		T_1&= \int_T  (-c^2\rho_F\nabla(\vdiv(\bw-\bw_h))\bv_T\\
		&= \int_T  (c^2\rho_F\vdiv(\bw-\bw_h))\vdiv\bv_T \\
		&\le \|c\rho_F^{1/2}\vdiv(\bw-\bw_h)\|_{0,T}h_T\|\rho^F\textbf{R}_{1,T}^F\|_{0,T}.
	\end{align*}
	Finally, in order to estimate $T_2$ we simply apply the Cauchy--Schwarz inequality to obtain  
	\begin{align*}
		T_2= \int_T(\omega^2_h\rho_F\bw_h -\omega^2\rho_F\bw)) \bv_T\le \|(\rho^F)^{-1/2}(\omega^2_h\rho_F\bw_h -\omega^2\rho_F\bw)\|_{0,T}h_T\|\rho^F\textbf{R}_{1,T}^F\|_{0,T}.
	\end{align*}
	By applying the inverse inequality, we derive the second stated result. Next, we define $$\bv_\ell:=\psi_\ell h_E(\rho_E^F)^2[\![c^2\rho_F(\vdiv\bw_h)\bn]\!],$$ where $\psi_\ell$ is the bubble function that satisfies the properties outlined in Lemma \ref{burbuja}. We then proceed to estimate the term $h_E\|\rho_E^{F}J_{\ell}\|_{0,\ell}^2$, leading to  
	\begin{align}\label{burbuja11}
		h_E\|\rho_EJ_{1,\ell}\|_{0,\ell}^2&\lesssim ([\![{c^2\rho_F(\vdiv\bw_h)\bn}]\!], \bv_\ell)_\ell= ([\![{c^2\rho_F(\vdiv\bw_h)\bn}]\!], \bv_\ell)_\ell.
	\end{align}
	Applying integration by parts on $\omega_T$ yields
	\begin{align*}
		([\![{c^2\rho_F(\vdiv\bw_h)\bn}]\!], \bv_\ell)_\ell=\sum_{T\in\omega_T} &\Big((c^2\rho_F(\vdiv\bw_h)-c^2\rho_F(\vdiv\bw), \vdiv(\bv_\ell))_T\\
		&+(\textbf{R}_{1,T}^F, \bv_\ell)_T
		+(\omega^2_h\rho_F\bw_h -\omega^2\rho_F\bw, \bv_\ell)_T\Big).
	\end{align*}
	Using the Cauchy--Schwarz inequality along with Lemma \ref{burbuja} and combining it with (\ref{burbuja11}), we obtain that
	\begin{align*}
		h_E\|\rho_E^FJ_{1,\ell}\|_{0,\ell}^2&\lesssim ( \|c\rho_F^{1/2}\vdiv(\bw-\bw_h)\|_{0,\omega_T}+\|(\rho^F)^{-1/2}(\omega^2_h\rho_F\bw_h -\omega^2\rho_F\bw)\|_{0,\omega_T}) h_E^{1/2}\|\rho_E^FJ_{1,\ell}\|_{0,\ell}.
	\end{align*}
	The last estimates directly follows from \cite[Theorem 2]{duran1999posteriori}.
\end{proof}

Lastly, the efficiency of the interface estimator $\eta_I$ is a direct consequence of \cite[Lemma 4.3]{khan2023finite} in conjunction with Lemma \ref{eff11}.

\section{Numerical experiments}\label{sec:numerical-section}
This section presents a series of computational tests using the open-source FE library \texttt{FEniCS} \cite{AlnaesEtal2015} together with the special modules \texttt{FeniCS$_{ii}$} \cite{kuchta2020assembly} and \texttt{multiphenics} \cite{ballarin2019multiphenics} for the treatment of bulk-surface coupling mechanisms. The meshes have been constructed using the mesh generation/manipulation library \texttt{GMSH}  \cite{geuzaine2009gmsh}.

\begin{figure}[t!]\centering
	\begin{minipage}{0.49\linewidth}\centering
		{\footnotesize $\Omega_1$}\\
		\includegraphics[scale=0.18,trim=45cm 2cm 42cm 4cm,clip]{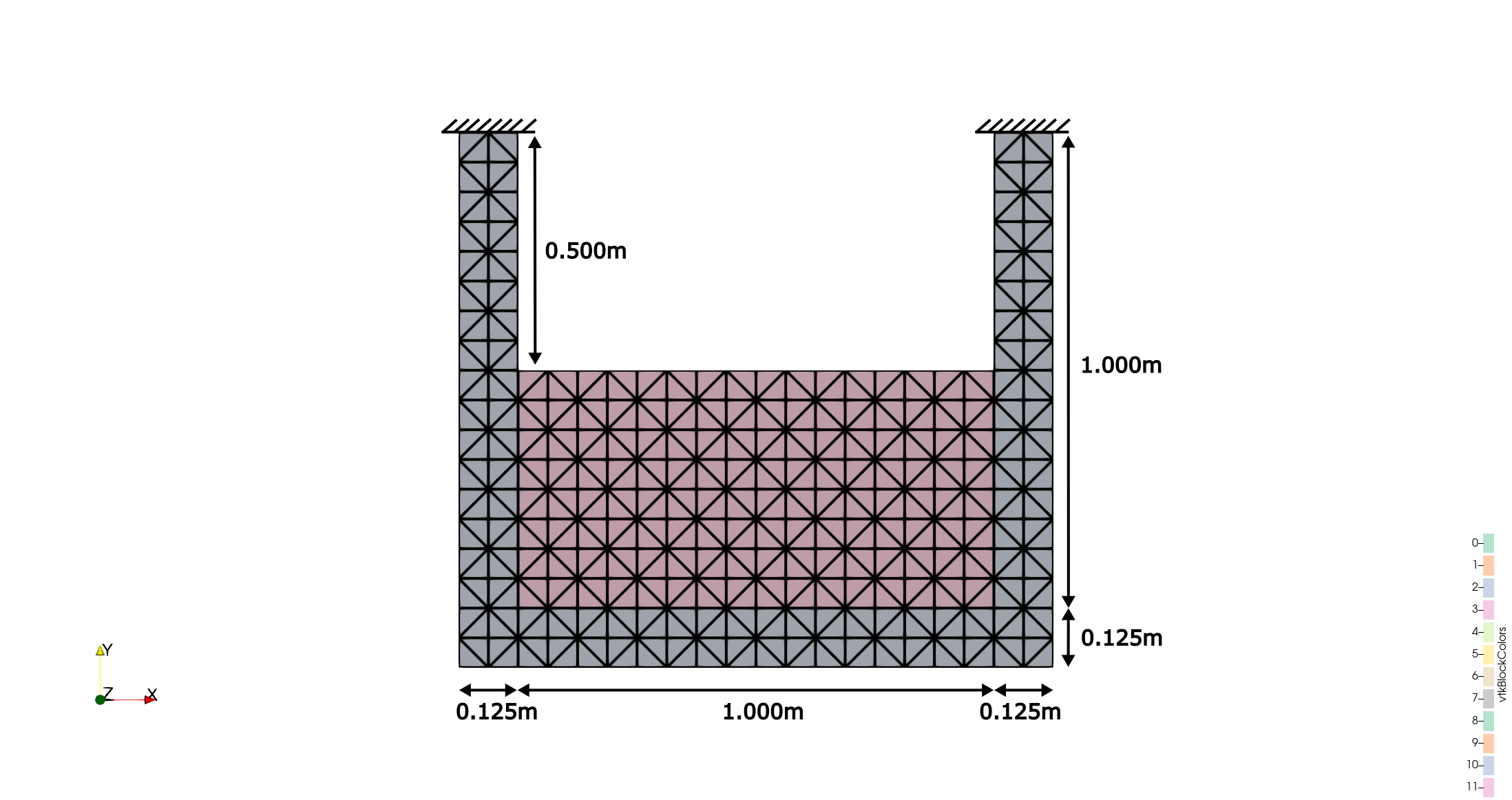}
	\end{minipage}
	\begin{minipage}{0.49\linewidth}\centering
		{\footnotesize $\Omega_2$}\\
		\includegraphics[scale=0.125]{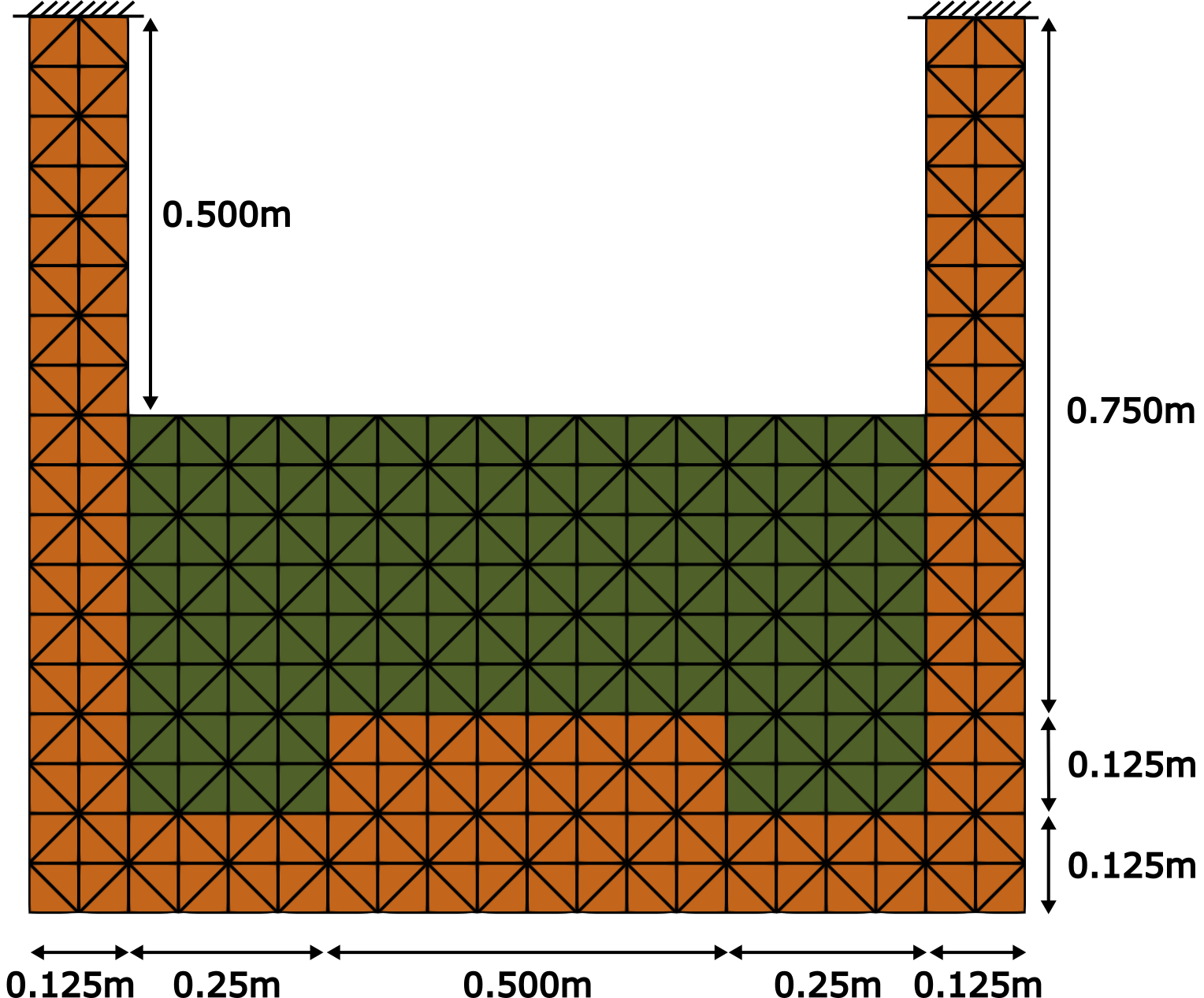}
	\end{minipage}
	\caption{Test \ref{subsec:accuracy-test}. Schematic of 2D computational domains indicating dimensions, boundaries, subdomains, and using a coarse mesh with $N=2$.}
	\label{fig:2D-meshed-domain}
\end{figure}

As we typically do not know the closed-form solutions for the eigenvalues, their convergence rates have been  obtained with a standard least-squares fitting and highly refined meshes.  
We  denote by $N$ the mesh refinement level (number of edges along the shortest edge) and $\texttt{dof}$ denotes the number of degrees of freedom. We denote by $\omega_{h,i}$ the i-th discrete eigenfrequency and denote the error on the $i$-th eigenvalue by $\err( \omega_i)$ with 
\begin{equation*}
	\err(\omega_i):=\vert \omega_{h,i}^2-\omega_{i}^2\vert.
\end{equation*}		

\subsection{Accuracy test}\label{subsec:accuracy-test}
In this section we perform a convergence analysis with respect to mesh refinement using the first three eigenvalues of the coupled problem. With the aim of comparing the numerically obtained results against the simulations reported in \cite{MR3283363}, the first series of tests employ the following parameter values
$\rho_s= 7700$kg$/$m$^3$, 
$E= 1.44\times 10^{11}$Pa, 
$\nu = 0.35$, 
$\rho_f= 1000$kg$/$m$^3$, 
$c = 1430$m$/$s, 
$g = 9.8$m$/$s$^2$, 
and we focus on the elasto-acoustic modes. We consider two configurations for the geometry. First, we consider $\Omega_1$ such that the fluid domain is a rectangle and the solid container is a polygon as depicted in Figure \ref{fig:2D-meshed-domain}(left). The second geometry, denoted as $\Omega_2$, is such that we have re-entrant corners in the solid and fluid subdomains \ref{fig:2D-meshed-domain}(right).  The  domains are discretized such that there is conformity between $\Omega_S$ and $\Omega_f$. 

The error history for is outlined in he two blocks of Table~\ref{tabla:results-accuracy-elastoacoustic}  for the elasto-acoustic modes in $\Omega_1$ and $\Omega_2$, respectively. We observe an asymptotic linear convergence  for the case of Taylor--Hood elements, and suboptimal rate for MINI-elements. However, results on Taylor--Hood on the first part of Table \ref{tabla:results-accuracy-elastoacoustic} are closer to the reference values from \cite{MR3283363}. Suboptimal convergence results are expected in $\Omega_i$ because of the strong interaction between the solid and the fluid, where the solid contains at least two angle singularities and four points where boundary conditions change from Dirichlet to Neumann.

\begin{table}[t!]\centering
	{\footnotesize
		\setlength{\tabcolsep}{9.5pt}
			\caption{Test \ref{subsec:accuracy-test}. Lowest computed  elasto-acoustic eigenvalues on two different geometry configurations $\Omega_i$.}
			\label{tabla:results-accuracy-elastoacoustic}
			\begin{tabular}{@{}m{0.75cm}@{}c@{}} 
				\rotatebox{90}{Configuration $\Omega_1$} &
				\begin{tabular}{|r r r r |c| r| r|}
					\hline
					\hline
					$N=8$ & $N=10$ & $N=12$ & $N=14$ & Order & $\omega_{extr}$ &\cite{MR3283363} \\ 
					\hline
					\multicolumn{7}{|c|}{MINI-element + $\mathbb{BDM}_1$}  \\
					\hline
					452.4554  & 449.4600  & 447.7540  & 446.6792  & 1.83 & 443.3200 & 442.71   \\
					1495.3899 & 1487.1441 & 1482.4956 & 1479.5959 & 1.88 & 1470.8135 & 1469.4  \\
					2634.2701 & 2618.4326 & 2609.1272 & 2603.1127 & 1.68 & 2582.4208 & 2578.33  \\
					2813.9435 & 2796.5619 & 2786.7383 & 2780.5963 & 1.87 & 2761.8755 & 2758.94  \\
					\hline
					\multicolumn{7}{|c|}{Taylor--Hood + $\mathbb{BDM}_1$}  \\
					\hline
					443.7421 & 443.5416 & 443.4114 & 443.3204 & 1.18 & 442.8549 & 442.71  \\
					1471.5727 & 1471.1864 & 1470.9341 & 1470.7567 & 1.15 & 1469.8245 & 1469.4   \\
					2586.2547 & 2584.7255 & 2583.7213 & 2583.0128 & 1.12 & 2579.1717 & 2578.33   \\
					2763.4399 & 2762.6174 & 2762.0797 & 2761.7013 & 1.14 & 2759.6912 & 2758.94  \\
					\hline
					\hline
			\end{tabular}  \end{tabular}
			
			\medskip 
			
			\begin{tabular}{@{}m{0.75cm}@{}c@{}}\centering 
				\rotatebox{90}{Configuration $\Omega_2$} &
				\begin{tabular}{|r r r r |c| r|}
					\hline
					\hline
					$N=10$             &  $N=12$         &   $N=14$         & $N=16$ & Order & $\omega_{extr}$ \\ 
					\hline
					\multicolumn{6}{|c|}{MINI-element + $\mathbb{BDM}_1$}  \\
					\hline
					405.0943  &   403.5271  &   402.5353  &   401.8617  & 1.76 &   399.2754  \\
					1597.7890  &  1592.0717  &  1588.3495  &  1585.7601  & 1.59 &  1574.6229  \\
					2634.5808  &  2625.2576  &  2619.2071  &  2615.0098  & 1.61 &  2597.2107  \\
					2654.9989  &  2645.1907  &  2638.9963  &  2634.7990  & 1.78 &  2618.9133  \\
					\hline
					\multicolumn{6}{|c|}{Taylor--Hood + $\mathbb{BDM}_1$}  \\
					\hline
					399.6299  &   399.4973  &   399.4046  &   399.3362  & 1.48 &   399.0376 \\
					1576.9642  &  1576.2474  &  1575.7438  &  1575.3713  & 1.14 &  1573.0690  \\
					2600.9286  &  2599.7732  &  2598.9654  &  2598.3703  & 1.17 &  2594.7947  \\
					2620.8404  &  2620.0780  &  2619.5463  &  2619.1554  & 1.18 &  2616.8270  \\
					\hline
					\hline
				\end{tabular}
			\end{tabular}

	}
\end{table} 		

\begin{figure}[t!]
	\centering
	\begin{minipage}{0.24\linewidth}
		\centering
		{\footnotesize $\bu_{h,1}$}\\
		\includegraphics[scale=0.1,trim=56cm 5cm 56cm 5cm,clip]{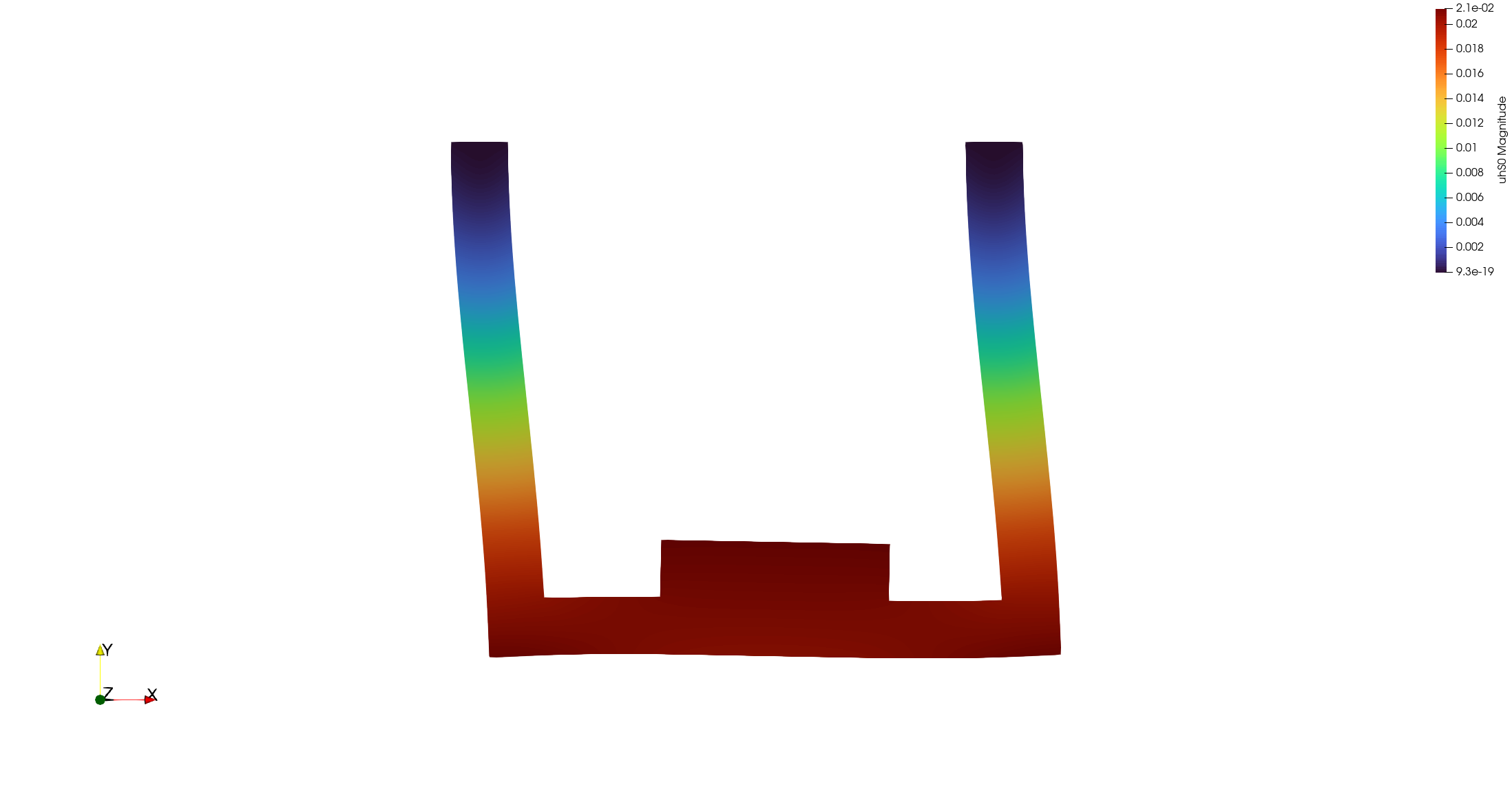}
	\end{minipage}
	\begin{minipage}{0.24\linewidth}
		\centering
		{\footnotesize $\bu_{h,2}$}\\
		\includegraphics[scale=0.1,trim=56cm 5cm 56cm 5cm,clip]{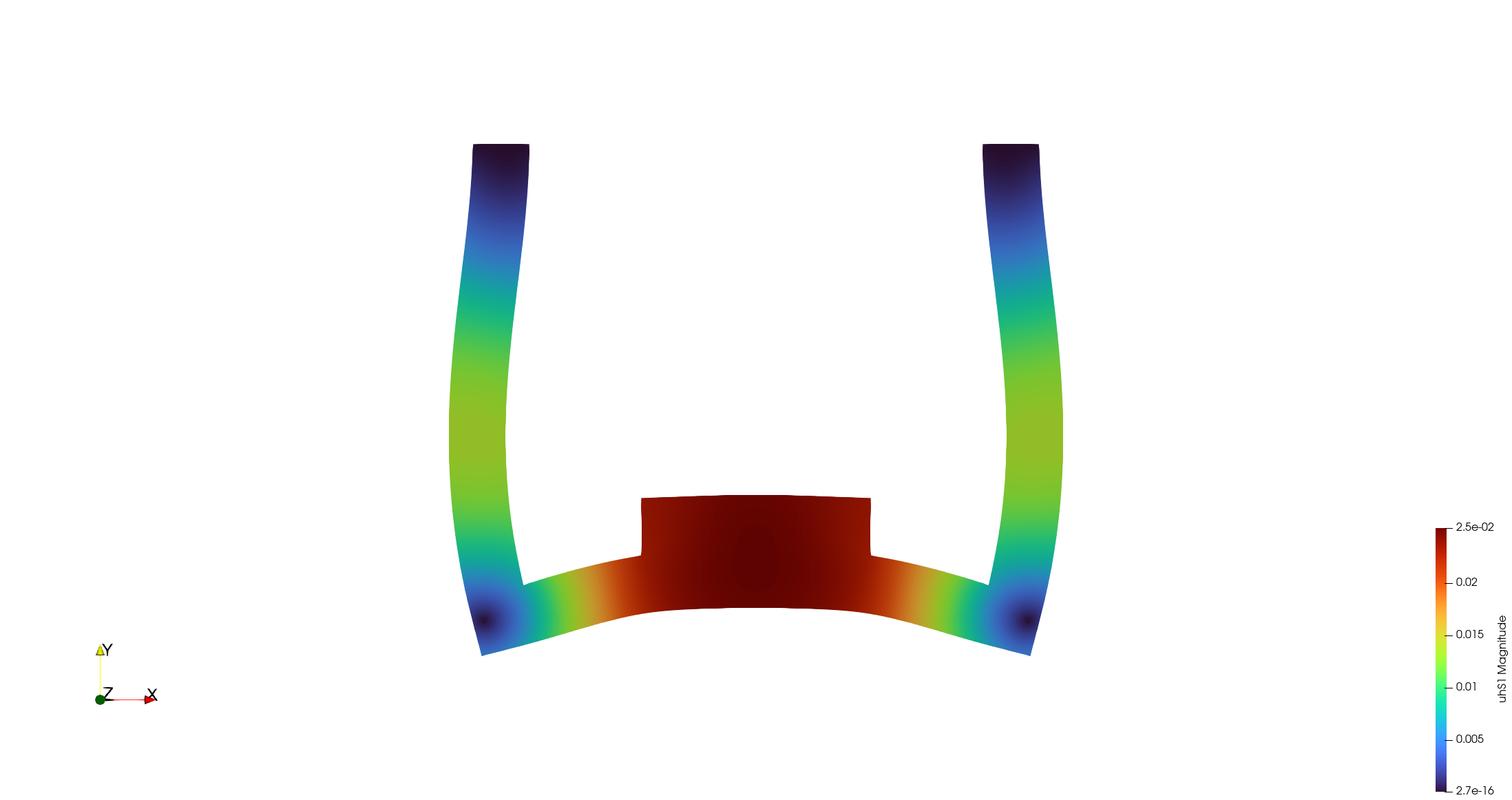}
	\end{minipage}
	\begin{minipage}{0.24\linewidth}
		\centering
		{\footnotesize $\bu_{h,3}$}\\
		\includegraphics[scale=0.1,trim=56cm 5cm 56cm 5cm,clip]{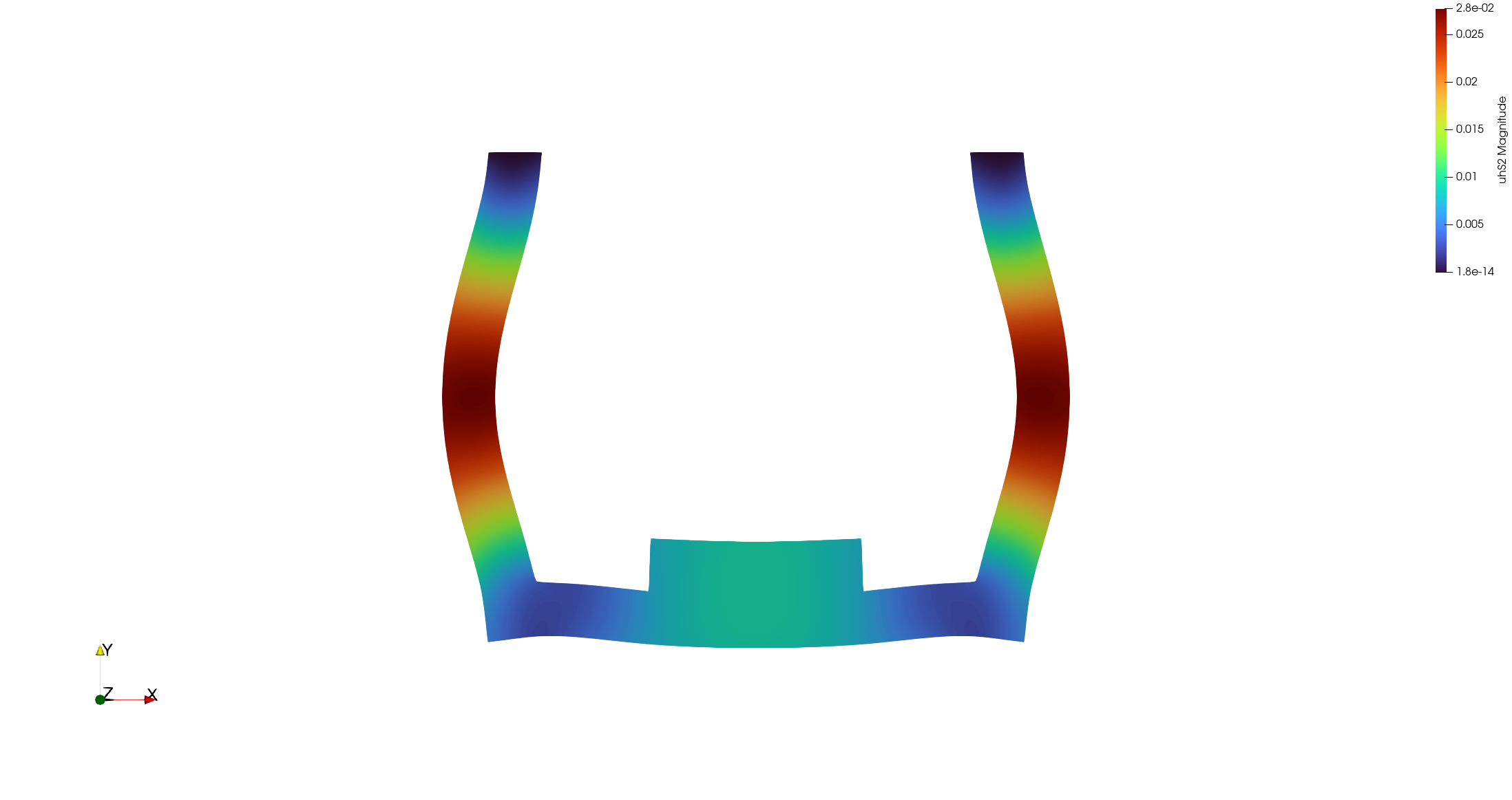}
	\end{minipage}
	\begin{minipage}{0.24\linewidth}
		\centering
		{\footnotesize $\bu_{h,4}$}\\
		\includegraphics[scale=0.1,trim=56cm 5cm 56cm 5cm,clip]{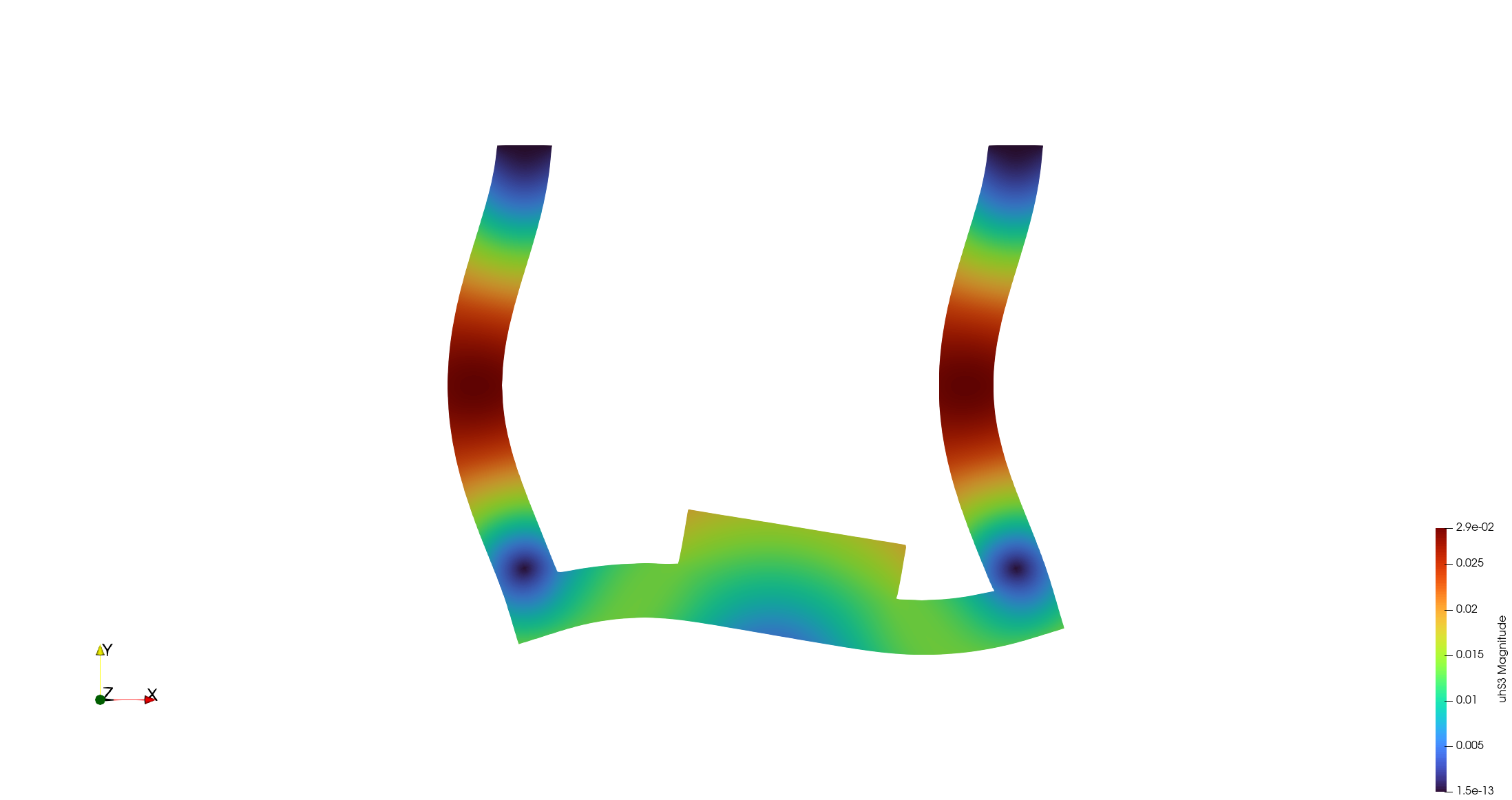}
	\end{minipage}
	\begin{minipage}{0.24\linewidth}
		\centering
		{\footnotesize $p_{h,1}$}\\
		\includegraphics[scale=0.1,trim=56cm 5cm 56cm 5cm,clip]{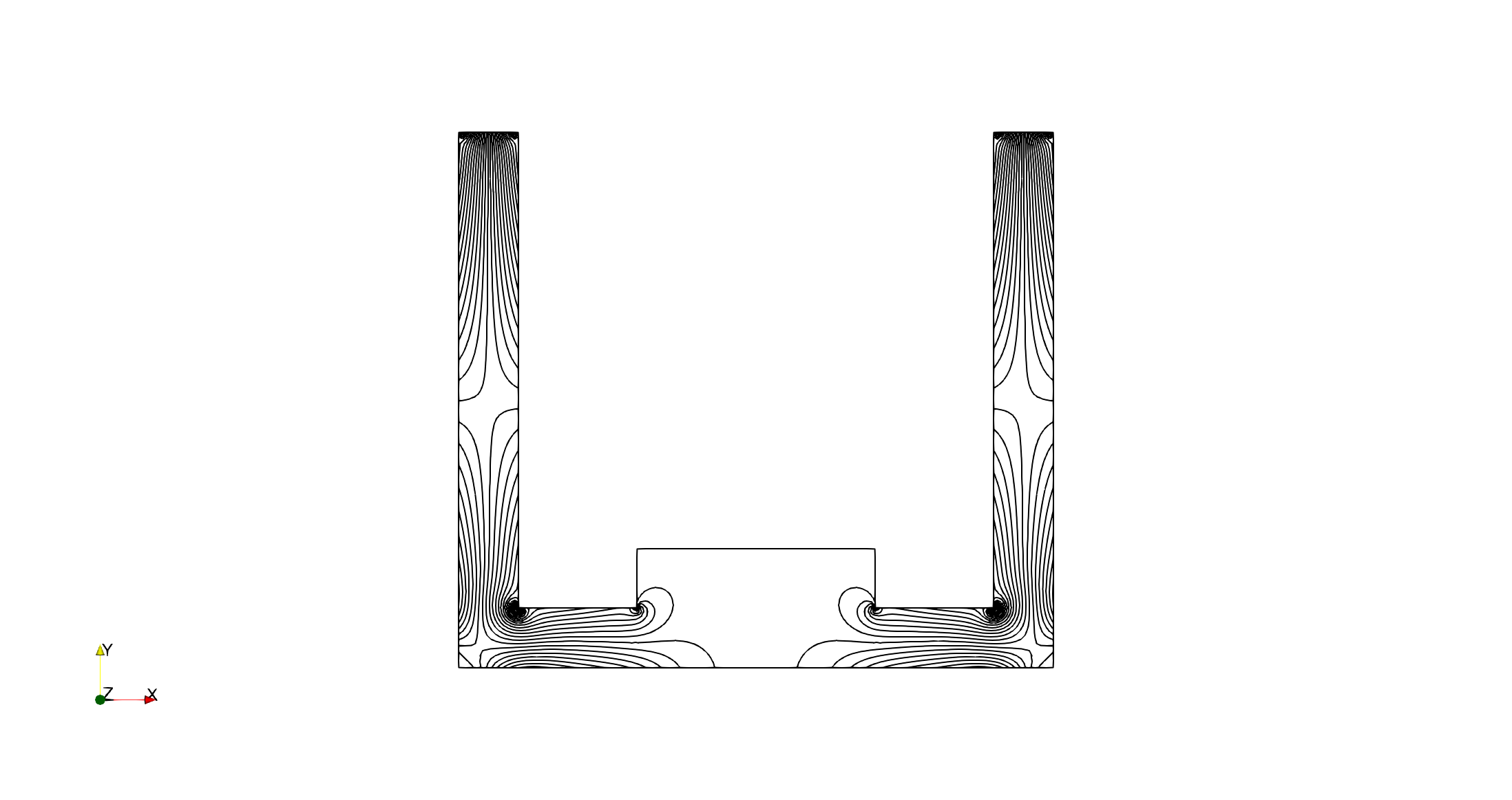}
	\end{minipage}
	\begin{minipage}{0.24\linewidth}
		\centering
		{\footnotesize $p_{h,2}$}\\
		\includegraphics[scale=0.1,trim=55cm 5cm 55cm 5cm,clip]{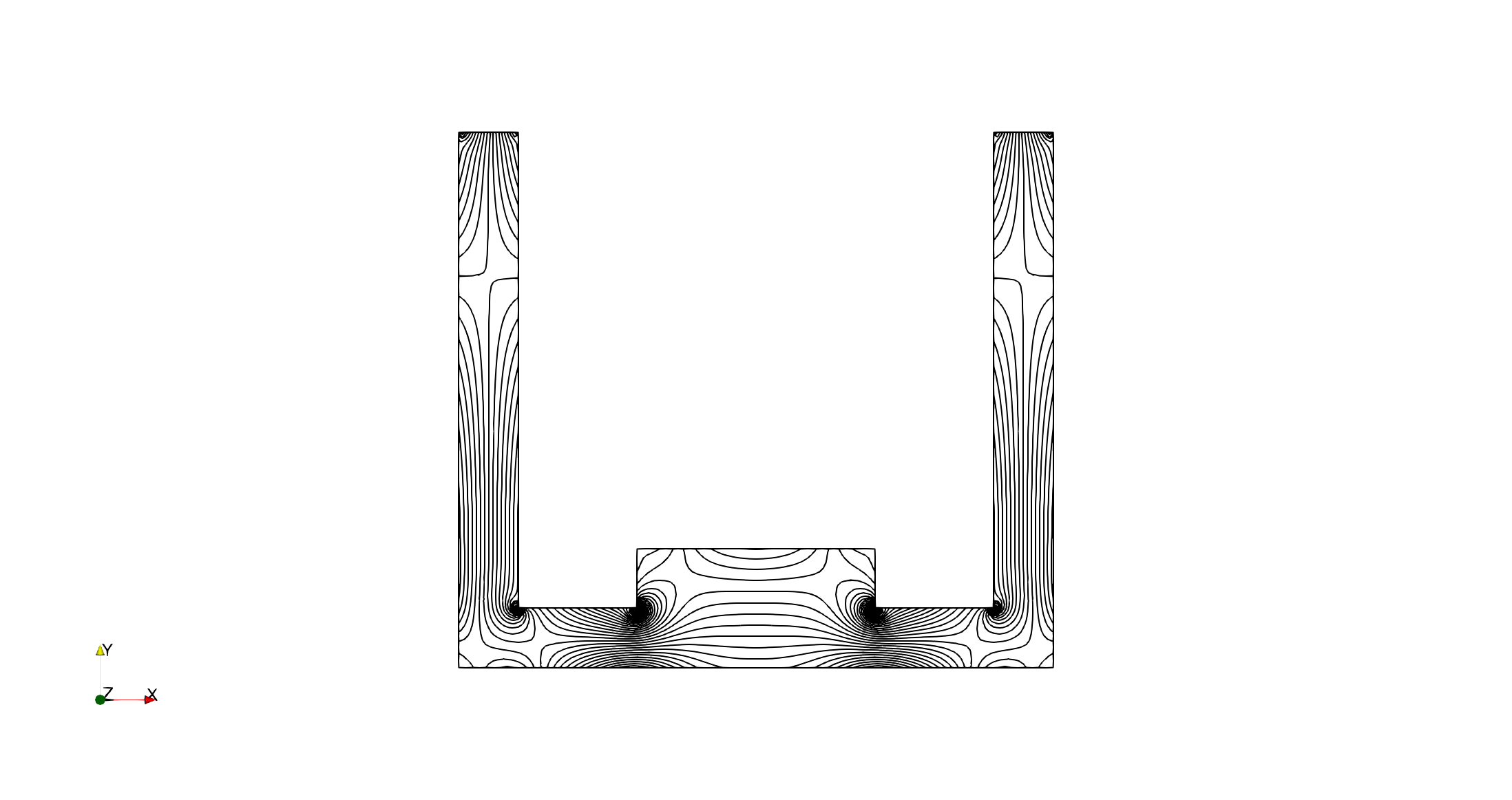}
	\end{minipage}
	\begin{minipage}{0.24\linewidth}
		\centering
		{\footnotesize $p_{h,3}$}\\
		\includegraphics[scale=0.1,trim=55cm 5cm 55cm 5cm,clip]{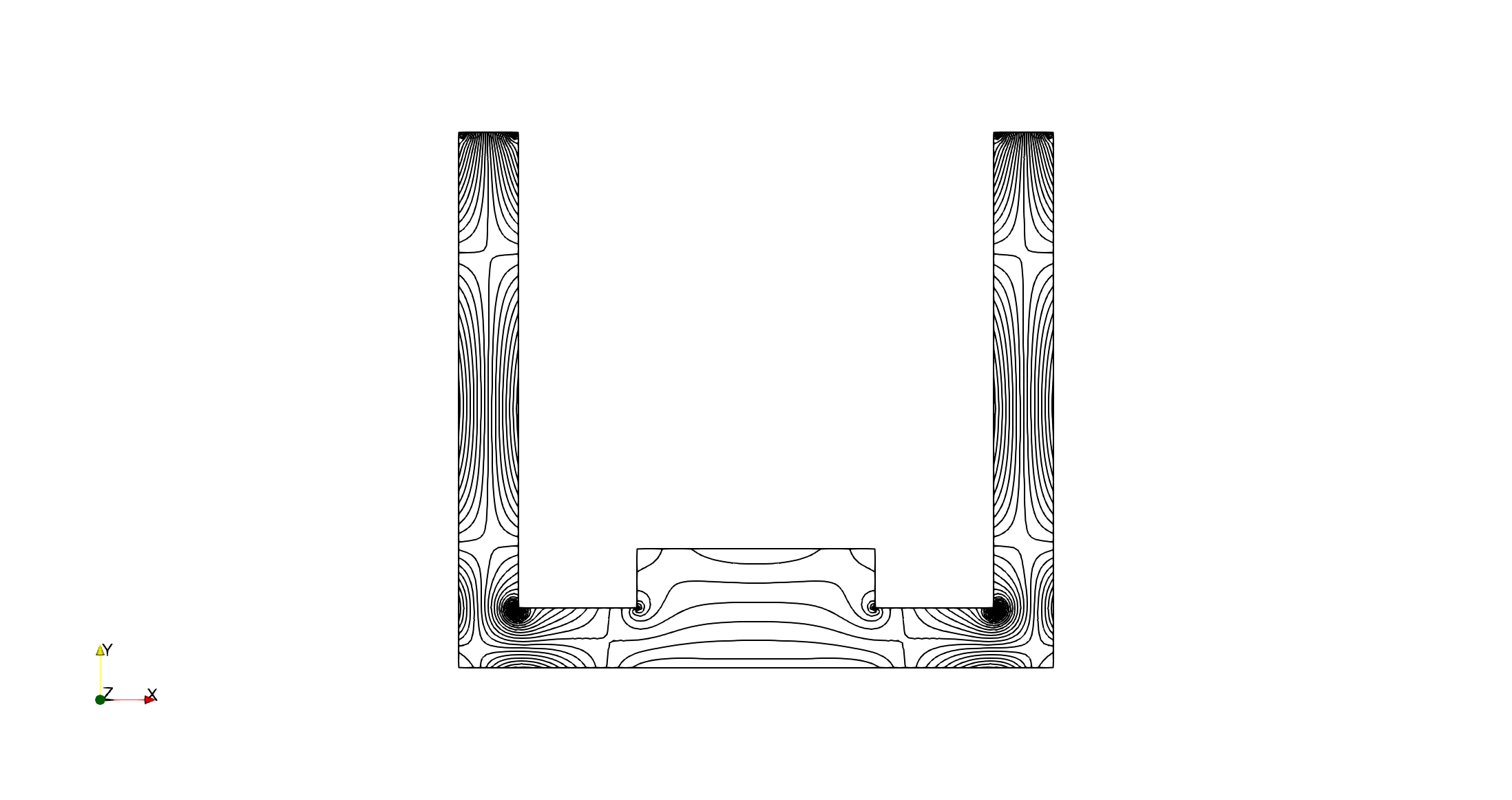}
	\end{minipage}
	\begin{minipage}{0.24\linewidth}
		\centering
		{\footnotesize $p_{h,4}$}\\
		\includegraphics[scale=0.1,trim=55cm 5cm 55cm 5cm,clip]{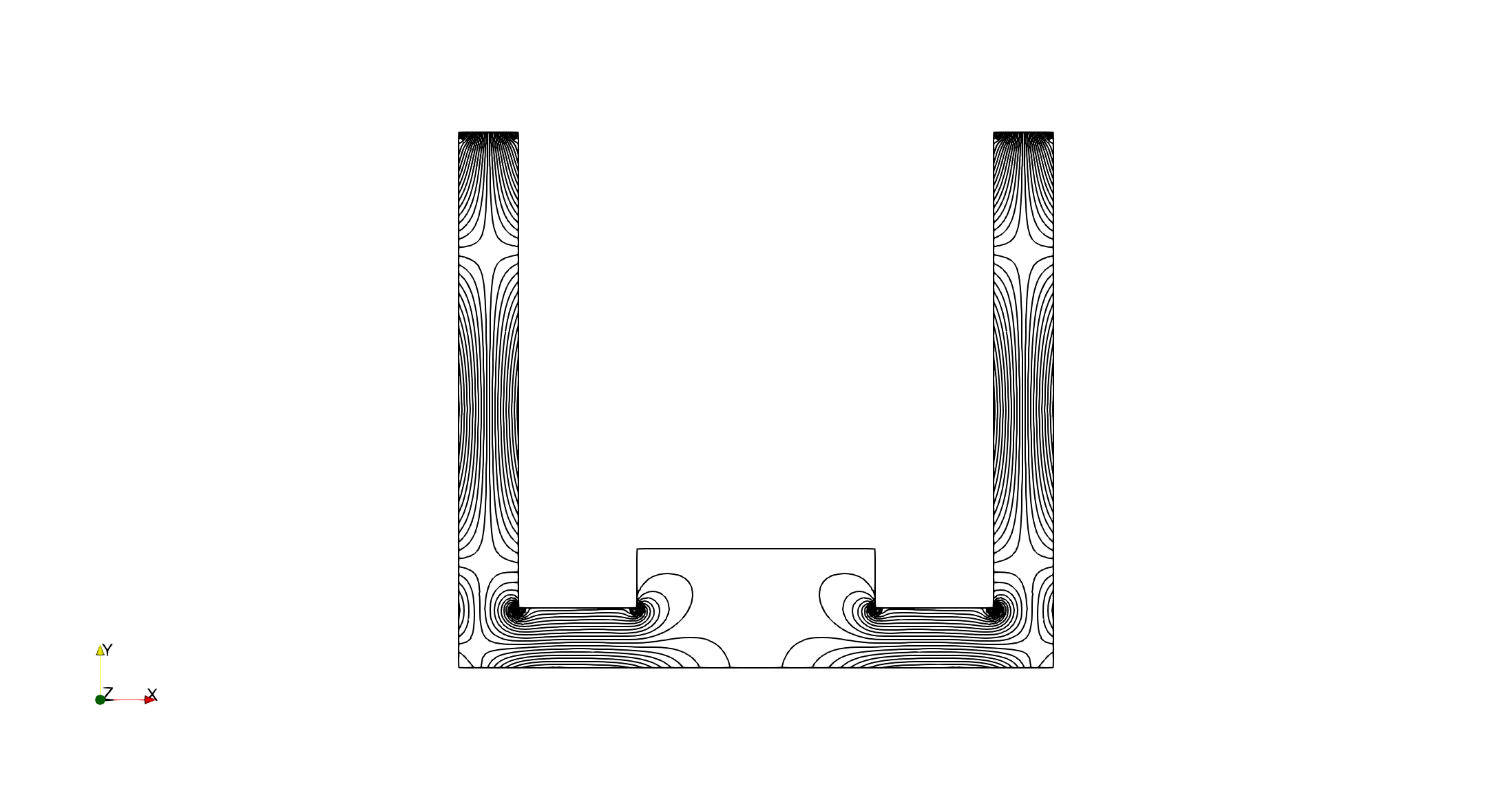}
	\end{minipage}
	\begin{minipage}{0.24\linewidth}
		\centering
		{\footnotesize $\bw_{h,1}$}\\
		\includegraphics[scale=0.1,trim=55cm 10cm 55cm 10cm,clip]{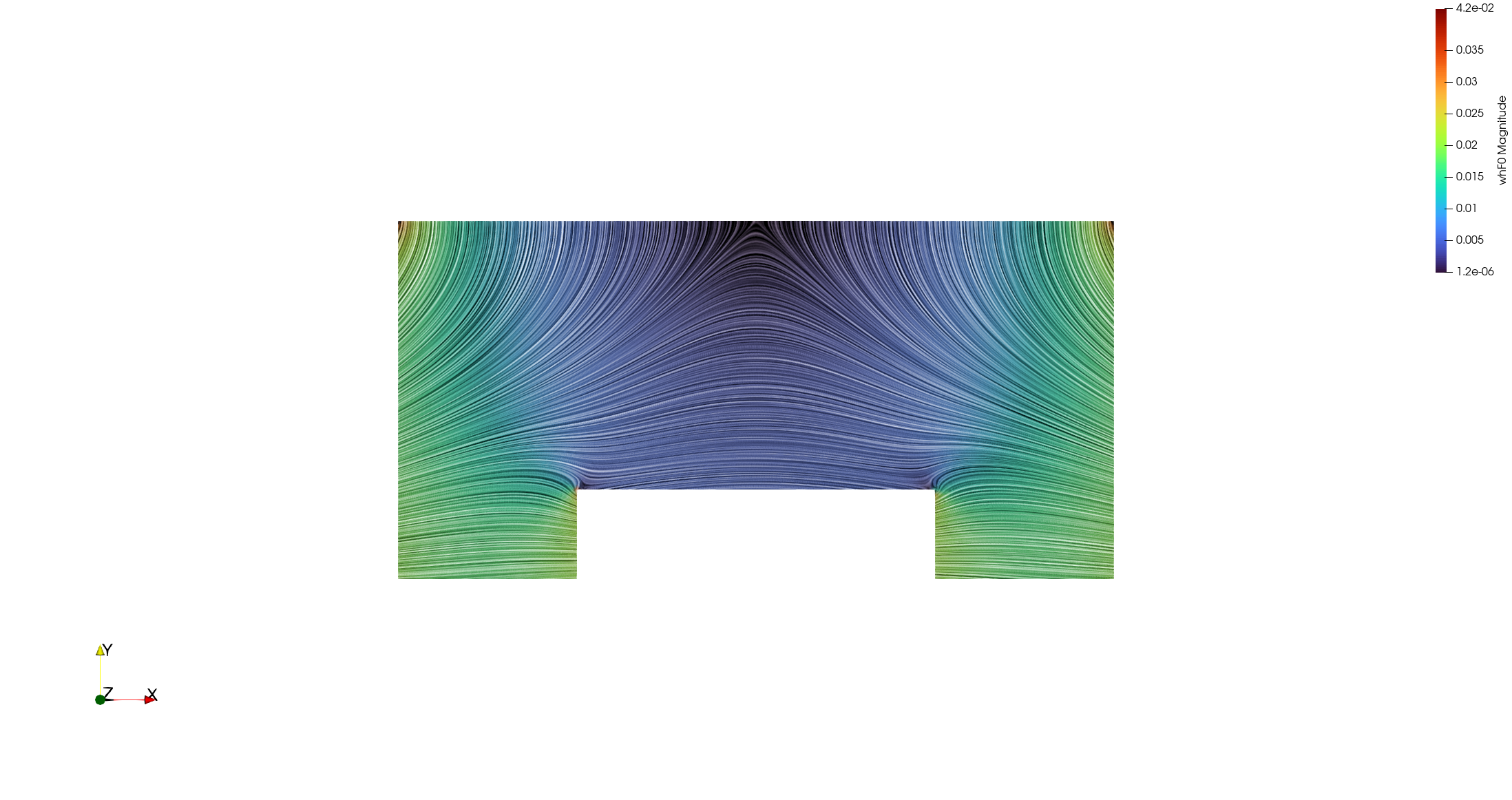}
	\end{minipage}
	\begin{minipage}{0.24\linewidth}
		\centering
		{\footnotesize $\bw_{h,2}$}\\
		\includegraphics[scale=0.1,trim=55cm 10cm 55cm 10cm,clip]{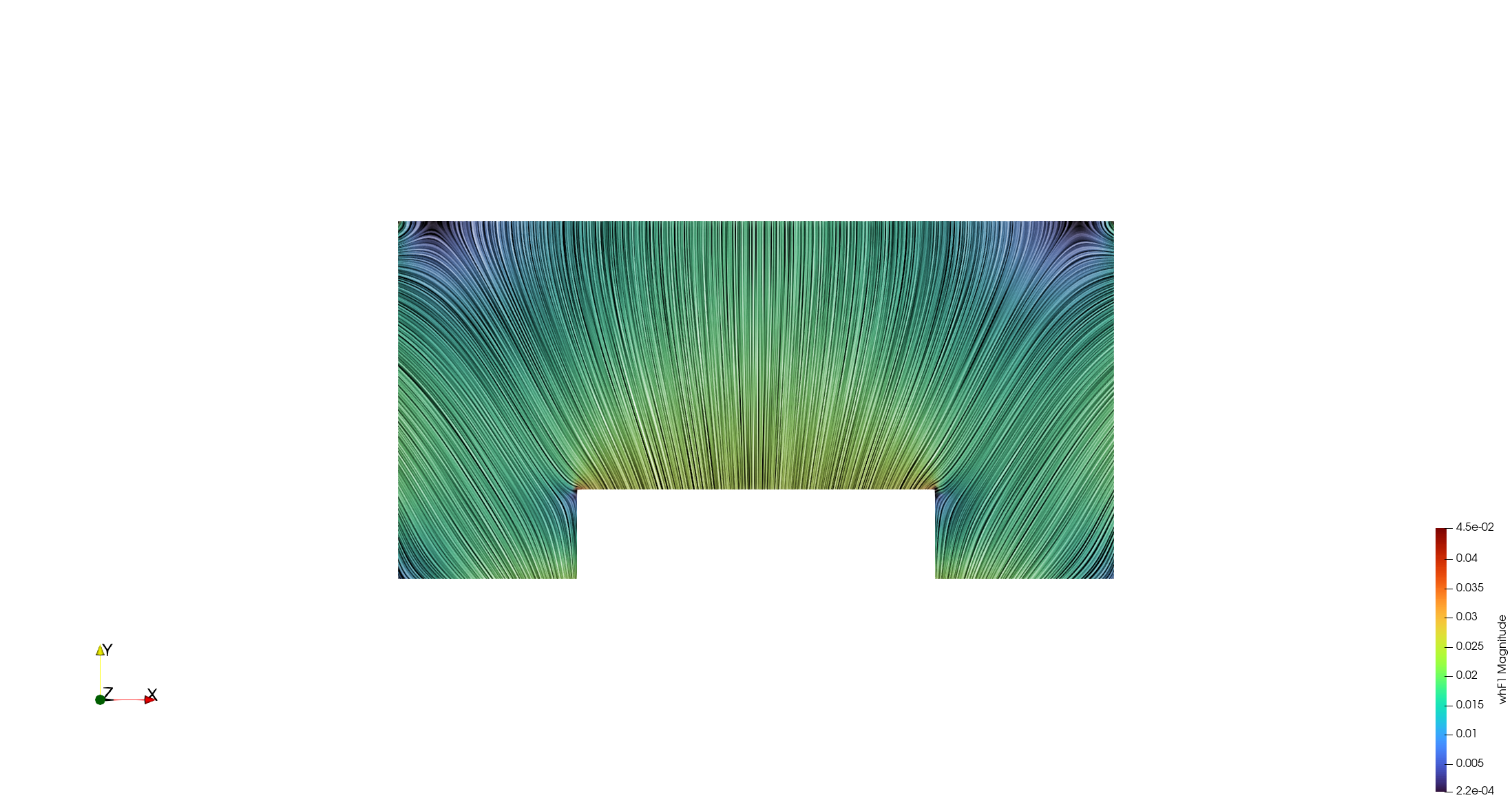}
	\end{minipage}
	\begin{minipage}{0.24\linewidth}
		\centering
		{\footnotesize $\bw_{h,3}$}\\
		\includegraphics[scale=0.1,trim=55cm 10cm 55cm 10cm,clip]{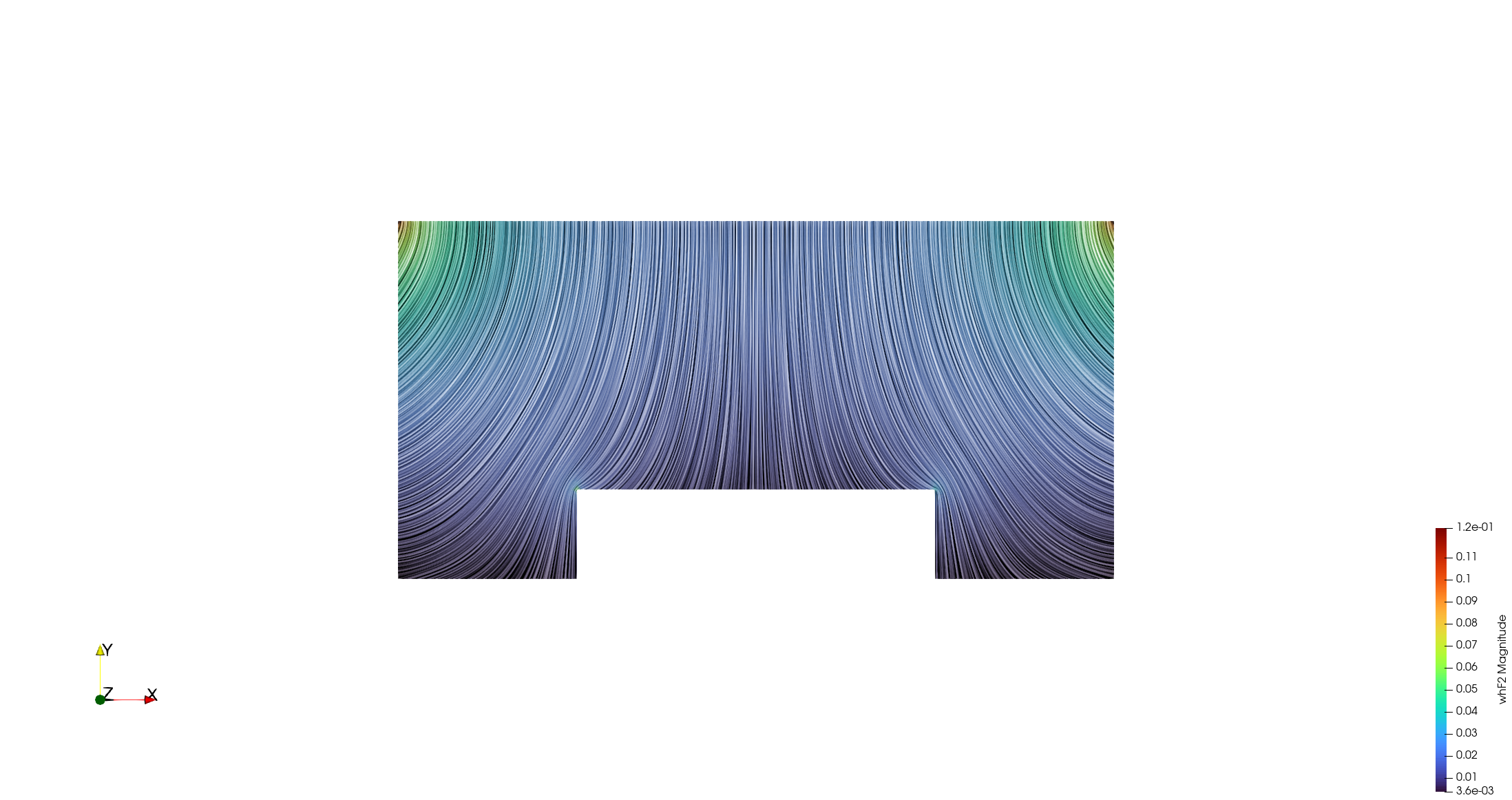}
	\end{minipage}
	\begin{minipage}{0.24\linewidth}
		\centering
		{\footnotesize $\bw_{h,4}$}\\
		\includegraphics[scale=0.1,trim=55cm 10cm 55cm 10cm,clip]{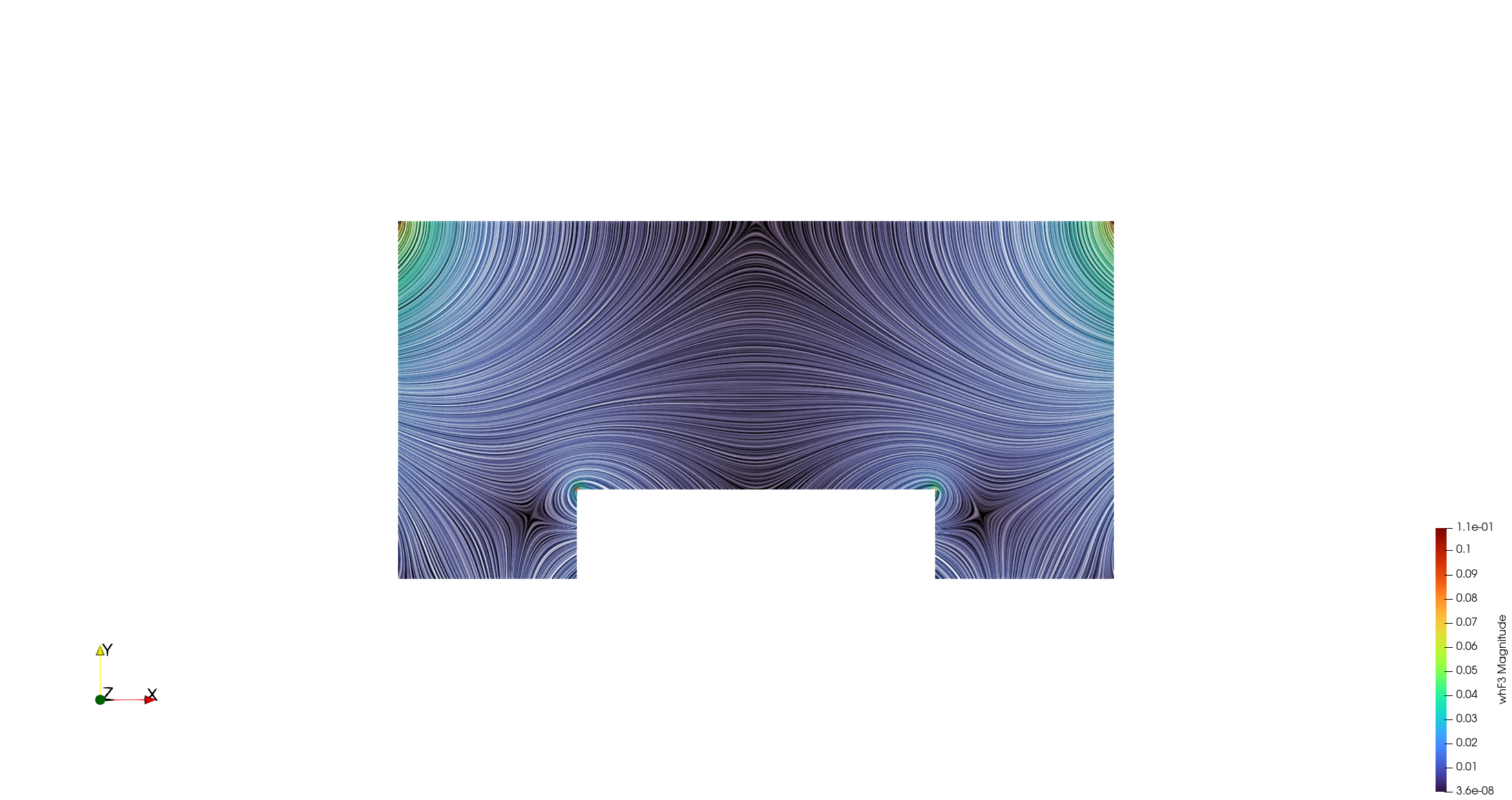}
	\end{minipage}
	\caption{Example \ref{subsec:accuracy-test}. Comparison between the first fourth lowest order computed elasto-acoustic modes on $\Omega_2$. The solid domain have been warped by a sufficiently large factor in order to observe the deformation.}
\end{figure}
\subsection{Fluid-structure interaction in a half-filled barrel}\label{subsec:3Dbarrel}
In this experiment we study the behavior of the scheme when considering a non-polygonal domain in three dimensions.  The solid and fluid sub-domains (see a sketch in Figure \ref{fig:barrel-domain}) are defined as follows 
\begin{gather*}
	\O_{s_1}:=\{(x,y,z)\in\mathbb{R}^3\,:\, y^2+z^2 = 1, x\in[-2,1] \},\\ 
	\O_{s_2}:=\{(x,y,z)\in\mathbb{R}^3\,:\, y^2+z^2 = 0.75^2, x\in[-1.75,0.75] \},\\
	\O_s:=\O_{s_1}\backslash\O_{s_2}, \quad 
	\Omega_f:=\{(x,y,z)\in\mathbb{R}^3\,:\, \sqrt{y^2+z^2} = -\sqrt{0.75}, x\in[-1.75,0.75] \}.
\end{gather*}
The barrel is clamped on the external circular faces and free of stress in the rest.	In order to compute the references eigenfrequencies, we have used a highly refined mesh and the higher order family $\mathbb{P}_3+\mathbb{P}_2 + \mathbb{BDM}_2$  for the solid displacement, solid pressure and fluid displacement, respectively.  The mesh level is defined as $h\approx 1{/} N$. The error history is reported in Table~\ref{tabla:results-3dbarrel}. Approximate solutions are shown in Figure~\ref{fig:3D-sols}. A linear rate of convergence is observed, justified by  the fluid-solid interaction.

\begin{figure}[!t]
	\centering
	\includegraphics[scale=0.1,trim=25cm 8cm 22cm 12cm, clip]{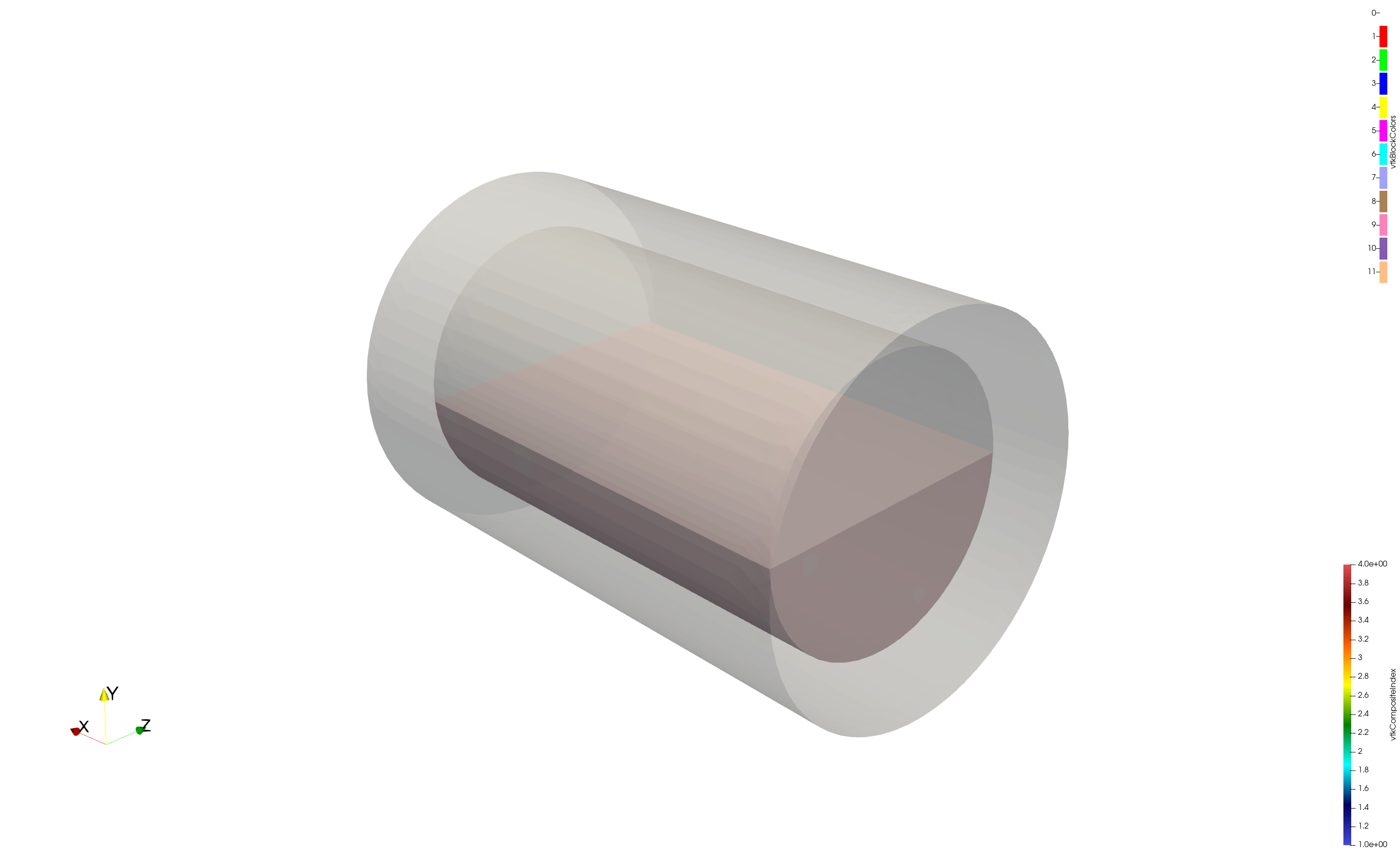}
	\caption{Example \ref{subsec:3Dbarrel}. The computational domain of the half filled barrel.}
	\label{fig:barrel-domain}
\end{figure}

\begin{table}[t!]\centering
	{\footnotesize\setlength{\tabcolsep}{9.5pt}
		\caption{Example \ref{subsec:3Dbarrel}. Lowest computed eigenvalues using different combinations of FE families in the half filled barrel domain.}
		\label{tabla:results-3dbarrel}
		\begin{tabular}{|r r r r |c| r| }
			\hline
			\hline
			$N=8$             &  $N=10$         &   $N=12$         & $N=14$ & Order & $\omega_{extr}$  \\ 
			\hline
			\multicolumn{6}{|c|}{MINI-element + $\mathbb{BDM}_1$}  \\
			\hline
			752.0366  &   750.1402  &   735.6936  &   731.3101  & 1.11 &   712.2499  \\
			837.3585  &   833.0175  &   807.9182  &   799.7536  & 1.17 &   768.1502  \\
			1072.1009  &  1066.3581  &  1022.3635  &  1009.2311  & 1.30 &   963.4901  \\
			1137.4735  &  1133.2683  &  1101.1474  &  1091.0507  & 1.30 &  1057.3622  \\
			\hline
			\multicolumn{6}{|c|}{Taylor--Hood + $\mathbb{BDM}_1$}  \\
			\hline
			716.0246  &   715.3147  &   714.2296  &   713.8362  & 1.29 &   712.2499  \\
			772.4976  &   771.5265  &   770.3104  &   769.8766  & 1.37 &   768.1502  \\
			968.7337  &   967.6326  &   966.1352  &   965.6204  & 1.34 &   963.4901  \\
			1061.6803  &  1060.8890  &  1059.6190  &  1059.1303  & 1.32 &  1057.3622  \\
			\hline
			\hline
		\end{tabular}
}
\end{table}

\begin{figure}[!hbt]
\centering
\begin{minipage}{0.24\linewidth}\centering
	{\footnotesize $\bu_{h,1}$}\\
	\includegraphics[scale=0.102,trim=22cm 4cm 20cm 6cm,clip]{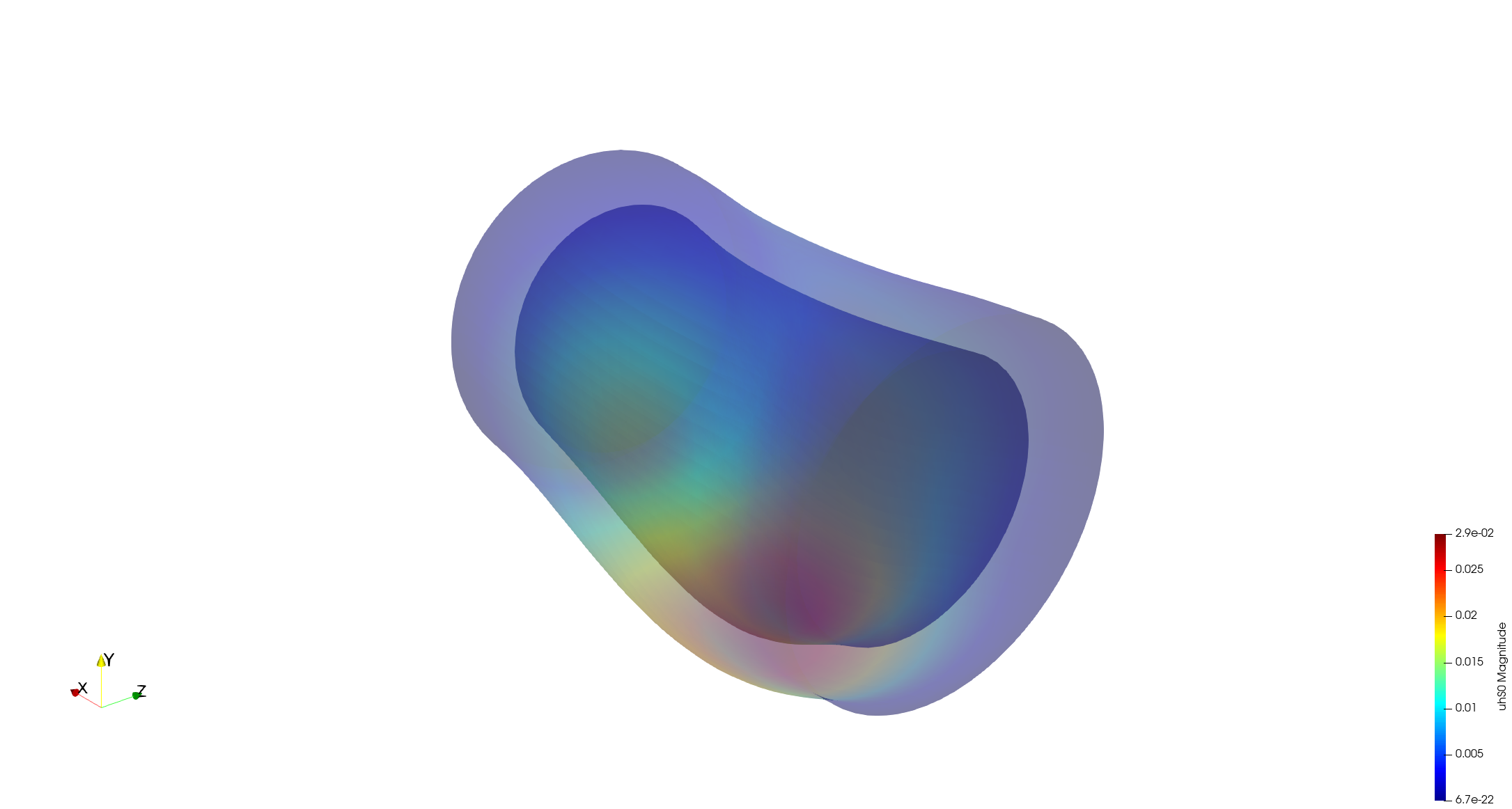}
\end{minipage}
\begin{minipage}{0.24\linewidth}\centering
	{\footnotesize $\bu_{h,2}$}\\
	\includegraphics[scale=0.102,trim=22cm 4cm 20cm 6cm,clip]{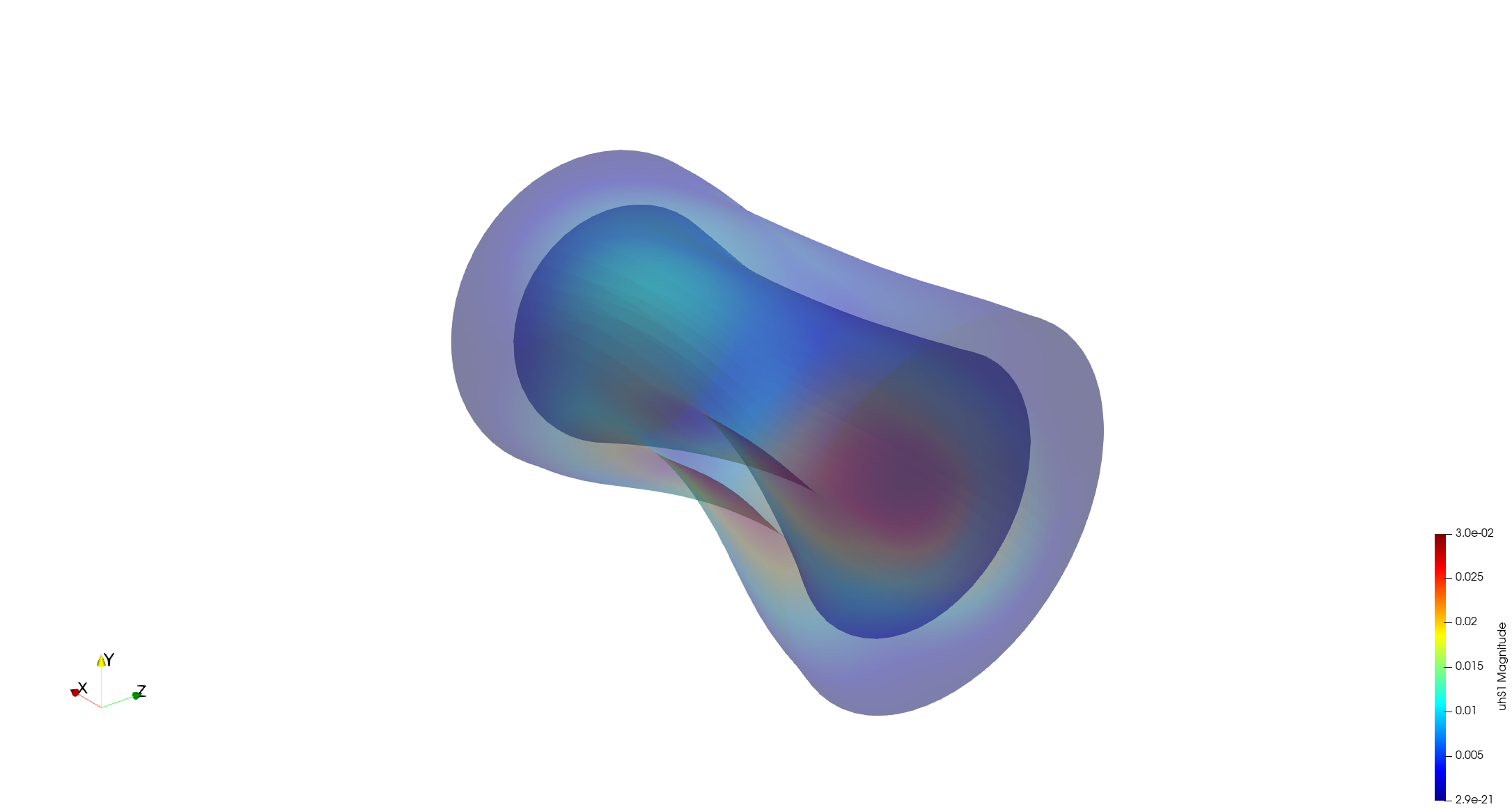}
\end{minipage}
\begin{minipage}{0.24\linewidth}\centering
	{\footnotesize $\bu_{h,3}$}\\
	\includegraphics[scale=0.102,trim=22cm 4cm 20cm 6cm,clip]{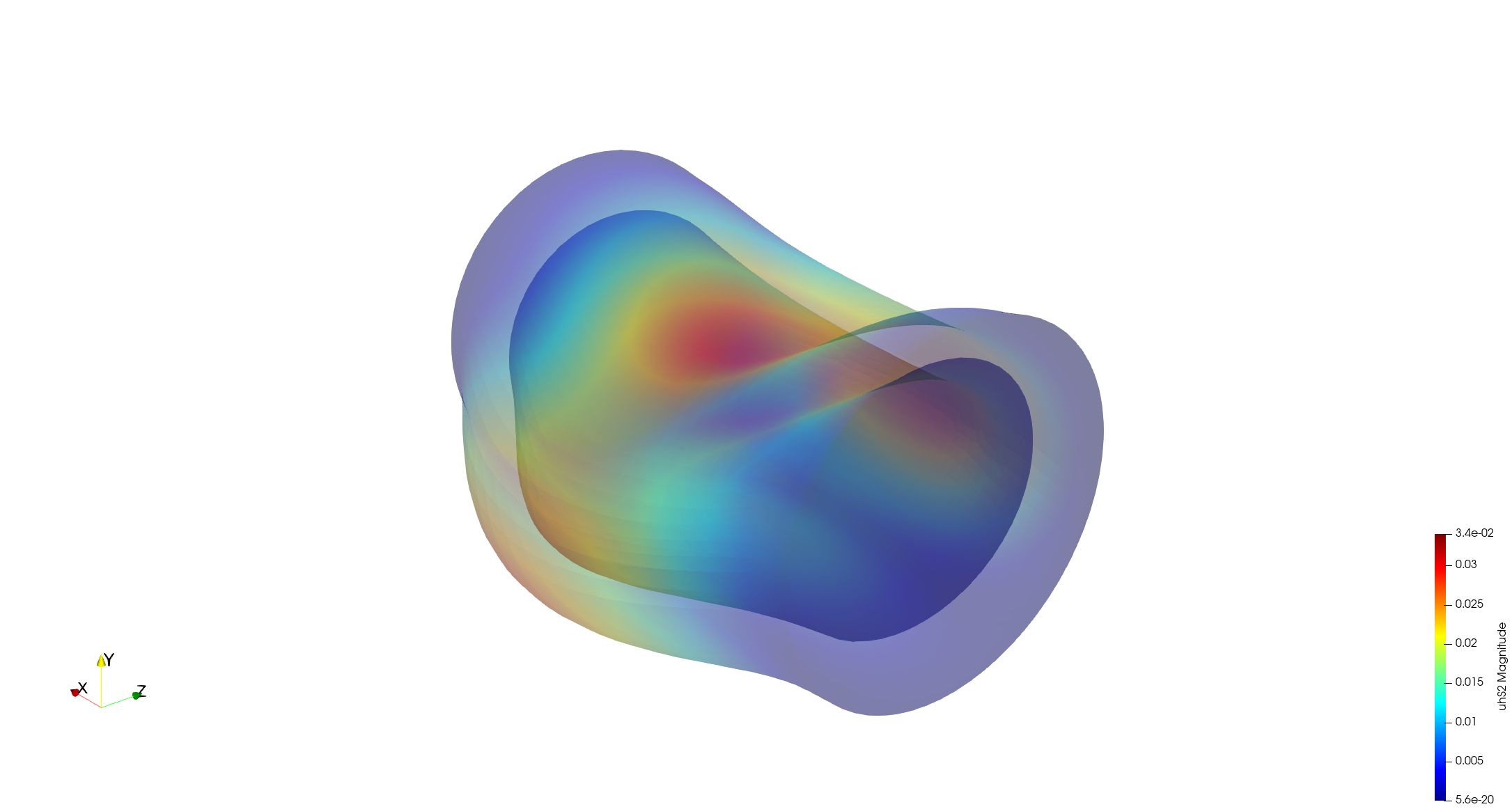}
\end{minipage}
\begin{minipage}{0.24\linewidth}\centering
	{\footnotesize $\bu_{h,4}$}\\
	\includegraphics[scale=0.102,trim=22cm 4cm 20cm 6cm,clip]{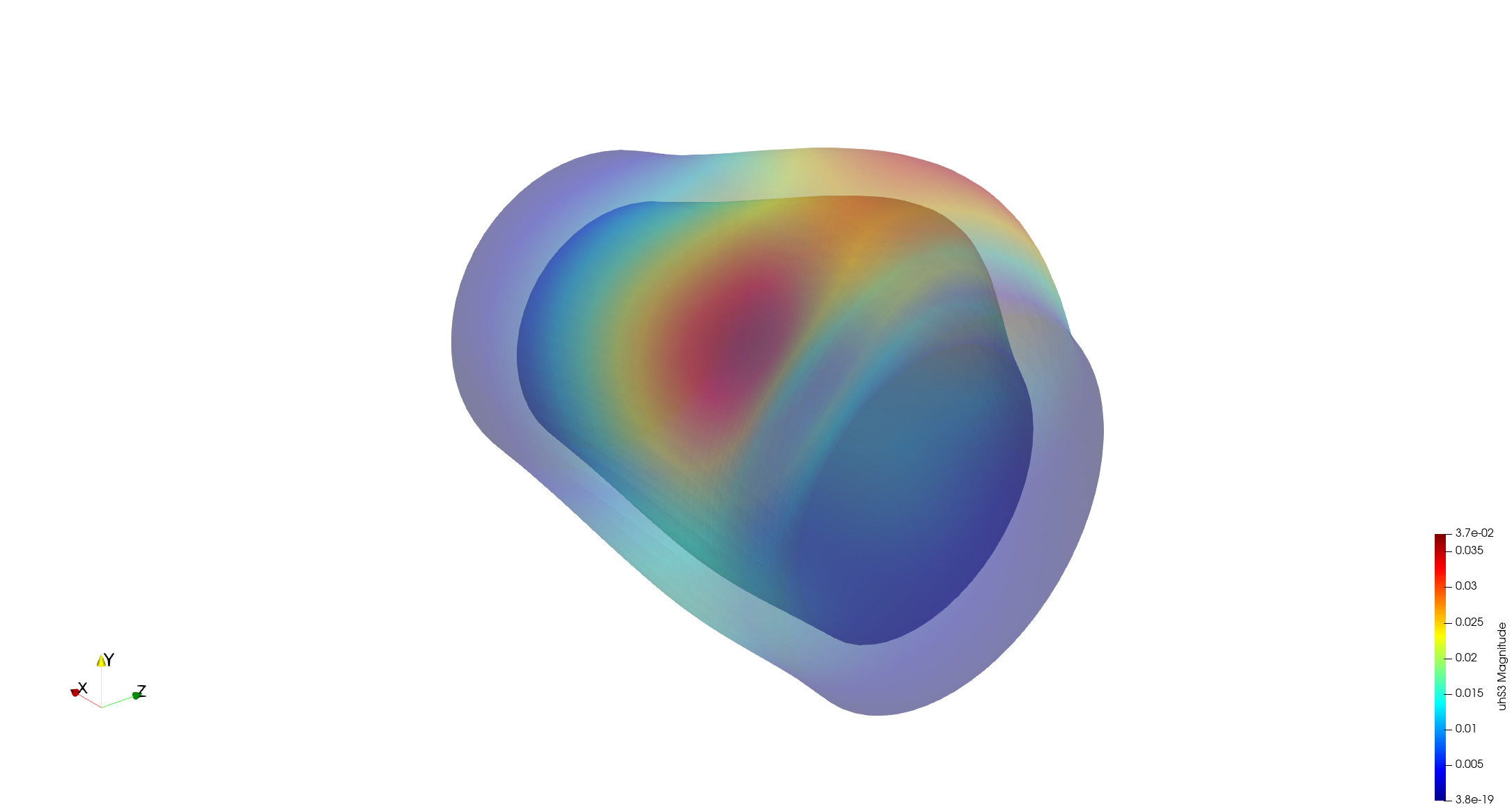}
\end{minipage}
\begin{minipage}{0.24\linewidth}\centering
	{\footnotesize $p_{h,1}$}\\
	\includegraphics[scale=0.102,trim=22cm 4cm 20cm 6cm,clip]{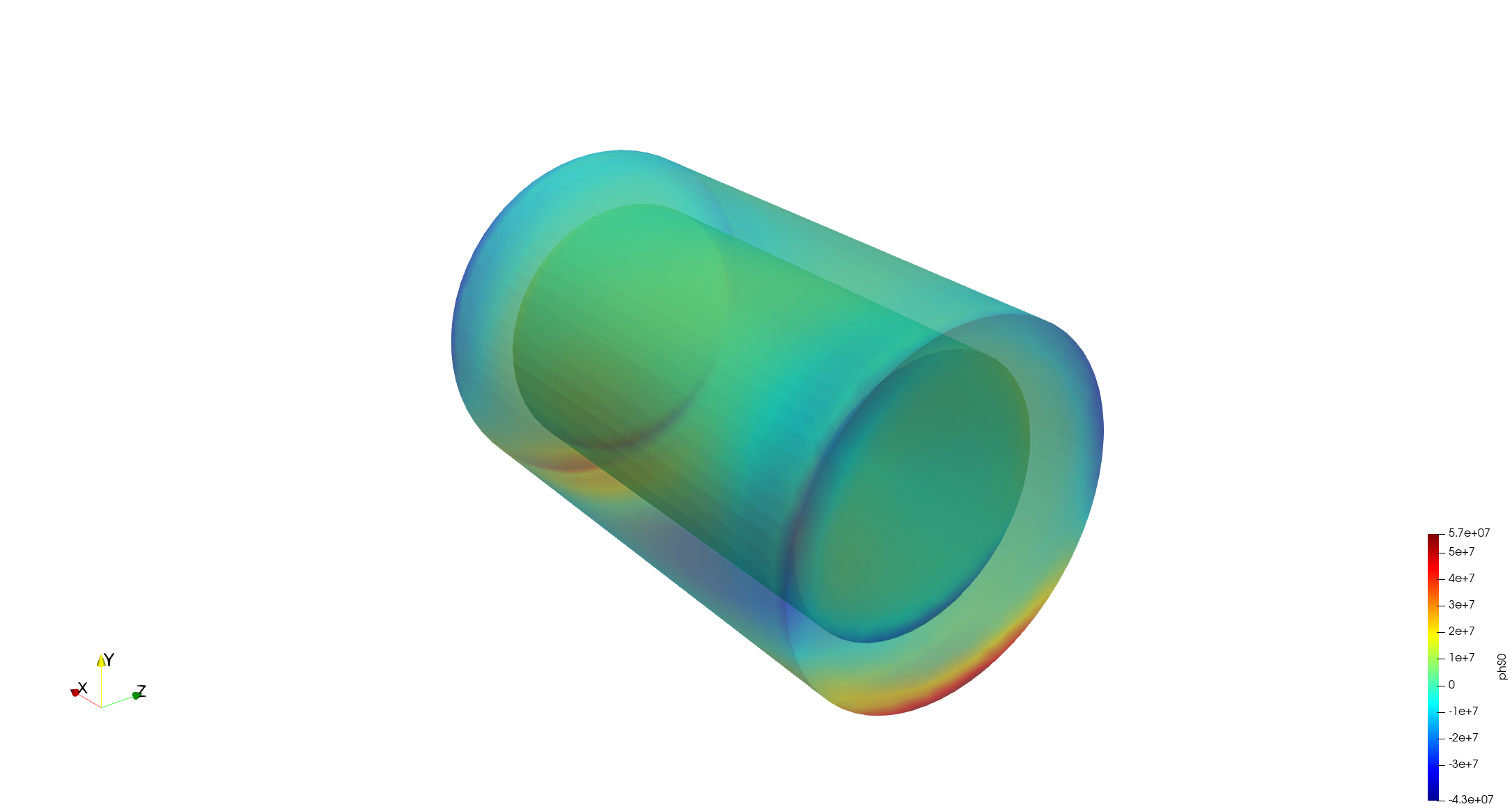}
\end{minipage}
\begin{minipage}{0.24\linewidth}\centering
	{\footnotesize $p_{h,2}$}\\
	\includegraphics[scale=0.102,trim=22cm 4cm 20cm 6cm,clip]{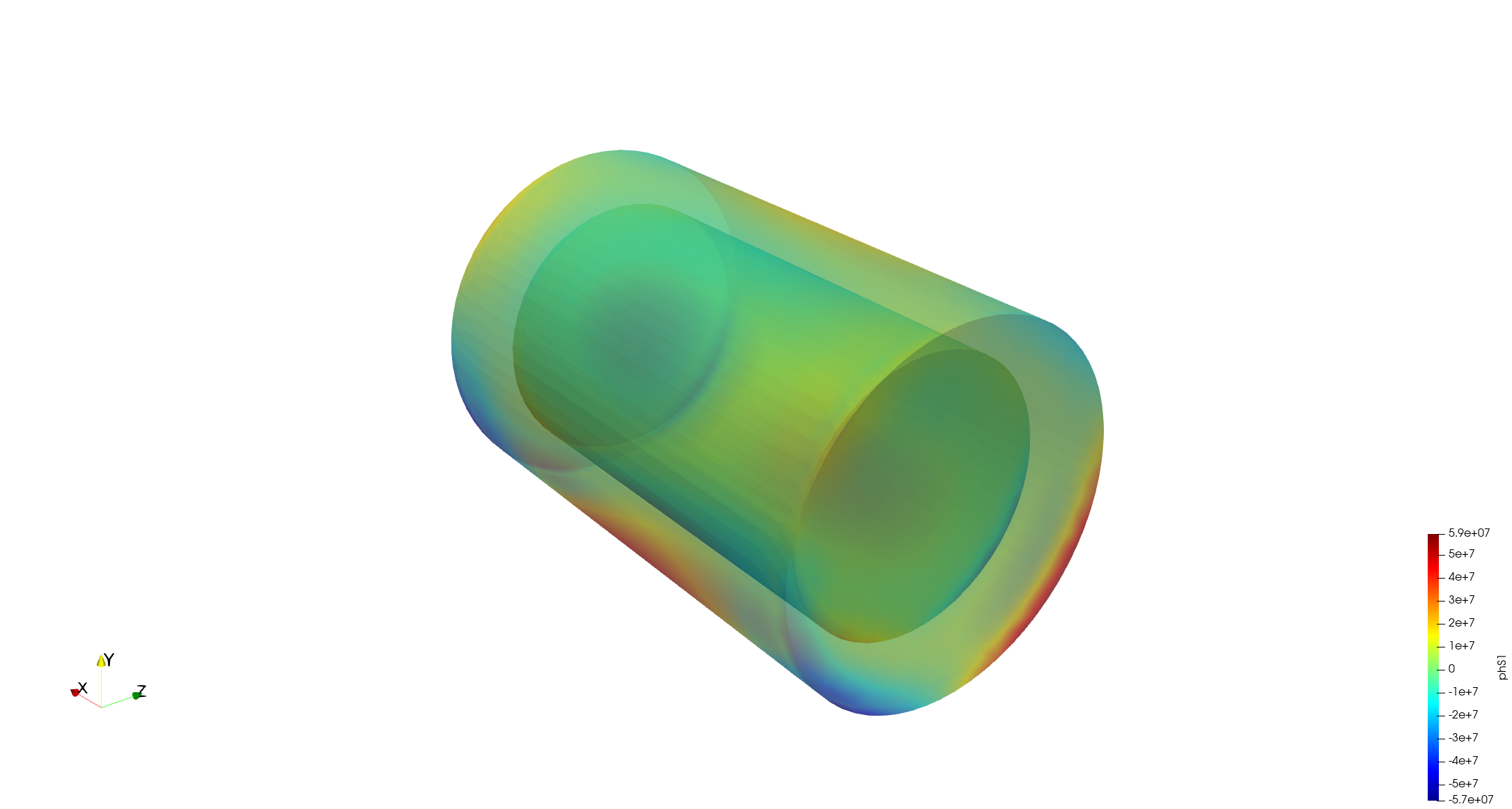}
\end{minipage}
\begin{minipage}{0.24\linewidth}\centering
	{\footnotesize $p_{h,3}$}\\
	\includegraphics[scale=0.102,trim=22cm 4cm 20cm 6cm,clip]{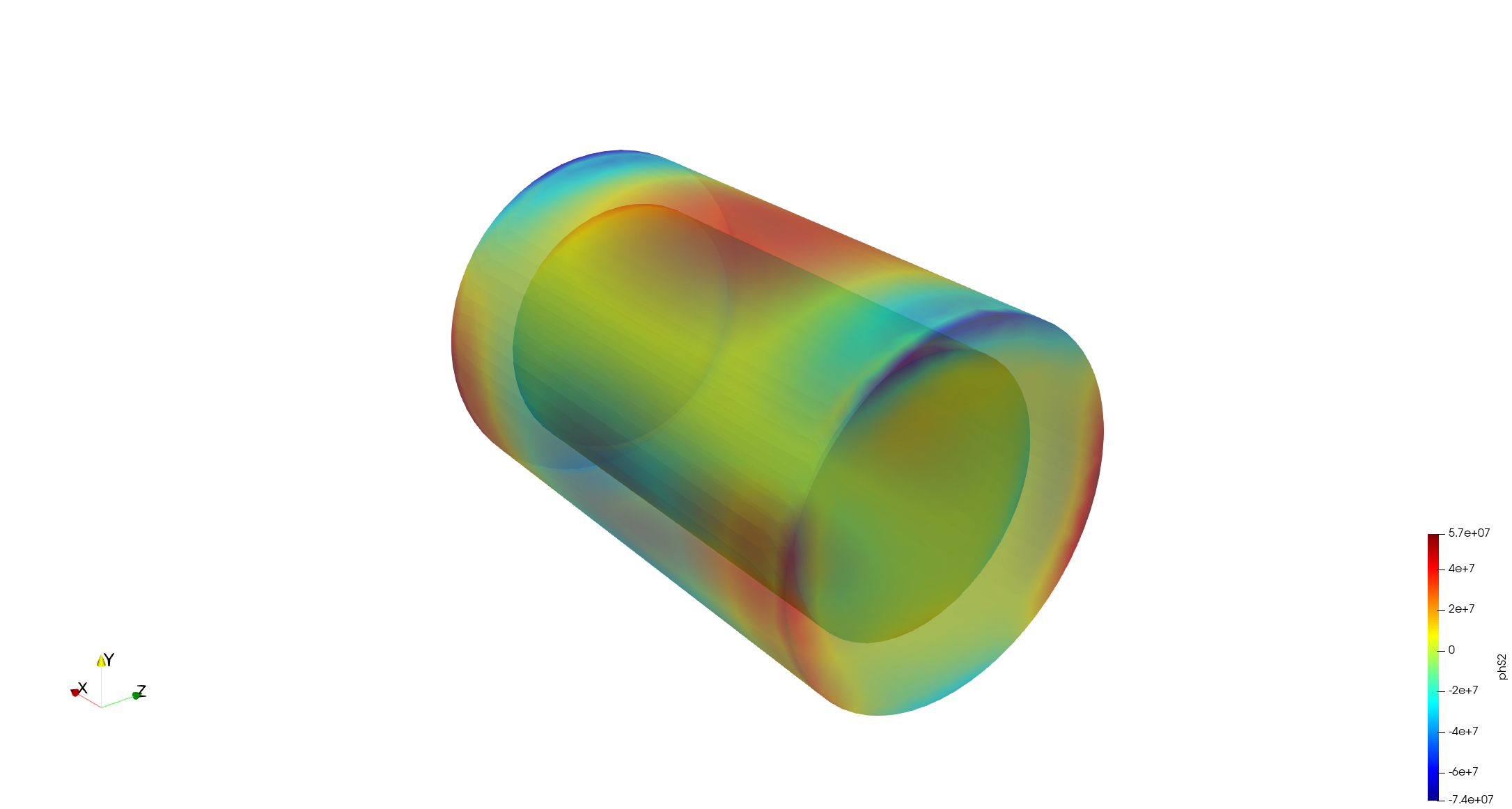}
\end{minipage}
\begin{minipage}{0.24\linewidth}\centering
	{\footnotesize $p_{h,4}$}\\
	\includegraphics[scale=0.102,trim=22cm 4cm 20cm 6cm,clip]{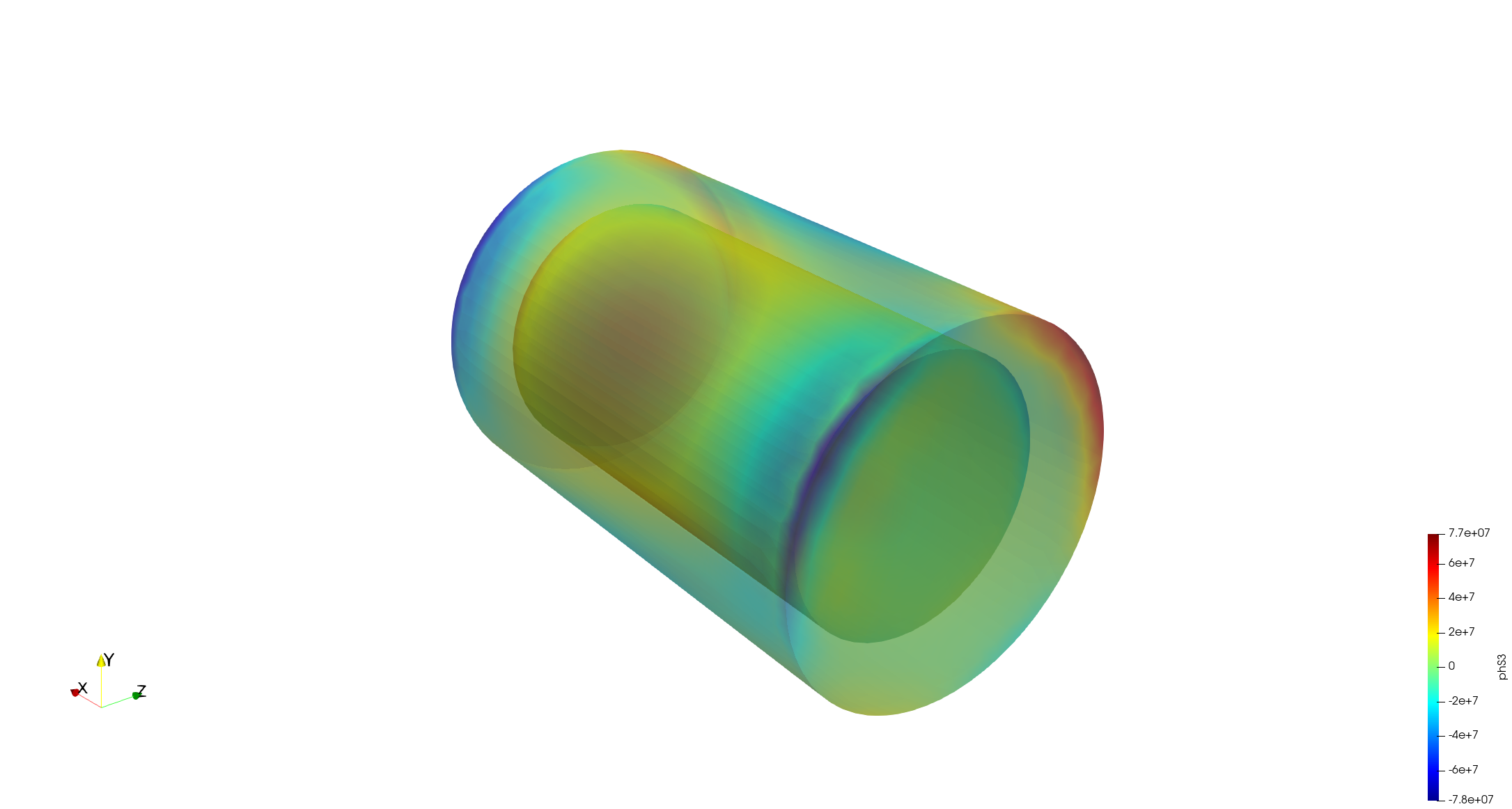}
\end{minipage}
\begin{minipage}{0.24\linewidth}\centering
	{\footnotesize $\bw_{h,1}$}\\
	\includegraphics[scale=0.102,trim=22cm 4cm 20cm 6cm,clip]{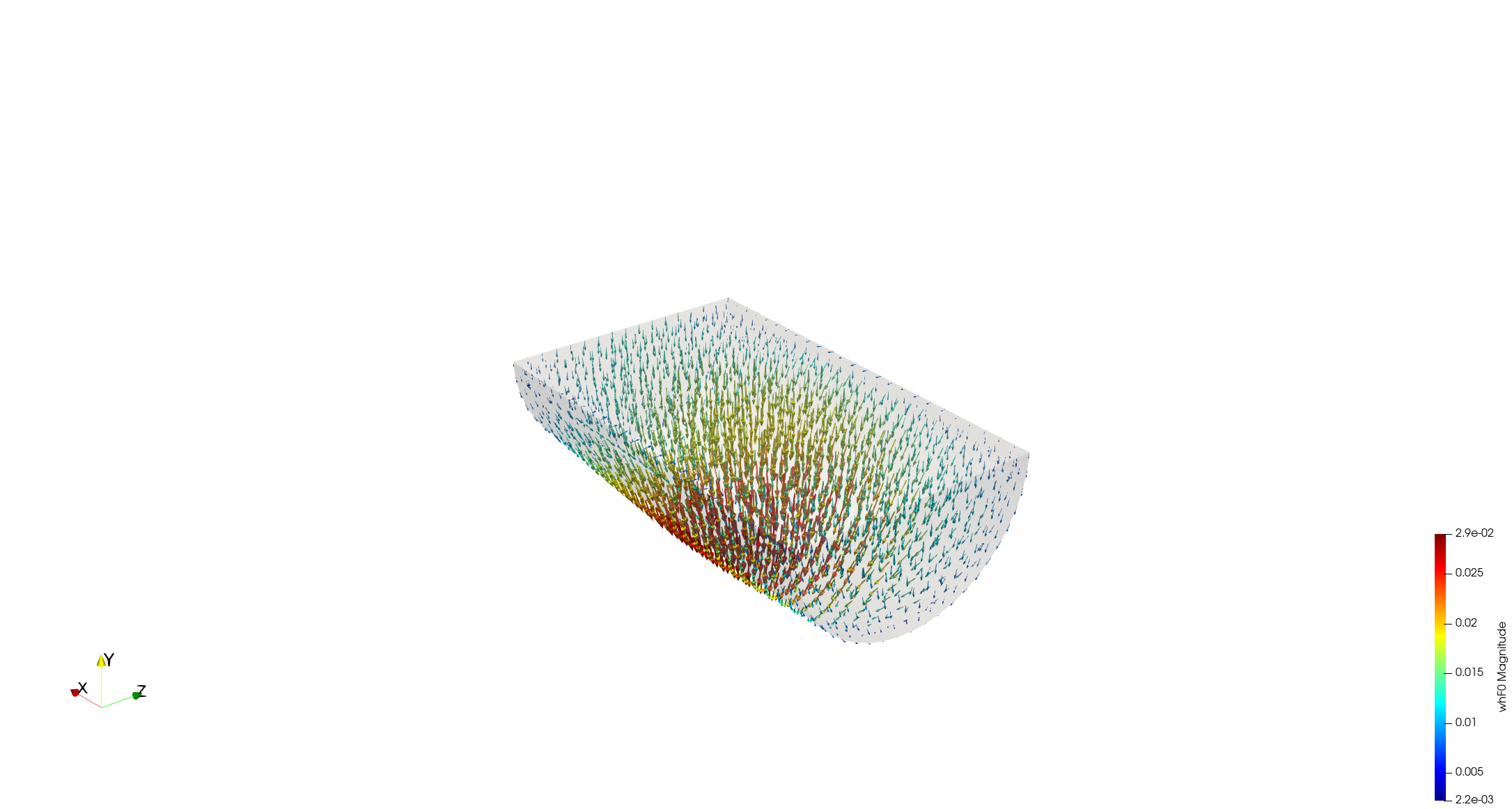}
\end{minipage}
\begin{minipage}{0.24\linewidth}\centering
	{\footnotesize $\bw_{h,2}$}\\
	\includegraphics[scale=0.102,trim=22cm 4cm 20cm 6cm,clip]{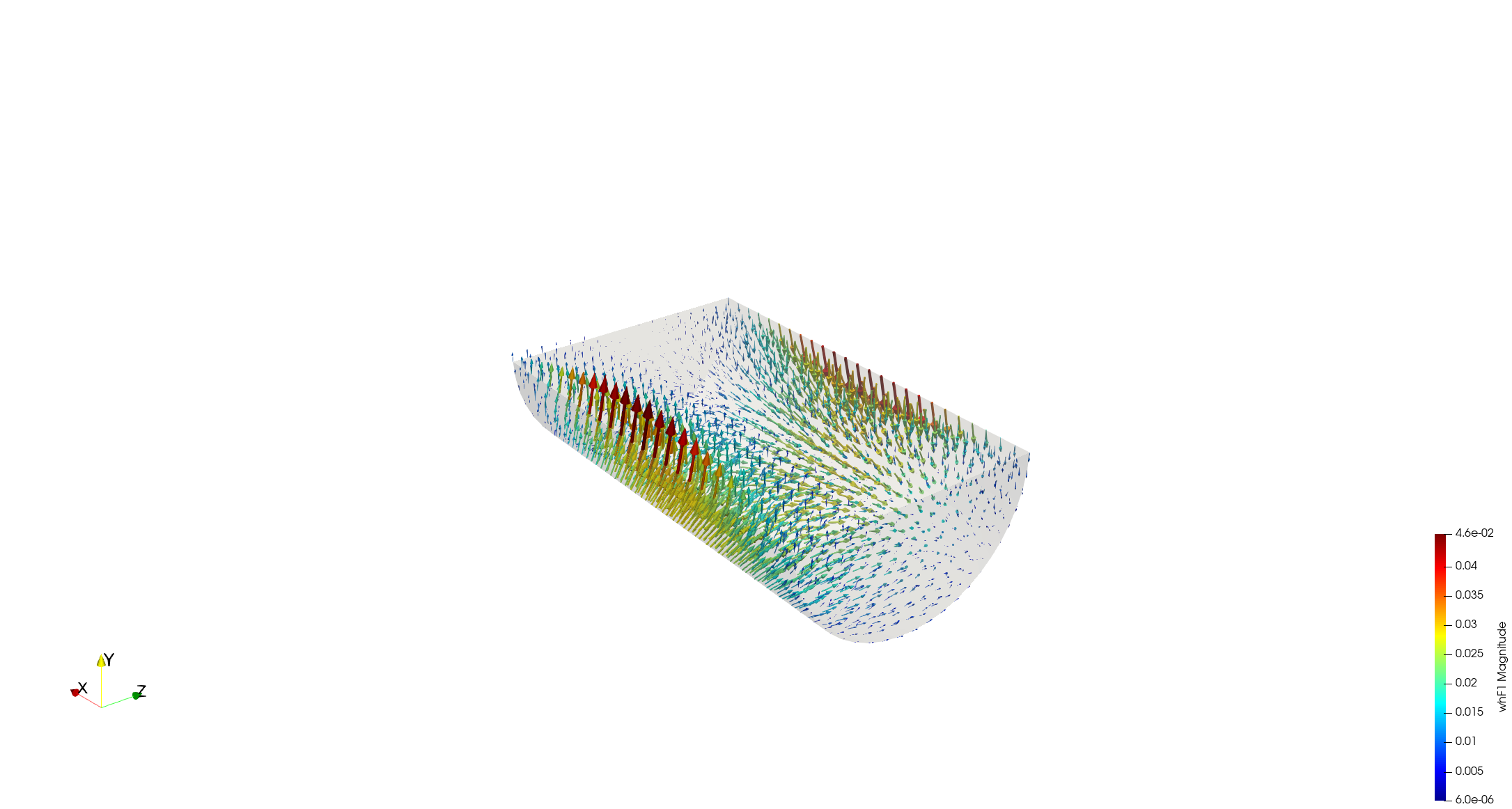}
\end{minipage}
\begin{minipage}{0.24\linewidth}\centering
	{\footnotesize $\bw_{h,3}$}\\
	\includegraphics[scale=0.102,trim=22cm 4cm 20cm 6cm,clip]{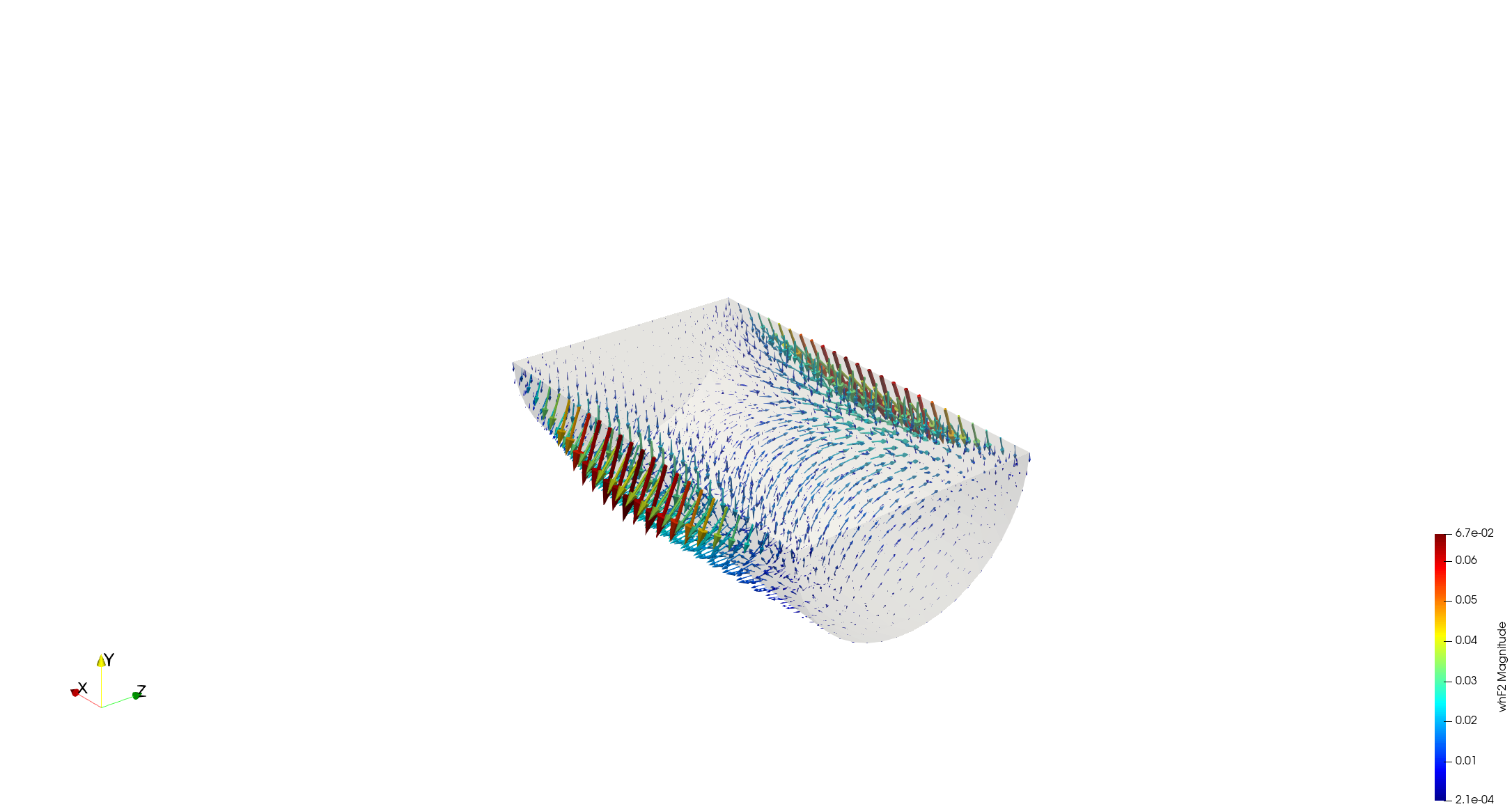}
\end{minipage}
\begin{minipage}{0.24\linewidth}\centering
	{\footnotesize $\bw_{h,4}$}\\
	\includegraphics[scale=0.102,trim=22cm 4cm 20cm 6cm,clip]{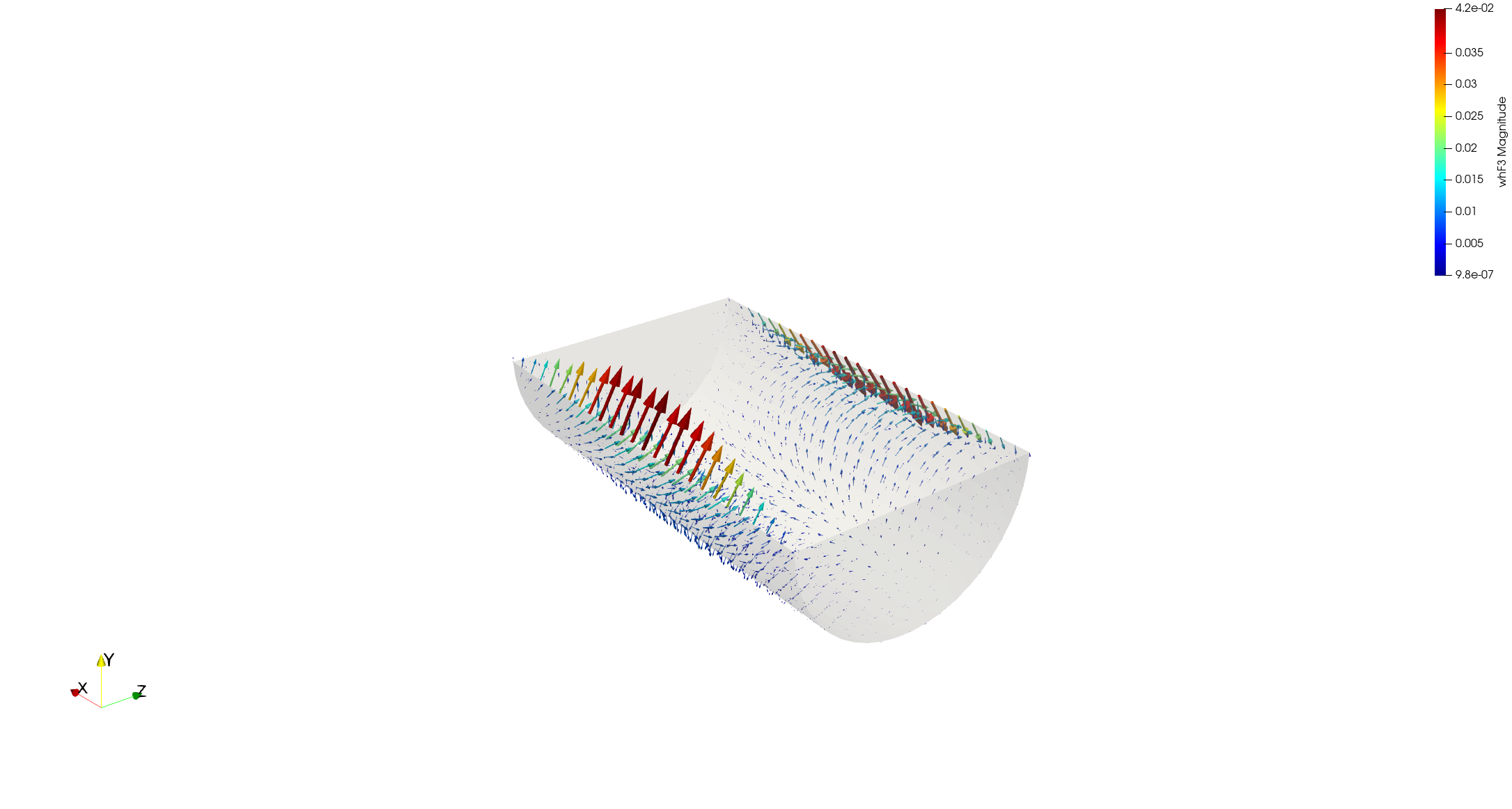}
\end{minipage}
\caption{Example \ref{subsec:3Dbarrel}. Sample numerical solutions of the first four elasto-acoustic modes on the barrel domain. The solid subdomain {has} been warped by a sufficiently large factor in order to observe the deformation.}\label{fig:3D-sols}
\end{figure}

\subsection{A posteriori error estimation}\label{subsec:afem}
With the aim of assessing the performance of the  a posteriori error estimator,  we consider domains   with singularities in two and three dimensions.  On each adaptive iteration, we use the blue-green marking strategy to refine each $T'\in \CT_{h}$ whose indicator $\eta_{T'}$ satisfies
$
\eta_{T'}\geq 0.5\max\{\eta_{T}\,:\,T\in\CT_{h} \}.
$
The effectivity indexes with respect to $\eta$ and the eigenvalue $\omega_i$ are defined by
$$
\eff(\omega_i):={\err(\omega_i)}/{\eta^2}. 		$$
For simplicity, the adaptive algorithm uses the MINI-element family for the solid displacement and pressure.

\subsubsection{2D example}
From Section \ref{subsec:accuracy-test} we observed that elasto-acoustic modes converges with suboptimal rate because of the point singularities in the reentrant corner. In this example we study the convergence on the adaptive process when this configuration is considered. The domain under consideration is the same as in Figure \ref{fig:2D-meshed-domain}(right). We will analyze the convergence of the first eigenvalue for several values of $\nu$.  
This is observed in the intermediate meshes portrayed in Figure \ref{fig:afem-2D-nu049-050}. Also, we note that the refinements are concentrated where the solid pressure $p_{h,1}$ becomes singular, namely, near the re-entrant corner and where the boundary conditions change from Dirichlet to Neumann type. 

In order to further examine the locking-free property of our scheme, we proceed to change the Poisson ratio but keeping the rest of the physical parameters unmodified. Adaptive meshes for $\nu=0.49$ and $\nu=0.5$ are also depicted in Figure \ref{fig:afem-2D-nu049-050}. Here, we observe that less elements are used the closer we get to $\nu=0.5$.   The error decay is presented in Figure \ref{fig:error-2D-adaptive} for different values of $\nu$. We observe that optimal rates are attained for some $h_0$ as predicted in Section \ref{sec:apost}. We end the experiment by investigating the effectivity curves, reported in Figure \ref{fig:efficiency-2D-adaptive}. Properly bounded effectivity values are observed for the studied eigenvalues with the selected values for $\nu$.	

\begin{figure}[!t]
\centering
\begin{minipage}{0.32\linewidth}\centering
	\text{$\omega_{h,1}$, \texttt{dof}= 11970}\\
	\includegraphics[scale=0.08,trim=14cm 0cm 14cm 0cm ,clip]{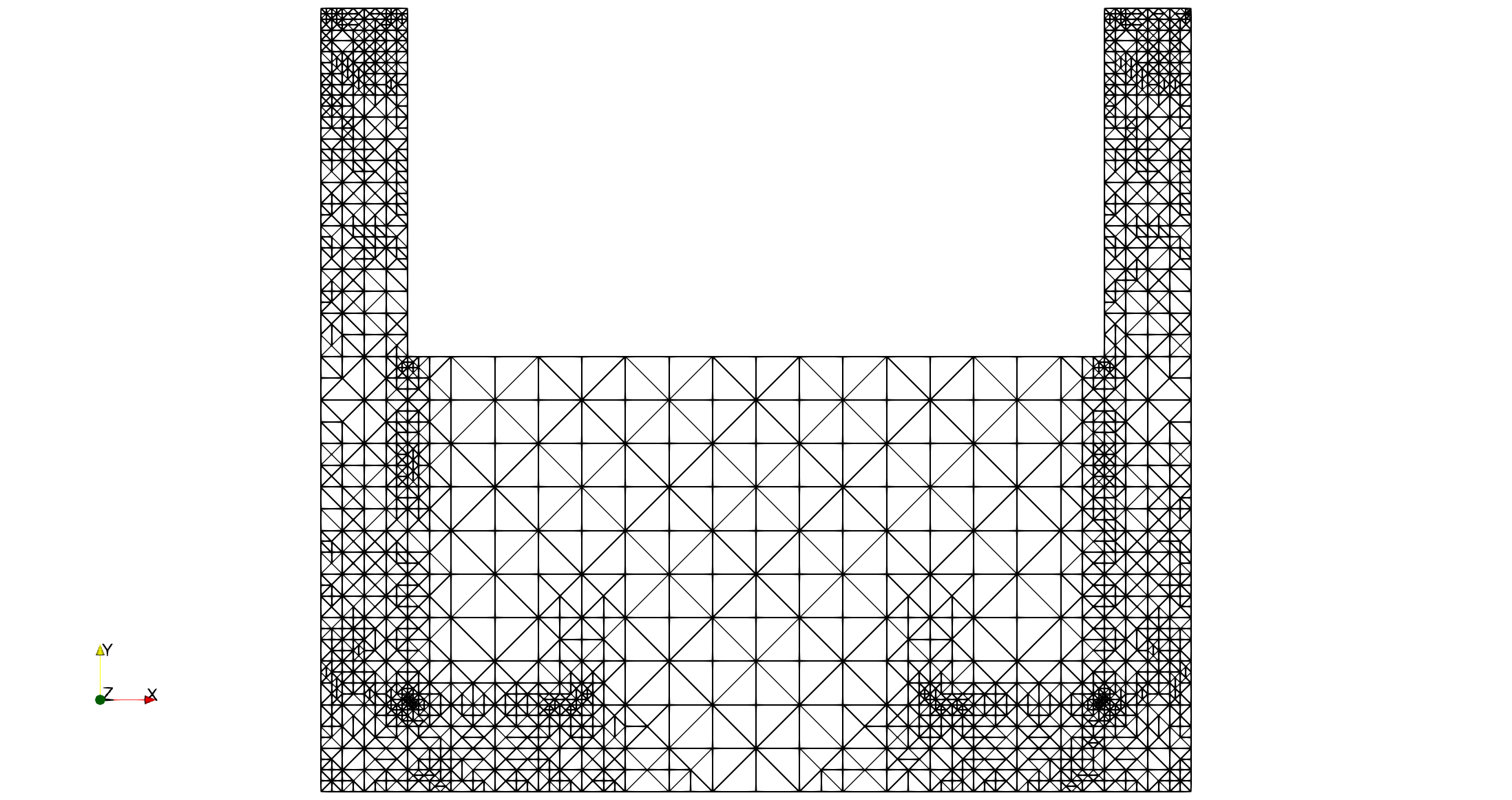}
\end{minipage}
\begin{minipage}{0.32\linewidth}\centering
	\text{$\omega_{h,1}$, \texttt{dof}= 50709}\\
	\includegraphics[scale=0.08,trim=14cm 0cm 14cm 0cm ,clip]{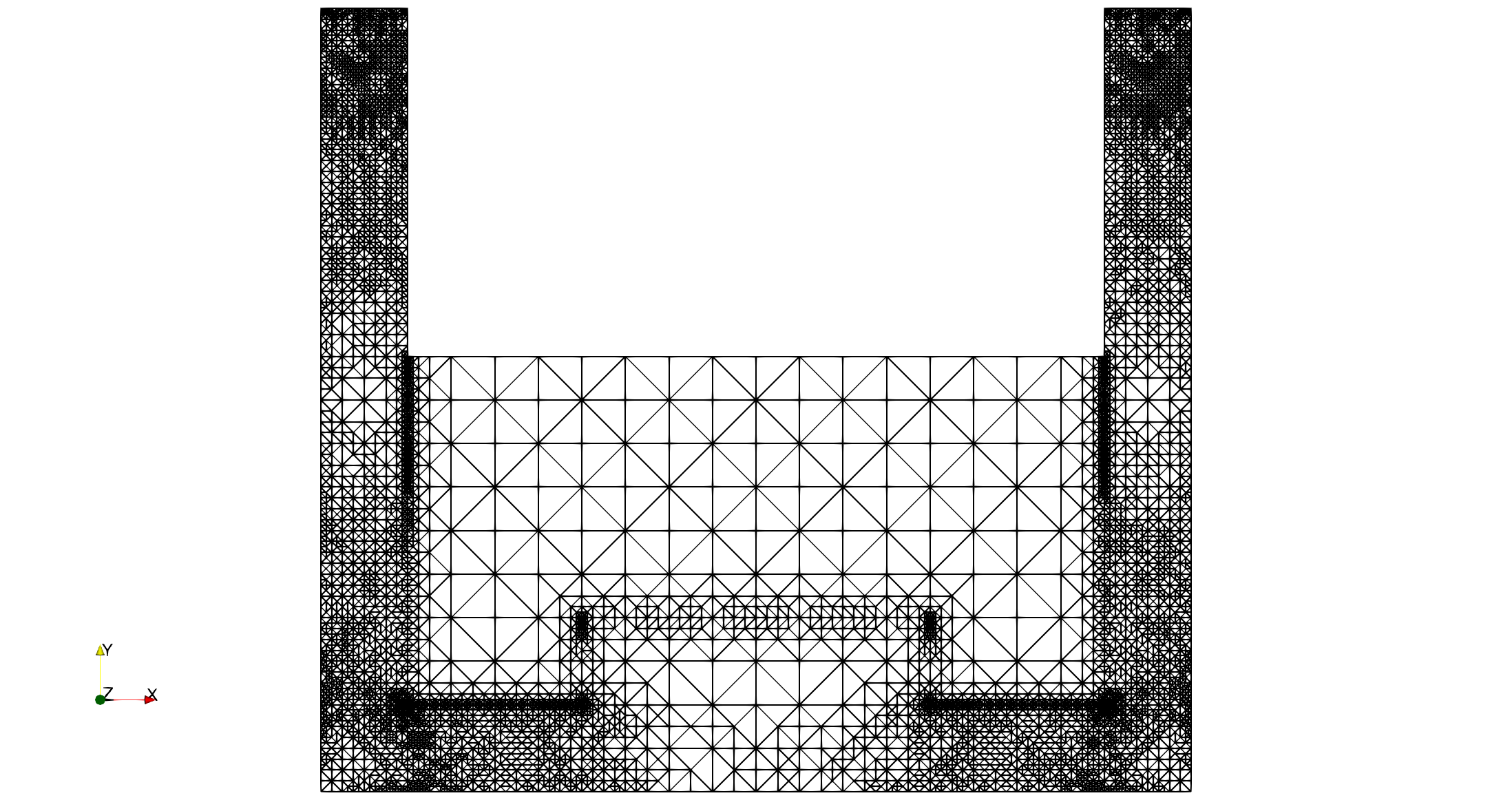}
\end{minipage}
\begin{minipage}{0.32\linewidth}\centering
	\text{$\omega_{h,1}$, \texttt{dof}= 104871}\\
	\includegraphics[scale=0.08,trim=14cm 0cm 14cm 0cm ,clip]{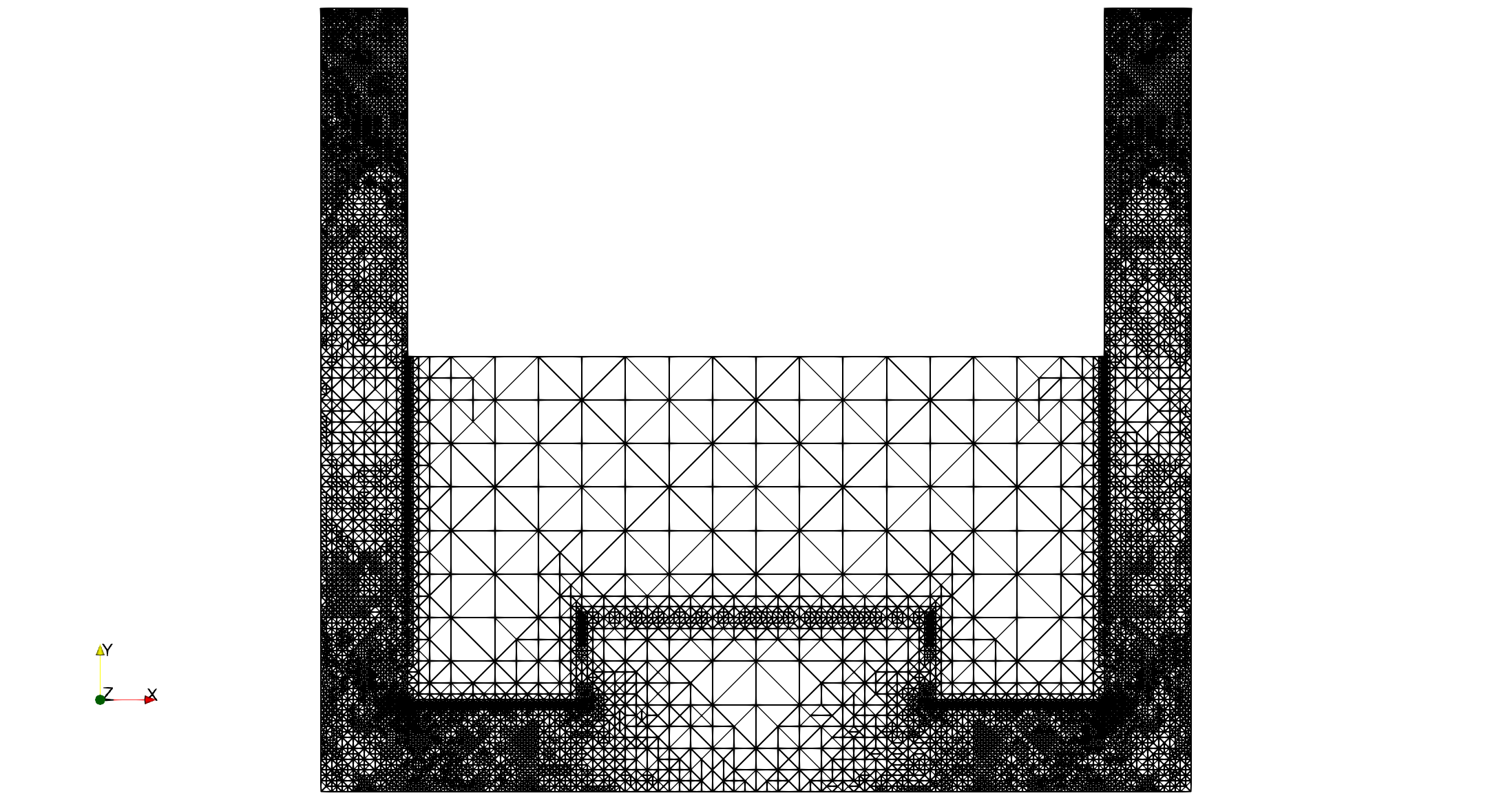}
\end{minipage}\\
\begin{minipage}{0.32\linewidth}\centering
	\text{$\omega_{h,2}$, \texttt{dof}= 13564}\\
	\includegraphics[scale=0.08,trim=14cm 0cm 14cm 0cm ,clip]{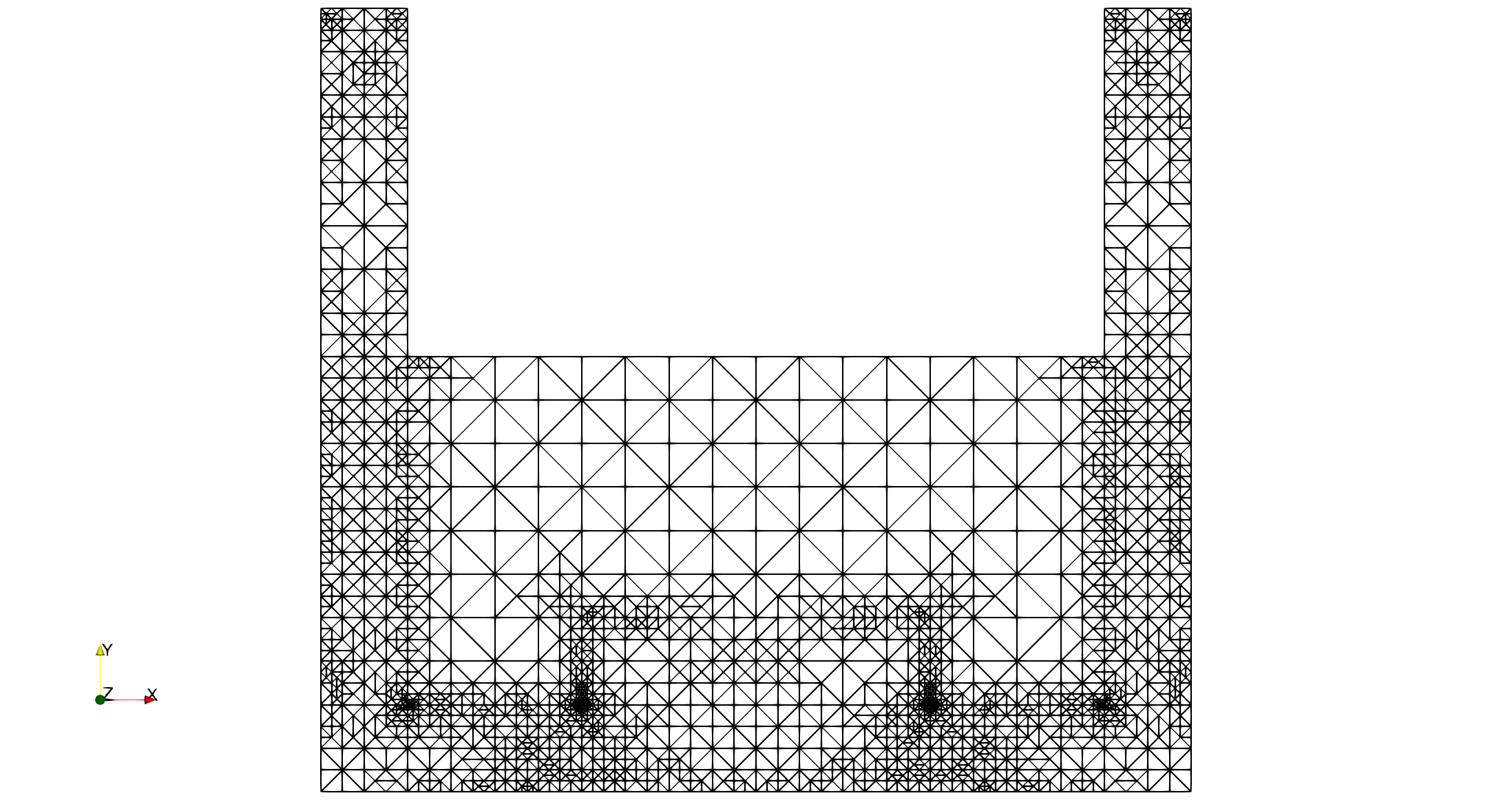}
\end{minipage}
\begin{minipage}{0.32\linewidth}\centering
	\text{$\omega_{h,2}$, \texttt{dof}= 60933}\\
	\includegraphics[scale=0.08,trim=14cm 0cm 14cm 0cm ,clip]{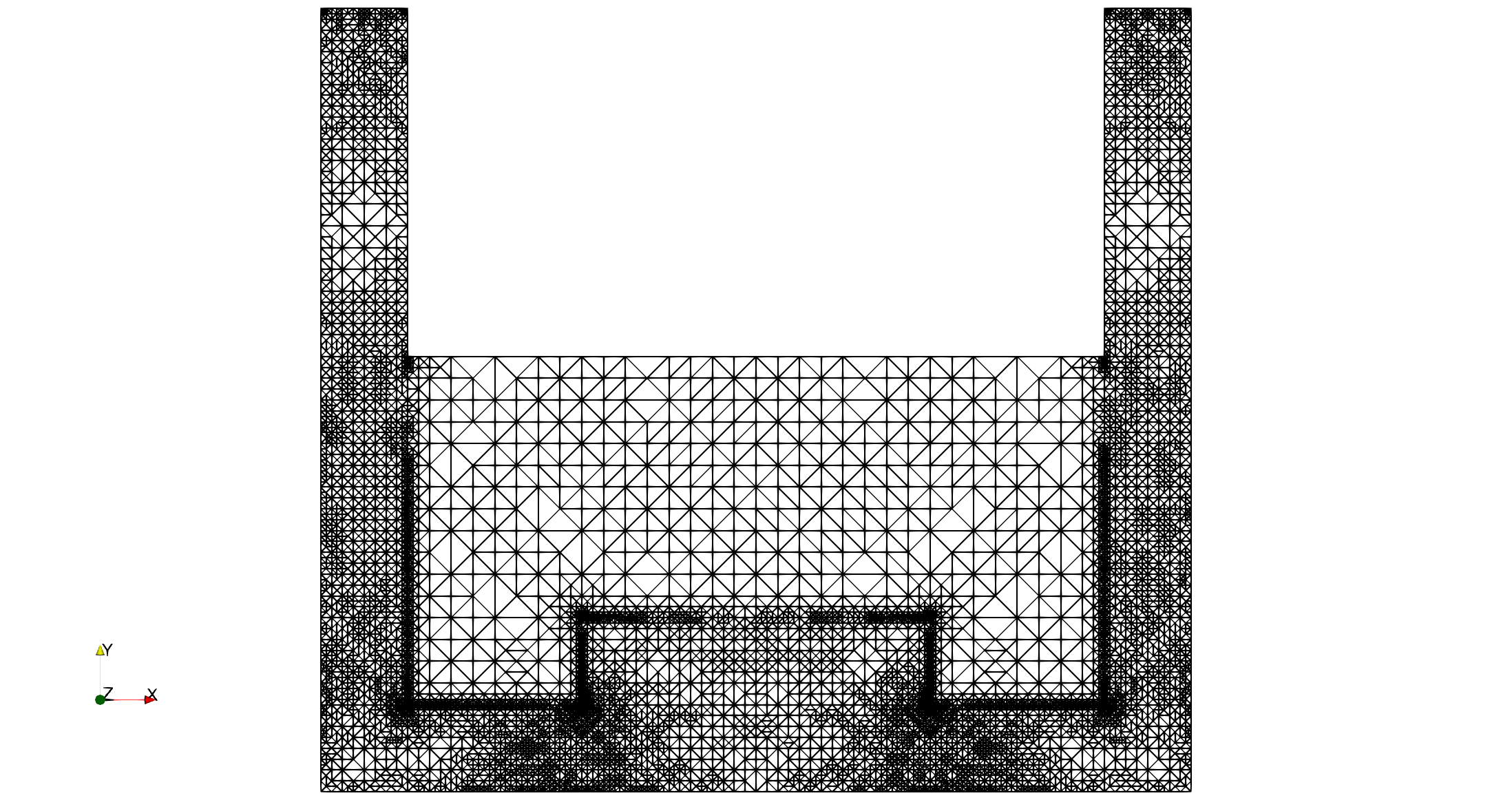}
\end{minipage}
\begin{minipage}{0.32\linewidth}\centering
	\text{$\omega_{h,2}$, \texttt{dof}= 139219}\\
	\includegraphics[scale=0.08,trim=14cm 0cm 14cm 0cm ,clip]{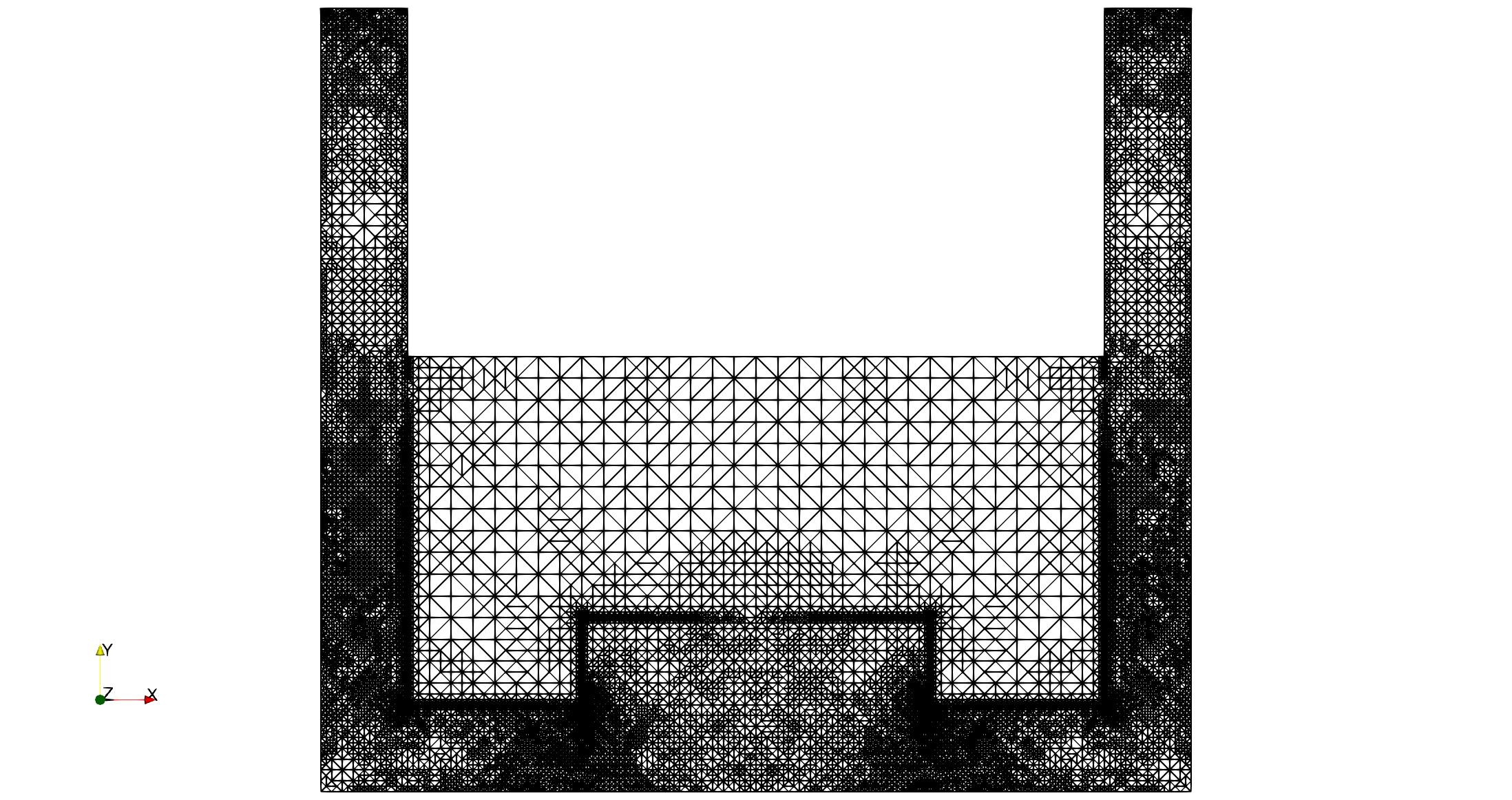}
\end{minipage}\\
\begin{minipage}{0.32\linewidth}\centering
	\text{$\omega_{h,4}$, \texttt{dof}= 13709}\\
	\includegraphics[scale=0.08,trim=14cm 0cm 14cm 0cm ,clip]{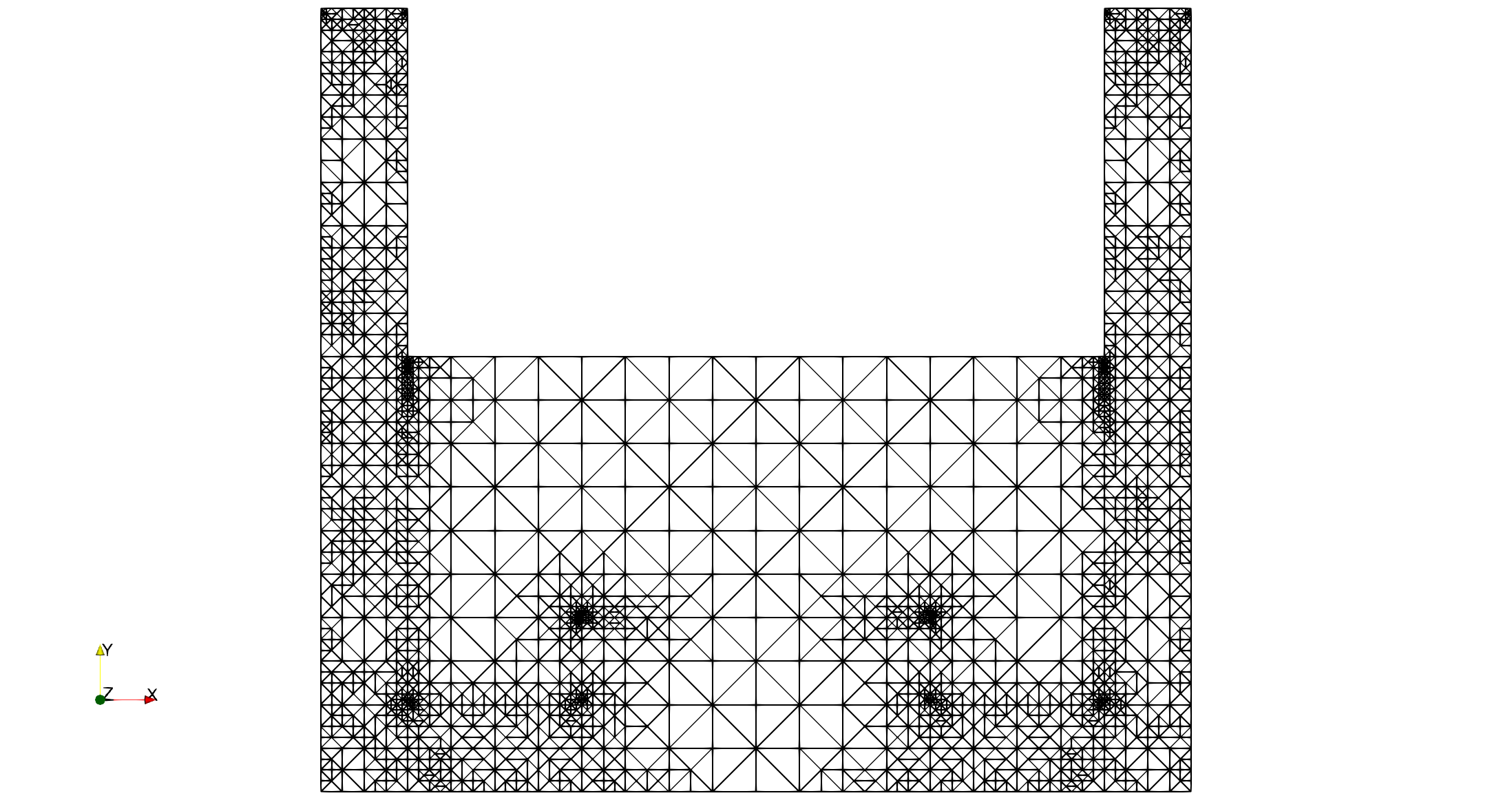}
\end{minipage}
\begin{minipage}{0.32\linewidth}\centering
	\text{$\omega_{h,4}$, \texttt{dof}= 75728} \\
	\includegraphics[scale=0.08,trim=14cm 0cm 14cm 0cm ,clip]{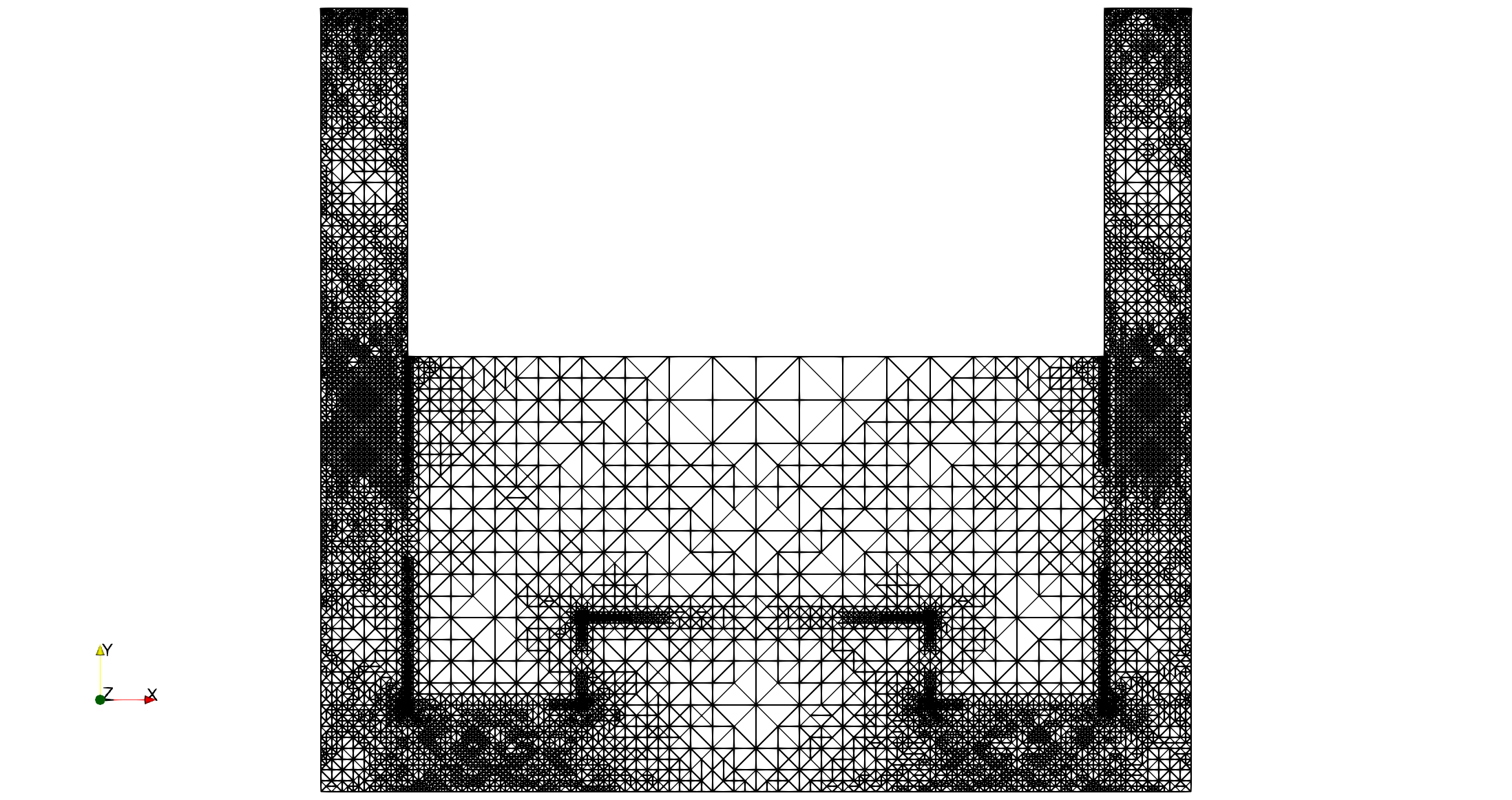}
\end{minipage}
\begin{minipage}{0.32\linewidth}\centering
	\text{$\omega_{h,4}$, \texttt{dof}= 154326}\\
	\includegraphics[scale=0.08,trim=14cm 0cm 14cm 0cm ,clip]{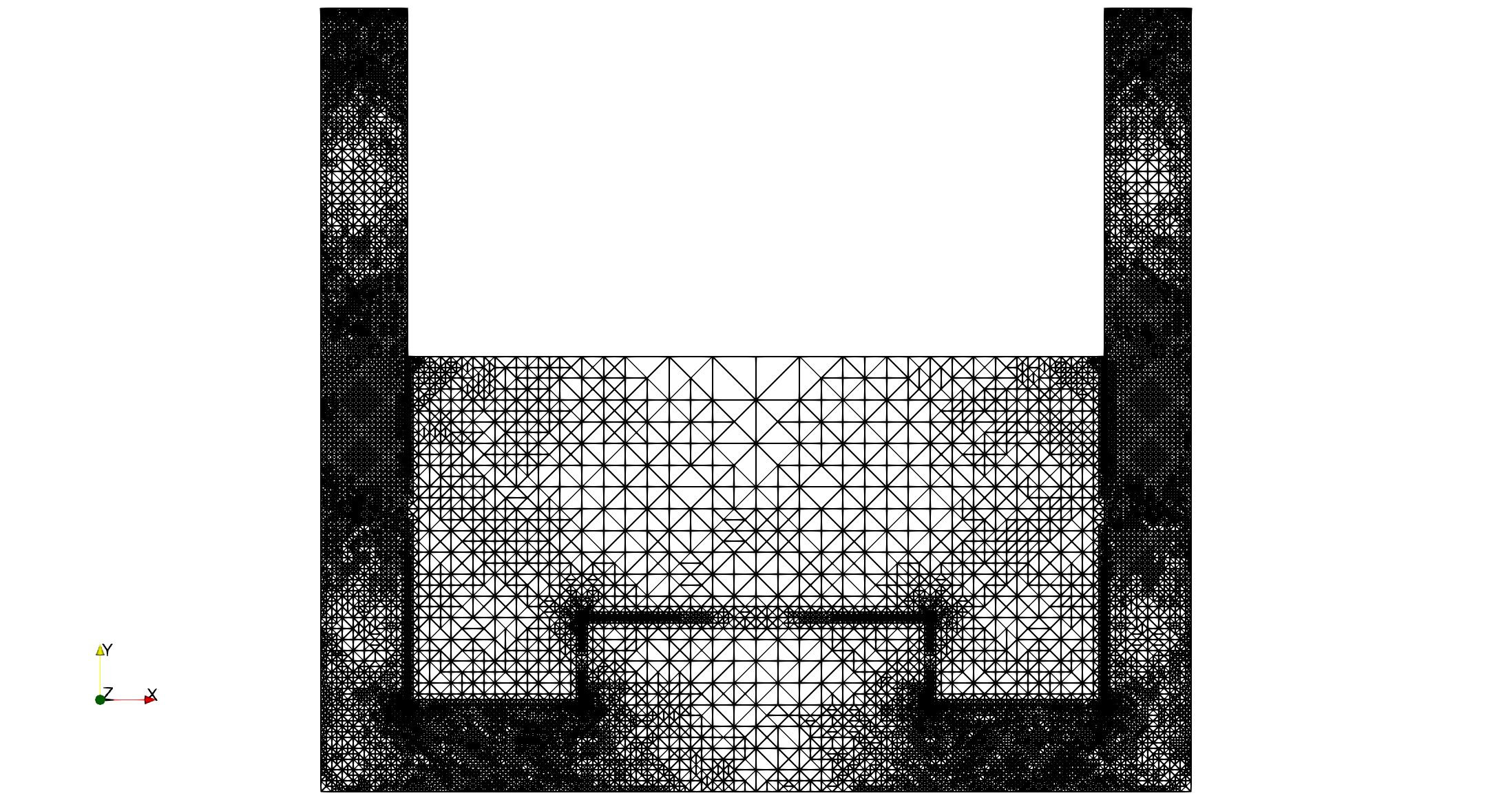}
\end{minipage}
\caption{Example \ref{subsec:afem}. Comparison between intermediate meshes on the adaptive process for the first, second and fourth eigenfrequencies in $\Omega_2$ with $\nu=0.35$.}
\label{fig:afem-2D-nu049-050}
\end{figure}

\begin{figure}[!t]\centering
\begin{minipage}{0.32\linewidth}\centering
	\includegraphics[scale=0.26]{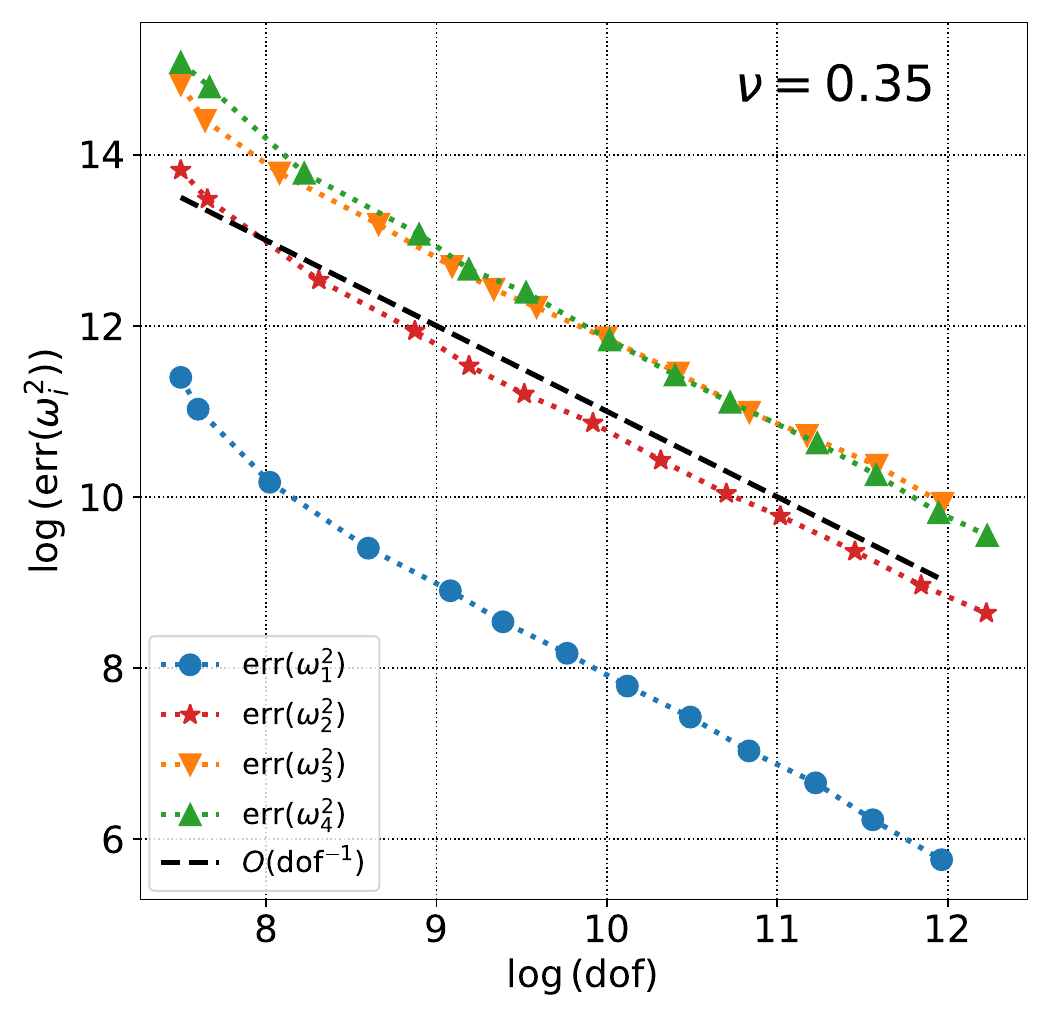}
\end{minipage}
\begin{minipage}{0.32\linewidth}\centering
	\includegraphics[scale=0.26]{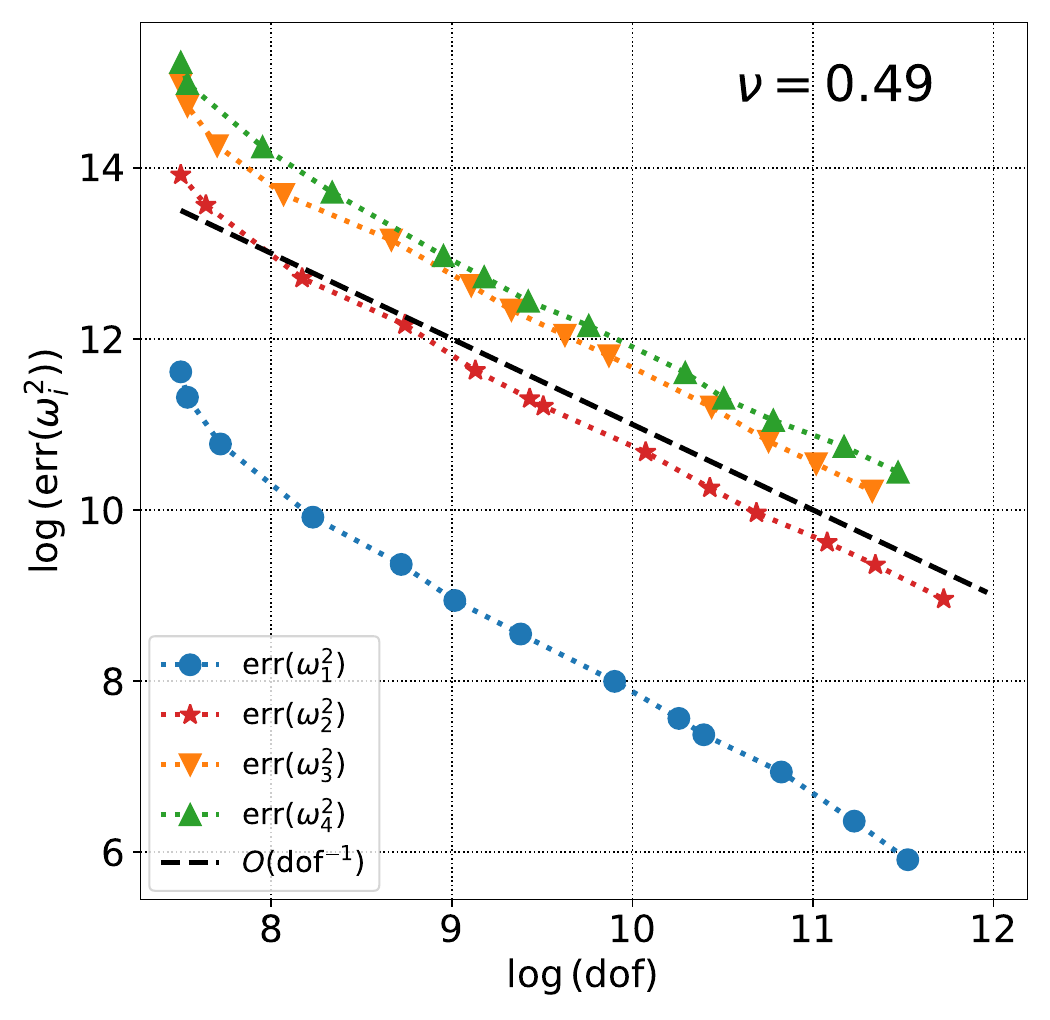}
\end{minipage}
\begin{minipage}{0.32\linewidth}\centering
	\includegraphics[scale=0.26]{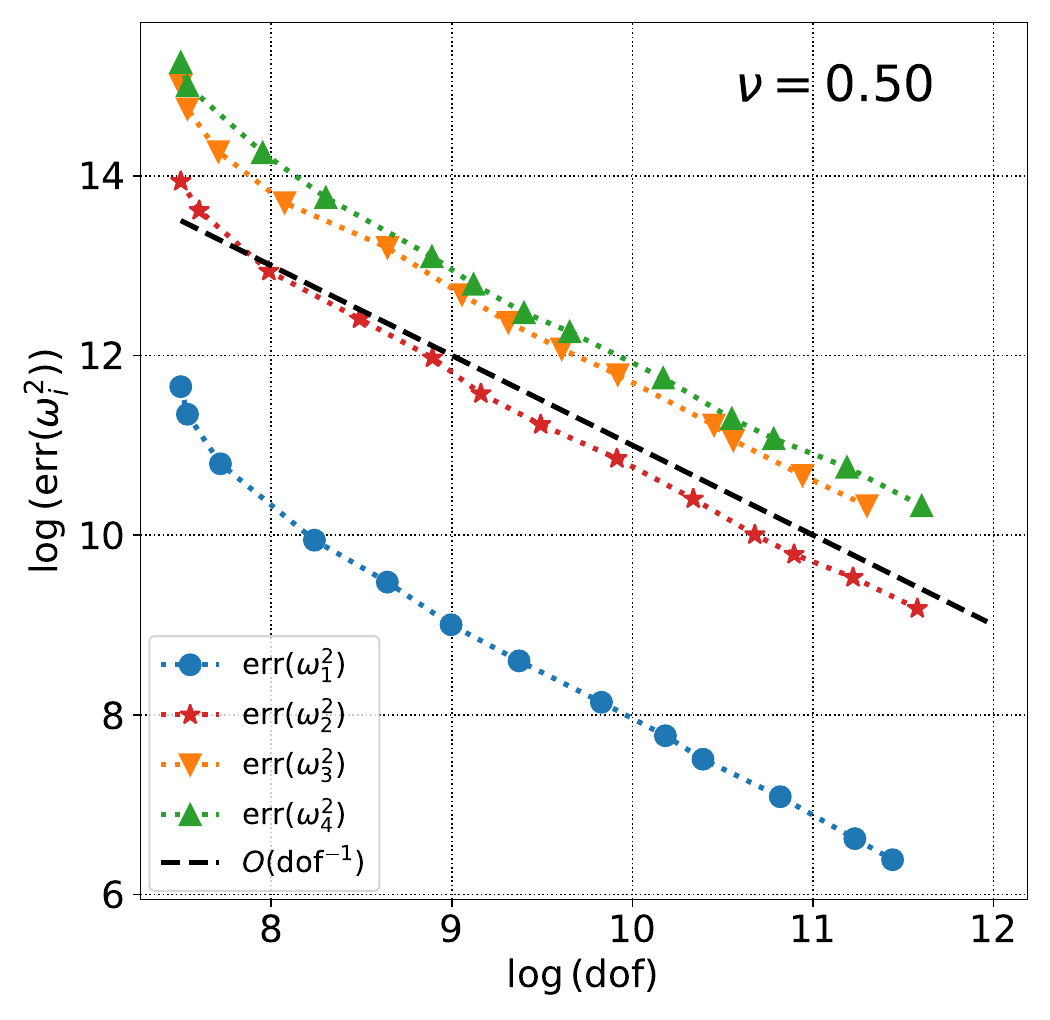}
\end{minipage}
\caption{Example \ref{subsec:afem}. Error history for the first four lowest computed frequencies in the adaptive algorithm on $\Omega_{2}$ with different values of $\nu$.}
\label{fig:error-2D-adaptive}
\end{figure}

\begin{figure}[!hbt]\centering
\begin{minipage}{0.32\linewidth}\centering
	\includegraphics[scale=0.26]{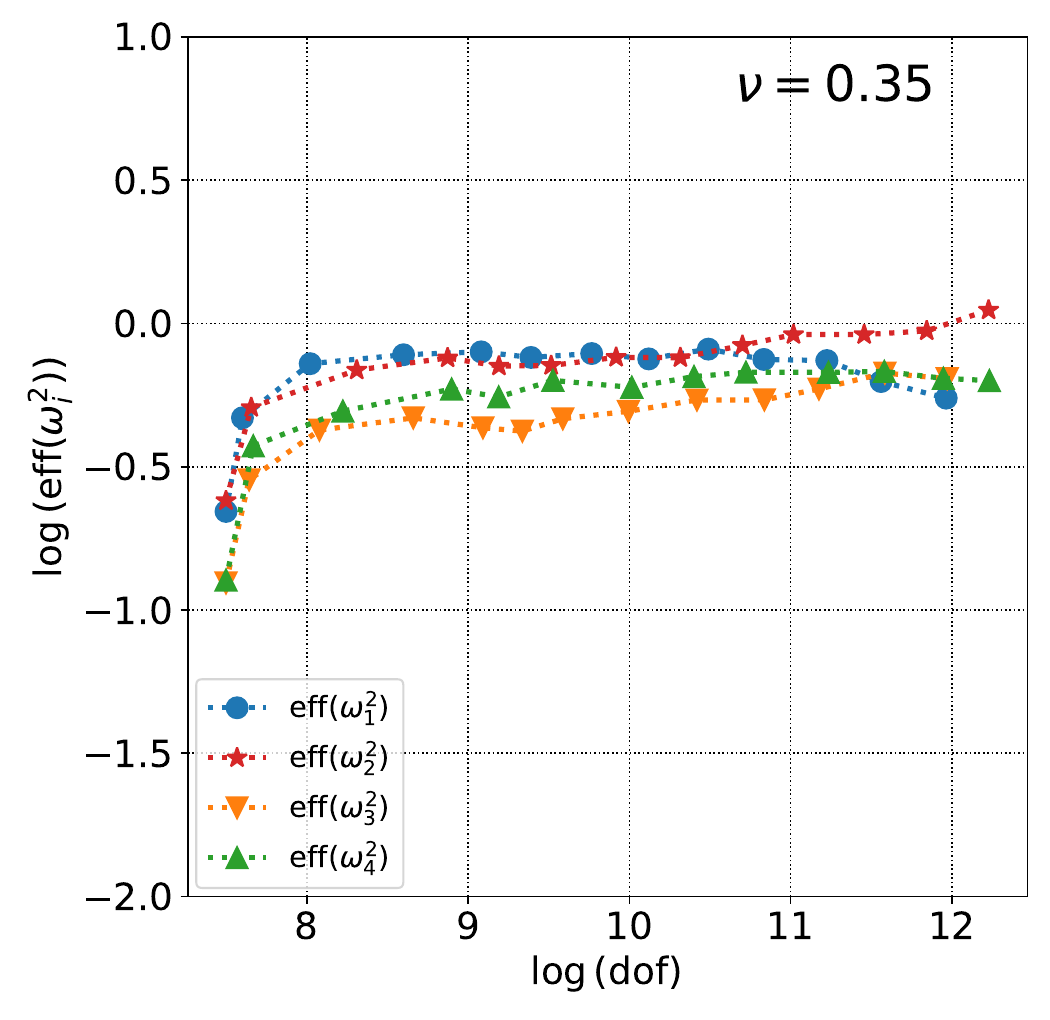}
\end{minipage}
\begin{minipage}{0.32\linewidth}\centering
	\includegraphics[scale=0.26]{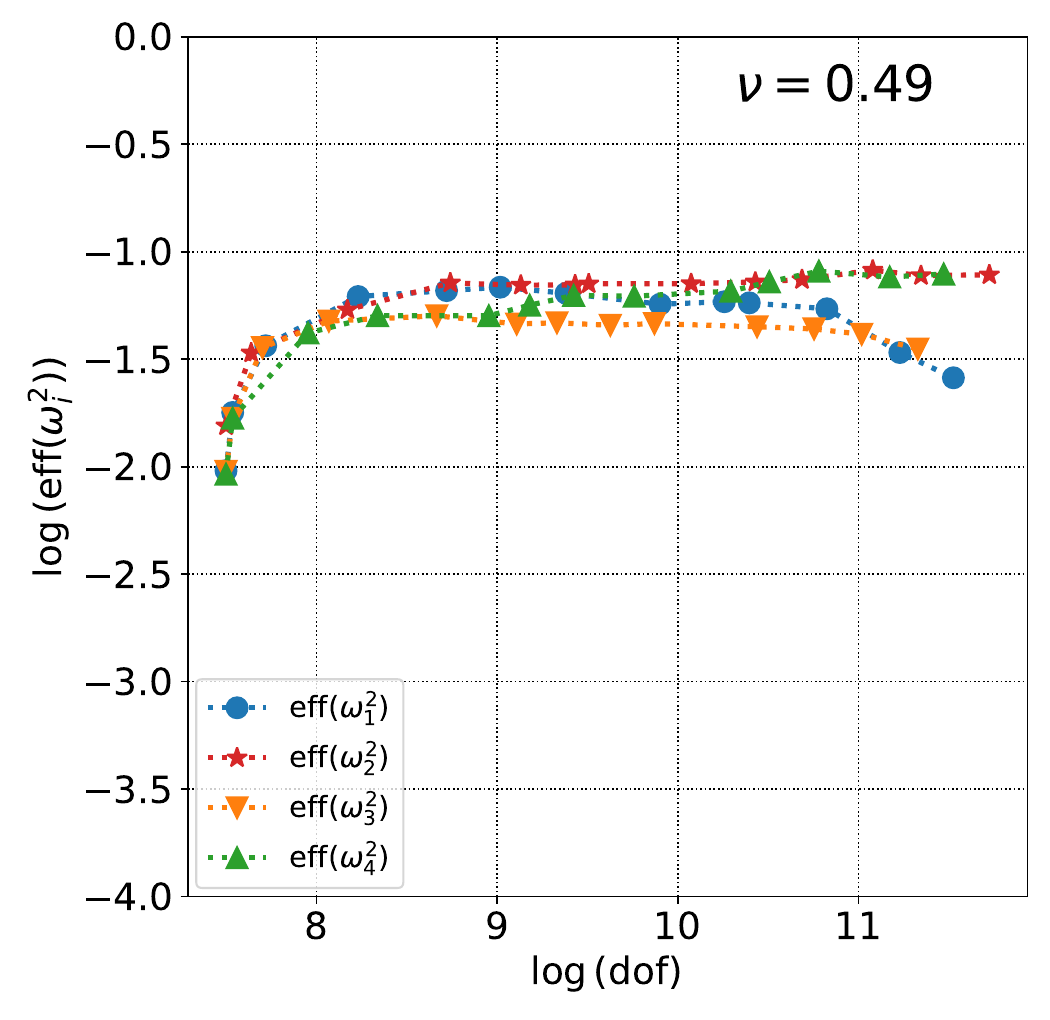}
\end{minipage}
\begin{minipage}{0.32\linewidth}\centering
	\includegraphics[scale=0.26]{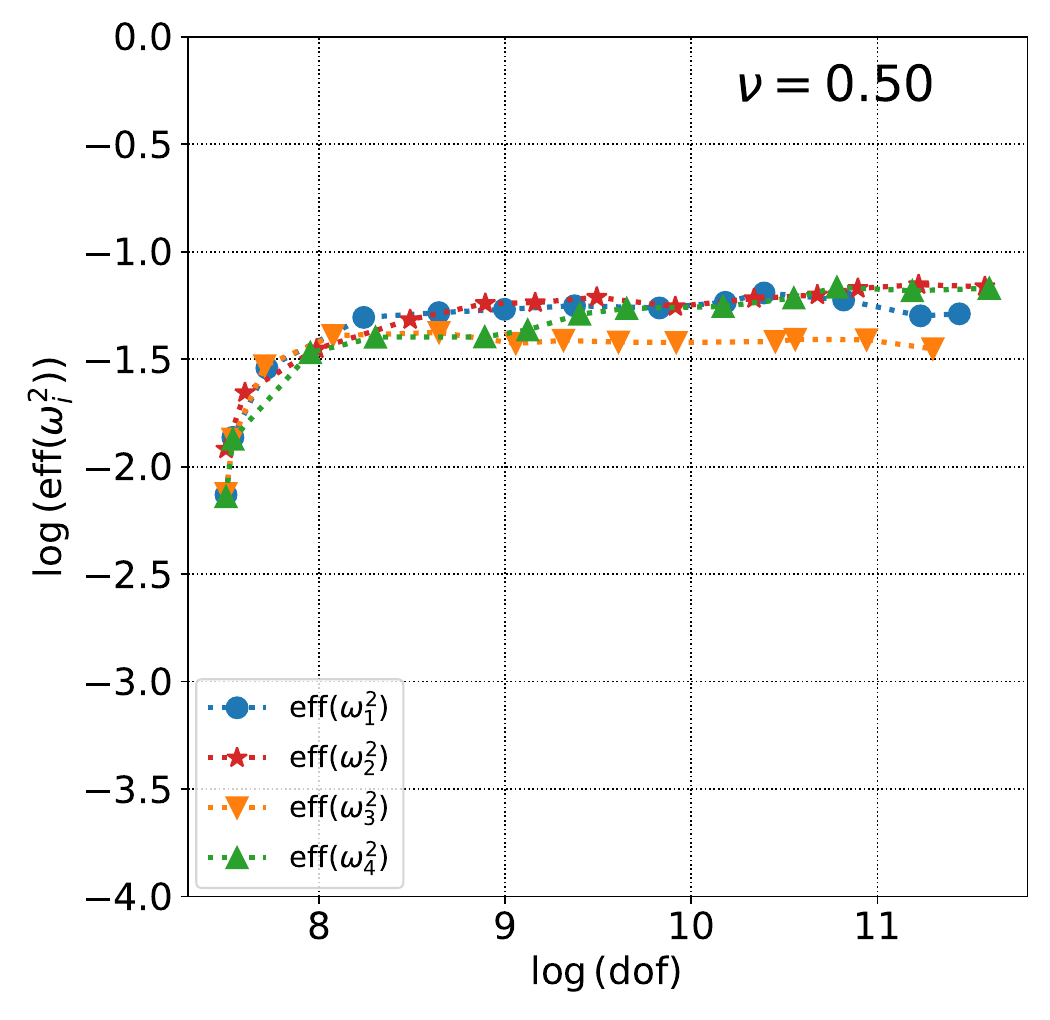}
\end{minipage}
\caption{Example \ref{subsec:afem}. Effectivity indexes for the first four lowest computed frequencies in the adaptive algorithm on $\Omega_{2}$ with different values of $\nu$.}
\label{fig:efficiency-2D-adaptive}
\end{figure}

\subsection{3D example}\label{subsec:3D-afem}
In this experiment we test our estimator on a geometry that has dihedral as well as trihedral singularities. More precisely, we consider the domain defined by $\Omega_{CF}:=\Omega_s\cup\Omega_f$, where
$$
\begin{aligned}
&\Omega_s:=(-1,1)^3,\\ 
&\Omega_f:= (-0.75,-0.3)\times(-0.75,-0.2)\times(-0.75,-0.1)\backslash((-0.5,-0.3)\times(-0.5,-0.2)\times(-0.5,-0.1)).
\end{aligned}
$$
Here, $\Omega_f$ is a Fichera-like domain. The physical parameters are the same as those of Section \ref{subsec:accuracy-test}. A sample of this domain is depicted in Figure \ref{fig:sample-afem-3D}. The refinement level is such that $1/N\approx h$. We assume that $\Omega_{CF}$ is clamped in the $xz$-plane with $y=-1$ and consider $\Gamma_0=\emptyset$.

\begin{figure}[!t]\centering
\includegraphics[scale=0.1,trim=19cm 0cm 19cm 0cm,clip]{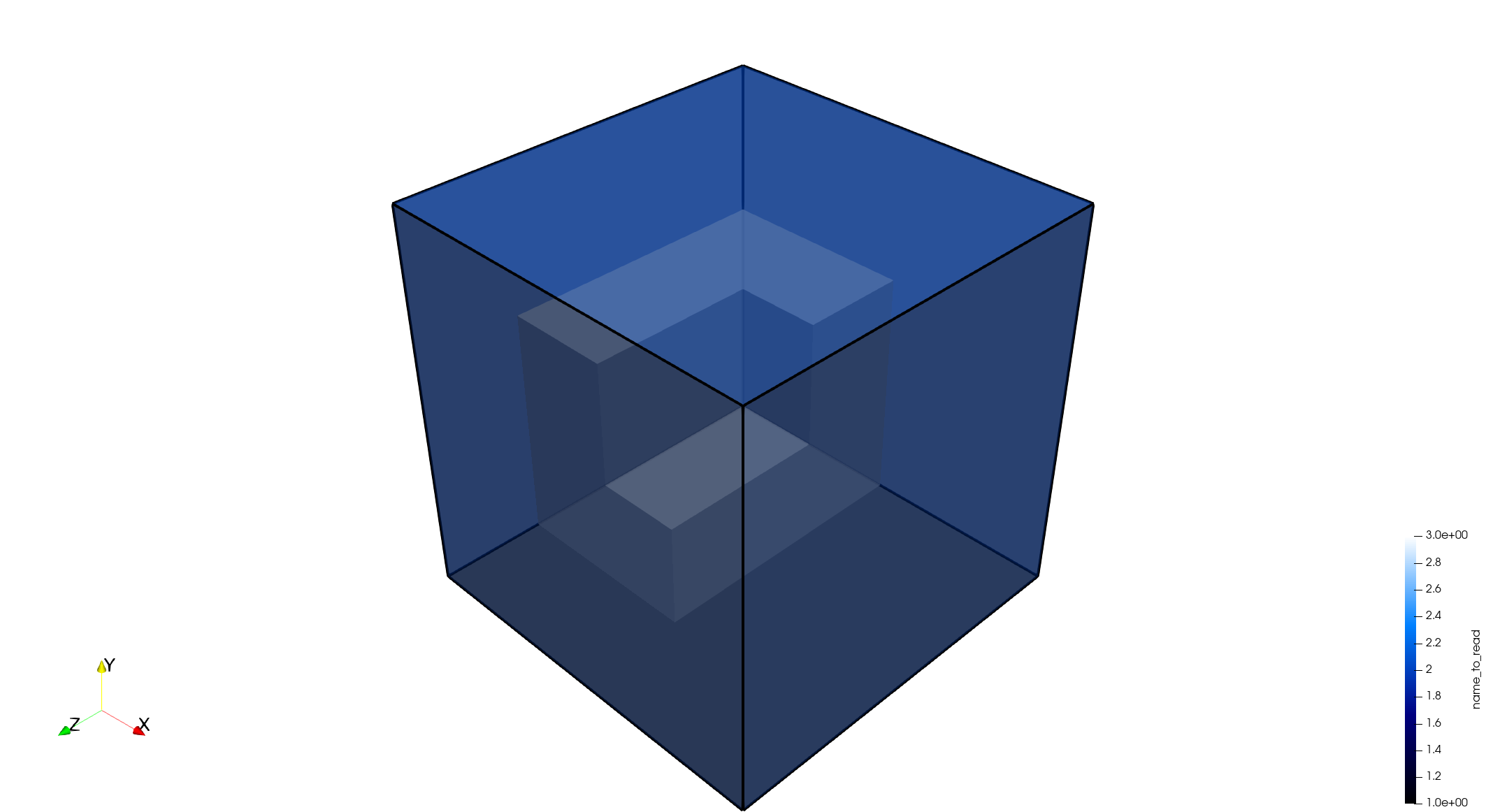}
\caption{Example \ref{subsec:3D-afem}. Sketch of the computational domain $\Omega_{CF}$.}
\label{fig:sample-afem-3D}
\end{figure}

We start by presenting the lowest computed elasto-acoustic frequencies in Table \ref{tabla:results-3D-cube-fichera}. We observe a linear rate of convergence in all the cases, denoting a strong coupling between solid and fluid. This was also observed in \cite{bermudez1994finite} for the two dimensional case. Samples of the eigenmodes are portrayed in Figure \ref{fig:3D-sols}. Here we note the effects of having a bottom clamped cube and a singular fluid domain. The pressure modes shows high gradients near $z=0$ and on the dihedral singularities from the solid domain side. On the fluid part, we observe that the third and fourth modes have high gradients across the Fichera singularities.

\begin{table}[t!]\centering
{\footnotesize\setlength{\tabcolsep}{9.5pt}
		\caption{Example \ref{subsec:3D-afem}. Lowest computed eigenvalues using different combinations of FE families in the cube-fichera domain.}
		\label{tabla:results-3D-cube-fichera}
		\begin{tabular}{|r r r r |c| r| }
			\hline
			\hline
			$N=8$             &  $N=16$         &   $N=24$         & $N=32$ & Order & $\omega_{extr}$  \\ 
			\hline
			\multicolumn{6}{|c|}{$\nu=0.35$}  \\
			\hline
			\multicolumn{6}{|c|}{MINI-element + $\mathbb{BDM}_1$}  \\
			\hline
			2611.7912  &  2533.9313  &  2506.0877  &  2494.1838  & 1.03 &  2468.8218  \\
			2704.2740  &  2646.4223  &  2626.2310  &  2617.1568  & 1.06 &  2599.8235  \\
			3921.0288  &  3804.9109  &  3772.5662  &  3759.9489  & 1.37 &  3741.8883  \\
			6100.5965  &  5981.2481  &  5938.0631  &  5918.0167  & 1.03 &  5879.1927  \\
			\hline
			\multicolumn{6}{|c|}{Taylor--Hood + $\mathbb{BDM}_1$}  \\
			\hline
			2488.4968  &  2477.4893  &  2473.7092  &  2472.1574  & 1.09 &  2468.9218  \\
			2611.8620  &  2604.6753  &  2602.3378  &  2601.3331  & 1.13 &  2599.4235  \\
			3747.8091  &  3742.5131  &  3740.6487  &  3739.9089  & 1.07 &  3738.2751  \\
			5903.2999  &  5888.5302  &  5883.9274  &  5881.8062  & 1.16 &  5878.1927  \\
			\hline
			\multicolumn{6}{|c|}{$\nu=0.49$}  \\
			\hline
			\multicolumn{6}{|c|}{MINI-element + $\mathbb{BDM}_1$}  \\
			\hline
			2648.6718  &  2556.4203  &  2522.2810  &  2507.4712  & 1.01 &  2476.1459  \\
			2724.4364  &  2653.7656  &  2626.9209  &  2614.6971  & 1.04 &  2591.6592  \\
			3755.3424  &  3641.8972  &  3609.7036  &  3597.0934  & 1.24 &  3574.6994  \\
			6175.0254  &  6050.0035  &  5995.0476  &  5970.3920  & 0.96 &  5920.2144  \\
			\hline
			\multicolumn{6}{|c|}{Taylor--Hood + $\mathbb{BDM}_1$}  \\
			\hline
			2497.1383  &  2484.6311  &  2480.2831  &  2478.4651  & 1.07 &  2474.6225  \\
			2605.5115  &  2596.3564  &  2593.3240  &  2591.9988  & 1.11 &  2589.4592  \\
			3584.0209  &  3579.0341  &  3577.2017  &  3576.4690  & 1.01 &  3574.6994  \\
			5943.3247  &  5930.3454  &  5925.9587  &  5923.9550  & 1.06 &  5919.9416  \\
			\hline
			\multicolumn{6}{|c|}{$\nu=0.5$}  \\
			\hline
			\multicolumn{6}{|c|}{MINI-element + $\mathbb{BDM}_1$}  \\
			\hline
			2657.6245  &  2561.6934  &  2526.2274  &  2510.8320  & 1.00 &  2476.0557  \\
			2731.1055  &  2657.5435  &  2629.3030  &  2616.5388  & 0.93 &  2585.4029  \\
			3745.9572  &  3631.9913  &  3599.5220  &  3586.8093  & 1.34 &  3567.8426  \\
			6181.2730  &  6060.4663  &  6003.3623  &  5977.8556  & 1.01 &  5932.4767  \\
			\hline
			\multicolumn{6}{|c|}{Taylor--Hood + $\mathbb{BDM}_1$}  \\
			\hline
			2499.4545  &  2486.7744  &  2482.3658  &  2480.5197  & 1.07 &  2476.6248  \\
			2606.4714  &  2597.1113  &  2594.0098  &  2592.6527  & 1.11 &  2590.0569  \\
			3573.4731  &  3568.4840  &  3566.6458  &  3565.9107  & 1.01 &  3564.1405  \\
			5948.8517  &  5935.9581  &  5931.5641  &  5929.5595  & 1.05 &  5925.4767  \\
			\hline
			\hline
		\end{tabular}
}
\end{table}

Considering the extrapolated values from Table \ref{tabla:results-3D-cube-fichera}, we now perform a total of 12 adaptive refinement for different values of $\nu$. The results on the error and efficiency are depicted in figures \ref{fig:error-3D-adaptive} and \ref{fig:efficiency-3D-adaptive}. We observe that the fourth eigenmode is the one who takes more iterations to converge optimally, and it is the one with less marked elements. This behavior increases when we approach to the incompressible limit. In all the cases, optimal rates of $\mathcal{O}(h^2)\approx \mathcal{O}(\texttt{dof}^{-1})$ is observed for some $h_0$. Also, the estimator remains reliable and efficient.

Sample of the adaptive meshes for $\nu=0.35$ are displayed in Figure \ref{fig:sample-afem-3D}. Here we present a bottom view of the solid domain in order to observe the refinements on the edges where boundary conditions change. It is clear that the algorithm detects and refine near the singularities according to the computed eigenmode. The pressure modes in Figure \ref{fig:3D-sols} gives a clue of the zones to be refined. It notes that a dihedral singularity for the solid is not necessary a singularity for the fluid.

\begin{figure}[!t]\centering
\begin{minipage}{0.24\linewidth}\centering
	{\footnotesize $\bu_{h,1}$}\\
	\includegraphics[scale=0.093,trim=21cm 0cm 16cm 0cm,clip]{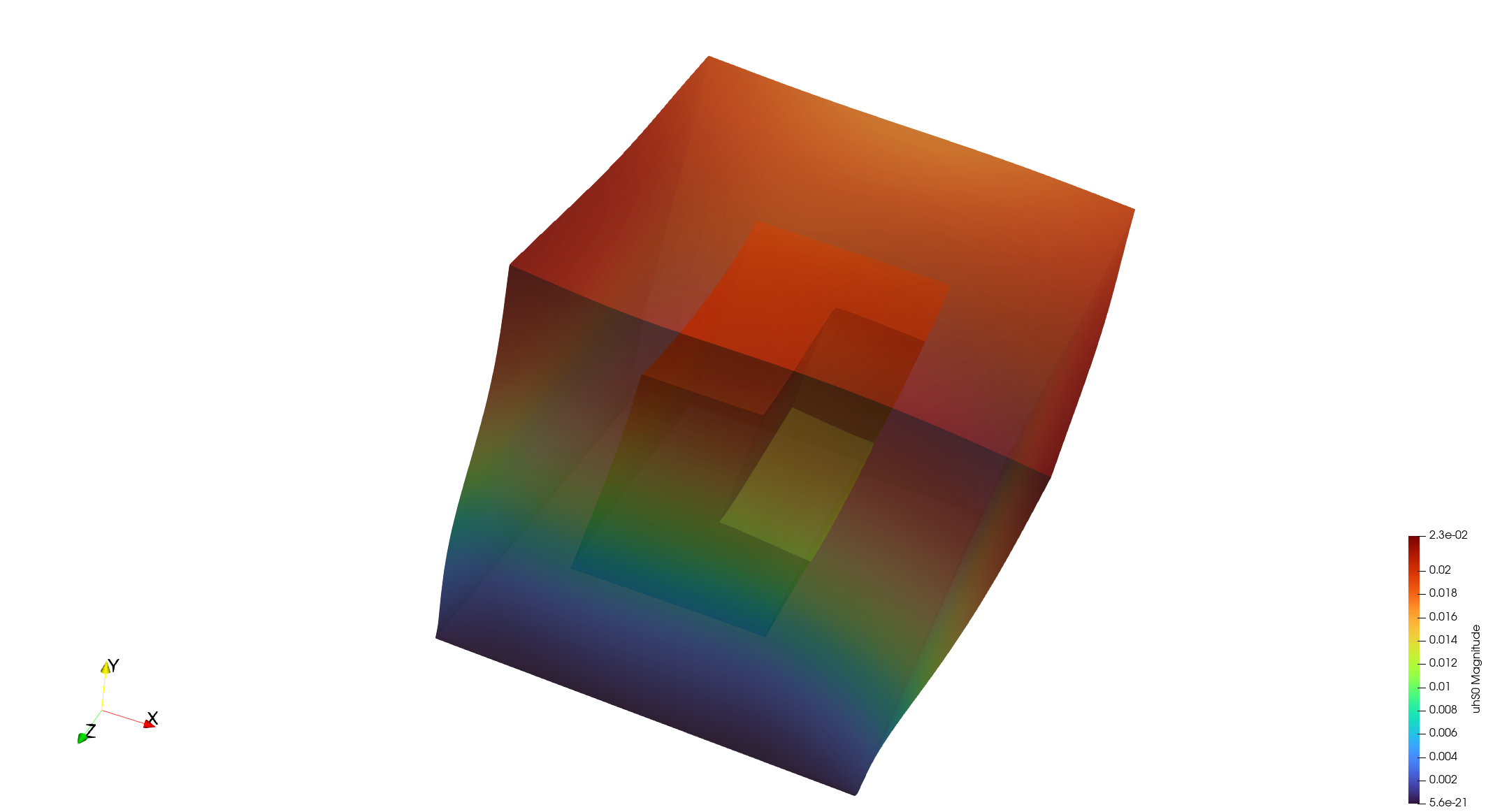}
\end{minipage}
\begin{minipage}{0.24\linewidth}\centering
	{\footnotesize $\bu_{h,2}$}\\
	\includegraphics[scale=0.093,trim=17cm 0cm 20cm 0cm,clip]{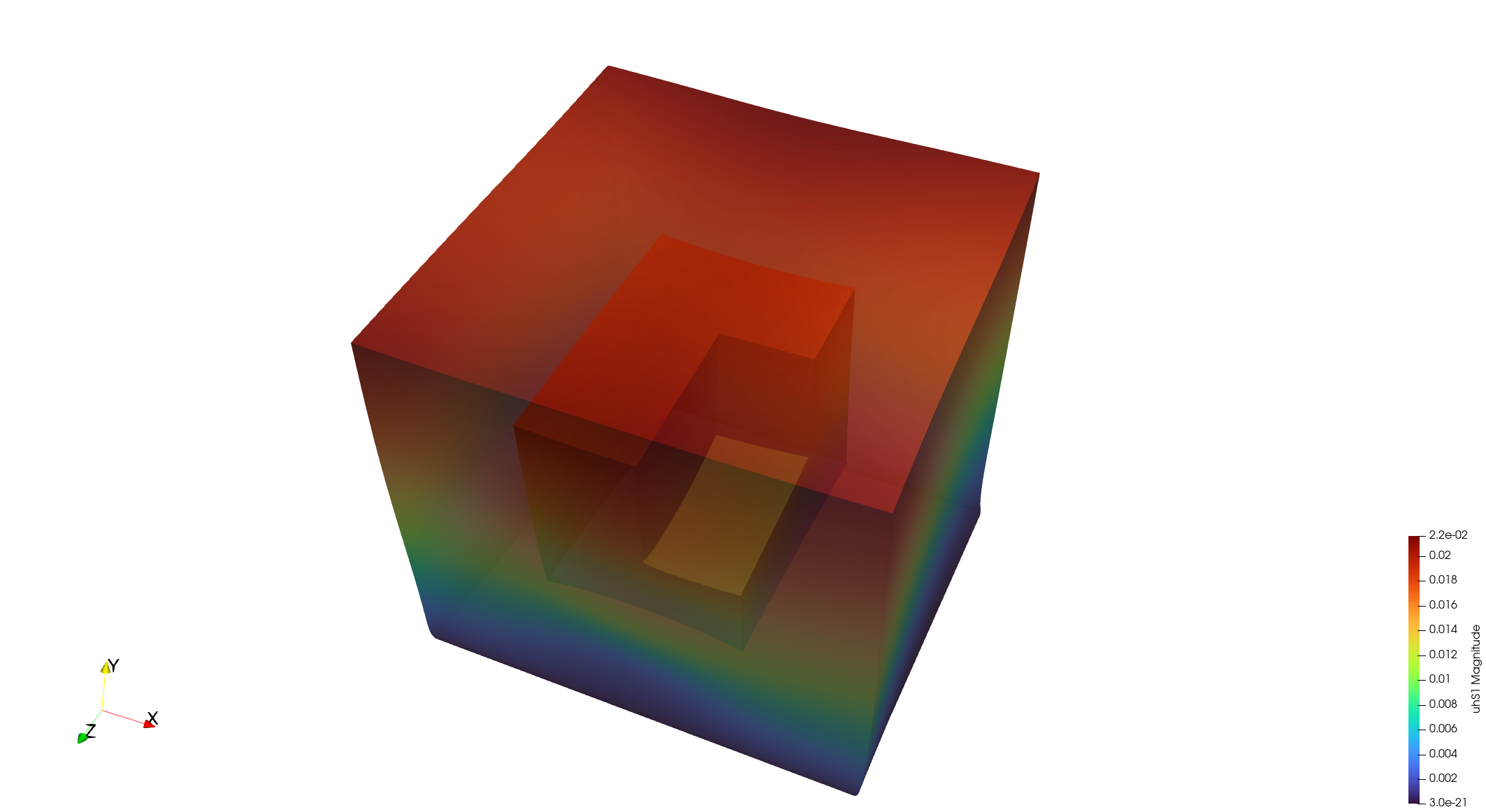}
\end{minipage}
\begin{minipage}{0.24\linewidth}\centering
	{\footnotesize $\bu_{h,3}$}\\
	\includegraphics[scale=0.093,trim=18cm 0cm 16cm 0cm,clip]{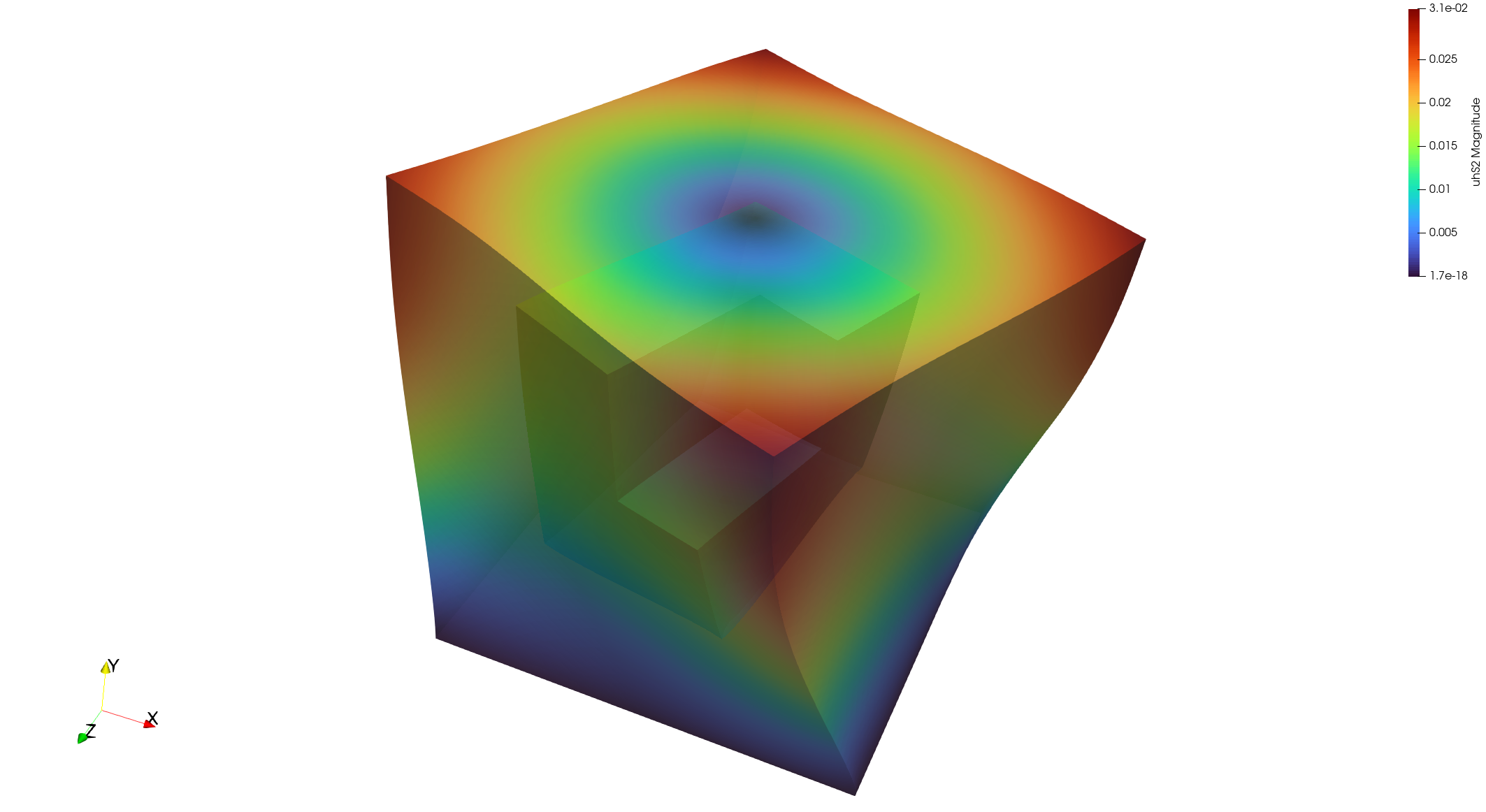}
\end{minipage}
\begin{minipage}{0.24\linewidth}\centering
	{\footnotesize $\bu_{h,4}$}\\
	\includegraphics[scale=0.093,trim=19cm 0cm 18cm 0cm,clip]{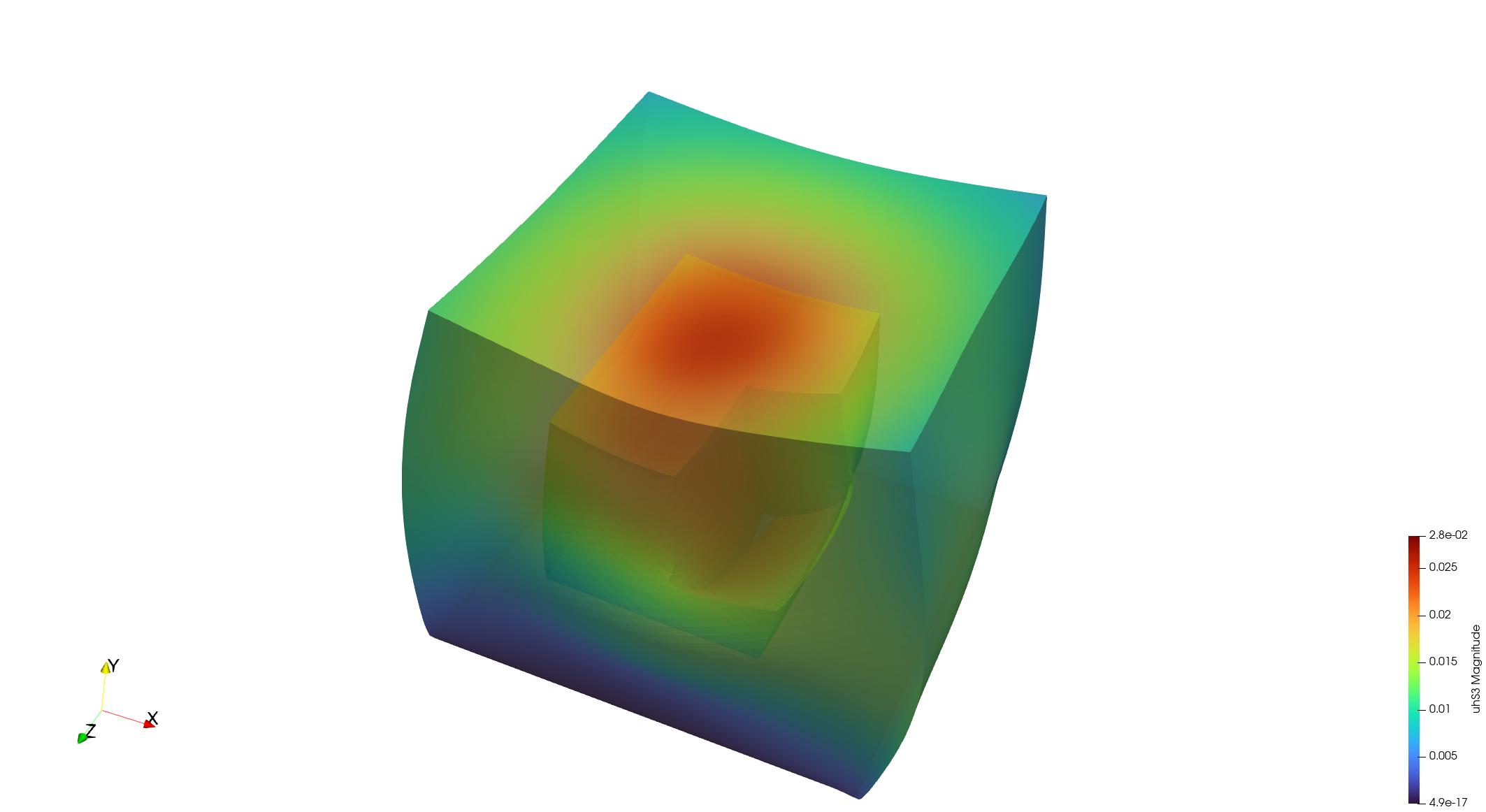}
\end{minipage}
\begin{minipage}{0.24\linewidth}\centering
	{\footnotesize $p_{h,1}$}\\
	\includegraphics[scale=0.093,trim=19cm 0cm 19cm 0cm,clip]{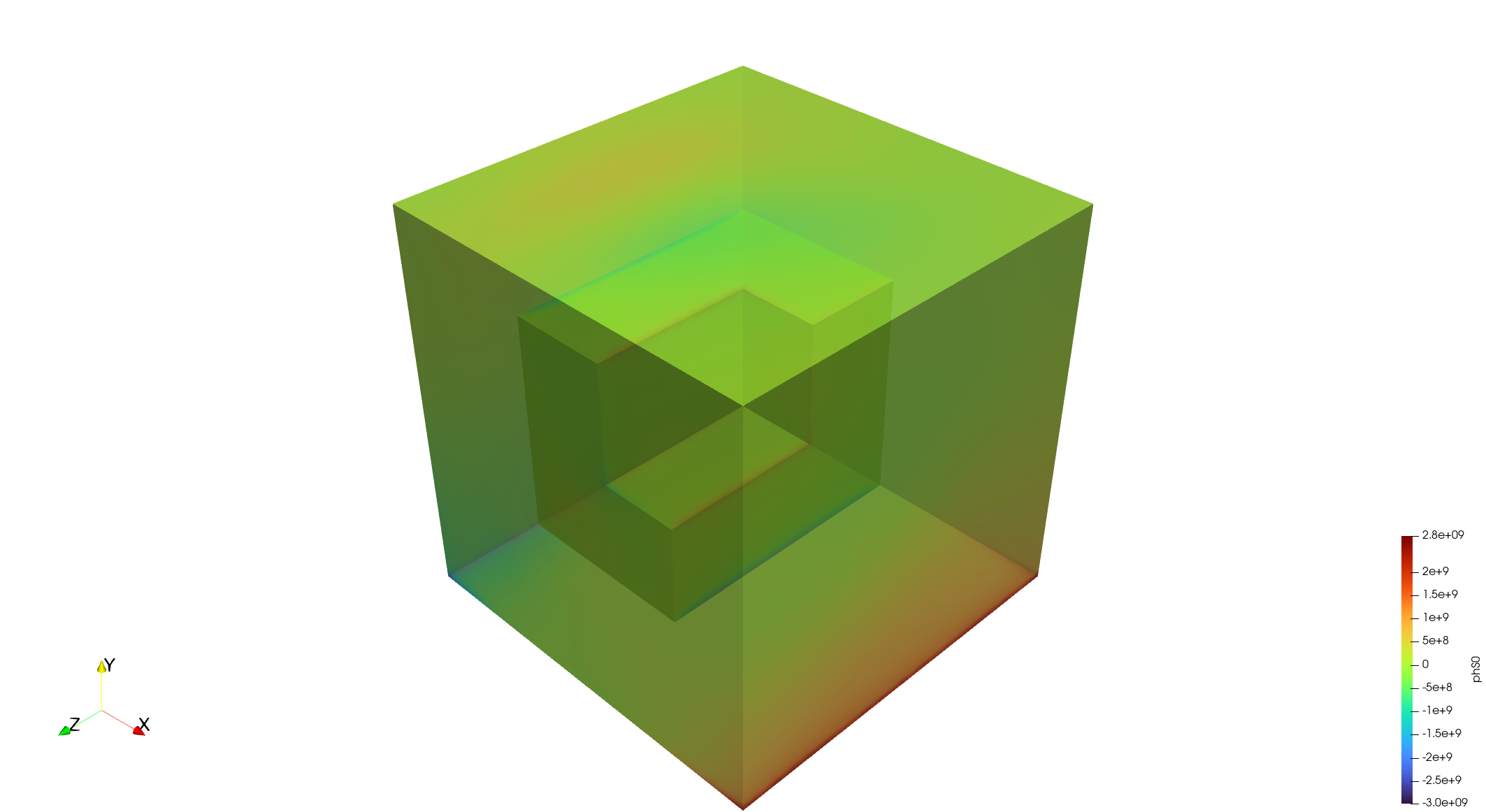}
\end{minipage}
\begin{minipage}{0.24\linewidth}\centering
	{\footnotesize $p_{h,2}$}\\
	\includegraphics[scale=0.093,trim=19cm 0cm 19cm 0cm,clip]{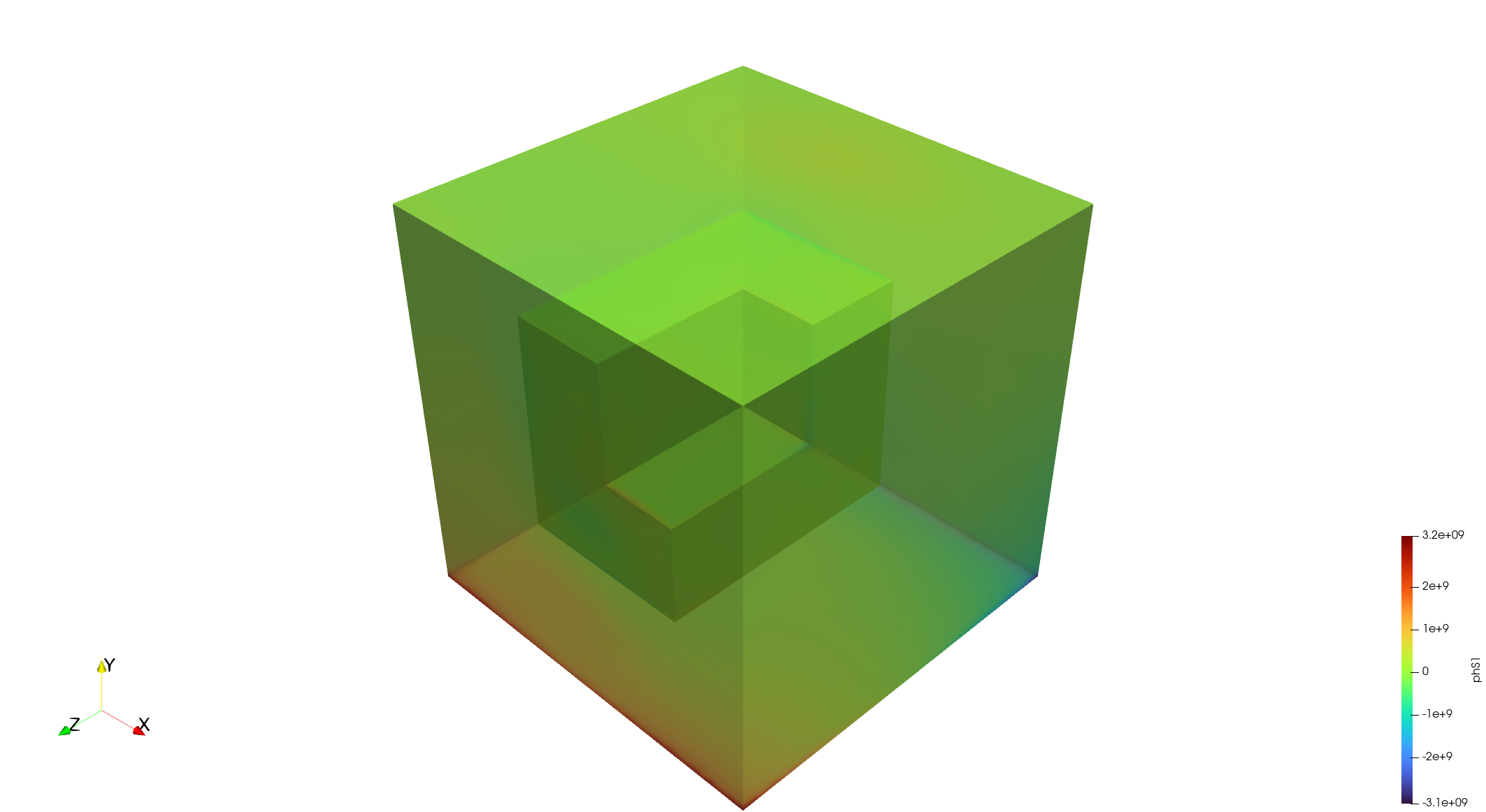}
\end{minipage}
\begin{minipage}{0.24\linewidth}\centering
	{\footnotesize $p_{h,3}$}\\
	\includegraphics[scale=0.093,trim=19cm 0cm 19cm 0cm,clip]{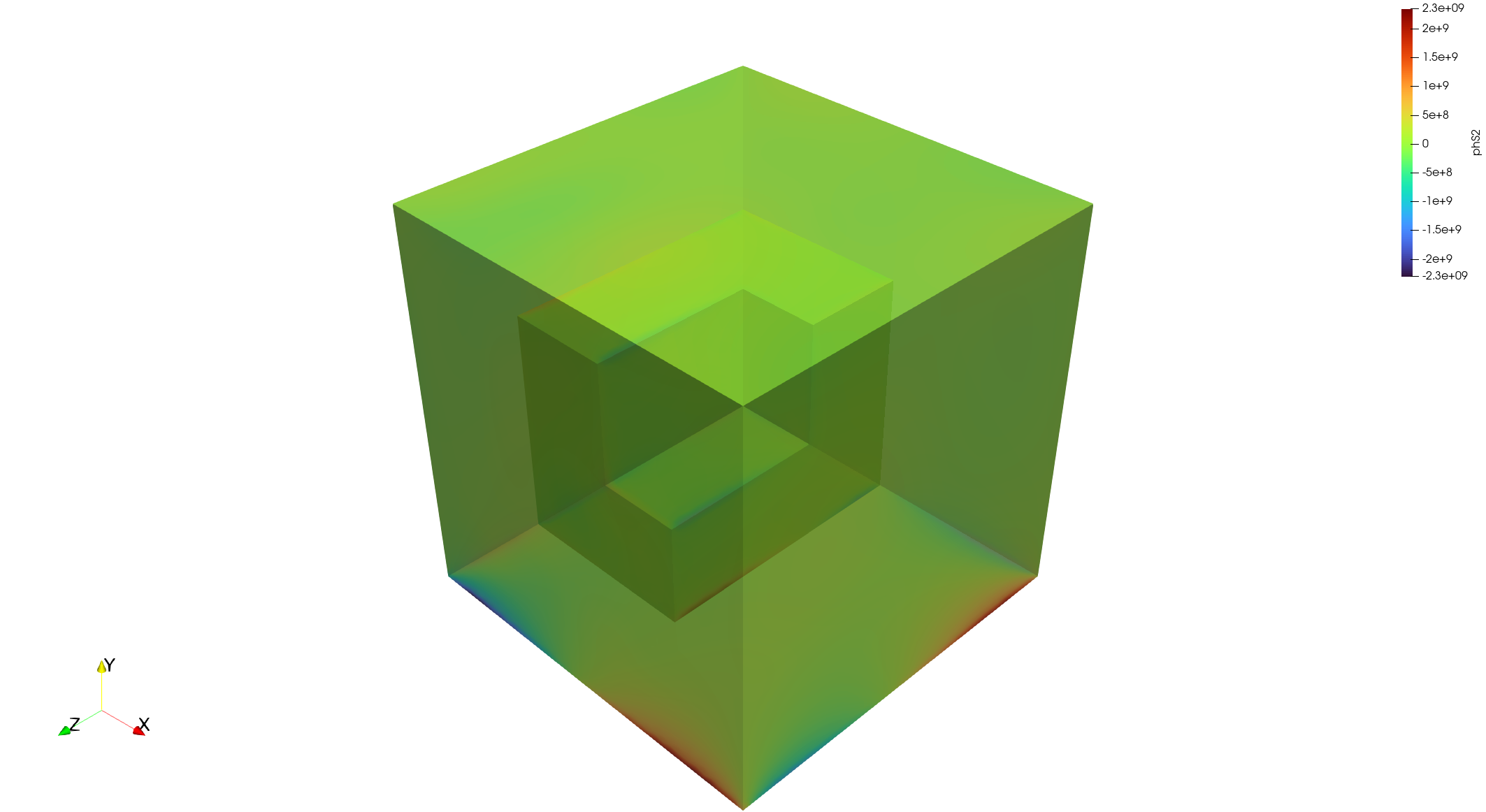}
\end{minipage}
\begin{minipage}{0.24\linewidth}\centering
	{\footnotesize $p_{h,4}$}\\
	\includegraphics[scale=0.093,trim=19cm 0cm 19cm 0cm,clip]{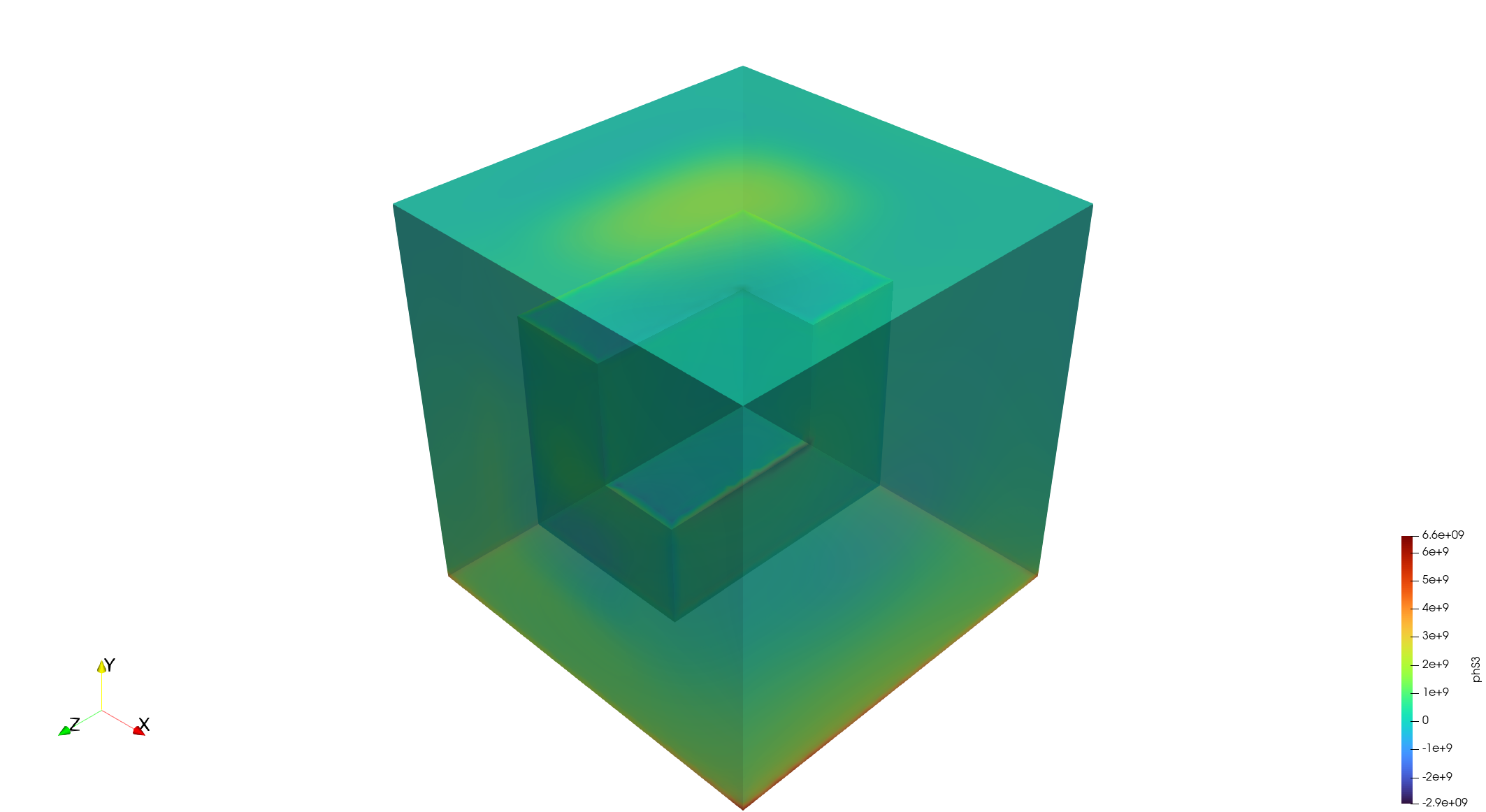}
\end{minipage}
\begin{minipage}{0.24\linewidth}\centering
	{\footnotesize $\bw_{h,1}$}\\
	\includegraphics[scale=0.093,trim=19cm 0cm 19cm 0cm,clip]{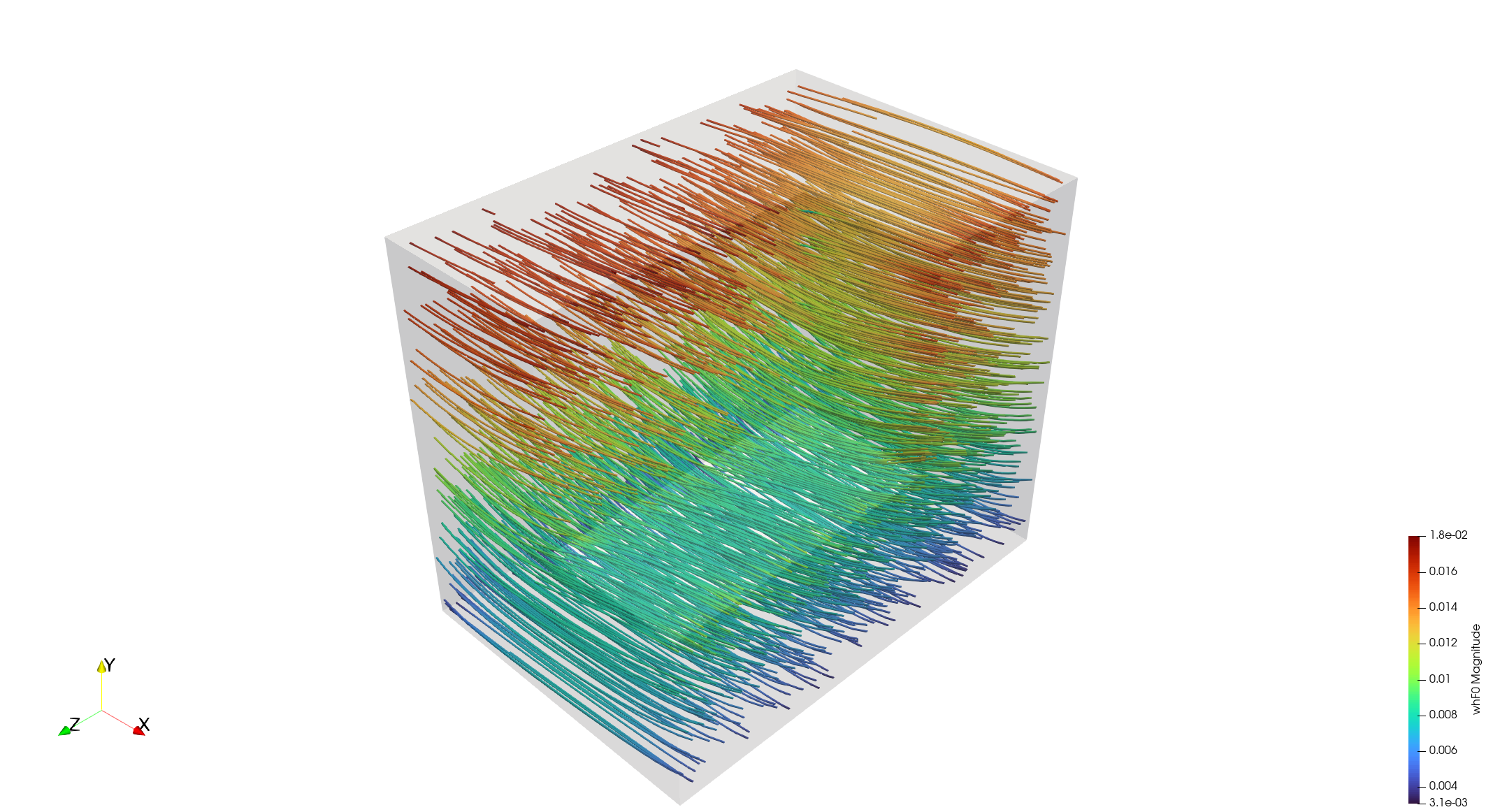}
\end{minipage}
\begin{minipage}{0.24\linewidth}\centering
	{\footnotesize $\bw_{h,2}$}\\
	\includegraphics[scale=0.093,trim=19cm 0cm 19cm 0cm,clip]{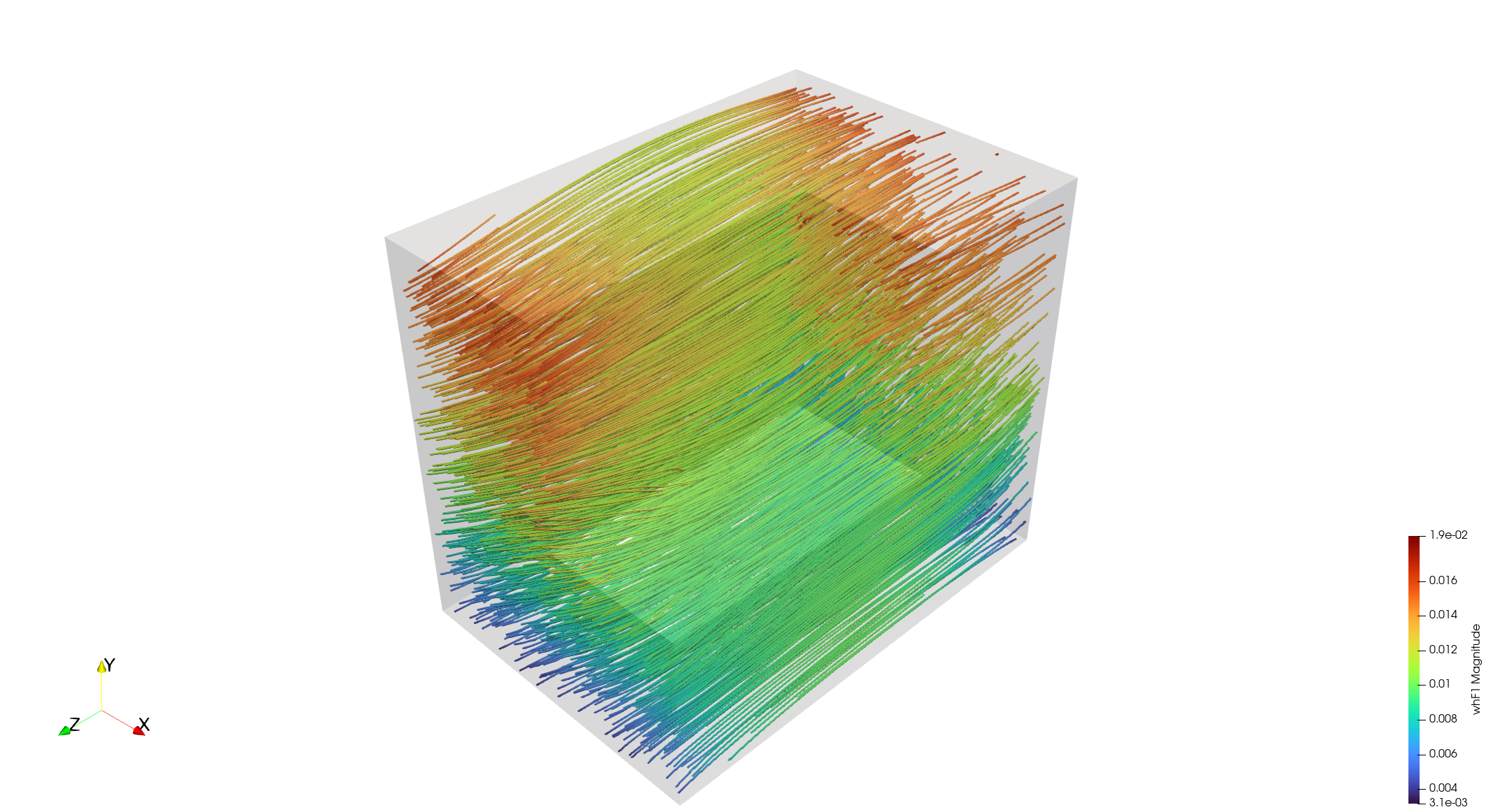}
\end{minipage}
\begin{minipage}{0.24\linewidth}\centering
	{\footnotesize $\bw_{h,3}$}\\
	\includegraphics[scale=0.093,trim=19cm 0cm 19cm 0cm,clip]{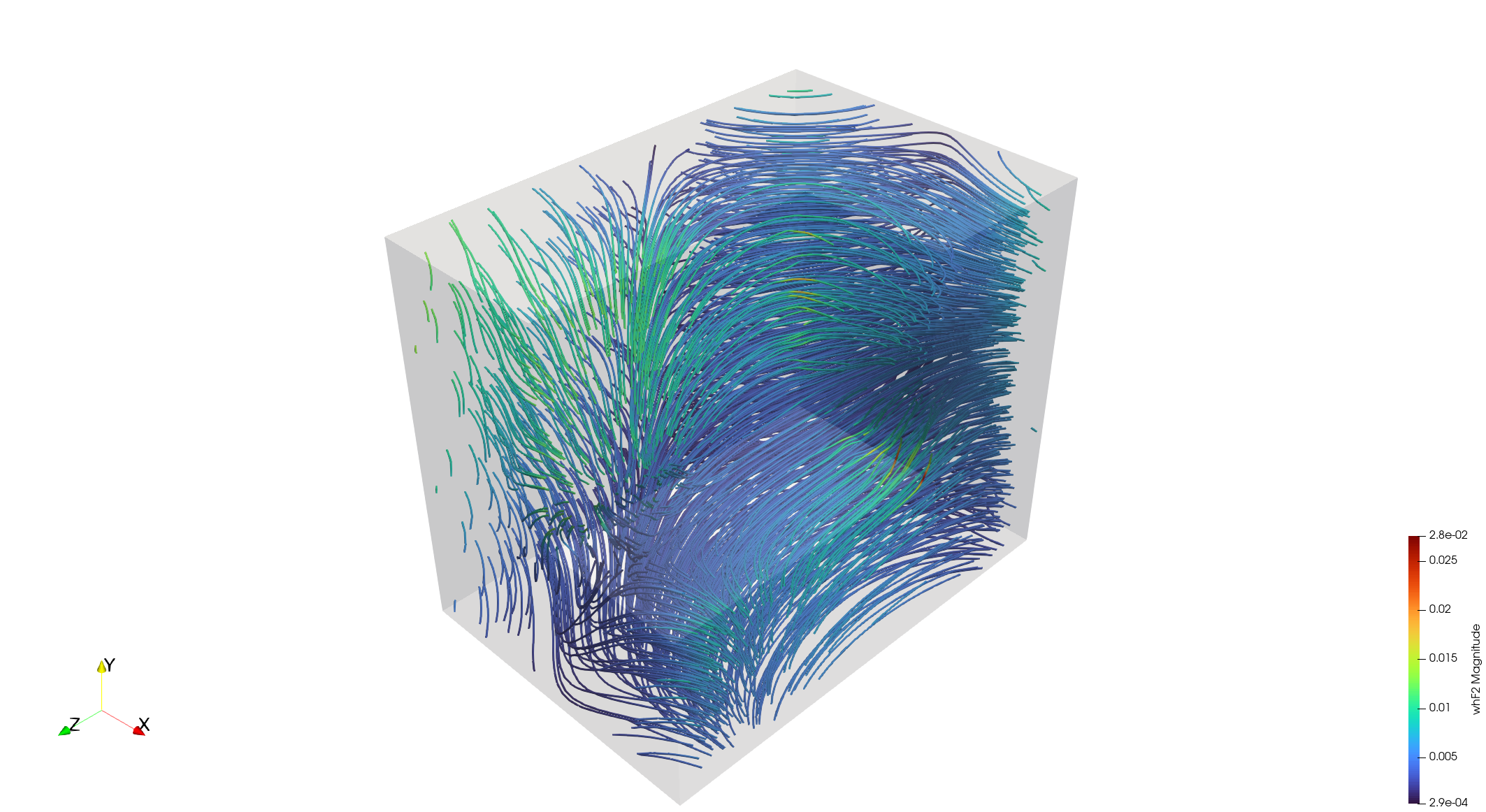}
\end{minipage}
\begin{minipage}{0.24\linewidth}\centering
	{\footnotesize $\bw_{h,4}$}\\
	\includegraphics[scale=0.093,trim=19cm 0cm 19cm 0cm,clip]{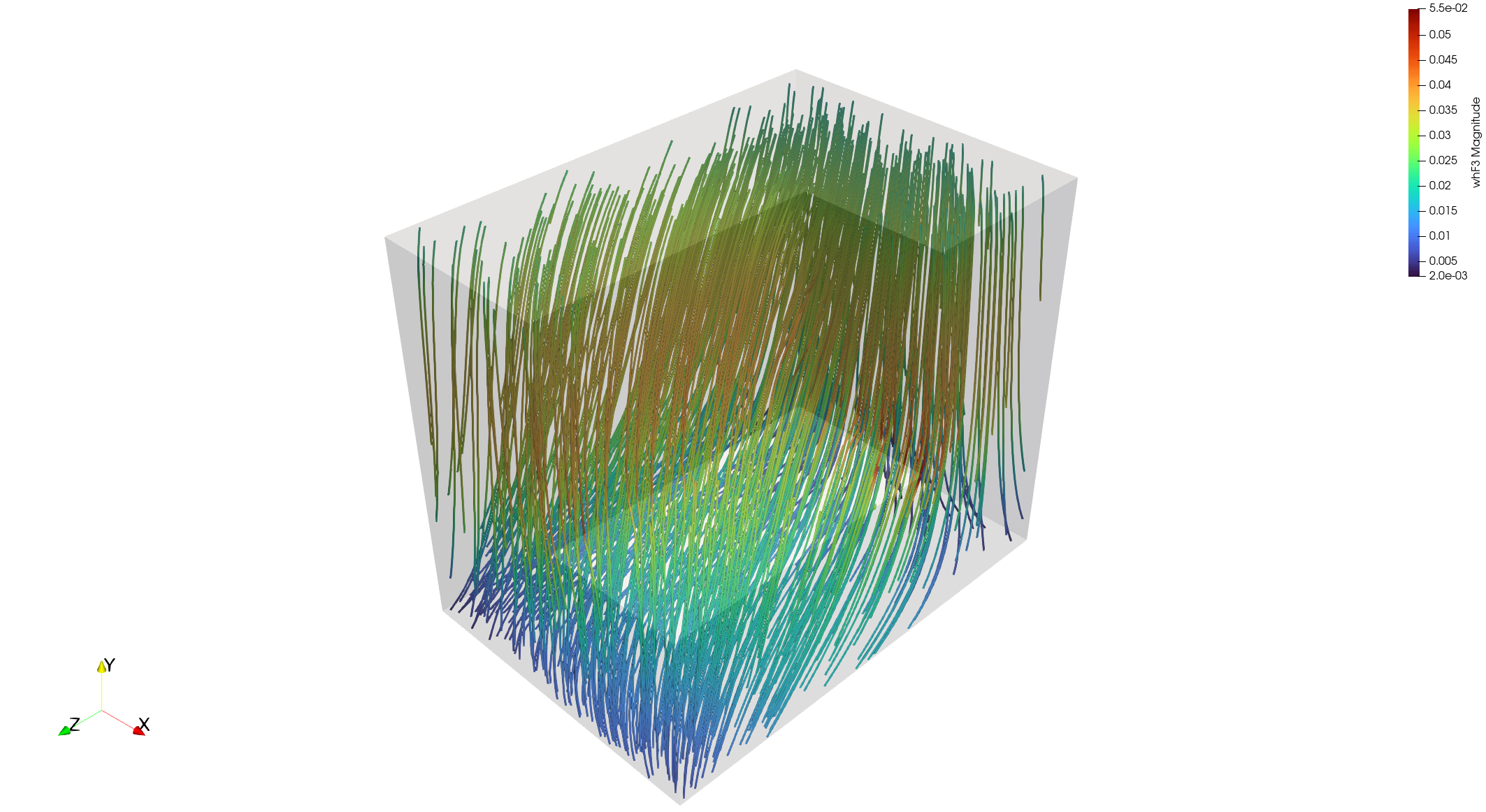}
\end{minipage}
\caption{Example \ref{subsec:3D-afem}. Comparison between the first fourth lowest order computed elasto-acoustic modes on $\Omega_{CF}$. The solid domain have been warped by a sufficiently large factor in order to observe the deformation.}
\end{figure}

\begin{figure}[!t]\centering
\begin{minipage}{0.24\linewidth}\centering
	{\text{$\Omega_s$, $\omega_{h,1}$, \texttt{dof}= 128136}\\ }
	\includegraphics[scale=0.09,trim=18cm 0cm 18cm 0cm,clip]{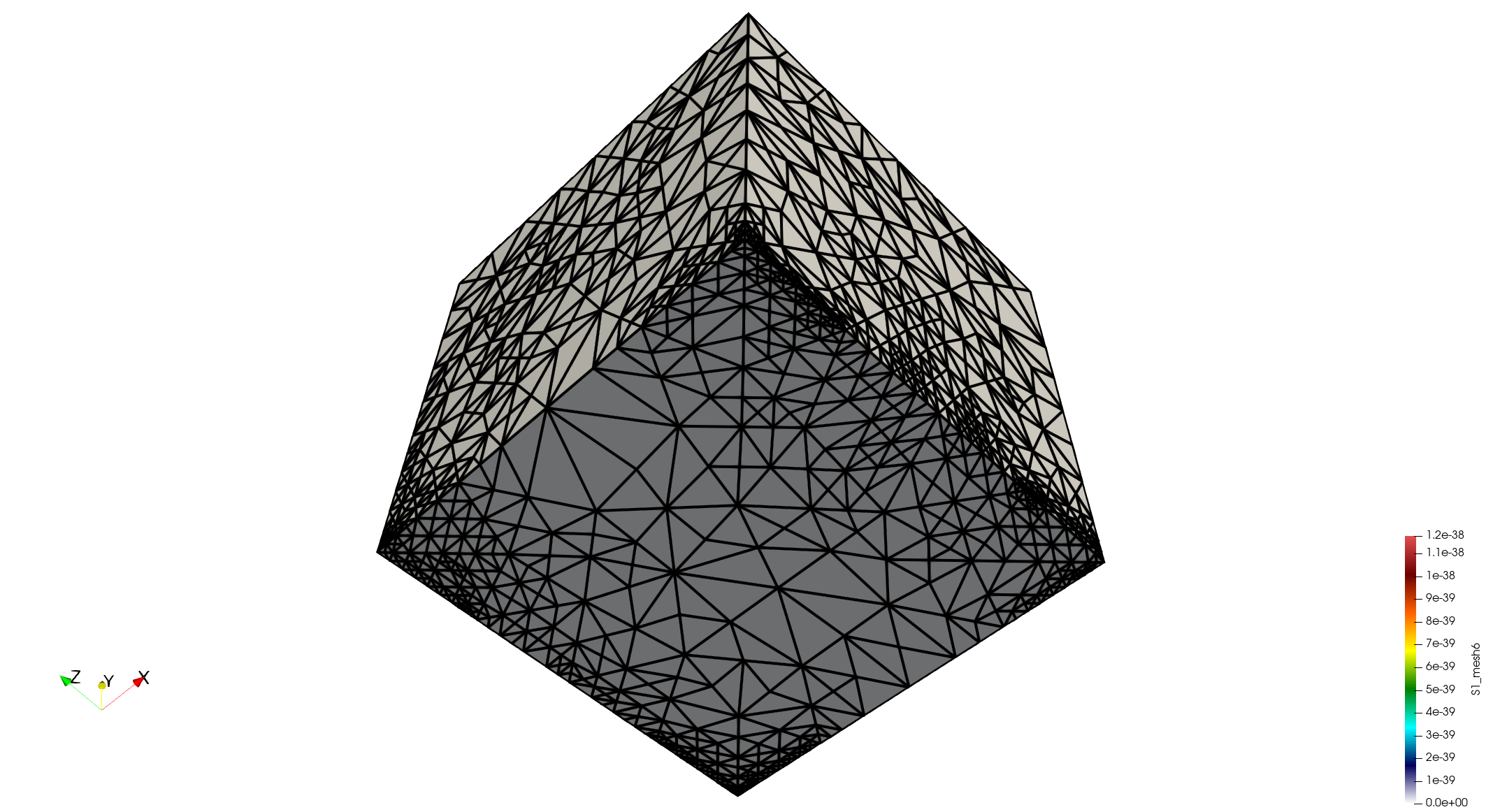}
\end{minipage}
\begin{minipage}{0.24\linewidth}\centering
	{\text{$\Omega_f$, $\omega_{h,1}$, \texttt{dof}= 128136}\\ }
	\includegraphics[scale=0.09,trim=18cm 0cm 18cm 0cm,clip]{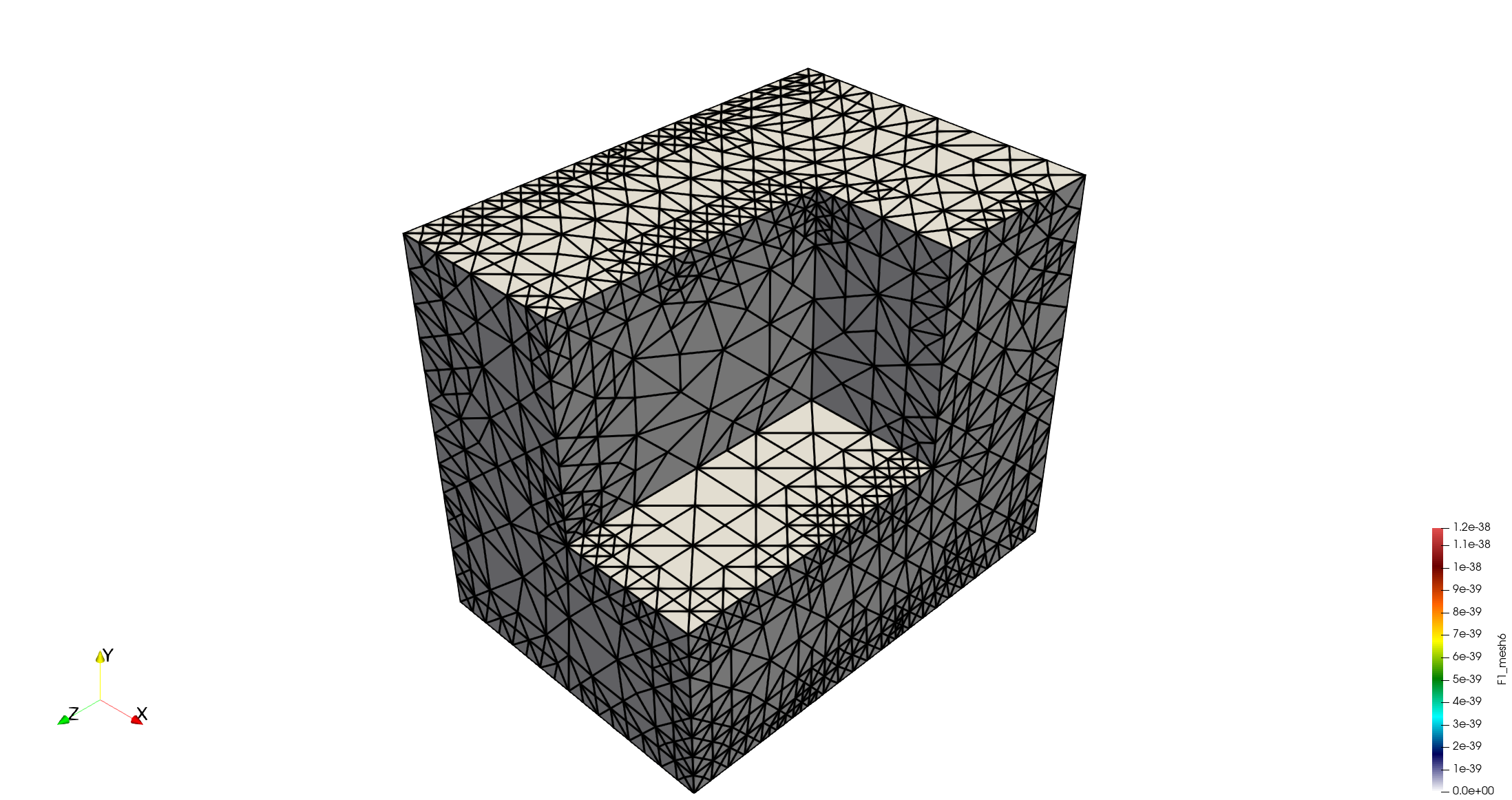}
\end{minipage}
\begin{minipage}{0.24\linewidth}\centering
	{\text{$\Omega_s$, $\omega_{h,1}$, \texttt{dof}= 1736770}\\ }
	\includegraphics[scale=0.09,trim=18cm 0cm 18cm 0cm,clip]{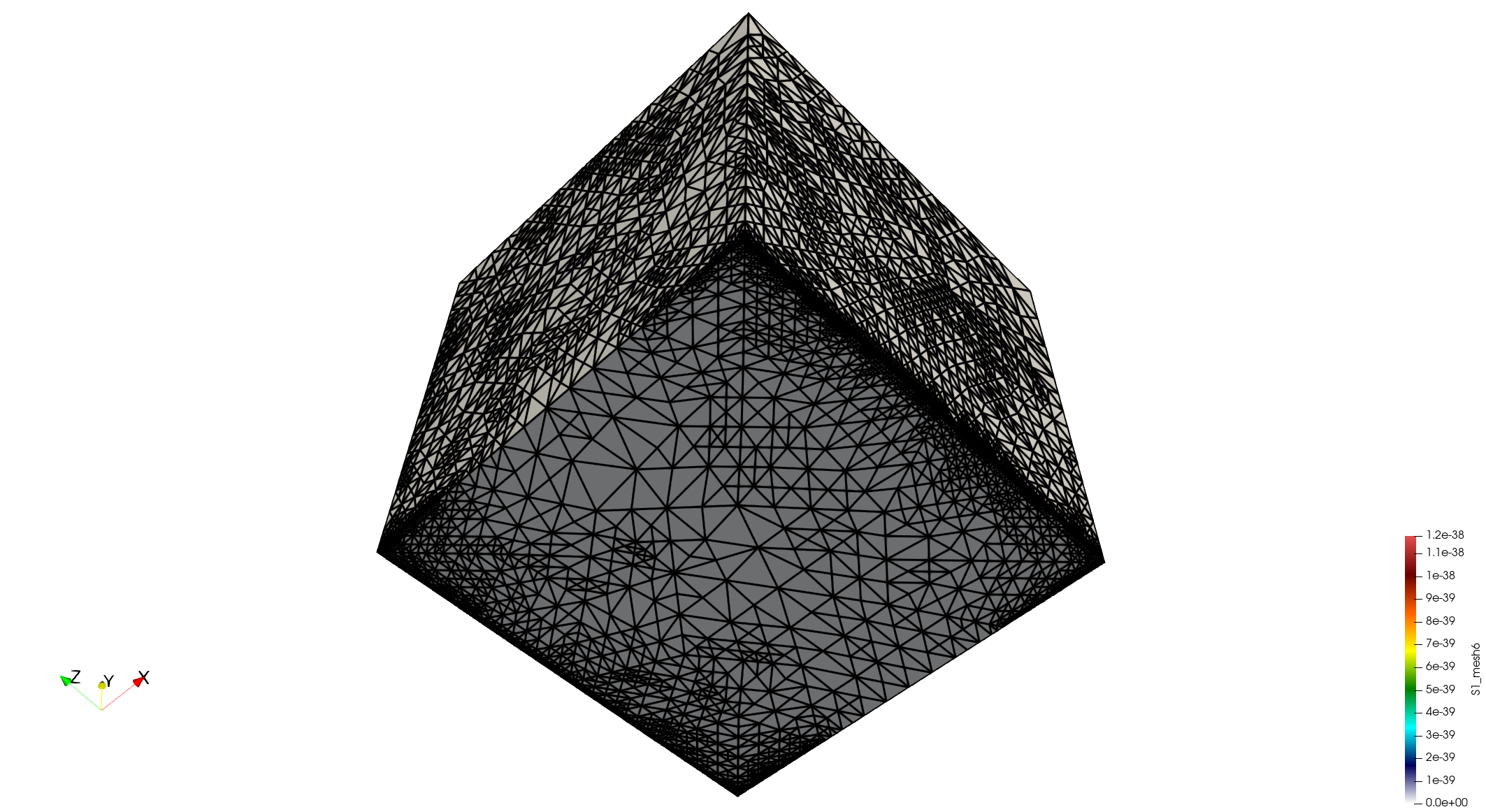}
\end{minipage}
\begin{minipage}{0.24\linewidth}\centering
	{\text{$\Omega_f$, $\omega_{h,1}$, \texttt{dof}= 1736770}\\ }
	\includegraphics[scale=0.09,trim=18cm 0cm 18cm 0cm,clip]{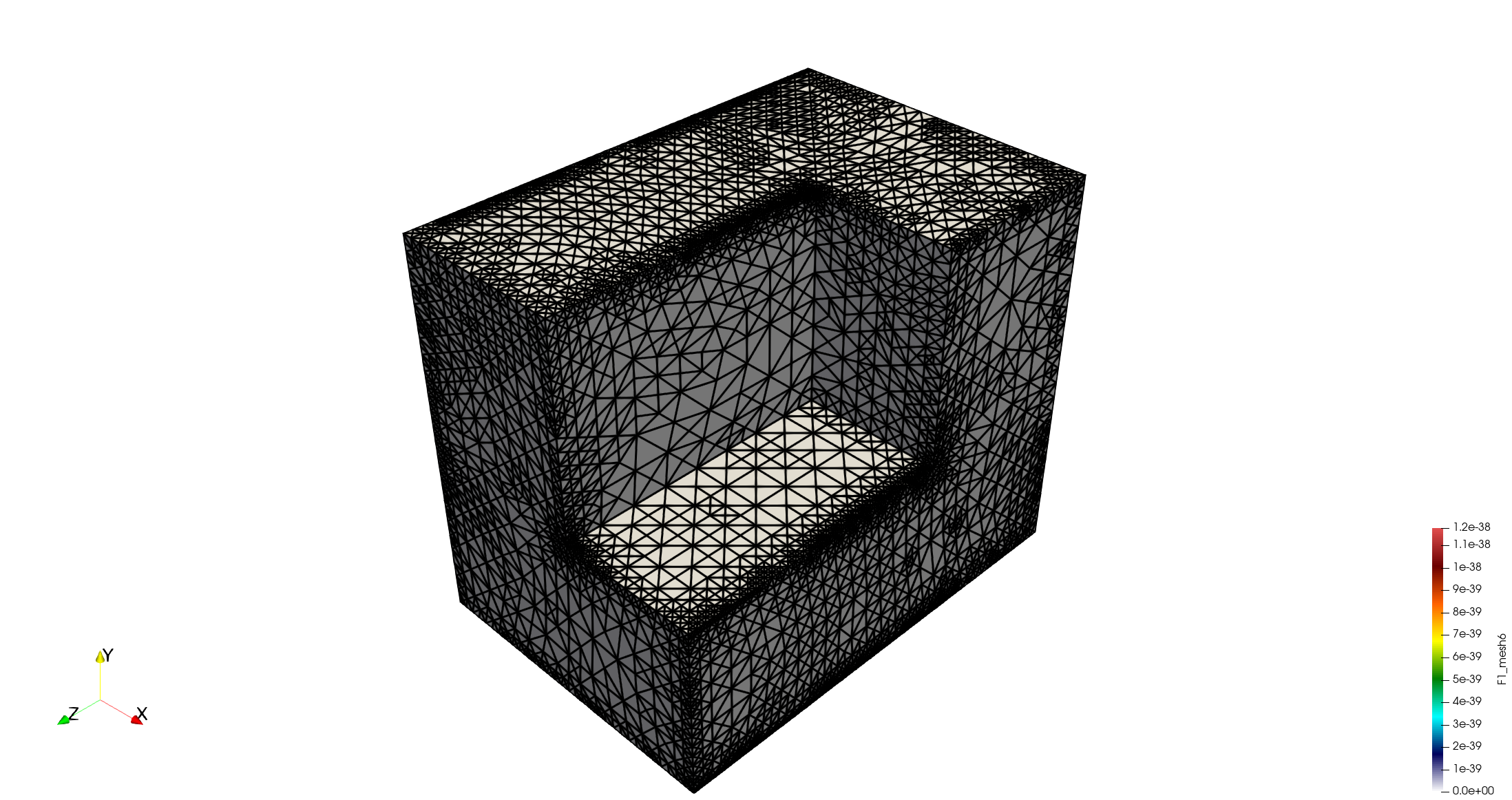}
\end{minipage}
\begin{minipage}{0.24\linewidth}\centering
	{\text{$\Omega_s$, $\omega_{h,3}$, \texttt{dof}= 331900}\\ }
	\includegraphics[scale=0.09,trim=18cm 0cm 18cm 0cm,clip]{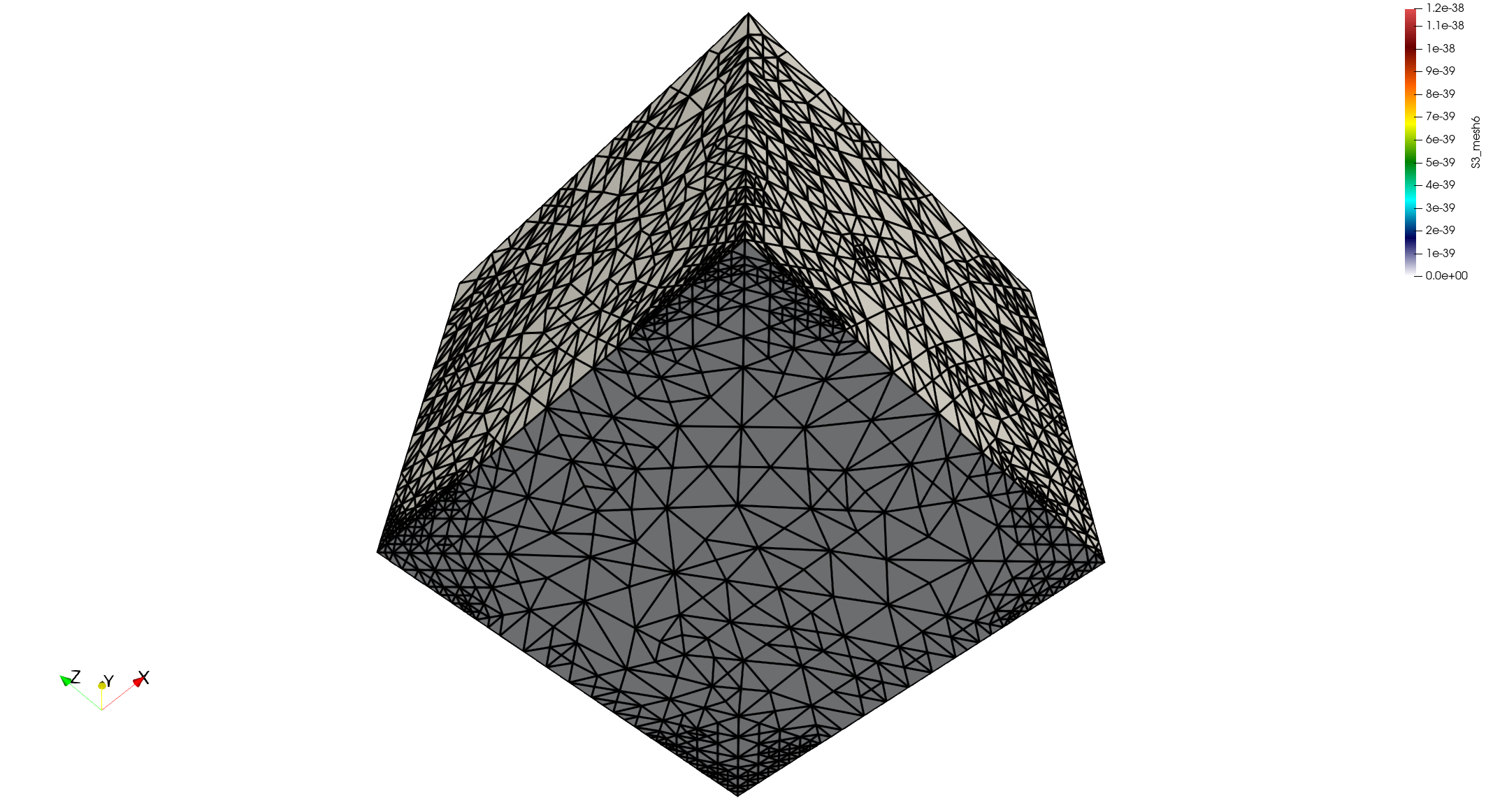}
\end{minipage}
\begin{minipage}{0.24\linewidth}\centering
	{\text{$\Omega_f$, $\omega_{h,3}$, \texttt{dof}= 331900}\\ }
	\includegraphics[scale=0.09,trim=18cm 0cm 18cm 0cm,clip]{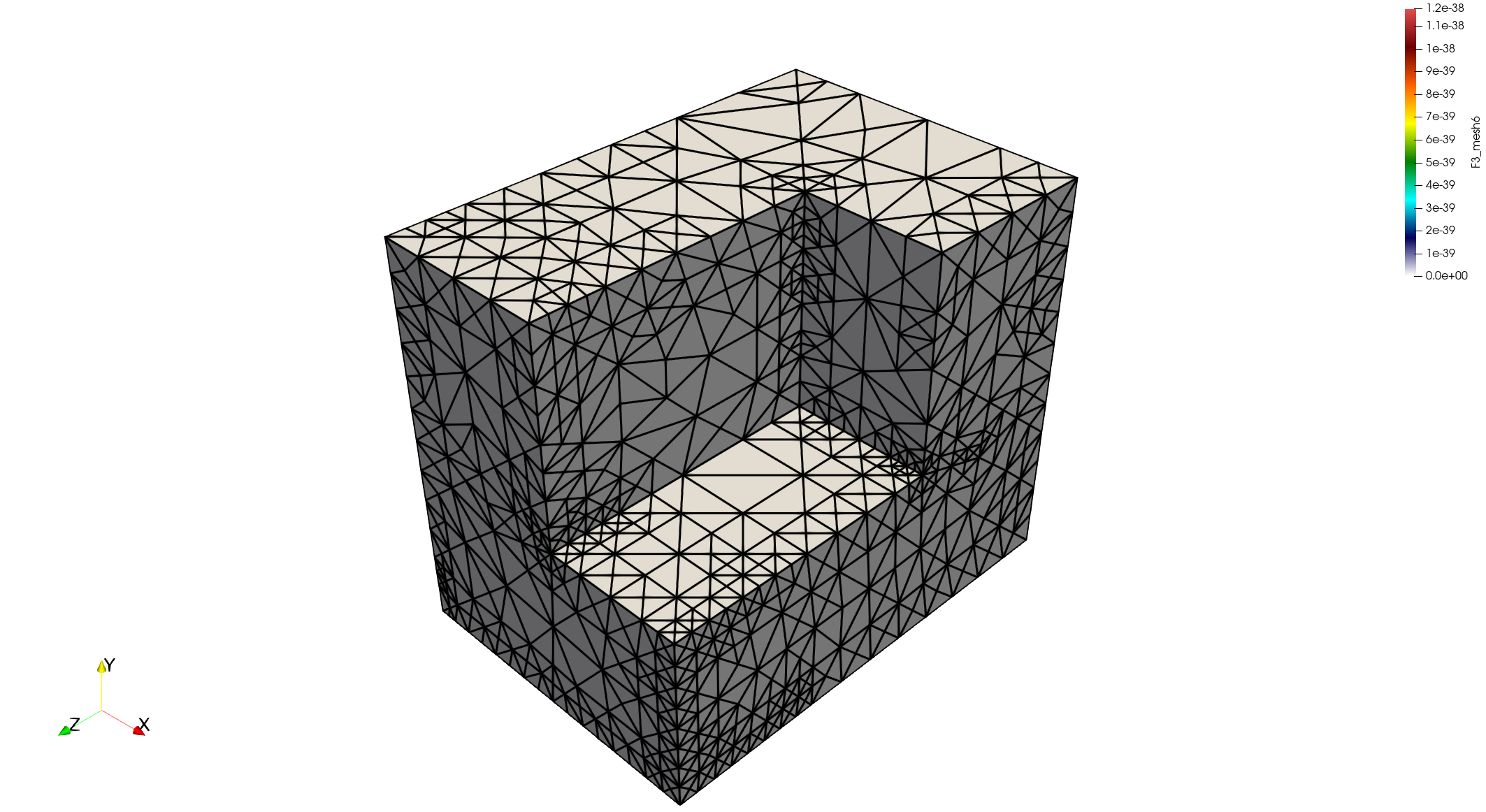}
\end{minipage}
\begin{minipage}{0.24\linewidth}\centering
	{\text{$\Omega_s$, $\omega_{h,3}$, \texttt{dof}= 1566206}\\ }
	\includegraphics[scale=0.09,trim=18cm 0cm 18cm 0cm,clip]{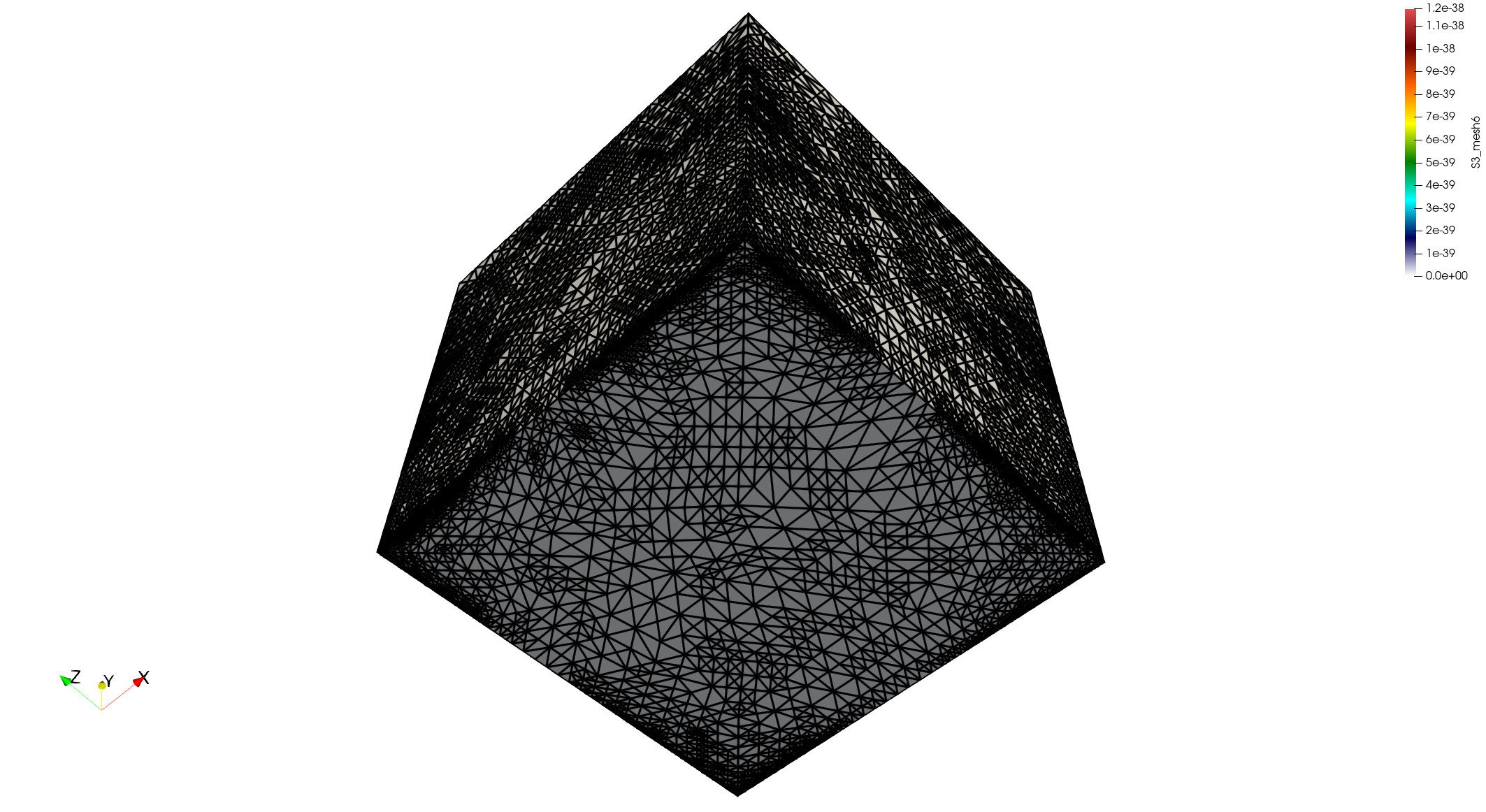}
\end{minipage}
\begin{minipage}{0.24\linewidth}\centering
	{\text{$\Omega_f$, $\omega_{h,3}$, \texttt{dof}= 1566206}\\ }
	\includegraphics[scale=0.09,trim=18cm 0cm 18cm 0cm,clip]{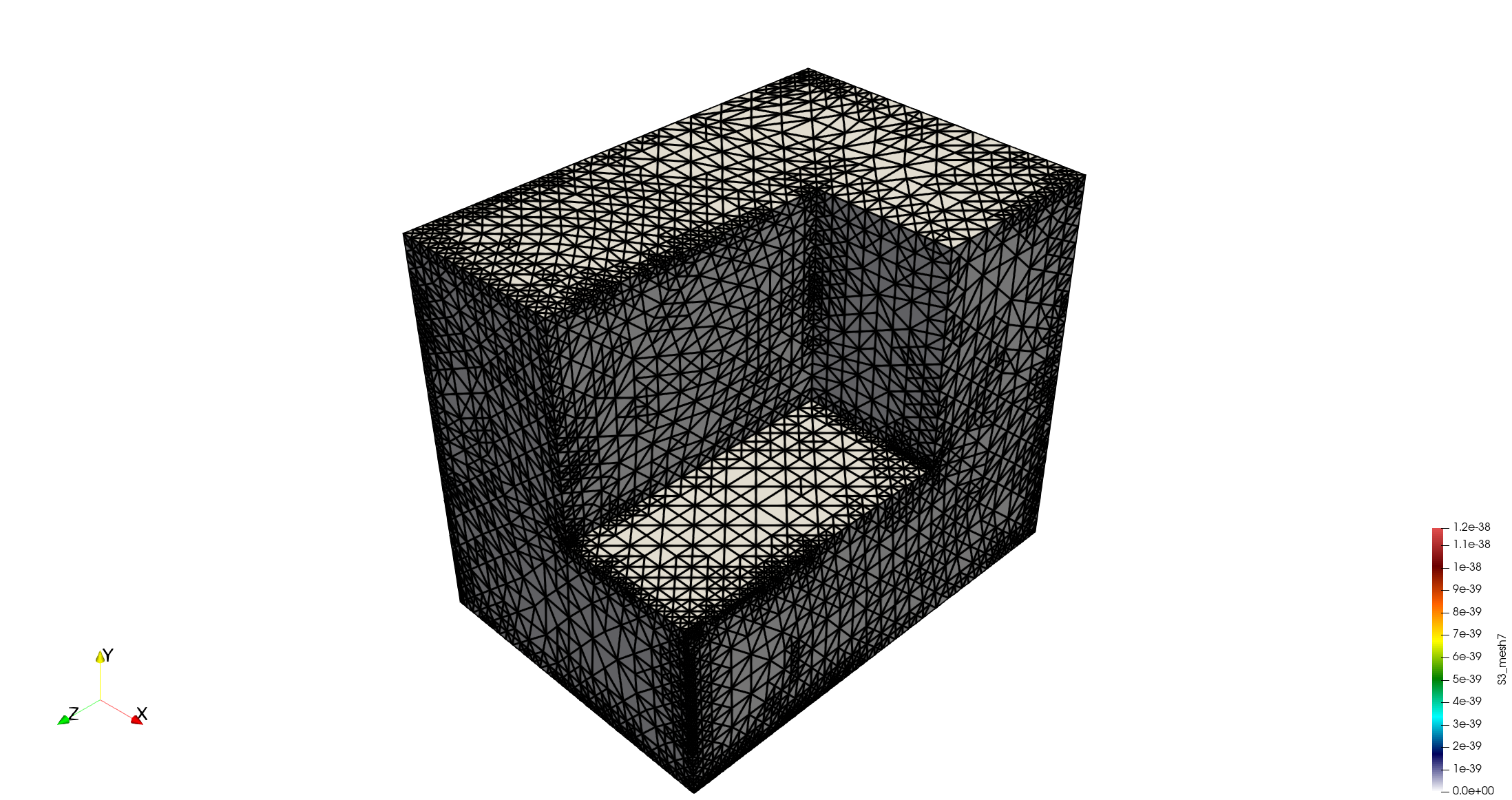}
\end{minipage}
\begin{minipage}{0.24\linewidth}\centering
	{\text{$\Omega_s$, $\omega_{h,4}$, \texttt{dof}= 213640}\\ }
	\includegraphics[scale=0.09,trim=18cm 0cm 18cm 0cm,clip]{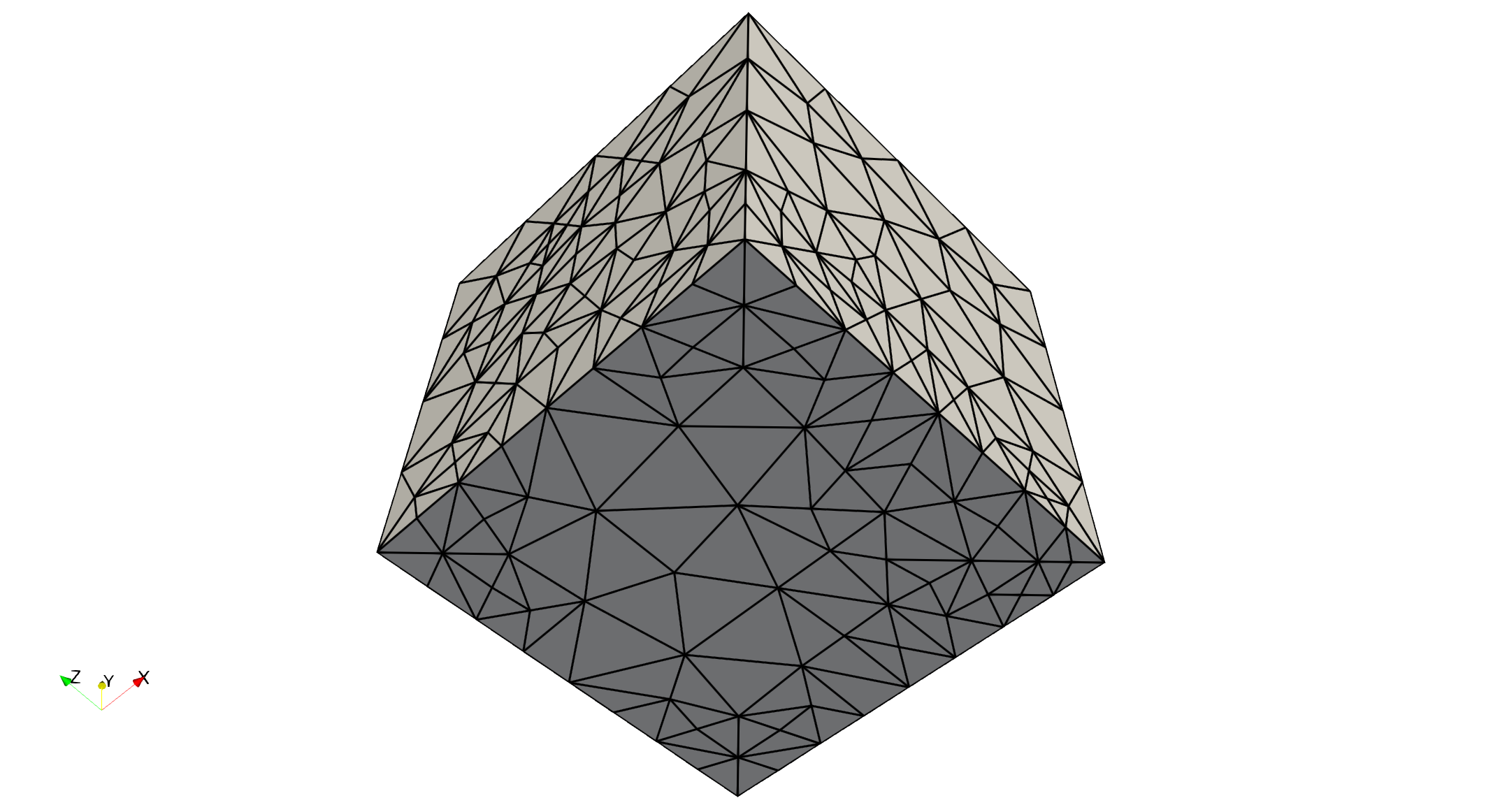}
\end{minipage}
\begin{minipage}{0.24\linewidth}\centering
	{\text{$\Omega_f$, $\omega_{h,4}$, \texttt{dof}= 213640}\\ }
	\includegraphics[scale=0.09,trim=18cm 0cm 18cm 0cm,clip]{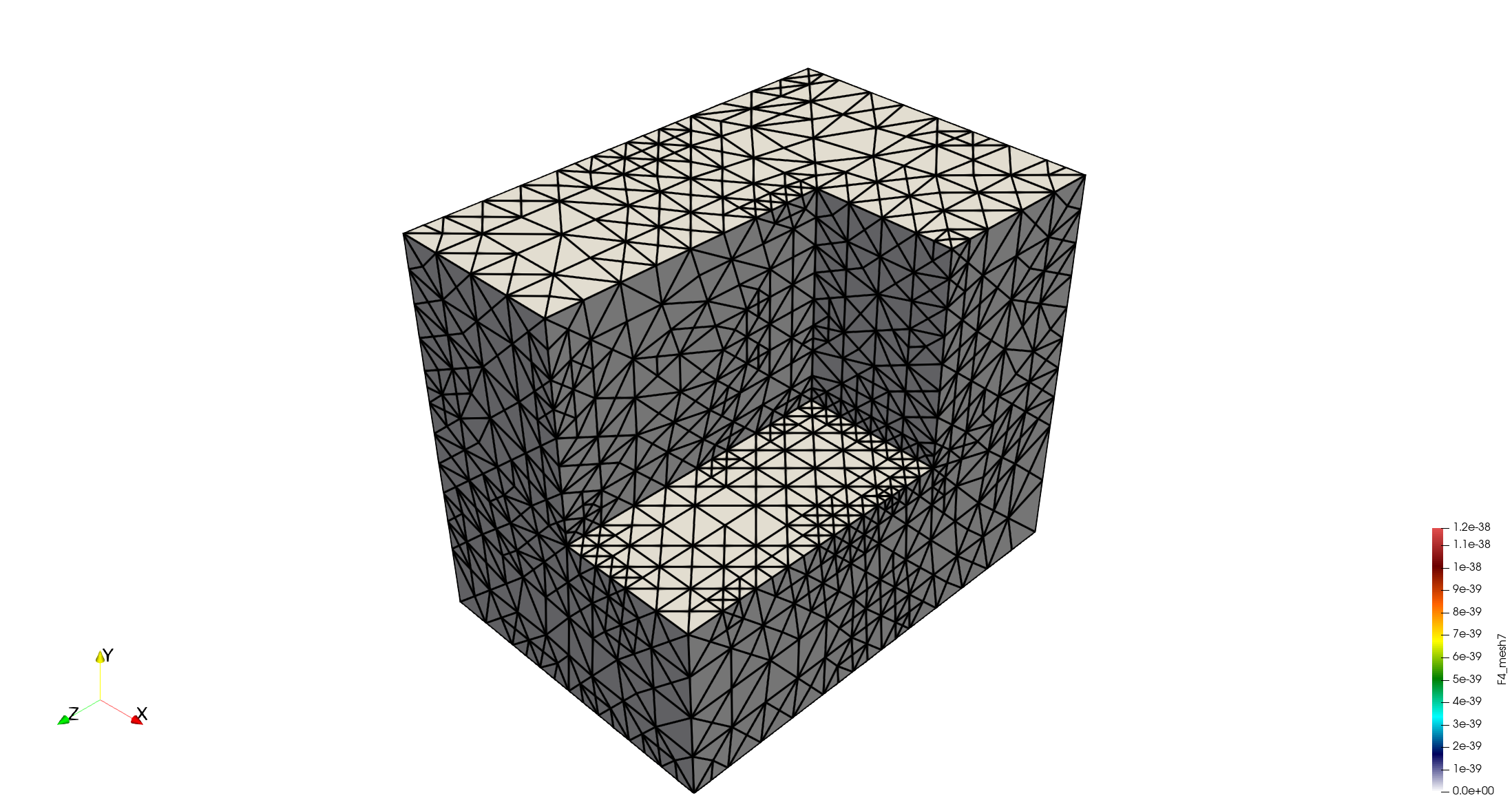}
\end{minipage}
\begin{minipage}{0.24\linewidth}\centering
	{\text{$\Omega_s$, $\omega_{h,4}$, \texttt{dof}= 1602525}\\ }
	\includegraphics[scale=0.09,trim=18cm 0cm 18cm 0cm,clip]{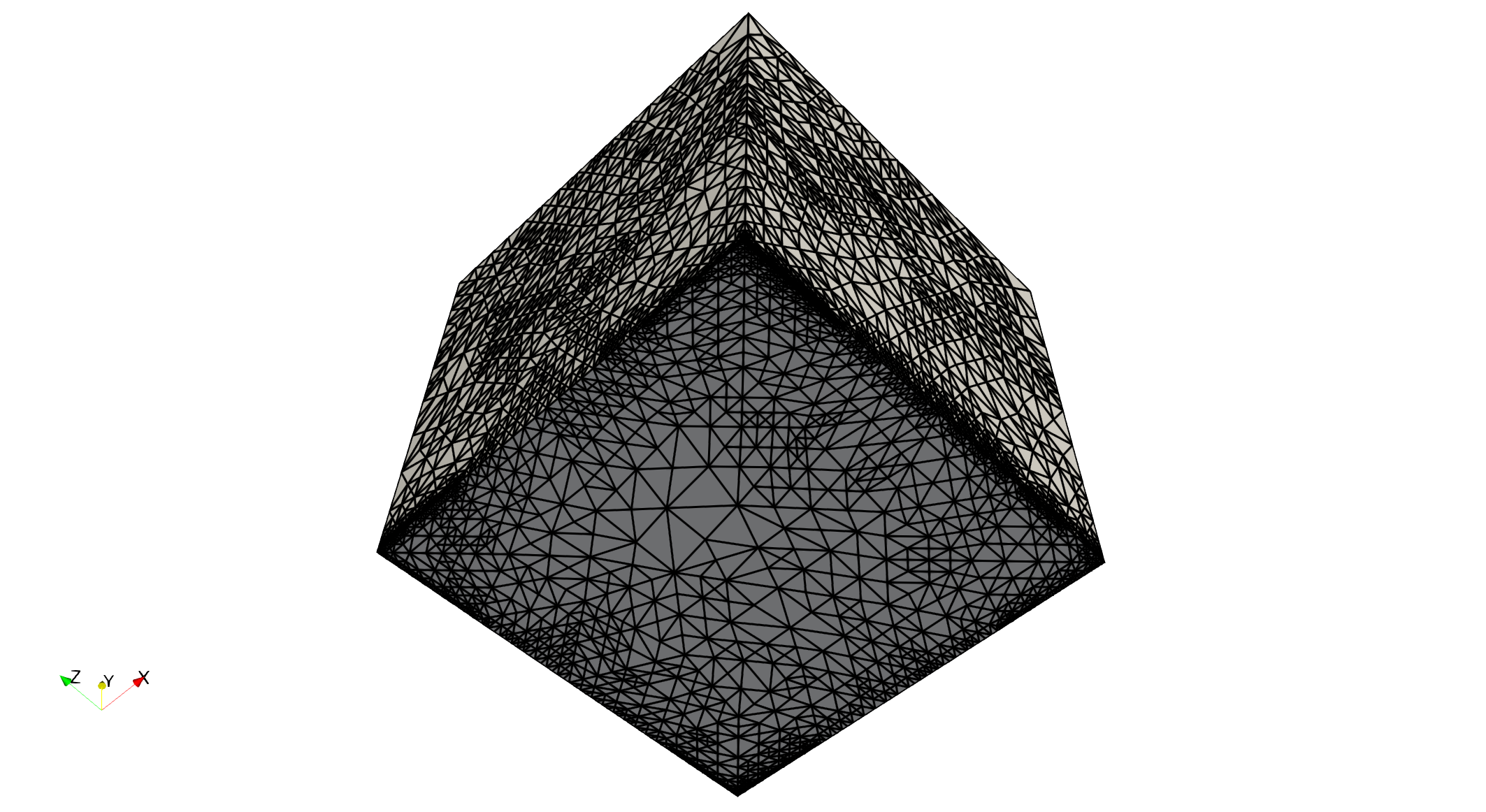}
\end{minipage}
\begin{minipage}{0.24\linewidth}\centering
	{\text{$\Omega_f$, $\omega_{h,4}$, \texttt{dof}= 1602525}\\ }
	\includegraphics[scale=0.09,trim=18cm 0cm 18cm 0cm,clip]{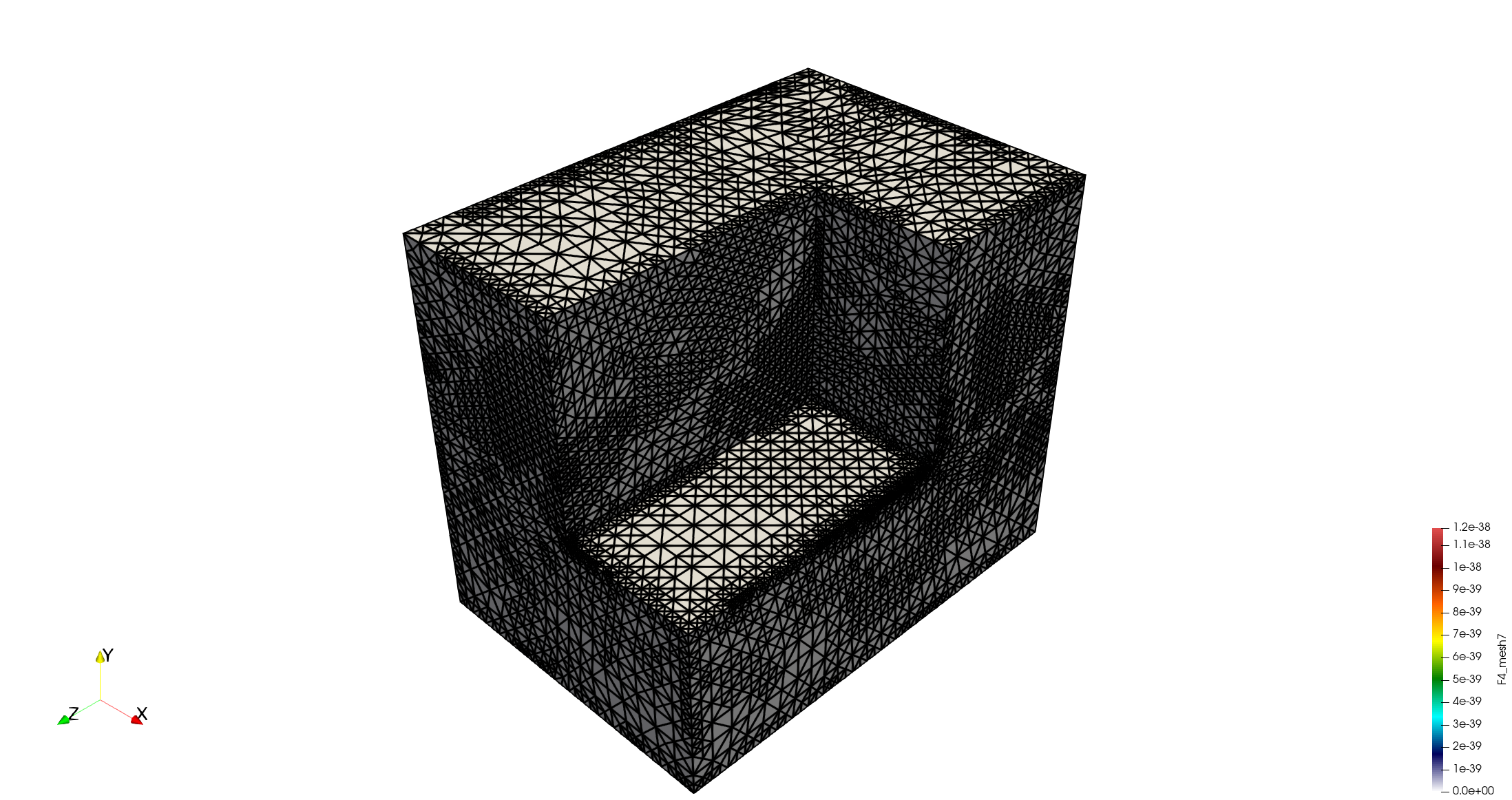}
\end{minipage}
\caption{Example \ref{subsec:3D-afem}. Comparison between intermediate meshes on the adaptive process for the first, second and fourth eigenfrequencies in $\Omega_{CF}$ with $\nu=0.35$.}
\end{figure}

\begin{figure}[!hbt]\centering
\begin{minipage}{0.32\linewidth}\centering
	\includegraphics[scale=0.26]{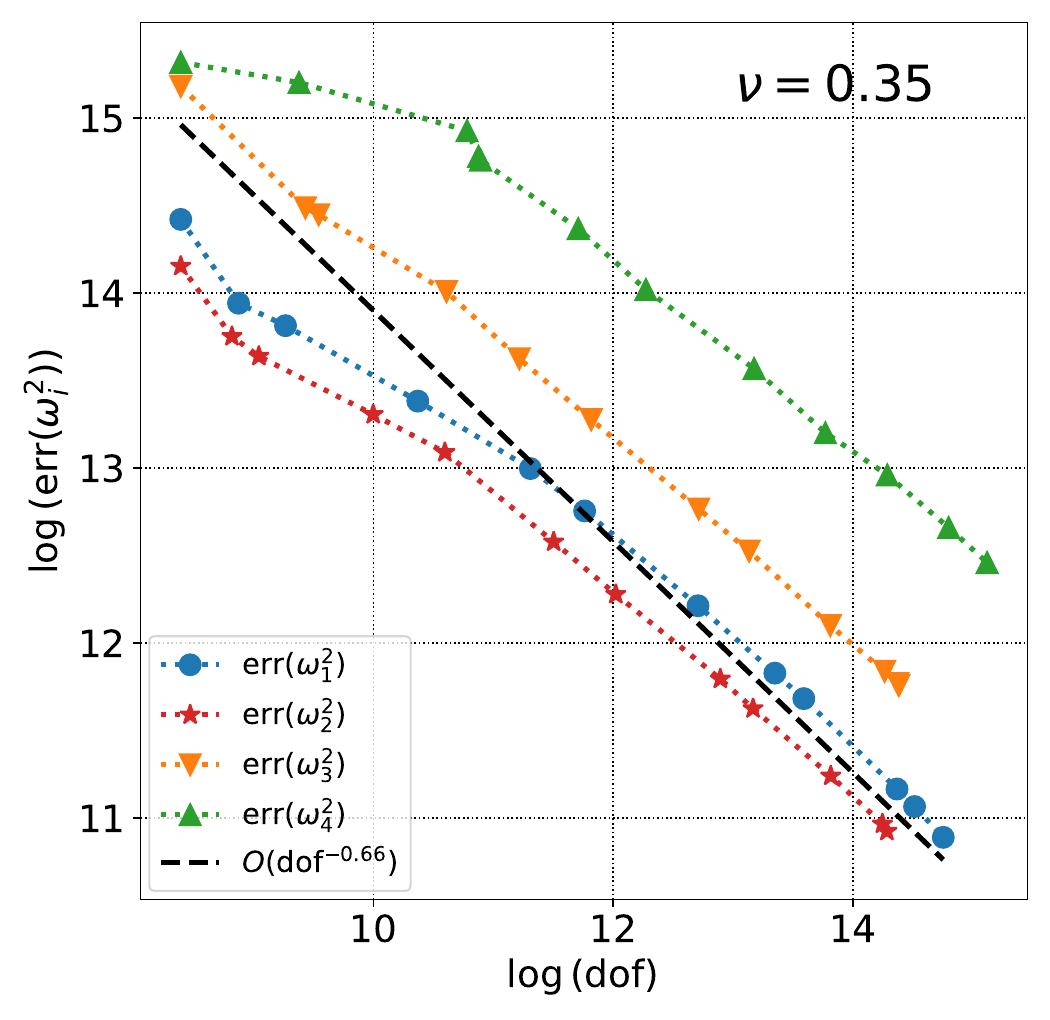}
\end{minipage}
\begin{minipage}{0.32\linewidth}\centering
	\includegraphics[scale=0.26]{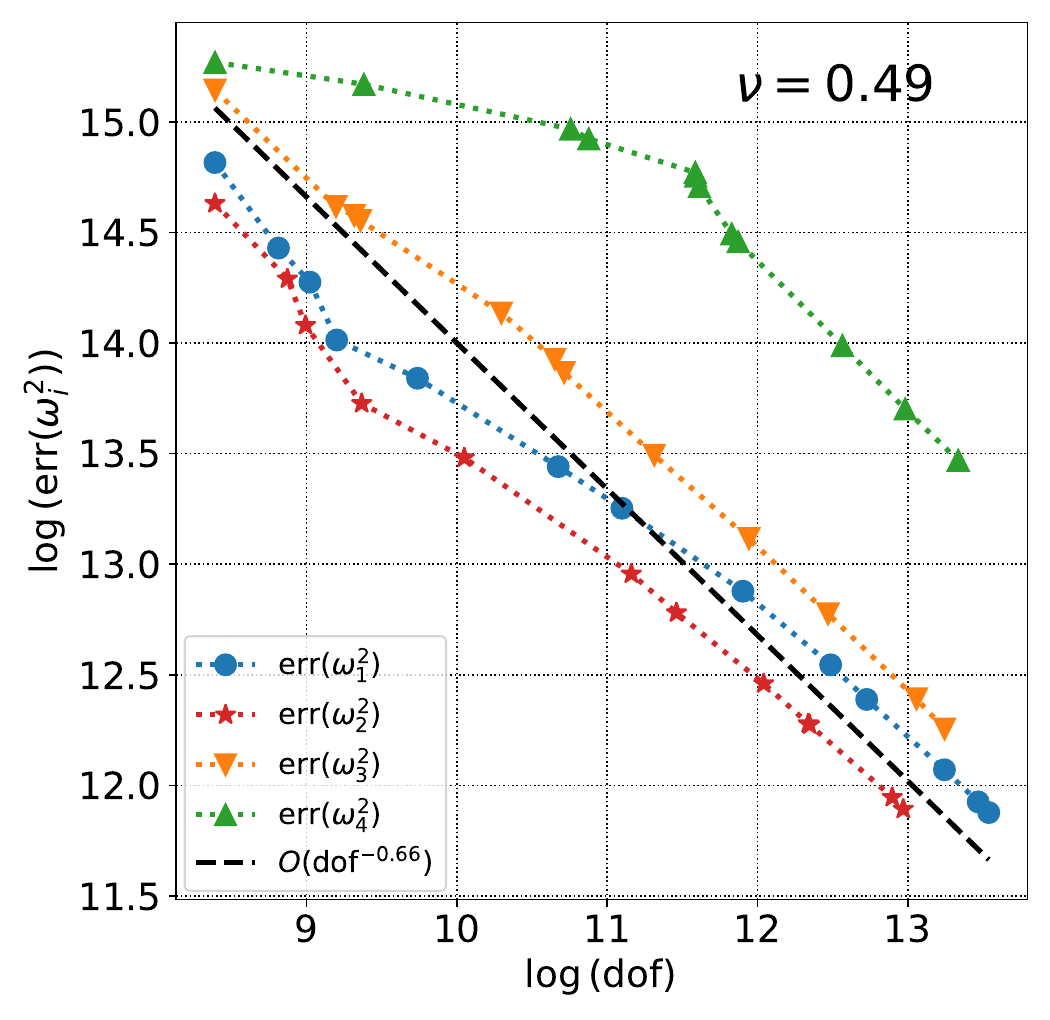}
\end{minipage}
\begin{minipage}{0.32\linewidth}\centering
	\includegraphics[scale=0.26]{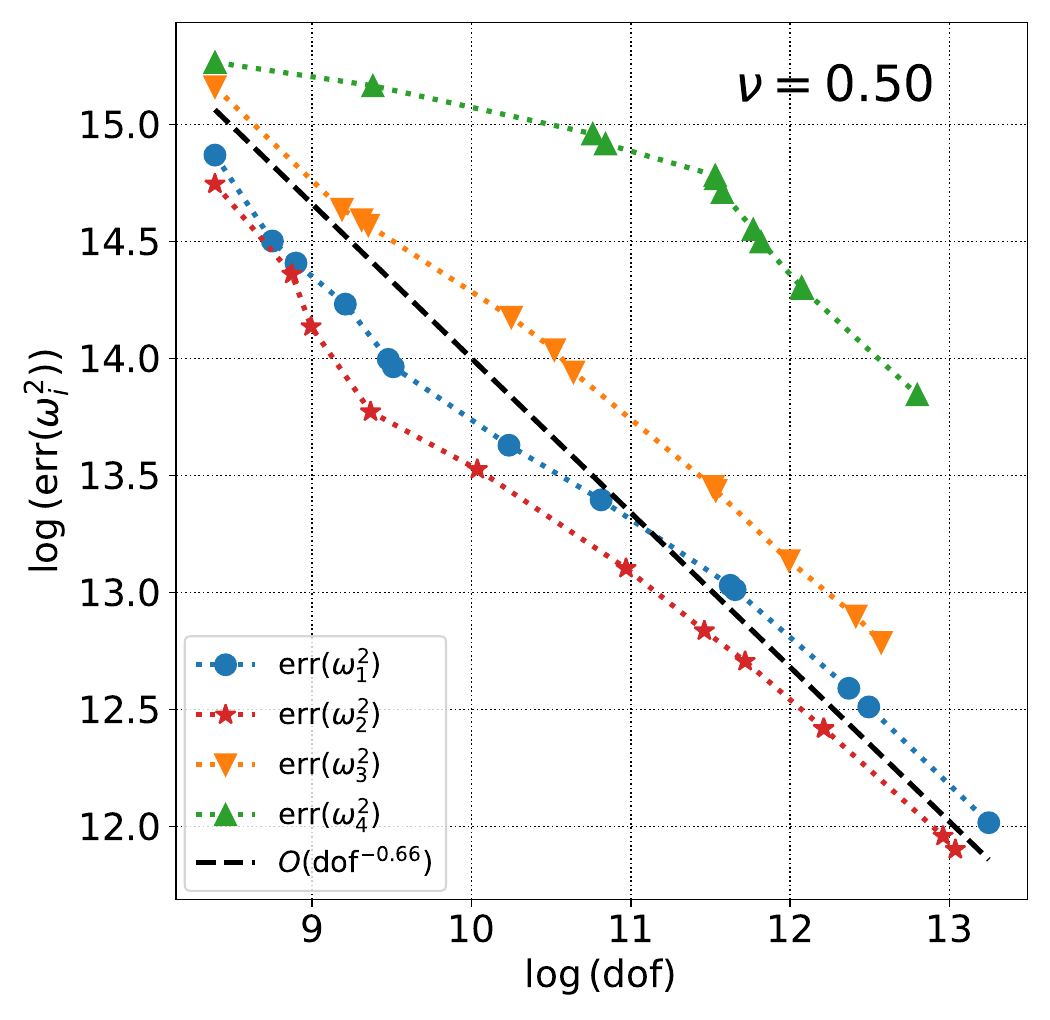}
\end{minipage}
\caption{Example \ref{subsec:afem}. Error history for the first four lowest computed frequencies in the adaptive algorithm on $\Omega_{CF}$ with different values of $\nu$.}
\label{fig:error-3D-adaptive}
\end{figure}

\begin{figure}[!hbt]\centering
\begin{minipage}{0.32\linewidth}\centering
	\includegraphics[scale=0.26]{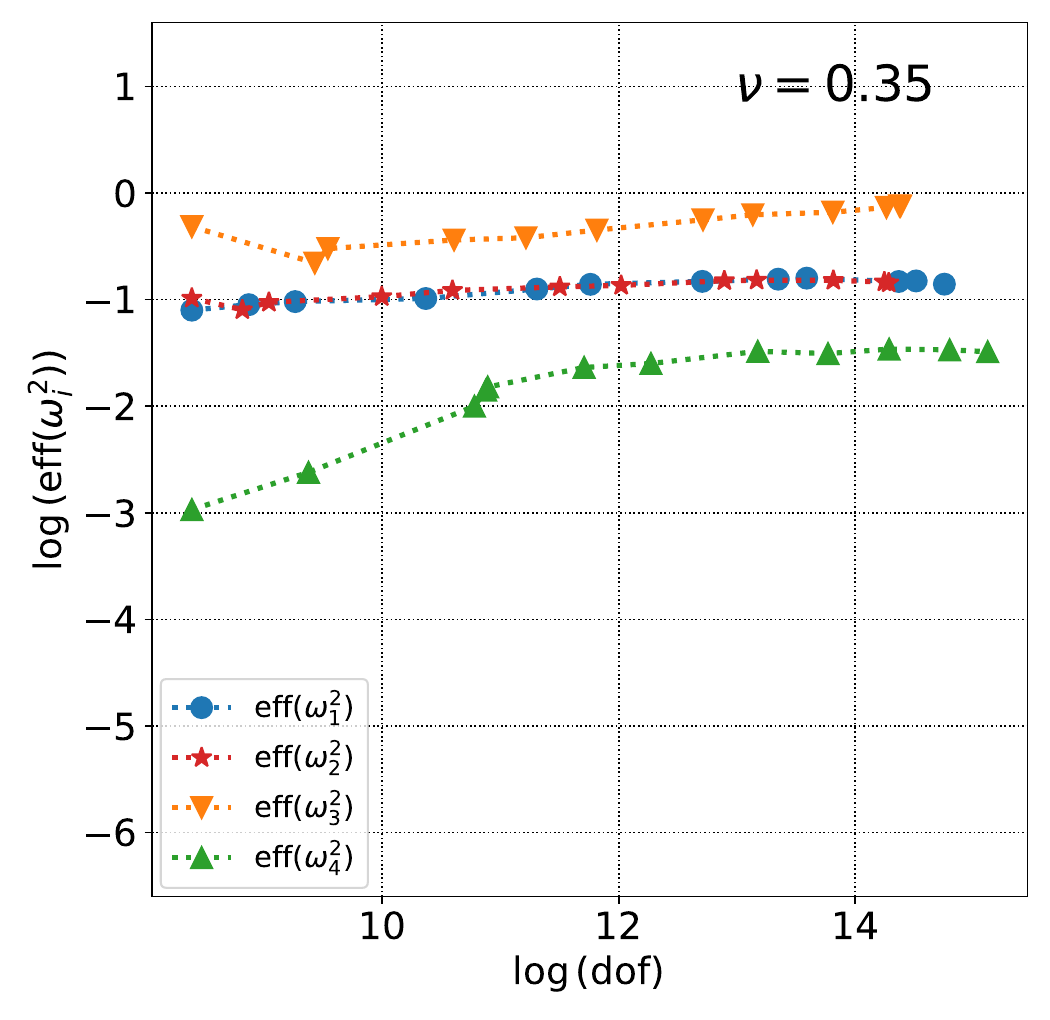}
\end{minipage}
\begin{minipage}{0.32\linewidth}\centering
	\includegraphics[scale=0.26]{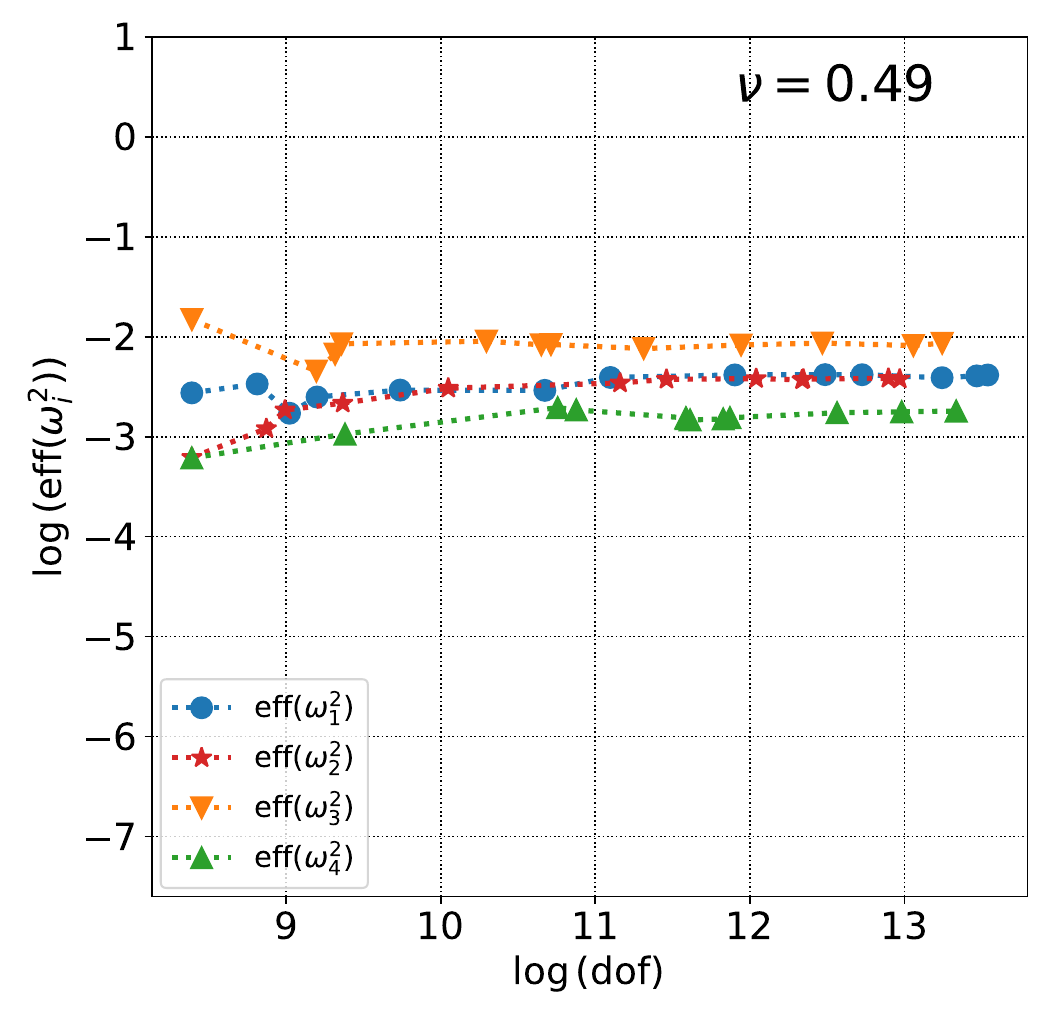}
\end{minipage}
\begin{minipage}{0.32\linewidth}\centering
	\includegraphics[scale=0.26]{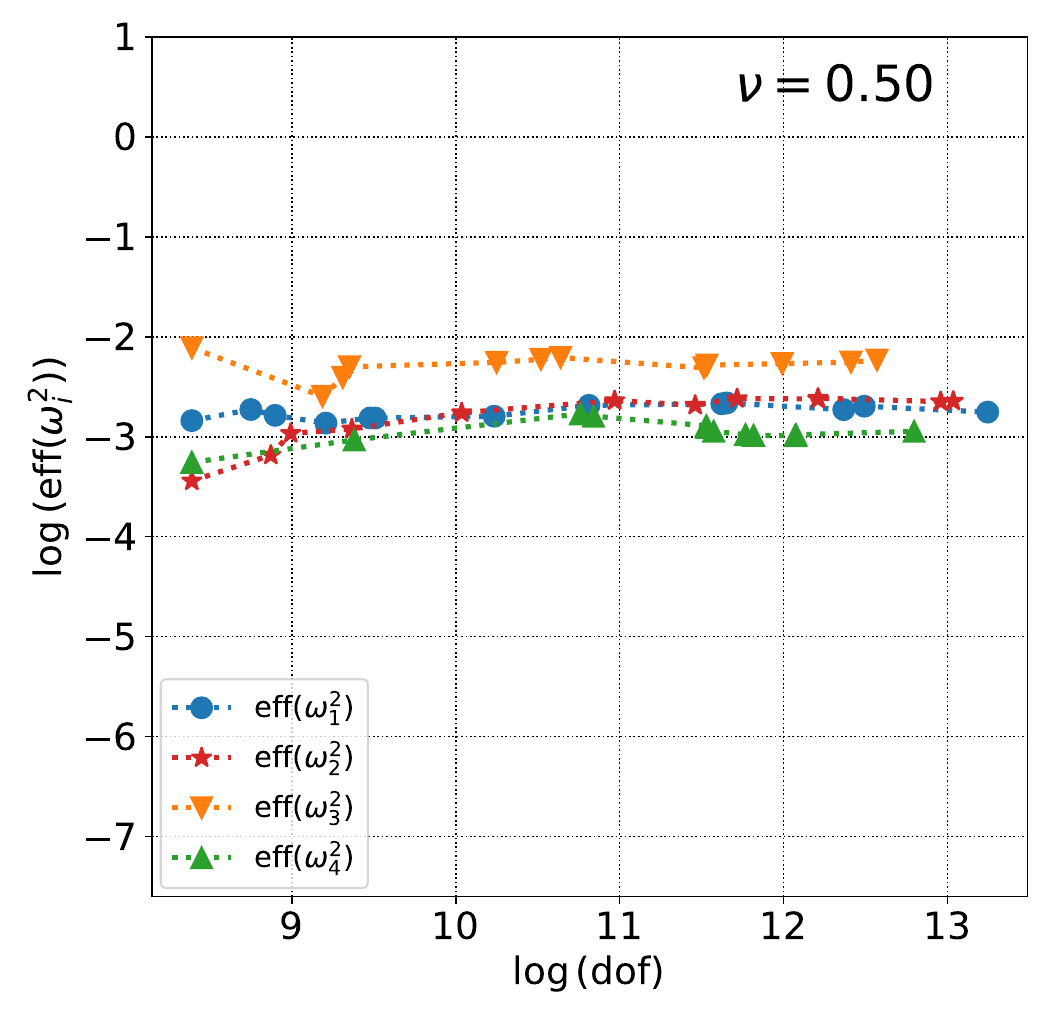}
\end{minipage}
\caption{Example \ref{subsec:afem}. Effectivity indexes for the first four lowest computed frequencies in the adaptive algorithm on $\Omega_{CF}$ with different values of $\nu$.}
\label{fig:efficiency-3D-adaptive}
\end{figure}

		\begin{acknowledgements}
			AK was  partially	supported by the Sponsored Research \& Industrial Consultancy (SRIC), Indian Institute of Technology Roorkee,
			India through the faculty initiation grant MTD/FIG/100878; by SERB MATRICS grant
			MTR/2020/000303; by SERB Core research grant CRG/2021/002569;
			FL was partially supported by was supported by Universidad del B\'io- B\'io through Proyecto Regular RE2514703.
			DM was partially supported by the National Agency for Research and Development, ANID-Chile through project Anillo of
			Computational Mathematics for Desalination Processes ACT210087, by FONDECYT project 1220881, and by project Centro de Modelamiento Matemático (CMM), FB210005, BASAL funds for centers of excellence.
			RRB acknowledges partial support from the Australian Research Council through the \textsc{Future Fellowship} grant FT220100496 and by 
 the Swedish Research Council under grant no. 2021-06594 while the author was in residence at Institut Mittag--Leffler in Djursholm, Sweden during the second semester of 2025. 
			JV was partially supported by the National Agency for Research and Development, ANID-Chile through FONDECYT Postdoctorado project 3230302.
		\end{acknowledgements}

\bibliographystyle{siam}
\bibliography{references}

\end{document}